    \let\subsubsection\subparagraph
    \title  {Khovanov homology is an unknot-detector}
    \author {P. B. Kronheimer and T. S. Mrowka%
            \thanks{%
            The work of the first author
            was supported by the National Science Foundation through
            NSF grants DMS-0405271 and DMS-0904589. The work of
            the second author was supported by NSF grant DMS-0805841.}} 
    \address {Harvard University, Cambridge MA 02138 \\
              Massachusetts Institute of Technology, Cambridge MA 02139}       
\begin{document}

\maketitle

\begin{abstract}
    We prove that a knot is the unknot if and only if its reduced
    Khovanov cohomology has rank $1$. The proof has two steps. We show first
    that there is a spectral sequence beginning with the reduced
    Khovanov cohomology and abutting to a knot homology
    defined using singular instantons. We then show that the latter homology
    is isomorphic to the instanton Floer homology of
    the sutured knot complement: an invariant that is already known to
    detect the unknot.
\end{abstract}


\section{Introduction}

\subsection{Statement of results}

This paper explores a relationship between the Khovanov cohomology of a
knot, as defined in \cite{Khovanov}, and various homology theories
defined using Yang-Mills instantons, of which the archetype is Floer's
instanton homology of 3-manifolds \cite{Floer-instanton}.  A
consequence of this relationship is a proof that Khovanov cohomology
detects the unknot. (For related results, see \cite{Grigsby-Wehrli,
Hedden-2-cable, Hedden-Watson}).

\begin{theorem}\label{thm:main-theorem}
    A knot in $S^{3}$ is the unknot 
   if and only if its reduced Khovanov cohomology is $\Z$.
\end{theorem}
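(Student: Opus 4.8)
The plan is to prove the two implications separately. The forward direction is elementary: if $K$ is the unknot, its reduced Khovanov cohomology is $\Z$, immediately from the cube-of-resolutions definition of \cite{Khovanov} applied to the standard one-crossing-free diagram. The whole weight of the argument is in the converse, and for that I would follow the two-step strategy announced in the abstract: build an instanton-theoretic knot homology that receives a spectral sequence from reduced Khovanov cohomology, and then identify it with a sutured instanton invariant that is already known to detect the unknot.

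Concretely, I would first introduce a Floer-type group $I^{\natural}(K)$ defined from moduli spaces of singular anti-self-dual connections on $S^{3}\setminus K$ (after a suitable blow-up or after adding an auxiliary unknotted component, to eliminate reducibles), with a prescribed codimension-two singularity along $K$ whose holonomy is asymptotic to a fixed order-two conjugacy class in $SO(3)$. The key structural input is an \emph{unoriented skein exact triangle}: at any crossing of a diagram, the knot and its two unoriented smoothings fit into a long exact sequence of these $I^{\natural}$-groups, proved by the standard surgery-triangle technology (a family of metrics or perturbations interpolating among the three relevant Dehn fillings of the configuration near the crossing).

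Iterating this exact triangle over all crossings of a diagram of $K$ yields a spectral sequence. After identifying $I^{\natural}$ of an unlink with the appropriate tensor power and matching the $d_{1}$-differential with Khovanov's edge maps, its $E_{2}$-page is precisely the reduced Khovanov cohomology of $K$, and it converges to $I^{\natural}(K)$. In particular, since $E_{\infty}$ is a subquotient of $E_{2}$, if the reduced Khovanov cohomology has rank $1$ then $\operatorname{rank} I^{\natural}(K)\le 1$; on the other hand a nonvanishing argument (via the graded Euler characteristic, or a distinguished cycle) forces $\operatorname{rank} I^{\natural}(K)\ge 1$, so $I^{\natural}(K)$ has rank exactly $1$. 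The second step is an excision argument, cutting and regluing along a torus to trade the singular-instanton data along $K$ for a pair of meridional sutures, identifying $I^{\natural}(K)$ with the instanton Floer homology of the sutured complement of $K$. Since that invariant detects the unknot, $K$ must be unknotted.

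I expect the main obstacle to be twofold and analytic in nature: first, setting up the singular instanton Floer homology and proving the unoriented skein exact triangle with enough control on perturbations, gluing, and orientations that the combinatorics of the resulting spectral sequence matches Khovanov's cube of resolutions \emph{on the nose} at $E_{2}$; and second, the excision-type identification of $I^{\natural}(K)$ with sutured instanton homology. The bookkeeping of the mod-$2$ or $\Z/4$ gradings, the sign conventions, and the local model at a crossing is where most of the effort will go; the unknot-detection property of sutured instanton homology itself I would import as a known result resting on Gabai's sutured-manifold hierarchy for knot complements.
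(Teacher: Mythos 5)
Your proposal is correct and follows essentially the same route as the paper: a singular-instanton knot homology $\Inat(K)$ (defined by adding a meridional circle and an arc $\omega$ to kill reducibles), an unoriented skein exact triangle iterated over a cube of resolutions to produce a spectral sequence with $E_{2}$ the reduced Khovanov cohomology (of the mirror, which is immaterial for the rank bound), and a Floer excision argument identifying $\Inat(K)$ with the sutured instanton invariant $\KHI(K)$, whose unknot-detection is imported from the sutured-manifold theory. The only cosmetic difference is that the paper does not need your intermediate nonvanishing step (rank $\ge 1$): it concludes directly that a nontrivial knot has $\rank\KHI(K)>1$, hence $\rank\Inat(K)>1$, hence $\rank\khr(K)>1$.
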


In \cite{KM-knot-singular}, the authors construct 
a Floer homology for knots
and links in $3$-manifolds using moduli spaces of connections with
singularities in codimension 2. (The locus of the singularity is 
essentially the link $K$, or $\R\times K$ in a cylindrical
$4$-manifold.) Several variations of this construction are already
considered in \cite{KM-knot-singular}, but we will introduce here one
more variation, which we call $\Inat(K)$. 
Our invariant $\Inat(K)$ is an invariant for unoriented links
$K\subset S^{3}$ with a marked point $x\in K$ and a preferred normal
vector $v$ to $K$ at $x$. The purpose of the normal vector is in
making the invariant functorial for link cobordisms: if $S
\subset [0,1]\times S^{3}$ is a link cobordism from $K_{1}$ to
$K_{0}$, not necessarily orientable, but equipped with a path $\gamma$
joining the respective basepoints and a section $v$ of the normal
bundle to $S$ along $\gamma$, then there is an induced map,
\[
     \Inat(K_{1}) \to \Inat(K_{0})
\]
that is well-defined up to an overall sign and satisfies a composition
law. (We will discuss what is needed to resolve the sign ambiguity in
section~\ref{subsec:i-orient}.) The definition is set up so that
$\Inat(K)=\Z$ when $K$ is the unknot. We will refer to this homology
theory as the reduced singular instanton knot homology of $K$. (There
is also an unreduced version which we call $\Isharp(K)$ and which can
be obtained by applying $\Inat$ to the union of $K$ with an extra
unknotted, unlinked component.)  The
definitions can be extended by replacing $S^{3}$ with an arbitrary closed,
oriented $3$-manifold $Y$. The invariants are then functorial for
suitable cobordisms of pairs.

 Our main result concerning $\Inat(K)$ is that it is related to 
reduced Khovanov cohomology by a spectral
sequence. The model for this result is a closely-related theorem due
to Ozsv\'ath and Szab\'o \cite{OS-double-covers} concerning the
Heegaard Floer homology, with $\Z/2$ coefficients, 
of a branched double cover of $S^{3}$. There
is a counterpart to the result of \cite{OS-double-covers} in the
context of Seiberg-Witten gauge theory, due to Bloom \cite{Bloom}.

\begin{proposition}\label{prop:spectral}
    With $\Z$ coefficients, there is a spectral sequence whose
    $E_{2}$ term is the reduced Khovanov cohomology, $\khr(\bar{K})$, of
    the  mirror image knot $\bar K$, and
    which abuts to the reduced singular instanton homology 
    $\Inat(K)$.
\end{proposition}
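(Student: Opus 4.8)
The plan is to run a cube-of-resolutions argument modeled on Ozsv\'ath and Szab\'o's construction for the Heegaard Floer homology of branched double covers. Fix a planar diagram $D$ of $K$ with $n$ crossings, together with the marked point $x$ and normal vector $v$. For each vertex $u\in\{0,1\}^{n}$ of the cube, resolving every crossing of $D$ according to $u$ produces an unlink $K_{u}\subset S^{3}$ carrying the marked point, and an edge of the cube from $u$ to $u'$ (differing in a single coordinate) is realized by an elementary saddle cobordism $S_{uu'}$, equipped with a path and a normal section extending the basepoint data. The first thing I would invoke is the functoriality and composition law for the induced maps $\Inat(S_{uu'})\colon\Inat(K_{u})\to\Inat(K_{u'})$ already asserted in the discussion preceding the proposition; after fixing signs on the edges by the standard procedure used for Khovanov's complex, the cube becomes a skew-commutative diagram of abelian groups.

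The second ingredient, and the one I expect to be \textbf{the main obstacle}, is an \emph{unoriented skein exact triangle} for $\Inat$: at a crossing, the crossing itself together with its two smoothings give links $K_{2},K_{1},K_{0}$ whose cobordism maps should fit into a long exact sequence
\[
\cdots\longrightarrow\Inat(K_{2})\longrightarrow\Inat(K_{1})\longrightarrow\Inat(K_{0})\longrightarrow\Inat(K_{2})\longrightarrow\cdots .
\]
Unlike its Heegaard Floer or monopole analogue, this cannot be deduced from a formal surgery triangle and must be extracted from the singular instanton moduli spaces directly. The approach I would take is a neck-stretching argument along the boundary of a small ball $B$ around the crossing: the three resolutions agree outside $B$, so (after suitable holonomy perturbations for transversality) the Floer differentials and the cobordism maps decompose according to the limiting flat connections on $\partial B\setminus K$ --- a $4$-punctured sphere, so a ``pillowcase''-type character variety --- while the local contribution of the tangle in $B$ stitches the three exterior pieces into a mapping-cone relation, producing the triangle. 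Keeping $x$ and $v$ outside $B$ makes the $\natural$-modification inert in this local computation.

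Granting the triangle, I would iterate it over the $n$ crossings. Realizing the saddle maps on the chain level in a compatible way and inserting the sign assignment, the resulting tower of exact triangles assembles into a single filtered complex $\bigl(\bigoplus_{u}C^{\natural}(K_{u}),\,\mathcal D\bigr)$, filtered by the weight $\lvert u\rvert=\sum_{i}u_{i}$; this is an $n$-fold iterated mapping cone, and its homology is $\Inat(K)$. The associated spectral sequence therefore has
\[
E_{1}=\bigoplus_{u}\Inat(K_{u}),\qquad d_{1}=\sum_{\text{edges}}\pm\,\Inat(S_{uu'}),
\]
and abuts to $\Inat(K)$.

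It remains to identify $(E_{1},d_{1})$ with a known complex. For this I need the values of $\Inat$ on unlinks and on elementary saddles: I would show that $\Inat$ of the $m$-component unlink with marked point is isomorphic to $A^{\otimes(m-1)}$, where $A=\Z[X]/(X^{2})$ is the Frobenius algebra underlying the reduced Khovanov TQFT, with the merge and split cobordisms inducing multiplication and comultiplication. Matching these data identifies $(E_{1},d_{1})$ with the reduced Khovanov complex. The grading and orientation conventions of instanton homology --- in particular the direction in which the homological degree of a crossing is shifted relative to its smoothings --- are what force the cube to compute the Khovanov complex of the \emph{mirror} $\bar K$ rather than of $K$, exactly as in \cite{OS-double-covers}; verifying this is a bookkeeping matter. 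Hence $E_{2}=\khr(\bar K)$, and the spectral sequence abuts to $\Inat(K)$, which is the claim.
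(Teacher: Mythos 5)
Your architecture matches the paper's: a cube of resolutions, $E_{1}$ given by the instanton homology of the unlinks with $d_{1}$ the saddle maps, identification of $(E_{1},d_{1})$ with the reduced Khovanov complex of the mirror via the rank-two Frobenius algebra, and the mirror arising from the direction of the differential on the cube. However, there are two substantive gaps at exactly the points you flag as delicate. First, the skein exact triangle is not obtained in the paper by neck-stretching along the four-punctured sphere $\partial B\sminus K$ and analyzing the pillowcase character variety; that boundary fails the non-integral condition, its representation variety contains reducibles, and none of the gluing theory developed for the Floer-theoretic cobordism maps applies to such a cut. Instead, the paper proves vanishing of the composite $f_{10}\circ f_{21}$ by exhibiting the composite cobordism $\Tigma_{2,0}$ as a connected sum with $(S^{4},\RP^{2})$ (where the relevant moduli spaces are computed explicitly and contain no troublesome reducibles), and the chain homotopy comes from a family of metrics stretching along the separating $S^{3}$ --- not along the four-punctured sphere.

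Second, and more seriously, the step ``the resulting tower of exact triangles assembles into a single filtered complex whose homology is $\Inat(K)$'' is the heart of the theorem, not bookkeeping. Having an exact triangle at each crossing does not imply that the iterated mapping cone over $\{0,1\}^{n}$ computes $\Inat(K)$: one needs the secondary homotopies $\bJ$ (witnessing $f_{i-1}f_{i}\simeq 0$) and the tertiary maps $\bK$, together with the verification that $j_{i-1}f_{i}+f_{i-2}j_{i}$ is a quasi-isomorphism --- the hypotheses of the Ozsv\'ath--Szab\'o algebraic lemma. In the paper these are constructed from explicit families of (broken) metrics --- a half-space family for $\bJ$ and a pentagon for $\bK$ --- and the quasi-isomorphism statement requires analyzing the composite $\Tigma_{30}$, its two plumbed M\"obius bands, and a degree-one evaluation map onto the interval of flat connections on the two-component unlink in $S^{3}$. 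This also forces the indexing set to be (three-periodic extensions of) $\{0,1,2\}^{N}$ rather than just $\{0,1\}^{n}$, since $K$ itself sits at the vertex $(2,\dots,2)$. Without supplying these higher homotopies and their moduli-space inputs, the proposal establishes only the shape of the spectral sequence, not its existence or its abutment to $\Inat(K)$.
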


As an immediate corollary, we have:

\begin{corollary}
\label{cor:lower-bound}
    The rank of the reduced Khovanov cohomology $\khr(K)$ is at least as
    large as the rank of  $\Inat(K)$. \qed
\end{corollary}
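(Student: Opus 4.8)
The plan is to read the bound straight off the spectral sequence of Proposition~\ref{prop:spectral}, using nothing more than the fact that passing to homology cannot raise the rank of a finitely generated abelian group, together with the standard symmetry of Khovanov cohomology under mirroring. First I would note that the spectral sequence of the proposition is one of finitely generated abelian groups supported in finitely many bidegrees, so that in each bidegree the pages $E_r$ stabilize to $E_\infty$ for $r$ large, and that $\Inat(K)$ carries a finite filtration whose associated graded module is $E_\infty$. Since the free rank of an abelian group is additive over short exact sequences, the rank of $\Inat(K)$ equals the rank of $E_\infty$.

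Next, for each $r\ge 2$ the page $E_{r+1}$ is the homology of $(E_r,d_r)$, and for any differential $d$ on a finitely generated abelian group $M$ the homology $H(M,d)$ has rank at most that of $M$; iterating this from $E_2$ down to $E_\infty$ gives that the rank of $E_\infty$ is at most the rank of $E_2=\khr(\bar K)$. Combining the two observations, the rank of $\Inat(K)$ is at most the rank of $\khr(\bar K)$.

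It remains to replace $\bar K$ by $K$ on the right. Here the only genuine input beyond bookkeeping is the symmetry of reduced Khovanov cohomology under mirror images: $\khr(\bar K)$ is, after reversing the bigrading, the $\Z$-linear dual of $\khr(K)$, so by universal coefficients the two groups have the same total rank. Substituting, the rank of $\khr(K)$ is at least the rank of $\Inat(K)$, which is the assertion. The one step calling for any care — and the point I would expect to be the crux of a fully careful write-up — is the convergence bookkeeping: one needs that Proposition~\ref{prop:spectral} really supplies a convergent spectral sequence of finitely generated groups, so that $E_\infty$ exists, is the associated graded of $\Inat(K)$, and the rank comparisons above are legitimate. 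Granted that, the corollary is immediate, which is why it is recorded here without a separate argument.
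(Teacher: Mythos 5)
Your argument is exactly the intended one: the paper records this corollary with no separate proof precisely because it follows immediately from Proposition~\ref{prop:spectral} by the standard observation that rank cannot increase when passing from $E_2$ through the (finitely many, finitely generated) pages to $E_\infty$, which is the associated graded of $\Inat(K)$, together with the fact that $\khr(\bar K)$ and $\khr(K)$ have equal total rank by mirror duality. Your convergence caveat is harmless here, since the spectral sequence arises from a finite cube of resolutions and hence from a finitely filtered complex of finitely generated groups.
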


To prove Theorem~\ref{thm:main-theorem}, it will therefore suffice to
show that $\Inat(K)$ has rank bigger than $1$ for non-trivial
knots. This will be done by relating $\Inat(K)$ to a knot homology
that was constructed from a different point of view (without singular
instantons) by Floer in \cite{Floer-Durham}. Floer's knot homology was
revisited by the authors in \cite{KM-sutures}, where it appears as an
invariant $\KHI(K)$ of knots in $S^{3}$. (There is a slight difference
between $\KHI(K)$ and Floer's original
version, in that the latter leads to a group with twice the rank). It
is defined using $\SU(2)$ gauge theory on a closed $3$-manifold
obtained from the knot complement. The construction of $\KHI(K)$ in
\cite{KM-sutures} was motivated by Juh\'asz's work on sutured
manifolds in the setting of Heegaard Floer theory
\cite{Juhasz-1,Juhasz-2}: in the context of sutured manifolds,
$\KHI(K)$ can be defined as the instanton Floer homology of the
sutured 3-manifold obtained from the knot complement by placing two
meridional sutures on the torus boundary. It is defined in
\cite{KM-sutures} using complex coefficients for convenience, but one
can just as well use $\Q$ or $\Z[1/2]$. The authors establish in
\cite{KM-sutures} that $\KHI(K)$ has rank larger than $1$ if $K$ is
non-trivial.  The proof of Theorem~\ref{thm:main-theorem} is therefore
completed by the following proposition (whose proof turns out to be a
rather straightforward application of the excision property of
instanton Floer homology).

\begin{proposition}\label{prop:singular-to-sutured}
    With $\Q$ coefficients, there is an isomorphism between the
    singular instanton homology
    $\Inat(K;\Q)$ and the sutured Floer homology of the knot
    complement, $\KHI(K;\Q)$.
\end{proposition}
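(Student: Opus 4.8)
The plan is to present both $\Inat(K;\Q)$ and $\KHI(K;\Q)$ as the instanton Floer homology $I^{w}_{*}(\,\cdot\,;\Q)$, for a suitable nontrivial $SO(3)$ bundle $w$, of a closed $3$-manifold assembled from the knot exterior $M = S^{3}\setminus\mathrm{int}\,N(K)$ and a fixed model piece, and then to identify the two answers by means of the Floer excision theorem.

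First I would reinterpret the left-hand side. The singular bundle data defining $\Inat(K)$ is supported along $K$ together with the earring at the marked point $x$, and the holonomy parameter is $1/2$; removing an open tubular neighbourhood of $K$ converts this into a gauge-theory problem for honest $SO(3)$ connections on $M$ with prescribed behaviour near $\partial M$ --- the meridian of $K$ carrying the nontriviality that came from the cone-angle-$\pi$ singularity --- while the earring's bundle data is pushed into a ball near $x$ on $\partial M$. Isotoping that ball across $\partial M$ and tidying up the collar, I expect to obtain an identification $\Inat(K;\Q) \cong I^{w}_{*}(M\cup_{T^{2}}W;\Q)$, where $(W, w|_{W})$ is a fixed $3$-manifold-with-torus-boundary carrying a nontrivial bundle, glued to $\partial M$ so that the meridian of $K$ is matched to a prescribed curve on $\partial W$. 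Concretely, I expect this to exhibit $\Inat(K)$ as $I^{w}$ of a closure of the sutured knot exterior $(M,\gamma)$ with $\gamma$ the two meridional sutures --- the earring being precisely the decoration that, upon removal of $N(K)$, supplies the meridional sutures together with the nontrivial $w$ appearing in the definition of $\KHI(K)$.

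Granting this reinterpretation, the remainder is formal. By definition $\KHI(K;\Q)$ is $I^{w}$ of a closure of the very same sutured manifold $(M,\gamma)$, and the excision theorem of \cite{KM-sutures} shows that $I^{w}$ of a sutured manifold does not depend on the chosen closure: one cuts along two disjoint tori --- the torus separating $M$ from the model piece, and a parallel copy of it inside that piece --- and reglues, the hypothesis that $w$ is nontrivial on both tori ruling out reducibles, so that the cobordism maps induced by the regluing are isomorphisms. Composing the identifications gives $\Inat(K;\Q) \cong \KHI(K;\Q)$. The restriction to $\Q$ coefficients is what makes the relevant eigenspace decompositions and the excision isomorphisms available, and what keeps $2$-torsion and orientation subtleties out of the picture.

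The one genuinely substantive step, I expect, is this first reinterpretation: matching the codimension-$2$ singularity of $\Inat(K)$ along $K$, and the earring at $x$, with nonsingular bundle data on a closure of the knot exterior, while checking that the $w$ so produced is nontrivial on each torus used in the excision and that the local regluing models on the two sides agree. Once that is in place, Proposition~\ref{prop:singular-to-sutured} follows from results already in \cite{KM-sutures}.
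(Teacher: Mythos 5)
Your first step---trading the singular bundle data along $K$ and the earring for an honest $\SO(3)$ problem on a closed manifold assembled from the knot exterior and a fixed model piece---is essentially the paper's Proposition~\ref{singular-to-sutured-2}: cutting along $\partial N(K)$ and $\partial N(L)$ and regluing with longitudes and meridians exchanged splits off a Hopf link in $S^{3}$ and leaves exactly the manifold $T^{3}_{K}$ obtained by gluing the knot exterior to the complement of a coordinate circle in $T^{3}$. But note that excision in this setting is only valid for the \emph{enlarged} gauge group determined by the subgroup $\psi\subset H^{1}(T^{3}_{K};\Z/2)$ dual to the torus $R$ along which one cuts; the conclusion is $\Inat(K)\cong I^{\omega_{1}}(T^{3}_{K})^{\psi}$, whereas the ordinary determinant-$1$ gauge group gives Floer's group $I^{\omega_{1}}(T^{3}_{K})=I^{\omega_{1}}(T^{3}_{K})^{\psi}\oplus I^{\omega_{1}}(T^{3}_{K})^{\psi}$, which has twice the rank. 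Your sketch does not register this factor of two.

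The genuine gap is in the ``formal'' second half. $\KHI(K)$ is \emph{not} the full instanton homology $I^{w}$ of a closure of the sutured knot complement: by definition it is the generalized $+2$-eigenspace of the degree-$4$ operator $\mu(\mathrm{point})$ acting on $I^{\omega_{1}}(T^{3}_{K})$. Excision between two closures shows that this eigenspace is independent of the closure chosen, but it cannot convert the whole group $I^{w}$ of a closure into that eigenspace, nor does it relate the eigenspace decomposition to the $\psi$-decomposition you have produced on the other side---these are two \emph{different} ways of halving the same group, and no cobordism map is constructed between the two halves. The paper bridges them only by a dimension count over $\Q$: the $+2$- and $-2$-eigenspaces of $\mu(\mathrm{point})$ have equal rank, so each has half the rank of $I^{\omega_{1}}(T^{3}_{K})$, as does $\Inat(K)\cong I^{\omega_{1}}(T^{3}_{K})^{\psi}$; hence $\Inat(K;\Q)\cong\KHI(K;\Q)$ as $\Q$-vector spaces. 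This is precisely why the proposition asserts only an abstract isomorphism over $\Q$ rather than a natural one, and it is the step your argument is missing.
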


\begin{remark}
    We will see later in this paper that one can define a version
    of $\KHI(K)$ over $\Z$. The above proposition can then be
    reformulated as an isomorphism over $\Z$ between $\Inat(K)$ and $\KHI(K)$.
\end{remark}

Corollary~\ref{cor:lower-bound} and
Proposition~\ref{prop:singular-to-sutured} yield other lower bounds on
the rank of the Khovanov cohomology. For example, it is shown in
\cite{KM-alexander} that the Alexander polynomial of a knot can be
obtained as the graded Euler characteristic for a certain
decomposition of $\KHI(K)$, so we can deduce:

\begin{corollary}
    The rank of the reduced Khovanov cohomology $\khr(K)$ is bounded
    below by the sum of the absolute values of the coefficients of the
    Alexander polynomial of $K$. \qed
\end{corollary}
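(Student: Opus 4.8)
The plan is to chain together the two facts already in hand — Corollary~\ref{cor:lower-bound} and Proposition~\ref{prop:singular-to-sutured} — with the description of the Alexander polynomial via $\KHI$ from \cite{KM-alexander}. Write $\Delta_{K}(t)=\sum_{j} a_{j}t^{j}$ for the symmetrized Alexander polynomial of $K$; the quantity we must bound $\operatorname{rank}\khr(K)$ below by is $\sum_{j}|a_{j}|$.

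First I would recall from \cite{KM-alexander} that $\KHI(K)$ carries an extra ``Alexander'' $\Z$-grading, giving a decomposition $\KHI(K)=\bigoplus_{j}\KHI(K,j)$ whose graded Euler characteristic recovers $\Delta_{K}$, that is, $\sum_{j}\chi\bigl(\KHI(K,j)\bigr)\,t^{j}=\pm\Delta_{K}(t)$ up to the standard normalization. Hence $|a_{j}|=\bigl|\chi(\KHI(K,j))\bigr|$ for every $j$. Next, since for any finite-dimensional graded vector space the absolute value of the Euler characteristic is at most the total dimension, we get $|a_{j}|\le\dim_{\Q}\KHI(K,j;\Q)$, and summing over $j$,
\[
   \sum_{j}|a_{j}| \;\le\; \sum_{j}\dim_{\Q}\KHI(K,j;\Q) \;=\; \dim_{\Q}\KHI(K;\Q).
\]
Now Proposition~\ref{prop:singular-to-sutured} identifies $\KHI(K;\Q)$ with $\Inat(K;\Q)$, and since $\Inat(K;\Q)\cong\Inat(K)\otimes\Q$ this common dimension equals $\operatorname{rank}\Inat(K)$. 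Finally Corollary~\ref{cor:lower-bound} gives $\operatorname{rank}\Inat(K)\le\operatorname{rank}\khr(K)$. Concatenating the inequalities yields $\sum_{j}|a_{j}|\le\operatorname{rank}\khr(K)$, which is the assertion.

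The argument is essentially bookkeeping once the inputs are granted, so the corollary is immediate; the only point needing care — and the closest thing to an obstacle — is a compatibility check rather than an inequality: one must be sure that the ``certain decomposition of $\KHI(K)$'' in \cite{KM-alexander} is literally the invariant that appears in Proposition~\ref{prop:singular-to-sutured}, and that the Euler-characteristic statement there is phrased over a coefficient ring in which $2$ is invertible (e.g.\ $\Q$ or $\Z[1/2]$), matching the coefficients used throughout this section. Modulo that verification the proof consists only of the displayed chain of inequalities together with the elementary bound $|\chi|\le\dim$.
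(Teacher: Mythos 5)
Your argument is exactly the one the paper intends: the corollary follows by combining Corollary~\ref{cor:lower-bound}, Proposition~\ref{prop:singular-to-sutured}, and the graded Euler characteristic description of the Alexander polynomial from \cite{KM-alexander}, together with the elementary bound $|\chi|\le\dim$ applied gradewise. The compatibility caveat you raise (coefficients in which $2$ is invertible, and that the decomposed invariant is the same $\KHI$) is appropriate and consistent with the paper's own remarks about working over $\Q$ or $\Z[1/2]$.
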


For alternating (and more generally, quasi-alternating) knots and
links, it is known that the rank of the reduced Khovanov cohomology
(over $\Q$ or over the field of 2 elements) is equal to the lower
bound which the above corollary provides
\cite{Lee, Manolescu-Ozsvath}. Furthermore, that lower bound is simply
the absolute value of the determinant in this case. 
We therefore deduce also:

\begin{corollary}
    When $K$ quasi-alternating,  the spectral sequence from $\khr(K)$ to
    $\Inat(K)$ has no non-zero differentials after the $E_{1}$ page, 
    over $\Q$ or $\Z/2$. In particular, the total rank of
    $\Inat(K)$ is equal to the absolute value of the determinant of
    $K$. \qed
\end{corollary}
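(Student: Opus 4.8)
The plan is a rank count. The spectral sequence of Proposition~\ref{prop:spectral} has $E_{2}$-page $\khr(\bar K)$ and converges to $\Inat(K)$; since the groups involved are finitely generated the spectral sequence is finite, so $E_{\infty}$ is the associated graded of a finite filtration of $\Inat(K)$, and, working over a field, $\operatorname{rank} E_{2}\ge\operatorname{rank} E_{3}\ge\cdots\ge\operatorname{rank} E_{\infty}=\operatorname{rank}\Inat(K)$. Each of these inequalities is strict as soon as the corresponding differential is non-zero, so all of them are equalities precisely when every differential $d_{r}$ with $r\ge 2$ vanishes. Hence the whole statement will follow once I show, for a quasi-alternating knot $K$ and coefficients $\Q$ and $\Z/2$, that $\operatorname{rank}\khr(\bar K)$ and $\operatorname{rank}\Inat(K)$ coincide, the common value being $|\det K|$.

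Over $\Q$ this is immediate from the chain
\[
  \operatorname{rank}_{\Q}\khr(\bar K;\Q)\;=\;|\det K|\;=\;\sum_{j}|a_{j}|\;\le\;\operatorname{rank}_{\Q}\KHI(K;\Q)\;=\;\operatorname{rank}_{\Q}\Inat(K;\Q)\;\le\;\operatorname{rank}_{\Q}\khr(\bar K;\Q),
\]
where the $a_{j}$ are the coefficients of the Alexander polynomial $\Delta_{K}$. The first equality uses that $\bar K$ is again quasi-alternating with $|\det\bar K|=|\det K|$, together with the thinness of reduced Khovanov homology for quasi-alternating links \cite{Lee,Manolescu-Ozsvath}; the second is the fact recalled in the discussion above that $\sum_{j}|a_{j}|=|\det K|$ in this case; the middle inequality is exactly the reasoning behind the preceding corollary, namely that $\Delta_{K}$ is the graded Euler characteristic of a decomposition of $\KHI(K)$ by \cite{KM-alexander}, whence $|a_{j}|\le\operatorname{rank}\KHI(K,j)$ in each degree; the next equality is Proposition~\ref{prop:singular-to-sutured}; and the last inequality is the spectral-sequence bound from the first paragraph (equivalently, Corollary~\ref{cor:lower-bound} applied to $\bar K$). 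Since the two ends agree, all terms agree: the differentials $d_{r}$, $r\ge2$, vanish over $\Q$, and $\operatorname{rank}_{\Q}\Inat(K;\Q)=|\det K|$.

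For $\Z/2$ the construction of Proposition~\ref{prop:spectral} applies verbatim (it is a spectral sequence of $\Z$-modules, so reducing mod $2$ gives one with $E_{2}=\khr(\bar K;\Z/2)$ abutting to $\Inat(K;\Z/2)$), so the only new input required is the lower bound $\operatorname{rank}_{\Z/2}\Inat(K;\Z/2)\ge|\det K|$. I would obtain this from the rational computation by universal coefficients: $\Inat$ is defined over $\Z$, so $\operatorname{rank}_{\Z/2}\Inat(K;\Z/2)\ge\operatorname{rank}_{\Q}\Inat(K;\Q)=|\det K|$, while $\operatorname{rank}_{\Z/2}\khr(\bar K;\Z/2)=|\det K|$ again by quasi-alternating thinness over $\Z/2$ \cite{Manolescu-Ozsvath}. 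The same squeeze as before then forces all $d_{r}$ with $r\ge2$ to vanish over $\Z/2$ and gives $\operatorname{rank}_{\Z/2}\Inat(K;\Z/2)=|\det K|$.

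I do not expect any genuine obstacle here: the substance is carried entirely by the cited results — the thinness of Khovanov homology for quasi-alternating links, the Alexander-polynomial lower bound on $\operatorname{rank}\KHI$ from \cite{KM-alexander}, and Proposition~\ref{prop:singular-to-sutured} — and the corollary is just the observation that together they pin both ends of the spectral sequence to the same rank. The one point needing a little care is the $\Z/2$ statement, where, in the absence of a mod-$2$ version of Proposition~\ref{prop:singular-to-sutured}, one routes the lower bound through the rational answer via universal coefficients.
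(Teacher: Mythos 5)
Your proposal is correct and is essentially the paper's own argument: the corollary is deduced by the same squeeze, pinning $\operatorname{rank}\khr$ and $\operatorname{rank}\Inat$ to $|\det K|$ via quasi-alternating thinness, the Alexander-polynomial lower bound on $\KHI$ from \cite{KM-alexander}, Proposition~\ref{prop:singular-to-sutured}, and Corollary~\ref{cor:lower-bound}. Your explicit routing of the $\Z/2$ lower bound through the rational computation by universal coefficients is a sensible filling-in of a detail the paper leaves implicit, not a different method.
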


The group $\KHI(K)$ closely resembles the ``hat'' version of Heegaard
knot homology, $\widehat{\HF}(K)$, defined in \cite{OS-knot-hf} and
\cite{Rasmussen-thesis}: one can perhaps think of $\KHI(K)$ as the
``instanton'' counterpart of the ``Heegaard'' group
$\widehat{\HF}(K)$.  The present paper provides a spectral sequence
from $\khr(K)$ to $\KHI(K)$, but at the time of writing it is not
known if there is a similar spectral sequence from $\khr(K)$ to
$\widehat{\HF}(K)$ for classical knots. 
This was a question raised by Rasmussen in
\cite{Rasmussen-knot-homologies}, motivated by observed similarities
between reduced Khovanov cohomology and Heegaard Floer homology.
There are results in the direction of providing such a spectral
sequence in \cite{Manolescu-skein}, but the problem remains open.

\subsection{Outline}

Section~\ref{sec:sing-inst-non} provides the framework for the
definition of the invariant $\Inat(K)$ by discussing instantons on
$4$-manifolds $X$ with codimension-2 singularities along an embedded
surface $\Sigma$. This is material that
derives from the authors' earlier work \cite{KM-gtes-I}, and it was
developed further for arbitrary structure groups in
\cite{KM-knot-singular}. In this paper we work only with the structure
group $\SU(2)$ or $\PSU(2)$, but we extend the previous framework in
two ways. First, in the previous development, the locus $\Sigma$ of the
singularity was always taken to be orientable. This condition can be
dropped, and we will be considering non-orientable surfaces. Second,
the previous expositions always assumed that the bundle which carried
the singular connection had an extension across $\Sigma$ to a bundle
on all of $X$ (even though the connection did not extend across the
singularity). This condition can also be relaxed. A simple example of
such a situation, in dimension 3, arises from a
$2$-component link $K$ in $S^{3}$: the complement of the link has
non-trivial second cohomology and there is therefore a $\PSU(2)$
bundle on the link complement that does not extend across the
link. The second Stiefel-Whitney class of this bundle on
$S^{3}\sminus K$ is dual to an
arc $\omega$ running from one component of $K$ to the other.

Section~\ref{sec:sing-inst-floer} uses the framework from
section~\ref{sec:sing-inst-non} to define an invariant $I^{\omega}(Y,K)$
for suitable links $K$ in $3$-manifolds $Y$. The label $\omega$ is a
choice of representative for the dual of $w_{2}$ for a chosen
$\PSU(2)$ bundle on $Y\sminus K$: it consists of a union of circles
in $S^{3}\sminus K$ and arcs joining components of $K$. The
invariant $\Inat(K)$ for a classical knot or link $K$ arises from this
more general construction as follows. Given $K$ with a framed
basepoint, we form a new link,
\[
           K^{\natural} = K \amalg L
\]
where $L$ is the oriented boundary of a small disk, normal to $K$ at
the basepoint. We take $\omega$ to be an arc joining the basepoint of $K$
to a point on $L$: a radius of the disk in the direction of  the
vector of the framing. We then define
\[
   \Inat(K) = I^{\omega}(S^{3}, K^{\natural}).
\]
This construction and related matters are described in more detail in 
section~\ref{sec:classical}.

Section~\ref{sec:an-excis-argum} deals with Floer's excision
theorem for Floer homology, as it applies in the context of this
paper. In order to work with integer coefficients, some extra work is
needed to deal with orientations and $\PSU(2)$ gauge transformations
that do not lift to $\SU(2)$. The excision property is used to prove
the relationship between $\Inat(K)$ and $\KHI(K)$ asserted in 
Proposition~\ref{prop:singular-to-sutured}, and also to establish a
multiplicative property of $\Inat(K)$ for split links $K$.

Sections~\ref{sec:cubes}, \ref{sec:proof-prop-blocks} and
\ref{sec:unlinks-e-2} are devoted to the proof of
Proposition~\ref{prop:spectral}, concerning the spectral sequence. 
The first part of the proof is to
show that $\Inat(K)$ can be computed from a  ``cube of
resolutions''. 
 This essential idea comes from Ozsv\'ath and Szab\'o's
paper on double-covers \cite{OS-double-covers}, and is closely related
to Floer's surgery long exact sequence for instanton Floer
homology. It can be seen as an extension of a more straightforward
property of $\Inat(K)$, namely that it has a long exact sequence for
the unoriented skein relation: that is, if $K_{2}$, $K_{1}$ and
$K_{0}$ are knots or links differing at one crossing as shown in
Figure~\ref{fig:Tetrahedra-skein}, then the corresponding groups
$\Inat(K_{i})$ form a long exact sequence in which the maps arise from
simple cobordisms between the three links.
 The cube of resolutions provides a spectral sequence
abutting to $\Inat(K)$. Section~\ref{sec:unlinks-e-2} establishes that
the $E_{2}$ term of this spectral sequence is the reduced Khovanov
cohomology of $\bar{K}$.

\begin{remark}
    Although it is often called Khovanov \emph{homology}, Khovanov's
    invariant is a cohomology theory, and we will follow \cite{Khovanov}
    in referring to it as such. The groups we write as $\kh(K)$, or
    in their bigraded version as $\kh^{i,j}(K)$, are the groups
    named  $\mathcal{H}^{i,j}(K)$ in \cite{Khovanov}.
\end{remark}

\subparagraph*{Acknowledgments.} The authors wish to thank Ciprian
Manolescu and Jake Rasmussen for thoughts and conversations that were
important in the development of these ideas. Comments from Ivan Smith
led to an understanding that the version of
Proposition~\ref{prop:unlink-canonical} in an earlier draft was not
stated in a sharp enough form.  Finally, this paper  owes a great deal to
the work of Peter Ozsv\'ath and Zolt\'an Szab\'o, particularly their
paper \cite{OS-double-covers}.

\section{Singular instantons and non-orientable surfaces}
\label{sec:sing-inst-non}

\subsection{Motivation}

In \cite{KM-gtes-I, KM-gtes-II}, the authors considered
$4$-dimensional connections defined on the complement of an embedded
surface and having a singularity along the surface. The basic model is
an $\SU(2)$ connection in the trivial bundle on $\R^{4}\sminus\R^{2}$
given by the connection matrix
\[
        i
        \begin{pmatrix}
            \alpha & 0 \\
             0 & -\alpha
        \end{pmatrix} \,d\theta
\]
where $\alpha$ is some parameter in the interval $(0,1/2)$. Given a
closed, embedded surface $\Sigma$ in an oriented $4$-manifold $X$, one
can study anti-self-dual connections on $X\sminus \Sigma$ whose
behavior near $\Sigma$ is modeled on this example. The moduli spaces
of such connections were defined and studied in \cite{KM-gtes-I}. In
\cite{KM-knot-singular}, a corresponding Floer homology theory was
constructed for knots in $3$-manifolds; but for the Floer homology
theory it was necessary to take $\alpha=1/4$.

We will now take this up again, but with a slightly more general setup
than in the previous paper.  We will continue to take $\alpha=1/4$,
and the local model for the singularities of our connections will be
the same: only the global topology will be more general. First of all,
we will allow our embedded surface $\Sigma$ to be
non-orientable. Second, we will not require that the bundle on the
complement of the surface admits any extension, globally, to a bundle
on the 4-manifold: specifically, we will consider $\PU(2)$ bundles on
$X^{4}\sminus\Sigma^{2}$ whose second Stiefel-Whitney class is
allowed to be non-zero on some torus $\gamma\times S^{1}$, where $\gamma$ is a
closed curve on $\Sigma$ with orientable normal and the $S^{1}$ factor
is the unit normal directions to $\Sigma\subset X$ along this
curve. It turns out that the constructions of \cite{KM-gtes-I} and
\cite{KM-knot-singular} carry over with little difficulty, as long as
we take $\alpha=1/4$ from the beginning.

Our first task will be to carefully describe the models for the sort of
singular connections we will study. A singular $\PU(2)$ connection on
$X\sminus\Sigma$ of the sort
we are concerned with will naturally give rise to $2$-fold covering
space $\pi : \Sigma_{\Delta}\to \Sigma$. To understand why this is
so, consider the simplest local model: a flat $\PU(2)$ connection $A_{1}$ on
$B^{4}\sminus B^{2}$ whose holonomy around the linking circles has
order $2$. The eigenspaces of the holonomy decompose the associated
$\R^{3}$ bundle as $\xi \oplus Q$, where $\xi$ is a trivial rank-1
bundle and $Q$ is a $2$-plane bundle, the $-1$ eigenspace. Suppose we
wish to extend $Q$ from $B^{4}\sminus B^{2}$ to all of $B^{4}$. To
do this, we construct a new connection $A_{0}$ as
\[
      A_{0} = A_{1} - \frac{1}{4} \bi \, d\theta
\]
where $\theta$ is an angular coordinate in the $2$-planes normal to
$B^{2}$ and $\bi$ is a section of the adjoint bundle which annihilates
$\xi$ and has square $-1$ on $Q$. Then $A_{0}$ is a flat connection in
the same bundle, with trivial holonomy, and it determines canonically
an extension of the bundle across $B^{2}$. In this process, there is a
choice of sign: both $d\theta$ and $\bi$ depend on a choice
(orientations of $B^{2}$ and $Q$ respectively). If we change the sign
of $\bi d\theta$ then we obtain a different extension of the bundle. 

In the global setting, when we have a surface $\Sigma \subset X$, we
will have two choices of extension of our $\PU(2)$ bundle at each
point of $\Sigma$. Globally, this will determine a double-cover
(possibly trivial), 
\[
 \pi : \Sigma_{\Delta} \to \Sigma.
\] 
It will be
convenient to think of the two different extensions of the $\PU(2)$
bundle as being defined simultaneously on a non-Hausdorff space
$X_{\Delta}$. This space comes with a projection $\pi: X_{\Delta}
\to X$ whose fibers are a single point of each point of $X\sminus
\Sigma$ and whose restriction to $\pi^{-1}(\Sigma)$ is the
double-covering $\Sigma_{\Delta}$.

\subsection{The topology of singular connections}
\label{subsec:sing-connections}

To set this up with some care, 
we begin with a closed, oriented, Riemannian 4-manifold $X$, a
smoothly embedded surface $\Sigma \subset X$. We identify a tubular
neighborhood 
$\nu$ of $\Sigma$  with the disk bundle of the
normal $2$-plane bundle $N_{\Sigma}\to\Sigma$. This identification
gives a tautological section $s$ of the pull-back of $N_{\Sigma}$ to $\nu$.
The section $s$ is non-zero over $\nu \sminus \sigma$, so on
$\nu\sminus\sigma$ we can consider the section 
\begin{equation}\label{eq:s1}
  s_{1}= s/|s|.
\end{equation}
On the other hand, let us choose any smooth connection in
$N_{\Sigma}\to\Sigma$ and pull it back to the bundle $N_{\Sigma}\to
\nu$. Calling this connection $\nabla$, we can then form the covariant
derivative $\nabla s_{1}$.
 We can identify the adjoint bundle of the $O(2)$ bundle $N_{\Sigma}$ as
$i \R_{o(\Sigma)}$, where $\R_{o(\Sigma)}$ is the real orientation
line bundle of $\Sigma$. So the derivative of $s_{1}$ can be written as
\[
        \nabla s_{1} = i\eta s_{1}
\]
for $\eta$ a $1$-form on $\nu\sminus \Sigma$ with values in
$\R_{o(\Sigma)}$. This $\eta$ is a global angular $1$-form on the
complement of $\Sigma$ in $\nu$.

Fix a 
local system $\Delta$ on $\Sigma$ with
structure group $\pm 1$, or equivalently a double-cover
$\pi: \Sigma_{\Delta}\to \Sigma$. This determines also a double-cover
$\pi:\tilde \nu_{\Delta}\to\nu$. We form a non-Hausdorff space $X_{\Delta}$
as an identification space of $X\sminus \Sigma$ and
$\tilde \nu_{\Delta}$, in which each $x \in \tilde \nu_{\Delta} \sminus
\Sigma_{\Delta}$ is identified with its image under $\pi$ in
$X\sminus\Sigma$. We write $\nu_{\Delta}\subset X_{\Delta}$ for
the (non-Hausdorff) image of the tubular neighborhood
$\tilde\nu_{\Delta}$.

The topological data describing the bundles in which our singular
connections live will be the following. We will have first a $\PU(2)$-bundle
$P_{\Delta}\to X_{\Delta}$. (This means that we have a bundle on
the disjoint union of $\tilde\nu_{\Delta}$ and $X\sminus\Sigma$
together with a bundle isomorphisms between the bundle on
$\tilde\nu_{\Lambda}\sminus\Sigma$ and pull-back of the bundle from
$X\sminus\Sigma$.) In addition, we will have a reduction of the
structure group of $P_{\Delta}|_{\nu_{\Delta}}$ to $O(2)$. We will
write $Q$ for the associated real $2$-plane bundle over
$\nu_{\Delta}$. 

The bundle $Q$ will be required to have a
very particular form.  To describe this, we start with $2$-plane
bundle $\tilde Q \to \Sigma_{\Delta}$ whose orientation bundle is
identified with the orientation bundle of $\Sigma_{\Delta}$:
\[
                 o(\tilde Q)
                 \stackrel{\cong}{\longrightarrow} o(\Sigma_{\Delta}).
\]
We can pull $\tilde Q$ back to $\tilde \nu_{\Sigma}$. In order to
create from this a bundle over the non-Hausdorff quotient
$\nu_{\Delta}$, we must give for each $x \in \tilde\nu_{\Delta}\sminus\Sigma_{\Delta}$ an
identification of the fibers,
\begin{equation}\label{eq:Q-to-Q}
    Q_{\tau(x)} \to Q_{x}   
\end{equation}
where $\tau$ is the covering transformation. Let us write
\begin{equation}\label{eq:hom-minus}
      \Hom^{-}(\tilde Q_{\tau(x)} , \tilde Q_{x})
\end{equation}
for the $2$-plane consisting of linear maps that are scalar multiples of
an orientation-reversing isometry (i.e. the complex-anti-linear maps if we
think of both $Q_{x}$ and $Q_{\tau(x)}$ as oriented). Like $\tilde Q_{x}$ and
$\tilde Q_{\tau(x)}$, the $2$-plane \eqref{eq:hom-minus} has its
orientation bundle canonically identified with $o(\Sigma)$: our
convention for doing this is to fix any vector in $\tilde Q_{\tau(x)}$
and use it to map  $  \Hom^{-}(\tilde Q_{\tau(x)} , \tilde Q_{x})$ to
$\tilde Q_{x}$. We will
give an identification \eqref{eq:Q-to-Q} by specifying an
orientation-preserving bundle isometry of $2$-plane bundles on $\Sigma_{\Delta}$,
\[
             \rho : N_{\Sigma_{\Delta}} \to   \Hom^{-}( \tau^{*}(\tilde Q) ,\tilde Q).
\]
This $\rho$ should satisfy
\[
           \rho(v) \rho(\tau(v)) = 1
\]
for a unit vector $v$ in $N_{\Sigma_{\Delta}}$. The identification
\eqref{eq:Q-to-Q} can then be given by $\rho(s_{1})$, where $s_{1}$ is
as in \eqref{eq:s1}. The existence of such a $\rho$ is a constraint on
$\tilde Q$.

To summarize, we make the following definition.

\begin{definition}\label{def:singular-bundle-data}
Given a pair $(X,\Sigma)$, by \emph{singular bundle data} on
$(X,\Sigma)$ we will mean a choice of the following items:
\begin{enumerate}
\item a double-cover $\Sigma_{\Delta}\to \Sigma$ and an associated
    non-Hausdorff space $X_{\Delta}$;
\item a principal $\PU(2)$-bundle $P_{\Delta}$ on $X_{\Delta}$;
\item a $2$-plane bundle $\tilde Q \to \Sigma_{\Delta}$ whose
    orientation bundle is identified with $o(\Sigma_{\Delta})$;
 \item an orientation-preserving bundle isometry
\[
   \rho : N_{\Sigma_{\Delta}} \to   \Hom^{-}( \tau^{*}(\tilde Q) ,\tilde Q)
\]
\item an identification, on the non-Hausdorff neighborhood
    $\nu_{\Delta}$,
    of the resulting quotient bundle $Q_{\Delta}$
    with an $O(2)$
    reduction of $P_{\Delta}|_{\nu_{\Delta}}$.
\end{enumerate}
\CloseDef
\end{definition}

\begin{remark}
When the conditions in this definition are fulfilled, the double-cover
$\Sigma_{\Delta}$ is in fact determined, up to isomorphism, by the
$\PU(2)$ bundle $P$ on $X\sminus \Sigma$. It is also the case that
not every double-cover of $\Sigma$ can arise as $\Sigma_{\Delta}$. We
shall return to these matters in the next subsection.
\end{remark}

\medskip

Given singular bundle data on $(X,\Sigma)$, we can now write down a model singular
connection. We start with a smooth connection $a_{0}$ in the bundle
$\tilde Q \to \Sigma_{\Delta}$, so chosen that the induced connection
in $ \Hom^{-} (\tau^{*}(\tilde Q) ,\tilde Q)$ coincides with the
connection $\nabla$ in $N_{\Sigma_{\Delta}}$ under the isomorphism
$\rho$. (Otherwise said, $\rho$ is parallel.) By pull-back, this
$a_{0}$ also determines a connection in $\tilde Q$ on
$\tilde\nu_{\Delta}$. On the deleted tubular neighborhood
$\tilde\nu_{\Delta}\sminus\Sigma_{\Delta}$, the bundles
$\tau^{*}(\tilde Q)$ and $\tilde Q$ are being identified by the
isometry $\rho(s_{1})$; but under this identification, the connection
$a_{0}$ is not preserved, because $s_{1}$ is not parallel. So $a_{0}$
does not by itself give rise to a connection over $\nu\sminus\Sigma$
downstairs. The covariant derivative of $s_{1}$ is $i\eta s_{1}$,
where $\eta$ is a $1$-form with values in $\R_{o(\Sigma)}$, or
equivalently in $\R_{o(\tilde Q)}$ on $\tilde\nu_{\Delta}$. We can
therefore form a new connection
\[
       \tilde a_{1} = a_{0} + \frac{ i }{2}\eta
\]
as a connection in $\tilde Q \to \tilde \nu_{\Delta}
\sminus\Sigma_{\Delta}$. With respect to this new connection, the
isometry $\rho(s_{1})$ is covariant-constant, so $\tilde a_{1}$
descends to a connection $a_{1}$ on the resulting bundle $Q \to
\nu\sminus\Sigma$ downstairs.

Since $Q$ is a reduction of the $\PU(2)$ bundle $P$ on $\nu
\sminus\Sigma$, the $O(2)$ connection $a_{1}$ gives us a $\PU(2)$
connection in $P$ there. Let us write $A_{1}$ for this $\PU(2)$
connection. We may extend $A_{1}$ in any way wish to a connection in
$P$ over all of $X\sminus \Sigma$.  If we pick a point $x$ in
$\Sigma_{\Delta}$, then a standard neighborhood of $x$ in
$X_{\Delta}$ is a $B^{4}$ meeting $\Sigma_{\Delta}$ in a standard
$B^{2}$. In such a neighborhood, the connection $A_{1}$ on
$B^{4}\sminus B^{2}$ can be written as
\[
                   A_{1} = A_{0} + \frac{1}{4}
                   \begin{pmatrix}
                       i & 0 \\ 
                       0 & -i
                   \end{pmatrix} \eta,
\]
where $A_{0}$ is a smooth $\PU(2)$ connection with reduction to
$O(2)$. Here are notation identifies the Lie algebra of $\PU(2)$ with
that of $\SU(2)$, and the element $i$ in $\mathrm{Lie}(O(2))$
corresponds to the element
\[
\frac{1}{2}
                   \begin{pmatrix}
                       i & 0 \\ 
                       0 & -i
                   \end{pmatrix}
\]
in $\su(2)$. Note that in this local description, the connection
$A_{0}$ depends in a significant way on our choice of $x \in
\Sigma_{\Delta}$, not just on the image of $x$ in $\Sigma$. Two
different points $x$ and $x'$ with the same image in $\Sigma$ give
rise to connections $A_{0}$ and $A_{0}'$ which differ by a term
\[
\frac{1}{2}
                   \begin{pmatrix}
                       i & 0 \\ 
                       0 & -i
                   \end{pmatrix} \eta.
\]

We will refer to any connection $A_{1}$ arising in this way as a
\emph{model singular connection}. Such an $A_{1}$ depends on a choice
of singular bundle data (Definition~\ref{def:singular-bundle-data}), a
choice of $a_{0}$ making $\rho$ parallel, and a choice of extension of
the resulting connection from the tubular neighborhood to all of
$X\sminus\Sigma$. The latter two choices are selections from certain
affine spaces of connections, so it is the singular bundle data that
is important here.

\subsection{Topological classification of singular bundle data}
\label{subsec:topol-class-sing}

When classifying bundles over $X_{\Delta}$ up to isomorphism, it is
helpful in the calculations to replace this non-Hausdorff space by a
Hausdorff space with the same weak homotopy type. We can construct
such a space, $X^{h}_{\Delta}$, as an identification space of the
disk bundle $\tilde \nu_{\Delta}$ and the complement $X\sminus
\mathrm{int}(\nu)$, glued together along $\partial \nu$ using the
$2$-to-$1$ map $\partial(\tilde  \nu_{\Delta}) \to \partial
\nu$. There is a map
\[
    \pi:  X^{h}_{\Delta} \to X
\]
which is $2$-to-$1$ over points of $\mathrm{int}(\nu)$ and $1$-to-$1$
elsewhere. The inverse image of $\nu$ under the map $\pi$ is a
$2$-sphere bundle
\[
          S^{2} \hookrightarrow D \to \Sigma.
\]
In the case that $\Delta$ is trivial, this $2$-sphere bundle $D$ is
the double of the tubular neighborhood $\nu$, and a choice of
trivialization of $\Delta$ determines an orientation of $D$. When $\Delta$ is
non-trivial, $D$ is not orientable: its orientation bundle is
$\Delta$. There is also an involution
\[
      t: X^{h}_{\Delta} \to X^{h}_{\Delta}
\]
with $\pi \circ t = \pi$ whose restriction to each $S^{2}$ in $D$ is
an  orientation-reversing map. 

In $H_{4}(X^{h}_{\Delta};\Q)$ there is a unique class $[X_{\Delta}]$ which is
invariant under the involution $t$ and has $\pi_{*}[X_{\Delta}] =
[X]$. This class is not integral: in terms of a triangulation of
$X_{\Delta}^{h}$, the class can be described as the sum of the
$4$-simplices belonging to $X^{h}_{\Delta}\sminus \tilde\nu_{\Delta}$,
plus half the sum of the simplices belonging to $D$, with all
simplices obtaining their orientation from the orientation of $X$.
In the case that the double-cover
$\Sigma_{\Delta} \to \Sigma$ is trivial, there is a different
fundamental class to consider. In this case, $X_{\Delta}$
is the union of two copies of $X$, identified on the complement of
$\Sigma$. A choice of trivialization of $\Delta$ picks out one of
these two copies: call it $X_{+} \subset X_{\Delta}$. The
fundamental class $[X_{+}]$ is an integral class in
$H_{4}(X_{\Delta})$: it can be expressed as
\[
  [X_{+}] = ([X_{\Delta}] + [D]/2)
\]
where $D$ is oriented using the trivialization of $\Delta$.
 A
full description of $H_{4}$ with $\Z$ coefficients is given by the
following, whose proof is omitted.

\begin{lemma}\label{lem:H4-XLambda}
    The group $H_{4}(X^{h}_{\Delta}; \Z)$ has rank $1 + s$, where $s$ is
    the number of components of $\Sigma$ on which $\Delta$ is
    trivial. If $\Delta$ is trivial and trivialized, 
   then  free generators are provided by (a) the fundamental
    classes of the components of the $2$-sphere bundle
    $D$, and (b) the integer class $[X_{+}]$ determined by the trivialization. 
     If $\Delta$ is
    non-trivial, then free generators are provided by (a) the
    fundamental classes of the orientable components of $D$, and (b)
    the integer class $2[X_{\Delta}]$. \qed
\end{lemma}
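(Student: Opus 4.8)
The plan is to reduce the computation to honest manifolds, since $X^{h}_{\Delta}$ is not a manifold along $\partial\nu$ and carries no Poincar\'e duality of its own. Write $W = X\sminus\mathrm{int}(\nu)$, a compact oriented $4$-manifold with $\partial W = \partial\nu$ which deformation retracts onto $X\sminus\Sigma$, so that $X^{h}_{\Delta} = W\cup_{\partial\nu}D$, with $W\cap D = \partial\nu$ and $D = \bigsqcup_{i}D_{i}$ the disjoint union of the $S^{2}$-bundles $D_{i}\to\Sigma_{i}$ over the components $\Sigma_{i}$ of $\Sigma$. Since $(X^{h}_{\Delta},W)$ is a CW pair with $X^{h}_{\Delta}/W\cong D/\partial\nu$, one has $H_{*}(X^{h}_{\Delta},W)\cong H_{*}(D,\partial\nu)$, and the long exact sequence of the pair becomes
\[
H_{4}(W)\longrightarrow H_{4}(X^{h}_{\Delta})\longrightarrow H_{4}(D,\partial\nu)\xrightarrow{\ \partial'\ }H_{3}(W),
\]
with $\partial'$ the composite $H_{4}(D,\partial\nu)\xrightarrow{\partial}H_{3}(\partial\nu)\xrightarrow{\iota_{*}}H_{3}(W)$, $\iota\colon\partial\nu\hookrightarrow W$. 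Each component of $X\sminus\Sigma\simeq W$ is a noncompact connected $4$-manifold, so $H_{4}(W;\Z)=0$; hence $H_{4}(X^{h}_{\Delta};\Z)\cong\ker\partial'$, which is automatically free abelian. It remains to compute the rank of $\ker\partial'$ and to exhibit a basis.

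Next I would compute $H_{4}(D,\partial\nu)$ component by component. If $\Delta$ is trivial on $\Sigma_{i}$, then $D_{i}$ is the (orientable) double of the disk bundle $\nu_{i}$ and $\partial\nu_{i}$ separates it, so $[\partial\nu_{i}]=0$ in $H_{3}(D_{i})$, and the long exact sequence of $(D_{i},\partial\nu_{i})$ gives $H_{4}(D_{i},\partial\nu_{i})\cong\Z^{2}$ with basis the relative fundamental classes $[\nu_{i}^{+},\partial\nu_{i}],[\nu_{i}^{-},\partial\nu_{i}]$ of the two hemisphere disk bundles. If $\Delta$ is nontrivial on $\Sigma_{i}$, the quotient map collapsing only the $2$-to-$1$ boundary identification is a relative homeomorphism $(\tilde\nu_{i},\partial\tilde\nu_{i})\to(D_{i},\partial\nu_{i})$, where $\tilde\nu_{i}$ is the (orientable) disk bundle over the connected double cover of $\Sigma_{i}$; hence $H_{4}(D_{i},\partial\nu_{i})\cong H_{4}(\tilde\nu_{i},\partial\tilde\nu_{i})\cong\Z$, generated by the image $z_{i}$ of $[\tilde\nu_{i},\partial\tilde\nu_{i}]$. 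Thus $H_{4}(D,\partial\nu)\cong\Z^{\,c+s}$, $c$ being the number of components of $\Sigma$. On generators: $[\nu_{i}^{+},\partial\nu_{i}]+[\nu_{i}^{-},\partial\nu_{i}]$ is the image of the absolute class $[D_{i}]$ and so lies in $\ker\partial$, forcing $\partial[\nu_{i}^{+},\partial\nu_{i}]=[\partial\nu_{i}]=-\partial[\nu_{i}^{-},\partial\nu_{i}]$; and $\partial z_{i}=2[\partial\nu_{i}]$, since $\partial\tilde\nu_{i}\to\partial\nu_{i}$ is an orientation-preserving connected double cover of closed oriented $3$-manifolds (all sphere bundles $S(N)$ that occur are orientable, as $w_{1}(S(N))=\pi^{*}\!\bigl(w_{1}(\text{base})+w_{1}(N)\bigr)=0$ because $w_{1}(N_{\Sigma})=w_{1}(\Sigma)$ when $X$ is oriented). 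Therefore the image of $\partial'$ has the same rational span as $\{[\partial\nu_{1}],\dots,[\partial\nu_{c}]\}$ inside $H_{3}(W)$.

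To identify that span I would use the long exact sequence of $(X,X\sminus\Sigma)$:
\[
0=H_{4}(X\sminus\Sigma)\longrightarrow H_{4}(X)\xrightarrow{\ a\ }H_{4}(X,X\sminus\Sigma)\xrightarrow{\ b\ }H_{3}(X\sminus\Sigma).
\]
By excision $H_{4}(X,X\sminus\Sigma)\cong\bigoplus_{i}H_{4}(\nu_{i},\partial\nu_{i})\cong\Z^{c}$, with generators $\eta_{i}=[\nu_{i},\partial\nu_{i}]$; the fundamental class of $X$ restricts to the relative fundamental class near each $\Sigma_{i}$, so $a[X]=\eta_{1}+\dots+\eta_{c}$, which is primitive, while $b(\eta_{i})=[\partial\nu_{i}]$ under the identification of $\partial'$ with $\iota_{*}\circ\partial$. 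Hence $\langle[\partial\nu_{1}],\dots,[\partial\nu_{c}]\rangle=\operatorname{im}(b)\cong\Z^{c}/\Z(1,\dots,1)\cong\Z^{c-1}$, a free group of rank $c-1$ with the single relation $\sum_{i}[\partial\nu_{i}]=0$. So the image of $\partial'$ has rank $c-1$, the kernel has rank $(c+s)-(c-1)=s+1$, and $H_{4}(X^{h}_{\Delta};\Z)$ is free of rank $1+s$.

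For the explicit generators I would push the candidate classes into $H_{4}(D,\partial\nu)$ via the injection $H_{4}(X^{h}_{\Delta})\hookrightarrow H_{4}(D,\partial\nu)$ and check they span $\ker\partial'$. The fundamental class of an orientable component $D_{i}$ (necessarily with $\Delta_{i}$ trivial) maps to $[\nu_{i}^{+},\partial\nu_{i}]+[\nu_{i}^{-},\partial\nu_{i}]$, a class in $\ker\partial'$ that is part of a basis of $H_{4}(D,\partial\nu)$; these account for $s$ of the generators. For the last one: when $\Delta$ is globally trivial and trivialized, the chosen sheet $X_{+}$ has $[X_{+}]$ mapping to $\sum_{i}[\nu_{i}^{+},\partial\nu_{i}]$, and an elementary linear-algebra check shows the $[D_{i}]$ together with $[X_{+}]$ form a basis of $\ker\partial'$; when $\Delta$ is nontrivial, one computes that the rational class $[X_{\Delta}]$ maps to $\frac12\bigl(\sum_{\Delta_{i}\text{ triv}}([\nu_{i}^{+},\partial\nu_{i}]+[\nu_{i}^{-},\partial\nu_{i}])+\sum_{\Delta_{i}\text{ nontriv}}z_{i}\bigr)$, so that $2[X_{\Delta}]$ is integral and, together with the orientable $[D_{i}]$, is a basis of $\ker\partial'$. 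The delicate part of carrying this out is the orientation and sign bookkeeping---pinning down $\partial[\nu_{i}^{\pm},\partial\nu_{i}]$ and $\partial z_{i}$ exactly, checking that the covers $\partial\tilde\nu_{i}\to\partial\nu_{i}$ are orientation-preserving, and especially verifying that $[X_{\Delta}]$ is well defined and computing its image in $H_{4}(D,\partial\nu)$ across the $2$-to-$1$ gluing; the reduction to the honest manifolds $W$, $D_{i}$ and $\nu_{i}$ is exactly what allows one to sidestep the absence of duality on $X^{h}_{\Delta}$.
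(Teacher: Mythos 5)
The paper states this lemma with ``whose proof is omitted,'' so there is no argument of the authors' to compare against; your proposal supplies exactly the kind of proof that was left to the reader, and it is correct. The reduction to the long exact sequence of the pair $(X^{h}_{\Delta},W)$ with $H_{4}(W)=0$, the computation of $H_{4}(D_{i},\partial\nu_{i})$ as $\Z^{2}$ or $\Z$ according to whether $\Delta_{i}$ is trivial, and the identification of the span of the classes $[\partial\nu_{i}]$ in $H_{3}(X\sminus\Sigma)$ as $\Z^{c}/\Z(1,\dots,1)$ via the pair $(X,X\sminus\Sigma)$ all check out, and the resulting description of $\ker\partial'$ does yield the stated rank $1+s$ and the stated bases (I verified the linear algebra you leave as ``an elementary check'': subtracting the appropriate multiple of the image of $[X_{+}]$, resp.\ of $2[X_{\Delta}]$, reduces any element of $\ker\partial'$ to a combination of the orientable $[D_{i}]$). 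The only caveat is the one you already flag: with both $\nu_{i}^{\pm}$ oriented via $\pi$ from the orientation of $X$ (which is the paper's convention, since $[X_{+}]=[X_{\Delta}]+[D]/2$ forces $[D_{i}]=[\nu_{i}^{+},\partial\nu_{i}]-[\nu_{i}^{-},\partial\nu_{i}]$), the absolute class $[D_{i}]$ is the \emph{difference} rather than the sum of the two relative fundamental classes; this is purely a sign convention and does not affect the conclusion.
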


The top cohomology of $X_{\Delta}$ contains $2$-torsion if $\Delta$
is non-trivial:

\begin{lemma}\label{lem:coH4-XLambda}
     If $a$ is the number of 
    components of $\Sigma$ on which $\Delta$ is non-trivial, then the
    torsion subgroup of $H^{4}(X^{h}_{\Delta};\Z)$ is isomorphic
    to $(\Z/2)^{a-1}$ if $a\ge 2$ and is zero otherwise. To describe
    generators, let $ x_{1},\dots,  x_{a}$ be points in the
    different non-orientable components of $D$ 
    which map under $\pi$ to points 
     in $\mathrm{int}(\nu)$. Let $\xi_{i}$ be the image in
    $H^{4}(X^{h}_{\Delta})$ of a generator of $H^{4}(X^{h}_{\Delta},
    X^{h}_{\Delta}\sminus  x_{i})\cong \Z$, oriented so that
    $\langle\xi_{i}, [X_{\Delta}]\rangle=1$. Then generators for the torsion
    subgroup are the elements $\xi_{i}-\xi_{i+1}$, for
    $i=1,\dots,a-1$.
\end{lemma}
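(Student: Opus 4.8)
The plan is to compute $H^{4}(X^{h}_{\Delta};\Z)$ directly from the Mayer--Vietoris sequence of the decomposition $X^{h}_{\Delta}=D\cup X'$, where $X':=X\sminus\mathrm{int}(\nu)$ and $D=\pi^{-1}(\nu)$ is the $S^{2}$-bundle over $\Sigma$; the two pieces meet along $\partial\nu$, which lies inside $D$ as the equator sphere-bundle and is the whole boundary of $X'$. (We may take $X$ connected.) Although $X^{h}_{\Delta}$ is not a manifold, thickening $D$ and $X'$ slightly inside it gives an excisive couple with $\hat D\simeq D$, $\hat{X'}\simeq X'$ and $\hat D\cap\hat{X'}\simeq\partial\nu$, so for any coefficient ring $R$ there is an exact sequence
\[
 H^{3}(D;R)\oplus H^{3}(X';R)\;\longrightarrow\;H^{3}(\partial\nu;R)\;\longrightarrow\;H^{4}(X^{h}_{\Delta};R)\;\longrightarrow\;H^{4}(D;R)\;\longrightarrow\;0,
\]
with maps $\alpha,\delta,\beta$ from left to right; here I have used that the six-term stretch ends in $H^{4}(D;R)\oplus H^{4}(X';R)$ followed by $H^{4}(\partial\nu;R)$, and that $H^{4}(X';R)=0$ (a compact $4$-manifold with non-empty boundary is a $3$-complex up to homotopy) and $H^{4}(\partial\nu;R)=0$ (a closed $3$-manifold). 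Thus $\ker\beta\cong\mathrm{coker}(\alpha)$.

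Next I would identify the terms and maps over $\Z$. Over a component $\Sigma_{i}$ of $\Sigma$, $D|_{\Sigma_{i}}$ is a closed $4$-manifold with orientation bundle $\pi^{*}(\Delta|_{\Sigma_{i}})$ (the two copies of $o(\Sigma_i)$ that enter the total-space orientation cancel), so $H^{4}(D|_{\Sigma_{i}};\Z)$ is $\Z$ when $\Delta|_{\Sigma_i}$ is trivial and $\Z/2$ otherwise; hence $H^{4}(D;\Z)\cong\Z^{s}\oplus(\Z/2)^{a}$, with $m:=s+a$ the number of components of $\Sigma$. The restriction of $\alpha$ to $H^{3}(X';\Z)$ has image the kernel of the ``sum of degrees'' map $H^{3}(\partial\nu;\Z)=\Z^{m}\to H^{4}(X',\partial\nu;\Z)\cong\Z$ given by Lefschetz duality, a submodule of corank $1$; and its restriction to $H^{3}(D;\Z)$ vanishes, because for each $i$ the long exact sequence of the pair $(D|_{\Sigma_{i}},\partial\nu_{i})$ (writing $\partial\nu_i$ for the part of $\partial\nu$ over $\Sigma_i$) identifies the image with the kernel of a connecting map $\delta_{i}\colon\Z=H^{3}(\partial\nu_{i};\Z)\to H^{4}(D|_{\Sigma_{i}},\partial\nu_{i};\Z)$, and $\delta_{i}$ is injective: the target is free abelian (collapsing $\partial\nu_i$ identifies $D|_{\Sigma_{i}}$ with a wedge of two copies of the Thom space of the normal bundle of $\Sigma_i$ when $\Delta|_{\Sigma_i}$ is trivial, and with a single Thom space of the normal bundle of the double cover $\Sigma_\Delta|_{\Sigma_i}$ when it is not), whereas $\delta_{i}=0$ would force $H^{4}(D|_{\Sigma_{i}},\partial\nu_{i};\Z)\to H^{4}(D|_{\Sigma_{i}};\Z)$ to be injective, which it is not. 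Therefore $\mathrm{coker}(\alpha)\cong\Z$, in particular torsion-free, so $\beta$ embeds the torsion subgroup of $H^{4}(X^{h}_{\Delta};\Z)$ into $(\Z/2)^{a}$; the torsion is thus $(\Z/2)^{b}$ for some $b\le a$.

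To pin down $b$ I would compare the $\Z/2$-dimension of $H^{4}$ against its $\Z$-rank. From the sequence over $\Q$, $\mathrm{rank}_{\Z}H^{4}(X^{h}_{\Delta};\Z)=1+s$ (consistent with Lemma~\ref{lem:H4-XLambda}). Running the same Mayer--Vietoris sequence mod $2$, the key change is that $\mathrm{coker}(\alpha)$ now vanishes as soon as $a\ge1$: over $\Z$, $\delta_{i}$ is multiplication by $\pm2$ on a non-orientable component $\Sigma_i$ (its image is the kernel of the surjection $\Z\to H^{4}(D|_{\Sigma_{i}};\Z)=\Z/2$) and a primitive inclusion on an orientable one, so mod $2$ the kernel of $\delta_i$ is everything exactly on the non-orientable components; hence $\alpha\bmod 2$ surjects onto the span of the non-orientable coordinates of $H^{3}(\partial\nu;\Z/2)$, and together with the corank-$1$ subspace coming from $H^{3}(X';\Z/2)$ this exhausts $H^{3}(\partial\nu;\Z/2)$. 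So $H^{4}(X^{h}_{\Delta};\Z/2)\cong H^{4}(D;\Z/2)\cong(\Z/2)^{m}$ has dimension $s+a$. Since $X^{h}_{\Delta}$ is a finite $4$-dimensional complex, $H^{5}(X^{h}_{\Delta};\Z)=0$, so the universal coefficient theorem gives $s+a=(1+s)+b$, i.e. $b=a-1$ (the cases $a\le1$ are immediate).

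Finally, for the generators: each $x_{i}$ lies in the interior of $D|_{\Sigma_{i}}$, where $X^{h}_{\Delta}$ is locally Euclidean, so $\xi_{i}$ is defined, and $\beta(\xi_{i})$ — its restriction to $D$ — is the nonzero element $g_i$ of the $i$-th $\Z/2$ summand $H^{4}(D|_{\Sigma_{i}};\Z)$ (the $\Z$-point class maps onto $H^{4}$ of a closed connected non-orientable $4$-manifold), and is zero in the other summands. Over $\Q$ this gives $\beta(\xi_{i})=0$, so $\xi_{i}-\xi_{j}\in\ker(\beta_{\Q})=\mathrm{coker}(\alpha_{\Q})\cong\Q$, a line on which the pairing with $[X_{\Delta}]$ is nonzero: by the standard adjointness of the Mayer--Vietoris connecting maps in homology and cohomology, $\langle\delta(\epsilon_{k}),[X_{\Delta}]\rangle=\pm\langle\epsilon_{k},\partial_{*}[X_{\Delta}]\rangle=\pm1$, where $\epsilon_{k}\in H^{3}(\partial\nu;\Z)$ is dual to the component $[\partial\nu_{k}]$ and $\partial_{*}[X_{\Delta}]=[\partial\nu]$ by the description $[X_{\Delta}]=[X']+\frac{1}{2}[D]$. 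Since the normalization makes $\langle\xi_{i},[X_{\Delta}]\rangle$ the same for all $i$, we get $\langle\xi_{i}-\xi_{j},[X_{\Delta}]\rangle=0$, so $\xi_{i}-\xi_{j}=0$ in $H^{4}(X^{h}_{\Delta};\Q)$; hence $\xi_{i}-\xi_{j}$ is $2$-torsion, and under the injection $\beta$ of the torsion subgroup into $(\Z/2)^{a}$ the images $g_{i}+g_{i+1}$, $i=1,\dots,a-1$, are $\Z/2$-linearly independent, so $\xi_{1}-\xi_{2},\dots,\xi_{a-1}-\xi_{a}$ form a basis of the torsion subgroup. I expect the main obstacle to be the homological bookkeeping behind exactly this dichotomy: the connecting maps $\delta_{i}$ are injective over $\Z$ (making $\mathrm{coker}(\alpha)$ torsion-free) but become zero mod $2$ on the non-orientable components (making $\mathrm{coker}(\alpha)$ die mod $2$), and that mismatch — together with getting the orientation bundle of $D$ and the Thom-space identifications right — is what manufactures the $(\Z/2)^{a-1}$.
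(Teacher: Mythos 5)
Your argument is correct, and it supplies considerably more than the paper's own two-sentence proof, so it is worth comparing the two routes. The paper only verifies the two properties of the explicit classes: $\xi_{i}-\xi_{i+1}$ is non-zero because it pairs non-trivially with the $\Z/2$-fundamental class of the $i$'th non-orientable component of $D$ (this is exactly your observation that $\beta(\xi_{i}-\xi_{i+1})=g_{i}+g_{i+1}\ne 0$ in $H^{4}(D;\Z)$), and it is $2$-torsion because $2\xi_{i}$ equals $\pi^{*}$ of the generator of $H^{4}(X;\Z)$ for every $i$; the determination of the full torsion subgroup as $(\Z/2)^{a-1}$ is left as ``straightforward.'' You carry out that omitted computation via the Mayer--Vietoris sequence for $X^{h}_{\Delta}=D\cup X'$, and your bookkeeping is right: $\mathrm{coker}(\alpha)\cong H^{4}(X',\partial\nu;\Z)\cong\Z$ is torsion-free (using that $X\sminus\Sigma$ is connected, as $\Sigma$ has codimension $2$), the connecting maps $\delta_{i}$ are multiplication by $\pm 2$ exactly on the non-orientable components, and the mod-$2$ dimension count $s+a=(1+s)+b$ pins down $b=a-1$. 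Your replacement of the identity $2\xi_{i}=\pi^{*}(\mathrm{gen})$ by the rational argument --- $\xi_{i}-\xi_{j}$ lies in the line $\ker\beta_{\Q}=\mathrm{im}\,\delta_{\Q}$ on which the pairing with $[X_{\Delta}]$ is injective, and the normalization forces that pairing to vanish on the difference --- is a legitimate alternative, and it has the virtue of being insensitive to the exact common value of $\langle\xi_{i},[X_{\Delta}]\rangle$ (which, given the paper's description of $[X_{\Delta}]$ as carrying the simplices of $D$ with coefficient $\tfrac12$, is really $\tfrac12$ rather than $1$; your argument only uses that it is the same for all $i$). The net trade-off: the paper's route is shorter but presupposes the reader will do essentially the computation you wrote out; yours is self-contained and also independently recovers the rank statement of Lemma~\ref{lem:H4-XLambda}.
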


\begin{proof}
    This is also straightforward. The element $\xi_{i} - \xi_{i+1}$ is
    non-zero because it has non-zero pairing with the $\Z/2$
    fundamental class of the $i$'th component of $D$. On the other
    hand, $2\xi_{i}= 2\xi_{i+1}$, because both of these classes are
    equal to the pull-back by $\pi$ of the generator of $H^{4}(X;\Z)$.
\end{proof}

Because we wish to classify $\SO(3)$ bundles, we are also interested
in $H^{2}$ with $\Z/2$ coefficients.

\begin{lemma}\label{lem:H2-XLambda}
 The group $H^{2}(X^{h}_{\Delta};\Z/2)$ lies in an exact
 sequence
\[
     0 \to H^{2}(X;\Z/2) \stackrel{\pi^{*}}{\longrightarrow}
          H^{2}(X^{h}_{\Delta};\Z/2) 
            \stackrel{e}{\longrightarrow} (\Z/2)^{N}  \to H_{1}(X)           
\]
where $N$ is the number of components of $\Sigma$. The map
$e$ is the restriction map to 
\[
   \bigoplus_{i} H^{2}(S_{i}^{2};\Z/2)
\]
for a collection of fibers $S_{i}^{2}\subset D$, one from each component
of $D$. If $\lambda_{i}\in H_{1}(\Sigma)$ denotes the Poincar\'e dual
of $w_{1}(\Delta|_{\Sigma_{i}})$, then 
the last map is given by
\[
             (\epsilon_{1},\dots,\epsilon_{N}) \mapsto  
                  \sum_{i} \epsilon_{i} \lambda_{i}  \in H_{1}(X;\Z/2).
\]
\end{lemma}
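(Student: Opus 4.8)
The plan is to compute $H^{2}(X^{h}_{\Delta};\Z/2)$ from a Mayer--Vietoris argument and to compare it, via the map $\pi$, with the corresponding computation for $X$. Write $X^{h}_{\Delta}=U\cup V$ where $U$ is an open neighbourhood of $D=\pi^{-1}(\nu)$ and $V$ an open neighbourhood of $X\sminus\mathrm{int}(\nu)$, so that $U\simeq D$, $V\simeq X\sminus\Sigma$, and $U\cap V\simeq \partial\nu$, the unit-normal circle bundle $Y:=S(N_{\Sigma})\to\Sigma$. The decomposition $X=\nu\cup(X\sminus\mathrm{int}(\nu))$ has pieces homotopy equivalent to $\Sigma$, $X\sminus\Sigma$ and $Y$, and $\pi$ induces a map of the two Mayer--Vietoris sequences that is the identity on the $V$-- and $(U\cap V)$--pieces and is the bundle projection $p:D\to\Sigma$ on the $U$--piece. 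In particular the connecting homomorphisms satisfy $\delta_{\Delta}=\pi^{*}\circ\delta$, so $\mathrm{im}\,\delta_{\Delta}\subseteq\mathrm{im}\,\pi^{*}$.

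The key preliminary is the Gysin sequence of the $S^{2}$--bundle $p:D\to\Sigma$ with $\Z/2$--coefficients (orientability is irrelevant over $\Z/2$): because $\Sigma$ is a surface the terms $H^{\ge3}(\Sigma;\Z/2)$ vanish, so the sequence splits into short exact sequences
\[
0\longrightarrow H^{k}(\Sigma;\Z/2)\xrightarrow{\,p^{*}\,}H^{k}(D;\Z/2)\longrightarrow H^{k-2}(\Sigma;\Z/2)\longrightarrow 0 .
\]
Thus $p^{*}$ is an isomorphism in degrees $0$ and $1$, and in degree $2$ it is injective with cokernel $H^{0}(\Sigma;\Z/2)=(\Z/2)^{N}$, this cokernel being detected by restriction to a fibre---which is precisely the composite of restriction to $D$ with the map $e$. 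Granting this, exactness of the stated sequence at $H^{2}(X;\Z/2)$ and at $H^{2}(X^{h}_{\Delta};\Z/2)$ follows by diagram chasing. For injectivity of $\pi^{*}$: if $\pi^{*}\alpha=0$ then $\alpha$ restricts to $0$ on $X\sminus\Sigma$ and, since $p^{*}$ is injective, to $0$ on $\Sigma$, so $\alpha=\delta(\xi)$; then $\delta_{\Delta}(\xi)=\pi^{*}\alpha=0$ puts $\xi$ in the image of $H^{1}(D)\oplus H^{1}(X\sminus\Sigma)\to H^{1}(Y)$, which equals the image of $H^{1}(\Sigma)\oplus H^{1}(X\sminus\Sigma)$ because $H^{1}(D)=p^{*}H^{1}(\Sigma)$, whence $\delta(\xi)=0$. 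For $\ker e=\mathrm{im}\,\pi^{*}$: the inclusion $\supseteq$ is immediate since $p^{*}$--classes die on fibres, and conversely a class $\gamma$ with $e(\gamma)=0$ has $\gamma|_{D}=p^{*}\mu$; matching $\mu$ with $\gamma|_{X\sminus\Sigma}$ on $Y$ produces, via Mayer--Vietoris for $X$, a class $\alpha$ with $\gamma-\pi^{*}\alpha$ killed by restriction to $U$ and $V$, hence $\gamma-\pi^{*}\alpha\in\mathrm{im}\,\delta_{\Delta}\subseteq\mathrm{im}\,\pi^{*}$.

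The substantive point---and the step I expect to be the main obstacle---is exactness at $(\Z/2)^{N}$, i.e. $\mathrm{im}(e)=\ker(f)$ with $f(\epsilon)=\sum_{i}\epsilon_{i}\lambda_{i}$. From the Mayer--Vietoris sequence, $\epsilon\in\mathrm{im}(e)$ exactly when one can choose classes $\delta_{i}\in H^{2}(D_{i};\Z/2)$ of fibre-degree $\epsilon_{i}$ (here $D_{i}$ is the component of $D$ over $\Sigma_{i}$) together with a single class $\beta\in H^{2}(X\sminus\Sigma;\Z/2)$ such that $\delta_{i}|_{Y_{i}}=\beta|_{Y_{i}}$ on each $Y_{i}=S(N_{\Sigma_{i}})$. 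This forces two computations. First, the restriction to $Y_{i}$ of a fibre class in $H^{2}(D_{i};\Z/2)$: the useful observation is that $D_{i}$ is the sphere bundle $S(N_{\Sigma_{i}}\oplus \R_{\Delta_{i}})$, where $\R_{\Delta_{i}}$ is the real line bundle with $w_{1}=w_{1}(\Delta|_{\Sigma_{i}})$ (this also explains $w_{1}(D_{i})=w_{1}(\Delta|_{\Sigma_{i}})$, since $w_{1}(TX)|_{\Sigma_{i}}=0$); then $Y_{i}$ is the equatorial subbundle $S(N_{\Sigma_{i}})\subset D_{i}$, its normal bundle is the pull-back of $\R_{\Delta_{i}}$, and the cohomology relations of a rank--$3$ sphere bundle together with the Thom isomorphism for $Y_{i}\hookrightarrow D_{i}$ pin down this restriction in terms of $w_{1}(\Delta|_{\Sigma_{i}})$. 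Second, the image of $H^{2}(X\sminus\Sigma;\Z/2)\to H^{2}(Y;\Z/2)$: applying Lefschetz duality to the $4$--manifold $X\sminus\mathrm{int}(\nu)$ with boundary $Y$ identifies it with $\ker\bigl(H^{2}(Y;\Z/2)\to H_{1}(X\sminus\Sigma;\Z/2)\bigr)$, after which the surjection $H_{1}(X\sminus\Sigma;\Z/2)\to H_{1}(X;\Z/2)$ (kernel spanned by the meridians of the $\Sigma_{i}$), together with Poincar\'e duality on each $\Sigma_{i}$, transports the compatibility condition on $\beta$ into the single equation $\sum_{i}\epsilon_{i}\lambda_{i}=0$ in $H_{1}(X;\Z/2)$. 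Keeping track of the meridian classes and of the several duality isomorphisms simultaneously is where the real bookkeeping lies.
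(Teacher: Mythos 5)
Your proof is correct and follows the same route as the paper: the paper's entire argument is the comparison, via $\pi^{*}$, of the Mayer--Vietoris sequences for $X^{h}_{\Delta}=D\cup(X\sminus\mathrm{int}(\nu))$ and $X=\nu\cup(X\sminus\mathrm{int}(\nu))$ glued along $\partial\nu$, followed by ``an examination of the diagram.'' You supply the details the paper omits --- the $\Z/2$ Gysin sequence for $D\to\Sigma$ giving the short exact middle column, and the Lefschetz-duality identification of $\mathrm{im}\bigl(H^{2}(X\sminus\mathrm{int}(\nu))\to H^{2}(\partial\nu)\bigr)$ that produces the condition $\sum_{i}\epsilon_{i}\lambda_{i}=0$ --- and these are carried out correctly.
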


\begin{proof}
With $\Z/2$ coefficients  understood, we have the following
commutative diagram, in which the vertical arrows from the bottom row are given by
$\pi^{*}$, the rows are exact sequences coming from Mayer-Vietoris,
and the middle column is a short exact sequence:
     \begin{align*}
    \xymatrix{
                           &          &   (\Z/2)^{N} & & \\
             \cdots \ar[r] &    H^{2}(X^{h}_{\Delta}) \ar[r]  &
                    H^{2}(X\sminus\Sigma) \oplus H^{2}(D) \ar[r] \ar[u] &
                    H^{2}(\partial \nu) \ar[r] & \cdots \\
                   \cdots \ar[r] &    H^{2}(X) \ar[r] \ar[u] &
                    H^{2}(X\sminus\Sigma) \oplus H^{2}(\nu) \ar[r] \ar[u]&
                    H^{2}(\partial \nu) \ar[r] \ar[u] & \cdots
	   }
           \end{align*}
The lemma follows from an examination of the diagram.
\end{proof}

Let us recall from \cite{Dold-Whitney} that $\SO(3)$ bundles $P$ on a
4-dimensional simplicial complex $Z$ can be classified as follows. First,
$P$ has a Stiefel-Whitney class $w_{2}(P)$, which can take on any
value in $H^{2}(Z;\Z/2)$. Second, the isomorphism classes of bundles
with a given $w_{2}(P)=w$ are acted on transitively by $H^{4}(Z;\Z)$. 
In the basic case of a class in
$H^{4}$ represented by the characteristic function of a single
oriented $4$-simplex, this action can be described as altering the
bundle on the interior of the simplex by forming a connect sum with an
$\SO(3)$ bundle $Q \to S^{4}$ with $p_{1}(Q)[S^{4}] = -4$ (i.e. the
$\SO(3)$ bundle associated to an $\SU(2)$ bundle with
$c_{2}=1$). Acting on a bundle $P$ by a class $z \in H^{4}(Z;\Z)$
alters $p_{1}(P)$ by $-4z$.  The action is may not be effective if the
cohomology of $Z$ has $2$-torsion: according to \cite{Dold-Whitney},
the kernel of the action is the subgroup
\[
                      \mathcal{T}^{4}(Z; w) \subset H^{4}(Z;\Z)
\]
given by
\[
       \mathcal{T}^{4}(Z;w) = \{ \, \beta(x)\cupprod \beta(x) + \beta(x
       \cupprod w_{2}(P)) \mid x \in H^{1}(Z;\Z/2) \, \}   ,
\]
where $\beta$ is the Bockstein homomorphism $H^{i}(Z;\Z/2) \to
H^{i+1}(Z;\Z)$. There are two corollaries to note concerning the class
$p_{1}(P)$ here. First, if $H^{4}(Z;\Z)$ contains classes $z$ with
$4z=0$ which do \emph{not} belong to $\mathcal{T}^{4}(Z;w)$, then
an $\SO(3)$ bundle $P \to Z$ with $w_{2}(P)=w$ is not determined up
to isomorphism by its Pontryagin class. Second, $p_{1}(P)$ is
determined by $w=w_{2}(P)$ to within a coset of the subgroup
consisting of multiples of $4$; on in other words, the image of
$\bar p_{1}(P)$  of $p_{1}(P)$ in $H^{4}(Z;\Z/4)$ is determined by $w_{2}(P)$. According
to \cite{Dold-Whitney} again, the determination is
\[
          \bar p_{1}(P) = \mathcal{P}( w_{2}(P)),
\]
where $\mathcal{P}$ is the Pontryagin square, $H^{2}(Z;\Z/2) \to
H^{4}(Z;\Z/4)$.

Now let us apply this discussion to $X_{\Delta}^{h}$ and the bundles
$P_{\Delta}$ arising from singular bundle data as in
Definition~\ref{def:singular-bundle-data}. The conditions of
Definition~\ref{def:singular-bundle-data} imply that
$w_{2}(P_{\Delta})$ is non-zero on every $2$-sphere fiber in
$D$. A first step in classifying such bundles $P_{\Delta}$ is to
classify the possible classes $w_{2}$ satisfying this condition. 
Referring to Lemma~\ref{lem:H2-XLambda}, we  obtain:

\begin{proposition}
    Let $\lambda \subset \Sigma$ be a $1$-cycle with $\Z/2$
    coefficients dual to
    $w_{1}(\Delta)$. A necessary and sufficient condition that there
    should exist a bundle $P_{\Delta} \to X_{\Delta}$ with $w_{2}$
    non-zero on every $2$-sphere fiber of $D$ is that $\lambda$
    represent the zero class in $H_{1}(X;\Z/2)$. When this condition
    holds the possible values for $w_{2}$ lie in a single coset of
    $H^{2}(X;\Z/2)$ in $H^{2}(X_{\Delta};\Z/2)$.  \qed
\end{proposition}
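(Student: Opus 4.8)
The plan is to extract everything from the exact sequence of Lemma~\ref{lem:H2-XLambda} together with the classification of $\SO(3)$ bundles over $4$-complexes recalled just above from \cite{Dold-Whitney}. First I would note that, since $X^{h}_{\Delta}$ is a $4$-dimensional complex with the same weak homotopy type as $X_{\Delta}$, giving a $\PU(2)$ bundle $P_{\Delta}\to X_{\Delta}$ is the same as giving an $\SO(3)$ bundle on $X^{h}_{\Delta}$, and by Dold--Whitney its Stiefel--Whitney class may be prescribed to be \emph{any} element of $H^{2}(X^{h}_{\Delta};\Z/2)$. The requirement that $w_{2}(P_{\Delta})$ be non-zero on every $2$-sphere fibre of $D$ is exactly the requirement that the image of $w_{2}$ under the restriction map $e$ of Lemma~\ref{lem:H2-XLambda} be $(1,\dots,1)\in(\Z/2)^{N}$. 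So the existence problem for $P_{\Delta}$ becomes the purely cohomological problem of deciding whether $(1,\dots,1)$ lies in the image of $e$.

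Next I would use exactness of the sequence in Lemma~\ref{lem:H2-XLambda} at the term $(\Z/2)^{N}$: the image of $e$ equals the kernel of the final arrow $(\epsilon_{1},\dots,\epsilon_{N})\mapsto\sum_{i}\epsilon_{i}\lambda_{i}\in H_{1}(X;\Z/2)$, where $\lambda_{i}$ is the Poincar\'e dual of $w_{1}(\Delta|_{\Sigma_{i}})$. Evaluating this arrow on $(1,\dots,1)$ yields $\sum_{i}\lambda_{i}$, which is precisely the image in $H_{1}(X;\Z/2)$ of the $1$-cycle $\lambda\subset\Sigma$ dual to $w_{1}(\Delta)$. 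This pins down the existence condition: a bundle $P_{\Delta}$ of the desired type exists if and only if $[\lambda]=0$ in $H_{1}(X;\Z/2)$.

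For the last assertion, I would observe that when $[\lambda]=0$ the set of admissible values of $w_{2}$ is the non-empty preimage $e^{-1}(1,\dots,1)$, hence a coset of the kernel of $e$; and by exactness of the sequence at $H^{2}(X^{h}_{\Delta};\Z/2)$ together with the injectivity of $\pi^{*}$ recorded in Lemma~\ref{lem:H2-XLambda}, that kernel is the image of $\pi^{*}$, a copy of $H^{2}(X;\Z/2)$ sitting inside $H^{2}(X_{\Delta};\Z/2)$. I do not expect any real difficulty here; the one step that deserves a sentence of care is the reduction from ``a bundle exists'' to ``a cohomology class exists'', which relies on the fact --- part of the Dold--Whitney classification quoted above --- that over a $4$-complex every class in $H^{2}(-;\Z/2)$ is the second Stiefel--Whitney class of some $\SO(3)$ bundle, so that no secondary obstruction can intervene.
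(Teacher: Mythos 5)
Your argument is correct and is exactly the proof the paper intends: the proposition is stated immediately after ``Referring to Lemma~\ref{lem:H2-XLambda}, we obtain,'' and its justification is precisely the combination you give of the Dold--Whitney realizability of any class in $H^{2}(\cdot;\Z/2)$ as $w_{2}$ of an $\SO(3)$ bundle over a $4$-complex with exactness of the sequence in that lemma at $(\Z/2)^{N}$ and at $H^{2}(X^{h}_{\Delta};\Z/2)$. Nothing is missing.
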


Let us fix $w_{2}$ and
consider the action of $H^{4}(X_{\Delta};\Z)$ on the isomorphism
classes of bundles $P_{\Delta}$. We orient all the
$4$-simplices of $X^{h}_{\Delta}$ using the map $\pi$ to $X$. We also
choose trivializations of $\Delta$ on all the components of $\Sigma$
on which it is trivial. We can then act by the class in $H^{4}$
represented by the characteristic function of a single oriented
$4$-simplex $\sigma$. We have the following cases, according to where
$\sigma$ lies.
\begin{enumerate}
\item If $\sigma$ is contained in $X\sminus
    \nu \subset X^{h}_{\Delta}$, then we refer to this operation as
    \emph{adding an instanton}.
\item If $\sigma$ is contained in $D$, then we have the following
    subcases:
    \begin{enumerate}
    \item if the component of $D$ is orientable, so that it is the
        double of $\nu$, and if $\sigma$ belongs to the distinguished
        copy of $\nu$ in $D$ picked out by our trivialization of
        $\Delta$, then we refer this operation as \emph{adding an
          anti-monopole} on the given component;
    \item if the component of $D$ is again orientable, but $\sigma$
        lies in the other copy of $\nu$, then we refer to the action
        of this class as \emph{adding a monopole};
     \item if the component of $D$ is not orientable then the
         characteristic function of any $4$-simplex in $D$ is
         cohomologous to any other, and we refer to this operation as
         adding a monopole.
    \end{enumerate}
\end{enumerate}

We have the following dependencies among these operations, stemming
from the fact that the corresponding classes in
$H^{4}(X^{h}_{\Delta};\Z)$ are equal:

\begin{enumerate}
\item
   \label{item:mono-plus-anti}
 adding a monopole and an anti-monopole to the same orientable
    component  is the same as adding an instanton;
\item 
  \label{item:mono-plus-mono}
  adding two monopoles to the same non-orientable component
    is the same as adding an instanton.
\end{enumerate}

Further dependence among these operations arises from the fact that
the action of the subgroup $\mathcal{T}^{4}(X_{\Delta};w_{2})$ is
trivial. The definition of $\mathcal{T}^{4}$ involves
$H^{1}(X_{\Delta}^{h};\Z/2)$, and the latter group is isomorphic to
$H^{1}(X;\Z/2)$ via $\pi^{*}$. Since $H^{4}(X;\Z)$ has no $2$-torsion,
the classes $\beta(x)\cupprod\beta(x)$ are zero. Calculation of the
term $\beta(x \cupprod w_{2})$ leads to the following interpretation:

\begin{enumerate}
\setcounter{enumi}{2}
\item
  \label{item:Dold-Whitney-extra}
  For any class $x$ in $H^{1}(X;\Z/2)$, let $n$ be the
    (necessarily even) number of components of $\Sigma$ on which
    $w_{1}(\Delta)\cupprod(x|_{\Sigma})$ is non-zero. Then the effect
    of adding in $n$ monopoles, one on each of these components, is
    the same as adding $n/2$ instantons.
\end{enumerate}

Our description so far gives a complete classification of $\SO(3)$ (or
$\PSU(2)$) bundles $P_{\Delta}\to X_{\Delta}$ having non-zero
$w_{2}$ on the $2$-sphere fibers of $D$. Classifying such bundles
turns out to be equivalent to classifying the (a priori more
elaborate) objects described as singular bundle data in
Definition~\ref{def:singular-bundle-data}.

\begin{proposition}
    For fixed $\Delta$, the forgetful map from the set of isomorphism
    classes of singular bundle data to the set of isomorphism classes
    of $\SO(3)$ bundles $P_{\Delta}$ on $X_{\Delta}$ is a bijection
    onto the isomorphism classes of bundles $P_{\Delta}$ with
    $w_{2}(P_{\Delta})$ odd on the $2$-sphere fibers in $D$.
\end{proposition}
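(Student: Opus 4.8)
The plan is to construct an inverse to the forgetful map. Given an $\SO(3)$ bundle $P_{\Delta}\to X_{\Delta}$ with $w_{2}(P_{\Delta})$ odd on every $2$-sphere fiber of $D$, I must produce singular bundle data in the sense of Definition~\ref{def:singular-bundle-data}, i.e. recover the $O(2)$ reduction over $\nu_{\Delta}$ together with the auxiliary bundle $\tilde Q\to\Sigma_{\Delta}$ and the isometry $\rho$. The essential point is that an $O(2)$ reduction of an $\SO(3)$ bundle over a space $Z$ is the same thing as a $\Z/2$-reduction plus a choice of $2$-plane sub-bundle; and over the tubular neighborhood $\tilde\nu_{\Delta}$, which deformation-retracts onto $\Sigma_{\Delta}$, the constraint that $w_{2}$ be odd on the normal $2$-sphere forces the restriction of $P_{\Delta}$ to each fiber $S^{2}$ to be the unique nontrivial $\SO(3)$ bundle, which is $\mathrm{Sym}^{2}$ of the Hopf bundle; this has a canonical reduction to $\mathrm{O}(2)$ over $S^{2}\sminus\{\mathrm{pt}\}$, or rather a canonical splitting of the associated $\R^{3}$ as a trivial line plus the tangent plane. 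I would make this precise by working on the $S^{2}$-bundle $D\to\Sigma$ (equivalently $\tilde\nu_{\Delta}$): the bundle $P_{\Delta}|_{D}$ restricted to each fiber is determined, and its fiberwise automorphisms/reductions are rigid enough that the $O(2)$-reduction over $\tilde\nu_{\Delta}\sminus\Sigma_{\Delta}$ is determined up to contractible choice, giving $Q$, and hence $\tilde Q=Q|_{\Sigma_{\Delta}}$ as a bundle whose orientation line is pinned to $o(\Sigma_{\Delta})$. The isometry $\rho$ then emerges from comparing the two fiberwise extensions across the two points of $\pi^{-1}(\Sigma)$ over a point of $\Sigma$, exactly the sign ambiguity already identified in the local model in the Motivation subsection.

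Concretely I would proceed in the following steps. First, restrict $P_{\Delta}$ to $\tilde\nu_{\Delta}$ and observe that, because $\tilde\nu_{\Delta}$ retracts to $\Sigma_{\Delta}$ and $w_{2}$ is odd on fibers, $P_{\Delta}|_{\tilde\nu_{\Delta}}$ is pulled back from an $\SO(3)$ bundle on $\Sigma_{\Delta}$ tensored (in the appropriate sense) with the fiberwise $\mathrm{Sym}^{2}(\mathrm{Hopf})$; more usefully, over $\tilde\nu_{\Delta}\sminus\Sigma_{\Delta}$ the normal directions give a section $s_{1}$ of $N_{\Sigma_{\Delta}}$, and I claim there is a canonical $O(2)$-reduction of $P_{\Delta}$ there whose associated $2$-plane bundle is $Q$. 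Second, verify the orientation-line identification $o(\tilde Q)\cong o(\Sigma_{\Delta})$: this is forced because the $-1$ eigenbundle of the holonomy, viewed fiberwise over $S^{2}$, is the tangent bundle of $S^{2}$ and its orientation transforms like the normal orientation. Third, extract $\rho$ by examining, over a point $y\in\Sigma$ with preimages $x,\tau(x)\in\Sigma_{\Delta}$, the two competing extensions of $Q$ across $\Sigma$ coming from the two sheets; the difference is an orientation-reversing isometry $\tilde Q_{\tau(x)}\to\tilde Q_{x}$, scaled by the normal coordinate, which packages exactly into $\rho:N_{\Sigma_{\Delta}}\to\Hom^{-}(\tau^{*}\tilde Q,\tilde Q)$, and the relation $\rho(v)\rho(\tau(v))=1$ is automatic from the symmetry of the construction under $\tau$. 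Fourth, check that this data recovers $P_{\Delta}$ via the model-connection construction of the previous subsection, so the forgetful map followed by this construction is the identity, and conversely.

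For surjectivity onto bundles with $w_{2}$ odd on fibers, the preceding construction already exhibits every such $P_{\Delta}$ as arising from singular bundle data. For injectivity, I would show that an isomorphism of the resulting $\SO(3)$ bundles can be promoted to an isomorphism of singular bundle data: the gauge transformation must carry the canonical fiberwise reduction to the canonical fiberwise reduction (since these are unique), hence it respects $Q$, hence restricts to an $O(2)$-gauge transformation over $\nu_{\Delta}$ intertwining the $\rho$'s; the only subtlety is whether an $\SO(3)$ automorphism of $\mathrm{Sym}^{2}(\mathrm{Hopf})$ over $S^{2}$ can fail to lift to respect the reduction, but the automorphism group here is just $\pm1$ acting trivially, so there is no room for trouble.

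The main obstacle I anticipate is Step~3: pinning down $\rho$ canonically rather than merely up to the $\pm1$ sign, and checking the cocycle-type identity $\rho(v)\rho(\tau(v))=1$ globally over $\Sigma_{\Delta}$ rather than just pointwise. This is where the non-Hausdorff bookkeeping of $X_{\Delta}$ and the interplay between the two sheets genuinely has to be confronted, and where one must be careful that the orientation-line identifications $o(\tilde Q)\cong o(\Sigma_{\Delta})$ and $o(\Hom^{-})\cong o(\Sigma)$ are compatible with $\tau$. Everything else is a matter of unwinding the definitions and invoking the classification of $\SO(3)$ bundles over $S^{2}$ and over $\Sigma_{\Delta}$ already recalled from \cite{Dold-Whitney}.
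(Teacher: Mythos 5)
Your overall strategy --- restrict $P_{\Delta}$ to the sphere bundle $D$, analyse it fibrewise, and show that the special $O(2)$ reduction demanded by Definition~\ref{def:singular-bundle-data} exists and is unique up to homotopy --- is the same as the paper's. But the step that carries all the weight is exactly the one you assert rather than prove, and the assertion is false as stated. You claim the admissible reduction over $\tilde\nu_{\Delta}\sminus\Sigma_{\Delta}$ is ``determined up to contractible choice,'' and later that the fibrewise automorphism group ``is just $\pm1$ acting trivially, so there is no room for trouble.'' Neither is true. Over a single fibre $D_{x}\cong S^{2}$, the reductions of the required sort correspond (after writing $D_{x}=D^{+}\cup D^{-}$ using the tautological $2$-valued section) to clutching functions on the equatorial circle that are simple closed geodesics in an isometric copy of $\SO(3)$; the space of such geodesic loops in the nontrivial free homotopy class is homotopy equivalent to $\SO(3)\times S^{2}$, which is nowhere near contractible. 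Likewise the gauge group of the nontrivial $\SO(3)$ bundle over $S^{2}$ is not $\pm1$, and its action on the space of reductions is what you would actually have to control. With the rigidity claim gone, your argument establishes neither surjectivity (existence of the reduction) nor injectivity (uniqueness up to homotopy, which your isomorphism-promotion step silently presupposes).

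What the paper does at this point is quantitatively weaker but correct, and it is the real content of the proof: the inclusion of the space of simple closed geodesics into the nontrivial component of $\mathrm{Map}(S^{1},\SO(3))$ induces an isomorphism on $\pi_{1}$ and $\pi_{2}$ and a surjection on $\pi_{3}$ (Morse theory for geodesics). Since the base $\Sigma$ is $2$-dimensional, obstruction theory then shows that classifying bundles on $D$ equipped with geodesic clutching data is the same as classifying the bundles themselves --- and geodesic clutching data is precisely an $O(2)$ reduction of the form required by the definition. The dimension hypothesis $\dim\Sigma=2$ is essential and appears nowhere in your argument; the comparison map is \emph{not} a weak equivalence (it need not be injective on $\pi_{3}$), so no version of your ``contractible choice'' claim can be rescued without it. To repair your proof you would need to replace the rigidity assertion with this homotopy-theoretic comparison, or an equivalent obstruction-theoretic computation of the space of admissible reductions over the $2$-complex $\Sigma$.
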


\begin{proof}
    Let $P_{\Delta}\to X_{\Delta}$ be given. We must show that the
    restriction of $P_{\Delta}$ to the non-Hausdorff neighborhood
    $\nu_{\Delta}$ (or equivalently, on $X^{h}_{\Delta}$, the
    restriction of $P_{\Delta}$ to the $2$-sphere bundle $P$) admits
    a reduction to an $O(2)$ bundle $Q_{\Delta}$ of the special sort
    described in the definition. We must also show that this reduction
    is unique up to homotopy.
    
     Consider the reduction of $P_{\Delta}$ to the $2$-sphere bundle
     $D$. The space $D$ is the double of the tubular neighborhood, and
     as a sphere-bundle it therefore comes with a
     $2$-valued section. Fix a point $x$ in $\Sigma$ and consider the
     fiber $D_{x}$ over $x$, written as the union of $2$ disks $D^{+}$
     and $D^{-}$ whose centers are points $x^{+}$ and $x^{-}$ given by
     this $2$-valued section at the point $x$. The bundle
     $P_{\Delta}$ on $D_{x}$ can be described canonically up to
     homotopy as arising from a clutching function on the equatorial
     circle:
     \[
                  \gamma : (D^{+}\cap D^{-}) \to \Hom(
                  P_{\Delta}|_{x^{-}},   P_{\Delta}|_{x^{+}} ).
      \]
     The target space here is an isometric copy of $\SO(3)$. The loop
     $\gamma$ belongs to the non-trivial homotopy class by our
     assumption about $w_{2}$. The space
     of loops $\mathrm{Map}(S^{1}, \SO(3))$ in the non-zero homotopy
     class contains inside it the simple closed geodesics; and the
     inclusion of the space of these geodesics is an isomorphism on $\pi_{1}$ and
     $\pi_{2}$ and surjective map on $\pi_{3}$, as follows from a
     standard application of the Morse theory for geodesics. Since
     $\Sigma$ is $2$-dimensional, the classification of bundles
     $P_{\Delta}$ is therefore the same as the classification of
     bundles with the additional data of being constructed by
     clutching functions that are geodesics on each circle fiber
     $D^{+}\cap D^{-}$.  On the other hand, describing $P_{\Delta}$
     on $D$ by such clutching functions is equivalent to giving a
     reduction of $P_{\Delta}$ to an $O(2)$ bundle $Q_{\Delta}$
     arising in the way described in Definition~\ref{def:singular-bundle-data}.
\end{proof}

\subsection{Instanton and monopole numbers}
\label{subsec:inst-monop-numb}

Consider again for a moment the case that $\Delta$ is trivial and
trivialized, so that we have a standard copy $X_{+}$ of $X$ inside
$X_{\Delta}$. Inside $X_{+}$ is a preferred copy, $\Sigma_{+}$, of
the surface $\Sigma$. The orientation bundle of $Q_{\Delta}$ and the
orientation bundle of $\Sigma_{+}$ are canonically identified along
$\Sigma_{+}$. In this situation we can define two characteristic
numbers,
\[
\begin{aligned}
    k &= -\frac{1}{4} \langle p_{1}(P_{\Delta}) , [X_{+}]\rangle \\
    l &= - \frac{1}{2} \langle e(Q_{\Delta}), [\Sigma_{+}]\rangle.
\end{aligned}
\]
(Here the euler class $e(Q_{\Delta})$ is regarded as taking values in
the second cohomology of $\Sigma_{+}$ with coefficients twisted by the
orientation bundle.) We call these the \emph{instanton number} and the
\emph{monopole number} respectively: in the case that $\Sigma$ is
orientable, these definitions coincide with those from the authors
earlier papers \cite{KM-gtes-I, KM-knot-singular}. Since the $O(2)$ bundle
$Q_{\Delta}$ has degree $-1$ on the fibers of the $2$-sphere bundle
$D$, a short calculation allows us to express $l$ also in terms of the
Pontryagin class of the bundle $P_{\Delta}$: 
\begin{equation}\label{eq:l-formula}
        l = \frac{1}{4} p_{1}(P_{\Delta})[D] + \frac{1}{4} \Sigma\cdot\Sigma.
\end{equation}
(The term $\Sigma\cdot\Sigma$ is the ``self-intersection'' number of
$\Sigma$. Recall that this is a well-defined integer, even when
$\Sigma$ is non-orientable, as long as $X$ is oriented.)
When $\Delta$ is non-trivial, we cannot define either $k$ or $l$ in
this way. We can always evaluate the Pontryagin class on
$[X_{\Delta}]$ however.

Recall that for an $\SU(2)$ bundle on a closed $4$-manifold $X$, the
characteristic number $c_{2}(P)[X]$ can be computed by the Chern-Weil formula
\[
    \frac{1}{8 \pi^2 } \int_{X} \tr (F_{A}\wedge F_{A})
\]
where $A$ is any $\SU(2)$ connection and the trace is the usual trace
on $2$-by-$2$ complex matrices. For a $\PSU(2)$ bundle, the same
formula computes $-(1/4)p_{1}(P)[X]$. (Here we must identify the Lie
algebra of $\PSU(2)$ with that of $\SU(2)$ and define the trace form
accordingly.) 

Consider now a model singular connection $A$, as defined in
section~\ref{subsec:sing-connections}, corresponding to singular
bundle data $P_{\Delta} \to X_{\Delta}$. We wish to interpret the
Chern-Weil integral in terms of the Pontryagin class of
$P_{\Delta}$. We have:

\begin{proposition}
    For a model singular connection $A$ corresponding to singular
    bundle data $P_{\Delta}\to X_{\Delta}$, we have
\[
 \frac{1}{8 \pi^2 } \int_{X\sminus\Sigma} \tr (F_{A}\wedge F_{A})
    = -\frac{1}{4} p_{1}(P_{\Delta})[X_{\Delta}] + \frac{1}{16}
    \Sigma\cdot \Sigma,
\]
where $[X_{\Delta}] \in H_{4}(X_{\Delta};\Q)$ is again the
fundamental class that is invariant under the involution $t$ and has
 $\pi_{*}[X_{\Delta}]=[X]$.
\end{proposition}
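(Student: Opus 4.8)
The plan is to reduce the Chern--Weil integral to a local computation near $\Sigma$, since away from $\Sigma$ the model connection $A$ is a genuine smooth $\PSU(2)$ connection and the integrand is exact in the usual way. More precisely, I would first observe that $A = A_{1}$ is $A_{0} + \tfrac14\,(\mathrm{diag}(i,-i))\,\eta$ on the tubular neighborhood $\nu$, where $A_{0}$ is a \emph{smooth} $\PSU(2)$ connection on $X_{\Delta}$ (restricted to $\nu_{\Delta}$) and $\eta$ is the angular $1$-form of \eqref{eq:s1}; outside $\nu$, $A$ is smooth on $X\sminus\Sigma$ and extends the smooth connection from $X^{h}_{\Delta}\sminus\tilde\nu_{\Delta}$. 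So I would write $A = A_{0} + a$, where $a$ is the globally-defined $\mathfrak{su}(2)$-valued $1$-form on $X\sminus\Sigma$ that equals $\tfrac14\,(\mathrm{diag}(i,-i))\,\eta$ on $\nu\sminus\Sigma$ and $0$ (or a smooth interpolation supported in $\nu$) outside; here $A_{0}$ is a smooth connection in the genuine bundle $P_{\Delta}\to X^{h}_{\Delta}$, whose Chern--Weil integral over $X^{h}_{\Delta}$ computes $-\tfrac14 p_{1}(P_{\Delta})[X_{\Delta}]$ once one is careful that $[X_{\Delta}]$ is the non-integral invariant class (the connection lives on $X^{h}_{\Delta}$, which double-covers $\nu$, so the pairing with $[X_{\Delta}]$ is the natural ``averaged'' evaluation).

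Next I would expand the Chern--Weil integrand using $F_{A} = F_{A_{0}} + d_{A_{0}}a + a\wedge a$. The key point is the transgression identity
\[
   \tr(F_{A}\wedge F_{A}) - \tr(F_{A_{0}}\wedge F_{A_{0}})
     = d\,\tr\!\bigl( a\wedge F_{A_{0}} + F_{A}\wedge a - \tfrac{1}{3} a\wedge[a\wedge a] \bigr),
\]
valid wherever both connections are smooth, i.e. on $X\sminus\Sigma$. Integrating over $X\sminus\Sigma$ and applying Stokes on $X\sminus(\text{small tube of radius }\epsilon\text{ around }\Sigma)$, the difference of the two Chern--Weil integrals becomes a boundary integral over the $\epsilon$-sphere bundle $S_{\epsilon}$ around $\Sigma$, in the limit $\epsilon\to 0$. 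Since $a = \tfrac14\,(\mathrm{diag}(i,-i))\,\eta$ there and $F_{A_{0}}$ is bounded, only the terms in the transgression form that survive the limit contribute; $\eta$ integrates to $2\pi$ (times the orientation-twisting) around each linking circle, so the limiting boundary integral reduces to an evaluation of characteristic data of $Q_{\Delta}$ over $\Sigma$. Carrying this out, the $a\wedge a$ term produces a contribution proportional to $\int_{\Sigma} \eta\wedge(\text{curvature of }O(2)\text{ reduction})$, which is exactly what yields the Euler-number / self-intersection term.

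The bookkeeping of constants is where I expect the real work to be, so I would pin it down by a \textbf{model calculation}: on a disk bundle neighborhood of a single point, or better on the local model $B^{4}\sminus B^{2}$ and its comparison with the ``extended'' bundle via $A_{0} = A_{1} - \tfrac14\,\bi\,d\theta$ as in the Motivation subsection, compute both sides directly. This pins the coefficient of $\Sigma\cdot\Sigma$: the smooth extension $A_{0}$ differs from $A_{1}$ precisely by the $\tfrac14$-twist, and the change in $p_{1}$ under such a twist across $\Sigma$ is governed by $e(Q_{\Delta})^{2}$, i.e. by $\Sigma\cdot\Sigma$ (using that $Q_{\Delta}$ has degree $-1$ on the $S^{2}$ fibers of $D$, as recorded just before \eqref{eq:l-formula}). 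Matching signs and the factor $1/16$ against $-\tfrac14 p_{1}(P_{\Delta})[X_{\Delta}]$ is then a finite check; the $1/16$ (rather than $1/4$ as in \eqref{eq:l-formula}) is the signature of evaluating on the non-integral class $[X_{\Delta}]$ rather than on $[D]$.

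The main obstacle is therefore not conceptual but a matter of doing the $\epsilon\to 0$ boundary analysis cleanly in the non-orientable and non-extendable setting: one must check that $\eta$, which is a $1$-form with values in $\R_{o(\Sigma)}$, contributes correctly around each linking circle regardless of orientability of $\Sigma$ (the twisting cancels against the twisting in $\bi$), and that no extra term is hidden in the identification of the Chern--Weil integral of $A_{0}$ over $X\sminus\Sigma$ with $p_{1}(P_{\Delta})[X_{\Delta}]$ — here the correct statement involves the invariant fractional class $[X_{\Delta}]$, and one should verify this by the triangulation description of $[X_{\Delta}]$ given earlier (sum of $4$-simplices outside $\tilde\nu_{\Delta}$, plus half the simplices in $D$), so that the ``half'' accounts precisely for the $2$-to-$1$ nature of $\pi$ over $\nu$. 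Once these two points are checked, the formula follows by assembling the model calculation with the global transgression argument.
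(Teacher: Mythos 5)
Your strategy is sound, but it is not the route the paper takes: the paper's proof is essentially a three-line reduction to prior work. It quotes the formula $\kappa(A)=k+\tfrac12 l-\tfrac1{16}\Sigma\cdot\Sigma$ already proved in \cite{KM-gtes-I} for the orientable, trivial-$\Delta$ case, substitutes the expression \eqref{eq:l-formula} for the monopole number, and observes that the result is exactly $-\tfrac14 p_1(P_\Delta)[X_\Delta]+\tfrac1{16}\Sigma\cdot\Sigma$ because $[X_\Delta]=[X_+]-\tfrac12[D]$; the general (non-orientable $\Sigma$, non-trivial $\Delta$) case then follows because the proof in \cite{KM-gtes-I} is a local computation near $\Sigma$. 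What you propose is, in effect, to redo that local computation from scratch by transgression plus model calculations — a legitimate, self-contained alternative, at the cost of re-deriving the constants that the paper simply inherits. One point in your plan deserves more care than you give it: when $\Delta$ is non-trivial there is no global decomposition $A=A_0+a$ on $\nu\sminus\Sigma$ downstairs, because $\bi$ is twisted by $o(Q)$ and $\eta$ by $o(\Sigma)$, so $a=\tfrac14\bi\,\eta$ is twisted by $\Delta$ and only becomes a genuine $1$-form (and $A_0$ a genuine smooth connection) after passing to $\tilde\nu_\Delta$; only the sum $A_0+a$ descends. The transgression and the Stokes argument therefore have to be carried out on $X^h_\Delta$ (or sheet-by-sheet), and the $2$-to-$1$ nature of $\pi$ over $\nu$ is precisely what produces the coefficient $\tfrac12$ on $[D]$ in the class $[X_\Delta]$ — this is the actual mechanism behind your ``averaged evaluation,'' not merely a bookkeeping convention to be checked at the end. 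With that point made precise, your argument closes and recovers the paper's formula.
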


We shall write $\kappa$ for this Chern-Weil integral:
\begin{equation}\label{eq:action}
          \kappa(A) = \frac{1}{8 \pi^2 } \int_{X\sminus\Sigma} \tr (F_{A}\wedge F_{A}).
\end{equation}

\begin{proof}[Proof of the proposition]
A formula for $\kappa$ in terms of characteristic classes was proved
in \cite{KM-gtes-I} under the additional conditions that $\Sigma$ was
orientable and $\Delta$ was trivial. 
In that case, after choosing a
trivialization of $\Delta$, the formula from \cite{KM-gtes-I} was
expressed in terms of the instanton and monopole numbers, $k$ and $l$,
as
\[
 \kappa(A) = k + \frac{1}{2} l  - \frac{1}{16} \Sigma\cdot\Sigma. 
\]
(In \cite{KM-gtes-I} the formula was written more generally for a
singular connection with a holonomy parameter $\alpha$. The formula
above is the special case $\alpha=1/4$.) Using the expression
\eqref{eq:l-formula} for the monopole number, this formula becomes
\[
    \kappa(A) = -\frac{1}{4}p_{1}(P_{\Delta})[X_{+}]  
                     +    \frac{1}{8} p_{1}(P_{\Delta})[D] 
                   + \frac{1}{16} \Sigma\cdot\Sigma,
\]
which coincides with the formula in the proposition, because
$[X_{\Delta}] = [X_{+}] - (1/2)[D]$. Thus the formula in the
proposition coincides with the formula from \cite{KM-gtes-I} in this
special case. The proof in \cite{KM-gtes-I} is essentially a local
calculation, so the result in the general case is the same.
\end{proof}

\subsection{The determinant-1 gauge group}

If $P\to X$ is an $\SO(3)$ bundle then there is a bundle
$G(P)\to X$ with fiber the group $\SU(2)$, associated to $P$ via the
adjoint action of $\SO(3)$ on $\SU(2)$. We refer to the sections of
$G(P)\to X$ as \emph{determinant-1 gauge transformation}, and we
write $\G(P)$ for the space of all such sections, the
\emph{determinant-1 gauge group}.  The gauge group $\G(P)$ acts on
the bundle $P$ by automorphisms, but the map
\[
     \G(P) \to \Aut(P)
\]
is not an isomorphism: its kernel is the two-element group $\{\pm 1\}$,
and its cokernel can be identified with $H^{1}(X;\Z/2)$.  

Suppose we are now given singular bundle data over the non-Hausdorff
space $X_{\Delta}$, represented in particular by an $\SO(3)$ bundle
$P_{\Delta}\to X_{\Delta}$.  We can consider the group of
determinant-1 gauge transformations,
$\G(P_{\Delta})$, in this context. Up to this point, we have not been
specific, but let us now consider simply \emph{continuous} sections of
$G(P_{\Delta})$ and denote the correspond group as $\Gtop$.

To understand $\Gtop$, consider a 4-dimensional ball neighborhood $U$ of a point
in $\Sigma_{\Delta}$. A section $g$ of the restriction of $G(P_{\Delta})$ to $U$ is
simply a map $U\to \SU(2)$, in an appropriate trivialization. Let $U'$
be the image of $U$ under the involution $t$ on $X_{\Delta}$, so that
$U\cap U' = U \sminus \Sigma_{\Delta}$. The section $g$ on $U$
determines a section $g'$ of the same bundle on $U'\sminus
\Sigma_{\Delta}$. In order for $g$ to extend to a
section of $G(P_{\Delta})$ on $U\cup U'$, it is necessary that $g'$
extend across $U' \cap \Sigma_{\Delta}$. In a trivialization, $g'$ is
an $\SU(2)$-valued function on $B^{4}\sminus B^{2}$ obtained by
applying a discontinuous gauge transformation to the function $g :
B^{4}\to \SU(2)$. It has the local form
\[
            g'(x) = \mathrm{ad}(u(\theta)) g(x)
\]
where $\theta$ is an angular coordinate in the normal plane to
$B^{2}\subset B^{4}$ and $u$ is the one-parameter subgroup of $\SU(2)$
that respects the reduction to an $O(2)$ bundle $Q_{\Delta}$ on
$U$. In order for $g'$ to extend continuously over $B^{2}$, it is
necessary and sufficient that $g(x)$ commutes with the one-parameter
subgroup $u(\theta)$ when $x\in B^{2}$: that is, $g(x)$ for $x\in U
\cap \Sigma_{\Delta}$ should
itself lie in the $S^{1}$ subgroup that preserves the subbundle
$Q_{\Delta}$ as well as its orientation.

To summarize, the bundle of groups $G(P_{\Delta}) \to X_{\Delta}$ has
a distinguished subbundle over $\Sigma_{\Delta}$,
\[
            H_{\Delta} \subset G(P_{\Delta}) \to \Sigma_{\Delta}
\]
whose fiber is the
group $S^{1}$; and the continuous sections of $G(P_{\Delta})$ take
values in this subbundle along $\Sigma_{\Delta}$. The local model is
an $\SU(2)$-valued function on $B^{4}$ constrained to take values in
$S^{1}$ on $B^{2}\subset B^{4}$.

The bundle $H_{\Delta}\to \Sigma_{\Delta}$ is naturally pulled back
from $\Sigma$. Indeed, we can describe the situation in slightly
different terms, without mentioning $\Delta$.  We have an $\SO(3)$
bundle $P \to X\sminus \Sigma$ and a reduction of $P$ to an $O(2)$
bundle $Q$ on $\nu\sminus \Sigma$. The bundle $G(P)\to
X\sminus\Sigma$ has a distinguished subbundle $H$ over
$\nu\sminus\Sigma$, namely the bundle whose fiber is the group
$S^{1}\subset \SU(2)$ which preserves $Q$ and its orientation. This
subbundle has structure group $\pm 1$ and is associated to the
orientation bundle of $Q$. This local system with structure group $\pm
1$ on $\nu\sminus\Sigma$ is pulled back from $\Sigma$ itself, so $H$
extends canonically over $\Sigma$. Thus, although the bundle $G(P)$ on
$X\sminus\Sigma$ does not extend, its subbundle $H$ does. There is a
topological space over $\mathbf{G}\to X$ obtained as an identification space of
$G(P)$ over $X\sminus\Sigma$ and $H$ over $\nu$:
\[
     \mathbf{G}=      (H \cup G(P))/\sim.
\]
The fibers of $\mathbf{G}$ over $X$ are copies of $S^{1}$ over
$\Sigma$ and copies of $\SU(2)$ over $X\sminus\Sigma$. The group $\Gtop$ is
the space of continuous sections of $\mathbf{G} \to X$.

We now wish to understand the component group, $\pi_{0}(\Gtop)$. To
begin, note that we have a restriction map
\[
         \Gtop \to \calH
\]
where $\calH$ is the space of sections of $H\to \Sigma$. 

\begin{lemma}\label{lem:pi0H}
    The group of components $\pi_{0}(\calH)$ is isomorphic to
    $H_{1}(\Sigma; \Z_{\Delta})$, where $\Z_{\Delta}$ is the local
    system with fiber $\Z$ associated to the double-cover
    $\Delta$. The map $\pi_{0}(\Gtop) \to \pi_{0}(\calH)$ is
    surjective. 
\end{lemma}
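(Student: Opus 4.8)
The plan is to compute $\pi_0(\calH)$ directly from the description of $H\to\Sigma$ as the circle bundle associated to the $\pm1$ local system $\Delta$, and then to establish surjectivity of $\pi_0(\Gtop)\to\pi_0(\calH)$ by an extension argument that exploits the local model near $\Sigma$. For the first claim, I would observe that $H\to\Sigma$ has fiber $S^1=K(\Z,1)$, so a section is a map to the total space of an $S^1$-bundle twisted by $\Delta$; homotopy classes of sections are classified by the twisted cohomology group $H^1(\Sigma;\Z_\Delta)$ (the obstruction/difference classes for sections of a $K(\Z,1)$-bundle live in $H^1$ with coefficients in the associated local system, and the higher obstructions vanish since $\Sigma$ is $2$-dimensional and the fiber has no higher homotopy). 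By Poincar\'e duality for the possibly-non-orientable surface $\Sigma$ with twisted coefficients — using that $\Z_\Delta\otimes\Z_\Delta$ is trivial so the duality pairs $H^1(\Sigma;\Z_\Delta)$ with $H_1(\Sigma;\Z_{\Delta}\otimes o(\Sigma))$ — one gets an isomorphism to $H_1(\Sigma;\Z_\Delta)$ after checking the orientation twists cancel appropriately; alternatively I would just quote $H^1(\Sigma;\Z_\Delta)\cong H_1(\Sigma;\Z_\Delta)$ via the universal coefficient and duality combination that the paper is implicitly using. Here one must be slightly careful about whether it is $\Z_\Delta$ or $\Z_\Delta$ tensored with the orientation line that appears, and I would pin this down by checking the rank-one case ($\Sigma=\mathbb{RP}^2$ with $\Delta$ non-trivial, where $H_1(\mathbb{RP}^2;\Z_\Delta)\cong\Z$) against a direct count of homotopy classes of sections.

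For surjectivity of $\pi_0(\Gtop)\to\pi_0(\calH)$, the point is that any section $s$ of $H\to\Sigma$ should extend to a continuous section of $\mathbf G\to X$. Recall $\mathbf G$ is glued from $G(P)$ over $X\sminus\Sigma$ and $H$ over $\nu$, with fiber $S^1$ over $\Sigma$ and $\SU(2)$ over the complement, and the local model (derived earlier in the subsection) is an $\SU(2)$-valued function on $B^4$ constrained to lie in the fixed circle subgroup on $B^2\subset B^4$. So I would first extend $s$ from $\Sigma$ to a section over the tubular neighborhood $\nu$ using that $\nu$ deformation-retracts to $\Sigma$ and $H$ is pulled back from $\Sigma$ — this is automatic. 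Then I need to extend from $\nu$ across the rest of $X$ as a section of $G(P)$; the obstructions to extending a section of a bundle with fiber $\SU(2)$ over the pair $(X\sminus\mathrm{int}\,\nu,\ \partial\nu)$ lie in $H^{k+1}(X\sminus\mathrm{int}\,\nu,\partial\nu;\pi_k(\SU(2)))$, and since $\pi_k(\SU(2))=0$ for $k=1,2$ and is $\Z$ for $k=3$, the only possible obstruction is in $H^4(X\sminus\mathrm{int}\,\nu,\partial\nu;\Z)$. That top obstruction can always be killed: changing the putative extension on the interior of a single top-dimensional simplex by an element of $\pi_3(\SU(2))$ shifts the obstruction by an arbitrary multiple of a generator, so the extension exists after such a correction. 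Hence every component of $\calH$ is hit.

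The main obstacle I expect is the twisted Poincar\'e duality bookkeeping — getting the local system exactly right (disentangling $o(\Sigma)$, $o(Q_\Delta)$, and $\Delta$, which are interrelated through the identification $o(\tilde Q)\cong o(\Sigma_\Delta)$ and the constraint on $\rho$) so that the answer comes out as $H_1(\Sigma;\Z_\Delta)$ on the nose rather than a twist of it. I would handle this by working upstairs on the double cover $\Sigma_\Delta$ where everything trivializes, identifying $\calH$ with an appropriate subspace of $\tau$-equivariant sections, and then descending; the equivariance condition under the deck transformation $\tau$ is precisely what introduces the $\Z_\Delta$-twist in the coefficients. A secondary, minor point is confirming that the higher homotopy of the section space does not interfere with the $\pi_0$ statement — but since we only assert a statement about components and $\Sigma$ is $2$-dimensional while the relevant fibers are aspherical or $2$-connected in the right range, this is routine.
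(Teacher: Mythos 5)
Your computation of $\pi_0(\calH)$ is essentially the paper's argument in different clothing: the paper uses the exponential sheaf sequence $0\to\Z_Q\to C^0(\R_Q)\to C^0(H)\to 0$ where you use obstruction theory for sections of a $K(\Z,1)$-bundle, but both land on $H^1(\Sigma;\cdot)$ followed by twisted Poincar\'e duality. The bookkeeping you flag does need fixing, and the resolution is this: the coefficient system classifying sections is the orientation local system $o(Q)$ of the plane bundle $Q$ (the structure group $\pm1$ of $H$ is associated to $o(Q)$, not to $\Delta$), so $\pi_0(\calH)\cong H^1(\Sigma;\Z_{o(Q)})$; Poincar\'e duality then introduces the extra twist by $o(\Sigma)$, and since $\Delta = o(Q)\otimes o(\Sigma)$ one lands on $H_1(\Sigma;\Z_\Delta)$ exactly. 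Your proposed sanity check on $\RP^2$ would have caught this.

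The surjectivity argument, however, has a genuine gap at exactly the point where the lemma has content. Having reduced to extending a section of an $\SU(2)$-fiber bundle from $\partial\nu$ over $X\sminus\mathrm{int}(\nu)$, you correctly identify the sole remaining obstruction as a class in $H^4(X\sminus\mathrm{int}(\nu),\partial\nu;\pi_3(\SU(2)))\cong\Z$, but your claim that it ``can always be killed'' by altering the extension on the interior of a top-dimensional simplex is not how obstruction theory works: the obstruction cocycle on a $4$-cell is the homotopy class of the section on its boundary $3$-sphere, determined by the data on the $3$-skeleton; modifying the section on $3$-cells changes the cocycle only by a coboundary and so does not move the class in $H^4$. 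Since $\pi_1(\SU(2))=\pi_2(\SU(2))=0$ forces the extension over $\nu$ rel $\Sigma$ (and over the $3$-skeleton of the complement) to be unique up to homotopy, this integer is a well-defined invariant of the class $[s]\in\pi_0(\calH)$, and its vanishing is precisely what must be proved. This is where the paper does its real work: it represents $[s]$ by a $1$-cycle $C=s^{-1}(-1)$, localizes the would-be extension in a $4$-dimensional neighborhood of $C$, and identifies the residual obstruction with the Euler number $k$ of $Q$ on the circle bundle $\partial\nu|_{C}$, which it kills by passing to the $\Delta$-double cover, over which $Q_\Delta$ extends across the disk bundle, giving $2k=0$ and hence $k=0$. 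Without some such geometric input (the interplay of $Q$ and $\Delta$), the final integer obstruction is not formally zero, and your argument does not establish surjectivity.
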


\begin{proof}
    Over $\Sigma$ there is a short exact sequence of sheaves
    (essentially the real exponential exact sequence, twisted by the
    orientation bundle of $Q$):
\[
              0 \to \Z_{Q} \to C^{0}(\R_{Q}) \to C^{0}(H) \to 0.
\]
From the resulting long exact sequence, one obtains an isomorphism
between $\pi_{0}(\calH)$ and $H^{1}(\Sigma; \Z_{Q})$, which is
isomorphic to $H_{1}(\Sigma;\Z_{\Delta})$ by Poincar\'e
duality. (Recall that the difference between $\Delta$ and the
orientation bundle of $Q$ is the orientation bundle of $\Sigma$.)
Geometrically, this isomorphism is realized by taking a section of $H$
in a given homotopy class, perturbing it to be transverse to the
constant section $-1$, and then taking the inverse image of $-1$. This
gives a smooth $1$-manifold in $\Sigma$ whose normal bundle is
identified with the orientation bundle of $H$, and whose tangent
bundle is therefore identified with the orientation bundle of
$\Delta$. This $1$-manifold, $C$, represents the element of $H_{1}(\Sigma;
\Z_{\Delta})$ corresponding to the given element of $\pi_{0}(\calH)$.

To prove surjectivity, we consider a class in $H_{1}(\Sigma;
\Z_{\Delta})$ represented by a $1$-manifold $C$ in $\Sigma$ whose
normal bundle is identified with $\R_{Q}$, and we seek to extend
the corresponding section $h$ of $H\to\Sigma$ to a section $g$ of
$\mathbf{G}\to X$. We can take $h$ to be supported in a $2$-dimensional
tubular neighborhood $V_{2}$ of $C$, and we seek a $g$ that is supported in a
$4$-dimensional tubular neighborhood $V_{4}$ of $C$. Let
$V'_{4}\subset V_{4}$ be smaller $4$-dimensional tubular
neighborhood. The section $h$
determines, by extension, a section $h'$ of $\mathbf{G}$ on
$V'_{4}$, and we need to show that $h'|_{\partial V'_{4}}$ is homotopic to the
section $1$. The fiber of $\partial V'_{4}$ over a point $x \in C$ is a
$2$-sphere, and on this $2$-sphere $h'$ is equal to $-1$ on an
equatorial circle $E$ (the circle fiber of the bundle $\partial\nu\to
\Sigma$ over $x$). To specify a standard homotopy from $h'$ to $1$ on
this $2$-sphere it
is sufficient to specify a non-vanishing section of $Q$ over
$E$. To specify a homotopy on the whole of $\partial V'_{4}$ we
therefore seek a non-vanishing section of the $2$-plane bundle $Q$ on
the circle bundle $T= \partial\nu\_{C} \to C$. This $T$ is a union of
tori or a Klein bottles, and on each component the orientation bundle of
$Q$ is identified with the orientation bundle of $T$. The obstruction
to there being a section is therefore a collection of integers $k$,
one for each component. Passing the
$\Delta$-double cover, we find our bundle $Q$ extending from the
circle bundle to the disk bundle, as the $2$-plane bundle
$Q_{\Delta}\to \nu_{\Delta}|_{C}$. The integer obstructions $k$
therefore satisfy $2k=0$. So $k=0$ and the homotopy exists.
\end{proof}

Next we look at the kernel of the restriction map $\Gtop \to\calH$,
which we denote temporarily by $\mathcal{K}$, in the exact sequence
\[
       1 \to \mathcal{K} \to \Gtop \to \calH.
\]

\begin{lemma}\label{lem:pi0-K}
    The group of components, $\pi_{0}(\mathcal{K})$, admits a
    surjective map
\[
          \pi_{0}(\mathcal{K}) \to H_{1}(X\sminus\Sigma;\Z)
\]
   whose kernel is either trivial or $\Z/2$. The latter occurs
   precisely when $w_{2}(P) = w_{2}(X\sminus\Sigma)$ in
   $H^{2}(X\sminus\Sigma;\Z/2)$.
\end{lemma}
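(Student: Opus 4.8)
The plan is to replace $\mathcal{K}$ by a space of sections over a compact manifold, run the obstruction theory for a bundle with fibre $\SU(2)=S^{3}$, and pin down the one piece of secondary obstruction data that decides between the two cases.

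\emph{Reduction.} An element of $\mathcal{K}$ is a section of $\mathbf{G}\to X$ equal to $1$ along $\Sigma$. Restricting to the disk bundle $\nu$ and applying the fibrewise scaling maps $\mu_{t}\colon\nu\to\nu$ (which cover the identity on $\Sigma$), the path $g\mapsto g\circ\mu_{t}$ deforms any such section on $\nu$ to the constant section $1$; using the homotopy extension property for $(W,\partial W)$ with $W=X\sminus\mathrm{int}(\nu)$, this shows $\pi_{0}(\mathcal{K})$ is the set of homotopy classes of sections of $G(P)$ over $W$ that are $\equiv 1$ on $\partial W=\partial\nu$. Here $W$ is connected (since $\Sigma$ has codimension $2$) and $W\hookrightarrow X\sminus\Sigma$ is a codimension-$0$ homotopy equivalence, so $w_{2}(P|_{W})$ and $w_{2}(TW)$ correspond to $w_{2}(P)$ and $w_{2}(X\sminus\Sigma)$.

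\emph{The map to $H_{1}$ and the first part of the structure.} The fibre $S^{3}$ is $2$-connected with $\pi_{3}=\Z$, $\pi_{4}=\Z/2$, and $\SO(3)$ acts by conjugation, trivially on $\pi_{3}$ and $\pi_{4}$. Relative to the identity section, a section of $G(P)$ over $(W,\partial W)$ has a primary difference class on the $3$-skeleton in $H^{3}(W,\partial W;\Z)$; every class is realised, since the obstruction to extending over the $4$-skeleton is a natural transformation $H^{3}(-;\Z)\to H^{4}(-;\Z)$ and hence vanishes. By Poincar\'e--Lefschetz duality $H^{3}(W,\partial W;\Z)\cong H_{1}(W;\Z)\cong H_{1}(X\sminus\Sigma;\Z)$, which gives the asserted surjective homomorphism $\pi_{0}(\mathcal{K})\to H_{1}(X\sminus\Sigma;\Z)$; surjectivity is also visible directly by taking, for an embedded loop $\gamma$ in the interior of $W$, the gauge transformation supported in a tubular neighbourhood $\gamma\times D^{3}$ which is a generator of $\pi_{3}\SU(2)$ in the $D^{3}$-factor (the bundle $G(P)$ is trivial there, as $\SO(3)$-bundles over $S^{1}$ are trivial). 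A section with vanishing primary class is homotopic to one trivial on the $3$-skeleton; such sections are classified by $H^{4}(W,\partial W;\pi_{4}S^{3})=H^{4}(W,\partial W;\Z/2)\cong H_{0}(W;\Z/2)=\Z/2$, modulo the image of a secondary operation, so the kernel is $0$ or $\Z/2$.

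\emph{Deciding the kernel, and the main obstacle.} It remains to decide whether the generator of $\pi_{4}\SU(2)$, inserted in a small ball, is null-homotopic in $\mathcal{K}$. This is governed by the secondary operation, which for the fibre $S^{3}$ is $\mathrm{Sq}^{2}$ twisted by the $\SO(3)$-bundle: it is $H^{2}(W,\partial W;\Z/2)\to H^{4}(W,\partial W;\Z/2)$, $c\mapsto \mathrm{Sq}^{2}c+w_{2}(P)\cupprod c$. By the Wu formula $\mathrm{Sq}^{2}c=v_{2}(W)\cupprod c=w_{2}(TW)\cupprod c$ on $H^{2}(W,\partial W;\Z/2)$ (as $W$ is orientable), so the operation is $c\mapsto\bigl(w_{2}(TW)+w_{2}(P)\bigr)\cupprod c$. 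Since the mod-$2$ Poincar\'e--Lefschetz pairing $H^{2}(W;\Z/2)\times H^{2}(W,\partial W;\Z/2)\to H^{4}(W,\partial W;\Z/2)=\Z/2$ is nondegenerate, this operation is onto — hence the kernel is $0$ — precisely when $w_{2}(TW)\neq w_{2}(P)$, and zero — hence the kernel is $\Z/2$ — precisely when $w_{2}(P)=w_{2}(TW)=w_{2}(X\sminus\Sigma)$, as claimed. The delicate point is the identification of this secondary operation, in particular the $w_{2}(P)$-twist of $\mathrm{Sq}^{2}$; rather than quote the $k$-invariant of the $\SO(3)$-equivariant Postnikov tower of $S^{3}$, I would argue it by hand through the cofibre sequence of $(W,\partial W)$ — so that the class of the point-supported gauge transformation becomes the image of $\eta\in\pi_{4}S^{3}$ under the collapse $W/\partial W\to S^{4}$ — and trace how the twisting of $G(P)$ over the $3$-skeleton of $W$ produces the term $w_{2}(P)\cupprod c$.
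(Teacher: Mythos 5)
Your approach is sound and is, in essence, the standard argument: the paper does not actually prove this lemma but cites Akbulut--Mrowka--Ruan for it, recording only the description of the map to $H_{1}$ (as $\tilde g^{-1}(-1)$ for a section transverse to $-1$, the Pontryagin--Thom picture dual to your cohomological difference class) and the assertion that the kernel is generated by the $\pi_{4}(\SU(2))$ element supported in a ball. Your reduction to sections over $(W,\partial W)$, the identification of the primary invariant with $H^{3}(W,\partial W;\Z)\cong H_{1}(X\sminus\Sigma;\Z)$, the vanishing of the obstruction to realizing every primary class, and the localization of the remaining question in $H^{4}(W,\partial W;\Z/2)\cong\Z/2$ are all correct, so the structure of the statement (surjection with kernel $0$ or $\Z/2$) is fully established.

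The one real gap is the one you flag yourself: the identification of the fiberwise $k$-invariant of the conjugation-twisted $S^{3}$-bundle, i.e.\ the claim that the relevant operation is $c\mapsto\mathrm{Sq}^{2}c+w_{2}(P)\cup c$. The twisted term is the only place $w_{2}(P)$ enters, so this identification \emph{is} the content of the lemma; asserting the formula and sketching how one ``would'' derive it from the cofibre sequence does not yet prove it. (It is correct, and it is exactly what the cited reference supplies.) A second, smaller point: the indeterminacy in the secondary difference class is the image of the primary classes of self-homotopies of the identity section over the $3$-skeleton, and those form $H^{2}(W,\partial W;\pi_{3}S^{3})=H^{2}(W,\partial W;\Z)$, not all of $H^{2}(W,\partial W;\Z/2)$. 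Your duality argument therefore shows the operation is onto precisely when $w_{2}(TW)+w_{2}(P)$ pairs nontrivially with the mod-$2$ reductions of \emph{integral} relative classes; when $H_{1}(X\sminus\Sigma;\Z)$ has $2$-torsion these reductions span a proper subspace of $H^{2}(W,\partial W;\Z/2)$, and the step from ``onto'' to ``$w_{2}(P)\ne w_{2}(X\sminus\Sigma)$'' needs an extra word. This subtlety is harmless for the paper's applications, which use only the upper bound $\Z/2$ on the kernel and its explicit generator, but your proof as written silently enlarges the indeterminacy to make the ``precisely when'' come out.
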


\begin{proof}
    This is standard \cite{Akbulut-Mrowka-Ruan}. 
    A representative $g$ for an element
    of $\pi_{0}(\mathcal{K})$ is a section of $\mathbf{G}$ that is $1$ on
    $\Sigma$. The corresponding element of
    $H_{1}(X\setminus\Sigma;\Z)$ is represented by
    $\tilde{g}^{-1}(-1)$, where $\tilde{g} \simeq g$ is a section
    transverse to $-1$. The kernel is generated by a gauge
    transformation that is supported in a $4$-ball and represents the
    non-trivial element of $\pi_{4}(\SU(2))$. This element survives in
    $\pi_{0}(\mathcal{K})$ precisely when the condition on $w_{2}$ holds.
\end{proof}

\subsection{Analysis of singular connections}
\label{subsec:orbi-analysis}

Having discussed the topology of singular connections, we quickly
review some of the analytic constructions of \cite{KM-knot-singular}
which in turn use the work in \cite{KM-gtes-I}.  Fix a closed pair
$(X,\Sigma)$ and singular bundle
data $\bP$ on $(X,\Sigma)$, and construct a model singular connection
$A_{1}$ on $P\to X\sminus \Sigma$. (See section
\ref{subsec:sing-connections}.)  We wish to define a Banach space of
connections modeled on $A_{1}$. In \cite{KM-gtes-I} two approaches to
this problem were used, side by side. The first approach used spaces
of connections modeled on $L^{p}_{1}$, while the second approach used
stronger norms. The second approach required us to equip $X$ with a
metric with an orbifold singularity along $\Sigma$ rather than a
smooth metric.

It is the second of the two approaches that is most convenient in the
present context. Because we are only concerned with the case that the
holonomy parameter $\alpha$ is $1/4$, we can somewhat simplify the
treatment: in  \cite{KM-gtes-I}, the authors to used metrics $g^{\nu}$
on $X$ with cone angle $2\pi/\nu$ along $\Sigma$, with $\nu$ a
(possibly large) natural number. In the present context we can simply
take $\nu=2$, equipping $X$ with a metric with cone angle $\pi$ along
$\Sigma$.

Equipped with such a metric, $X$ can be regarded as an orbifold with
point-groups $\Z/2$ at all points of $\Sigma$. We will write
$\orbiX$ for $X$ when regarded as an orbifold in this way, and we
write $\orbig$ for an orbifold Riemannian metric. The holonomy of
$A_{1}$ on small loops linking $\Sigma$ in $X\setminus\Sigma$ is
asymptotically of order $2$; so in local branched double-covers of
neighborhoods of points of $\Sigma$, the holonomy is asymptotically
trivial. We can therefore take it that $P$ extends to an orbifold
bundle $\orbiP\to \orbiX$ and $A_{1}$ extends to a smooth
orbifold connection $\orbiA_{1}$ in this orbifold
$\SO(3)$-bundle. (Note that if we wished to locally extend an $\SU(2)$
bundle rather than an $\SO(3)$ bundle in this context, we should have
required a $4$-fold branched cover and we would have been led to use a
cone angle of $\pi/2$, which was the approach in \cite{KM-gtes-I}.)

Once we have the metric $\orbig$ and our model connection
$\orbiA_{1}$, we can define Sobolev spaces using the covariant
derivatives $\nabla^k_{\orbiA_{1}}$ on the bundles $\Lambda^p(T^*\orbiX)\otimes
\g_{\orbiP}$, where the Levi-Civita connection is used in
$T^*\orbiX$. The Sobolev space
$\orbiL^2_k(\orbiX ;\Lambda^p\otimes \g_{\orbiP})$ is the
completion of space of smooth orbifold sections with respect
to the norm
\begin{equation*}
\|a\|^2_{\orbiL^2_{k,\orbiA_{1}}}=\sum_{i=0}^k\int_{X\sminus \Sigma} |
\nabla_{\orbiA_{1}}a|^2 d \mathrm{vol}_{\orbig}.
\end{equation*}
We fix $k\ge 3$ and we consider a  space of
connections on $P\to X\sminus \Sigma$ defined as
\begin{equation}\label{eq:cA-def}
            \cA_k(X,\Sigma,\bP) = \{\,  A_{1} + a \mid
                                                a \in \orbiL^{2}_{k}(\orbiX) \,\}.
\end{equation}
As in \cite[Section 3]{KM-gtes-I}, the definition of this space of
connections can be reformulated to make clear that it depends only on
the singular bundle data $\bP$, and does not
otherwise depend on $A_{1}$. The reader can look there for a full
discussion.

This space of connections admits an action  by the
gauge group
\[
                \G_{k+1}(X,\Sigma,\bP)
\]
which is the completion in the $\orbiL^{2}_{k+1}$ topology of the
group $\G(\orbiP)$ of smooth, determinant-1 gauge transformations of
the orbifold bundle.  The fact that this is a Banach Lie group acting
smoothly on $\cA_k(X,\Sigma,\bP)$ is a consequence of multiplication
theorems just as in \cite{KM-gtes-I}.  Note that the center $\pm 1$ in
$SU(2)$ acts trivially on $\cA_k$ via constant gauge
transformations. Following the usual gauge theory nomenclature we call
a connection whose stabilizer is exactly $\pm 1$ \emph{irreducible}
and otherwise we call it \emph{reducible}. The homotopy-type of the
gauge group $\G_{k+1}(X,\Sigma,\bP)$ coincides with that of $\Gtop$,
the group of continuous, determinant-1 gauge transformations
considered earlier.

Here is the Fredholm package for the present situation.  Let $A \in
\cA_k(X,\Sigma,\bP)$ be a singular connection on $(X,\Sigma)$
equipped with the metric $\orbig$, and let $d_{A}^{+}$ be the
linearized anti-self-duality operator acting on $\g_{P}$-valued
$1$-forms, defined using the metric $\orbig$. Let $\mathcal{D}$ be the
operator
\begin{equation}\label{eq:D-4d}
                \mathcal{D}= 
                d^{+}_{A} \oplus -d^{*}_{A}
\end{equation}
acting on the spaces
\begin{equation*}
                \orbiL^{2}_{k}(\orbiX;\g_{\orbiP}\otimes
                \Lambda^1)
                \to
                \orbiL^{2}_{k-1}(\orbiX;\g_{\orbiP}\otimes
                (\Lambda^{+} \oplus \Lambda^{0}))   .            
\end{equation*}
Then in the orbifold setting $\mathcal{D}$ is a Fredholm
operator. (See for example \cite{KawaskiindVmfld} and compare with
\cite[Proposition 4.17]{KM-gtes-I}.)

We now wish to define a moduli space of anti-self-dual connections as
\[
            M(X,\Sigma,\bP) =
                    \{ \, A \in \cA_k \mid F^{+}_{A} = 0 \, \}
                    \bigm/ \G_{k+1}.
\]
Following \cite{KM-gtes-I}, there
is a Kuranishi model for the neighborhood of a  connection $[A]$
in $M(X,\Sigma,\bP)$ described by a Fredholm complex.  The Kuranishi theory then tells us,
in particular, that if $A$ is irreducible and the
operator $d^{+}_{A}$
is surjective, then a neighborhood of $[A]$ in $M(X,\Sigma,\bP)$ is a
smooth manifold,
and its dimension is equal to  the index of $\mathcal{D}$.

\subsection{Examples of moduli spaces} 
\label{subsec:RP2-examples}

The quotient of $\bar{\CP}^{2}$ by the action
of complex conjugation can be identified with $S^{4}$, containing a
copy of $\RP^{2}$ as branch locus. The self-intersection number of
this $\RP^{2}$ in $S^{4}$ is $+2$. The pair $(S^{4},\RP^{2})$ obtains
an orbifold metric from the standard Riemannian metric on
$\bar\CP^{2}$. We shall describe the corresponding 
moduli spaces $M(S^{4}, \RP^{2}, \bP)$ for various choices of singular
bundle data $\bP$.  

 On $\bar{\CP}^{2}$ with the
standard Riemannian metric, there is a unique anti-self-dual $\SO(3)$
connection with $\kappa=1/4$. This connection $A_{\bar\CP^{2}}$ is
reducible and has non-zero $w_{2}$: it splits as $\R\oplus L$, where
$L$ is an oriented $2$-plane bundle with $e(L)[\CP^{1}] = 1$. (See
\cite{Donaldson-Kronheimer} for example.) We can view $L$ as the
tautological line bundle on $\bar\CP^{2}$, and as such we see that the
action of complex conjugation on $\bar\CP^{2}$ lifts to an involution
on $L$ that is orientation-reversing on the fibers. This involution
preserves the connection. Extending the involution to act as $-1$ on
the $\R$ summand, we obtain an involution on the $\SO(3)$ bundle,
preserving the connection. The quotient by this involution is an
anti-self-dual connection $A$ on $S^{4}\setminus\RP^{2}$ for the
orbifold metric. It has $\kappa=1/8$. This orbifold connection
corresponds to singular bundle data $\bP$ on $(S^{4}, \RP^{2})$ with
$\Delta$ trivial. The connection is irreducible, and it is regular
(because $d^{+}$ is surjective when coupled to $A_{\bar\CP^{2}}$
upstairs).

This anti-self-connection is unique, in the following strong sense,
amongst solutions with $\Delta$ trivial and $\kappa=1/8$. To explain
this, suppose that we have $[A] \in M(S^{4}, \RP^{2}, \bP)$ and $[A']
\in M(S^{4}, \RP^{2}, \bP')$ are two solutions with $\kappa=1/8$, and
that trivializations of the corresponding local systems $\Delta$ and
$\Delta'$ are given. When lifted to
$\bar\CP^{2}$, both solutions must give the same $\SO(3)$ connection
$A_{\bar\CP^{2}}$ up to gauge transformation, in the bundle $\R\oplus
L$. There are two different ways to lift the involution to $L$ on
$\bar\CP^{2}$, differing in overall sign, but these two involutions on
$L$ are intertwined by multiplication by $i$ on $L$. It follows that,
as $\SO(3)$-bundles with connection on $S^{4}\setminus \RP^{2}$, the
pairs $(P,A)$ and $(P',A')$ are isomorphic. Such an isomorphism of
bundles with connection extends canonically to an isomorphism $\psi$ from
$\bP$ to $\bP'$. A priori, $\psi$ may not preserve the given
trivializations of $\Delta$ and $\Delta'$. However, the connection $A$
on $P$ has structure group which reduces to $O(2)$, so it has a $\Z/2$
stabilizer in the $\SO(3)$ gauge group on $S^{4}\setminus\RP^{2}$. The
non-trivial element of this stabilizer is an automorphism of $P$ that
extends to an automorphism of $P_{\Delta}$ covering the non-trivial
involution $t$ on $X_{\Delta}$. So $\psi$ can always be chosen to
preserve the chosen trivializations.

When $\Delta$ is trivialized, singular bundle data $\bP$ is classified
by the evaluation of $p_{1}(P_{\Delta})$ on $[D]$ and on
$[X_{+}]$ in the notation of subsection~\ref{subsec:inst-monop-numb},
or equivalently by its instanton and monopole numbers $k$ and $l$. The
uniqueness of $[A]$ means that the corresponding singular bundle data
$\bP$ must be invariant under the involution $t$, which in turn means
that $p_{1}(P_{\Delta})[D]$ must be zero. Using the formulae for $k$,
$l$ and $\kappa$, we see that $\bP$ has $k=0$ and $l=1/2$; or
equivalently, $p_{1}(P_{\Delta})=0$. We summarize this discussion
with a proposition.

\begin{proposition}
\label{prop:RP2-example-prop}
    Let $(S^{4},\RP^{2})$ be as above, so that the branched
    double-cover is $\bar\CP^{2}$. Fix a trivial and trivialized
    double-cover $\Delta$ of $\RP^{2}$, and let $S^{4}_{\Delta}$ be
    the corresponding space. Then, amongst singular bundle data with
     $\kappa=1/8$, there is exactly one $P_{\Delta} \to
     S^{4}_{\Delta}$ with a non-empty moduli space, namely the one
     with $p_{1}(P_{\Delta})=0$ in
     $H^{4}(S^{4}_{\Delta})=\Z\oplus\Z$. The corresponding moduli space
     is a single point, corresponding to an irreducible, regular
     solution. \qed
\end{proposition}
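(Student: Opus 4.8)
The plan is to combine two ingredients: the explicit construction of one anti-self-dual orbifold connection already given just before the proposition, and a pullback-to-the-branched-cover argument to show that \emph{every} solution with $\kappa = 1/8$ and trivial $\Delta$ arises this way. First I would recall that the branched double-cover of $(S^4,\RP^2)$ is $\bar\CP^2$ with the standard metric, and that an orbifold anti-self-dual connection $A$ on $S^4\sminus\RP^2$ with $\kappa=1/8$ pulls back to a genuine (smooth) anti-self-dual $\SO(3)$ connection $\tilde A$ on $\bar\CP^2$ with $\kappa(\tilde A)=2\kappa(A)=1/4$; the doubling of $\kappa$ is exactly the degree-$2$ covering, and the holonomy condition $\alpha=1/4$ is what makes the pulled-back connection extend smoothly across the branch locus. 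The hypothesis that $\Delta$ is trivial is used here: with $\Delta$ non-trivial the pullback would live on a non-trivial double cover rather than on $\bar\CP^2$ itself, so restricting to trivial $\Delta$ is precisely what lets us land on $\bar\CP^2$.

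Next I would invoke the uniqueness statement on $\bar\CP^2$: the only anti-self-dual $\SO(3)$ connection with $\kappa=1/4$ is the reducible connection $A_{\bar\CP^2}$ in $\R\oplus L$ (cited from Donaldson--Kronheimer), and it carries the involution covering complex conjugation as described in the text. Thus $\tilde A \cong A_{\bar\CP^2}$, and running the quotient construction backwards shows $A$ is gauge-equivalent to the explicit orbifold connection already exhibited, with the ambiguity of lifting the involution to $L$ absorbed by multiplication by $i$, exactly as in the paragraph preceding the proposition. This gives both existence and uniqueness (up to gauge, after fixing the trivialization of $\Delta$) of the solution, and simultaneously the identification of the underlying singular bundle data: invariance under $t$ forces $p_1(P_\Delta)[D]=0$, and then the Chern--Weil formula together with $\kappa=1/8$ and the formulae for $k$ and $l$ in section~\ref{subsec:inst-monop-numb} pins down $k=0$, $l=1/2$, i.e.\ $p_1(P_\Delta)=0$ in $H^4(S^4_\Delta)\cong\Z\oplus\Z$. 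Finally, regularity of the solution follows because $d^+$ coupled to $A_{\bar\CP^2}$ upstairs is surjective (a standard fact for this reducible connection on $\bar\CP^2$), and surjectivity descends to the $\Z/2$-invariant part, which is the deformation complex of the orbifold connection; irreducibility was already checked in the construction.

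The main obstacle I anticipate is making the pullback/pushforward correspondence between orbifold connections on $(S^4,\RP^2)$ and honest connections on $\bar\CP^2$ genuinely bijective at the level of \emph{gauge-equivalence classes} and \emph{bundle data}, rather than just at the level of connections in a fixed bundle. One has to track how the involution $t$ on $S^4_\Delta$ interacts with the complex-conjugation involution on $\bar\CP^2$, verify that an isomorphism of $\SO(3)$-bundles-with-connection on $S^4\sminus\RP^2$ extends canonically across the branch locus to an isomorphism of singular bundle data (this is where the $O(2)$-reduction and its $\Z/2$-stabilizer enter), and check that this stabilizer acts on trivializations of $\Delta$ so that uniqueness is not spoiled by the choice of trivialization. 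The text preceding the proposition has already done most of this bookkeeping, so the proof is largely a matter of assembling those observations; the residual care is in confirming that the count $H^4(S^4_\Delta)\cong\Z\oplus\Z$ from Lemma~\ref{lem:H4-XLambda} (with $s=1$, $\Delta$ trivial) really does leave no room for a second $t$-invariant bundle with $\kappa=1/8$.
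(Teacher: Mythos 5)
Your proposal is correct and follows essentially the same route as the paper: the paper's proof is exactly the discussion preceding the proposition, which constructs the solution as the $\Z/2$-quotient of the reducible $\kappa=1/4$ instanton in $\R\oplus L$ on $\bar\CP^{2}$, proves uniqueness by lifting any $\kappa=1/8$ solution to the branched cover and absorbing the sign ambiguity in the lift of the involution by multiplication by $i$ on $L$ (with the $O(2)$-stabilizer handling the trivialization of $\Delta$), and then pins down $p_{1}(P_{\Delta})=0$ from $t$-invariance together with the formulae for $\kappa$, $k$ and $l$; regularity likewise descends from surjectivity of $d^{+}$ upstairs. The only inaccuracy is an aside: for non-trivial $\Delta$ the pullback still lives on $\bar\CP^{2}$ (the branched double cover is independent of $\Delta$; the paper's non-trivial-$\Delta$ example is the flat bundle upstairs, with $\kappa=0$), but since the proposition fixes $\Delta$ trivial this does not affect your argument.
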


On the same pair $(S^{4},\RP^{2})$, there is also a solution in a
moduli space corresponding to singular bundle data $\bP$ with $\Delta$
non-trivial. This solution can be described in a similar manner to the
previous one, but starting with \emph{trivial} $\SO(3)$ bundle on
$\bar{\CP}^{2}$, acted on by complex conjugation, lifted as an
involution on the bundle as an element of order $2$ in $\SO(3)$. The
resulting solution on $(S^{4},\RP^{2})$ has $\kappa=0$ and $\Delta$
non-trivial. Knowing that $\kappa=0$ is enough to pin down
$P_{\Delta}\to S^{4}_{\Delta}$ up to isomorphism in this case, because
$H^{4}(S^{4}_{\Delta})$ is now $\Z$. This solution $[B]$ is
reducible. It is regular, for similar reasons as arise in the previous
case. The index of $\mathcal{D}$ in this case is therefore $-1$.

\medskip One can also consider the quotient of $\CP^{2}$, rather then
$\bar\CP^{2}$, with respect to the same involution, which leads to a
pair $(S^{4},\RP^{2}_{-})$ with $\RP^{2}_{-}\cdot
\RP^{2}_{0}=-2$. There is an isolated anti-self-dual $\PSU(2)$
connection on $\CP^{2}$, arising from the $U(2)$ connection given by
the Levi-Civita derivative in $T\CP^{2}$. This gives rise to a
solution on $(S^{4},\RP^{2}_{-})$ with $\Delta$ trivial and
$\kappa=3/8$. As solutions with $\Delta$ trivialized (rather than just
trivial), this solution gives rise to two different solutions, with
$(k,l)=(0,1/2)$ and $(k,l)=(1,-3/2)$, as the reader can verify. The
operator $\mathcal{D}$ is again invertible for these two
solutions. The flat connection $[B]$ on $\CP^{2}\setminus\RP^{2}$
provides another solution, with $\kappa=0$ and $\Delta$
non-trivial. This solution is reducible and the operator $d^{+}_{B}$
now has $2$-dimensional cokernel, so that $\mathcal{D}$ has index $-3$.

\subsection{The dimension formula} 

We next compute the index of the operator $\mathcal{D}$ for a
connection $A$ in $\cA_{k}(X,\Sigma,\bP)$. The index of $\mathcal{D}$
will coincide with the dimension of the moduli space $M(X,\Sigma,\bP)$
in the neighborhood of any irreducible, regular solution.

\begin{lemma}\label{lem:index}
The index of $\mathcal{D}$ is given by
    \[
    8\kappa(A)-\frac{3}{2}\bigl(\chi(X)+\sigma(X)\bigr)+\chi(\Sigma) +    
 \frac{1}{2} (\Sigma \cdot \Sigma)
    \]
where $\kappa$ is again the action \eqref{eq:action}.
\end{lemma}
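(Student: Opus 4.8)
The plan is to compute the index of $\mathcal{D}=d^{+}_{A}\oplus -d^{*}_{A}$ by the excision principle for indices, reducing the global orbifold computation to (a) a standard closed-manifold contribution and (b) a local contribution supported near $\Sigma$ that depends only on the model singularity with $\alpha=1/4$. First I would note that the index is deformation-invariant in $A$ within $\cA_k(X,\Sigma,\bP)$, so we may compute it for the model singular connection $A_1$; moreover, since both sides of the claimed formula are additive under disjoint union and depend only on $\kappa$ and the topology of $(X,\Sigma)$, it suffices to pin down the coefficients by the Atiyah--Singer--Patodi index theorem for the orbifold $\orbiX$ together with a check on the examples already computed in section~\ref{subsec:RP2-examples}.

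The main computation is the orbifold $G$-index theorem (cf.\ \cite{KawaskiindVmfld}) applied to $\mathcal{D}$ on $\orbiX$ with cone angle $\pi$ along $\Sigma$. The index splits as an integral over the smooth part, reproducing the familiar closed four-manifold expression $8\kappa(A)-\tfrac{3}{2}(\chi(X)+\sigma(X))$ coming from $-p_1(\g_P)-\tfrac{3}{2}(\chi+\sigma)$ with $\kappa$ playing the role of the instanton action, plus a contribution localized on the singular stratum $\Sigma$. The latter is an integral over $\Sigma$ of a local index density determined entirely by the holonomy parameter $\alpha=1/4$ and the normal geometry; by the local nature of the formula it is a universal linear combination $a\,\chi(\Sigma)+b\,(\Sigma\cdot\Sigma)$. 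The coefficient of $\Sigma\cdot\Sigma$ absorbs both the genuine normal self-intersection term and the correction to $\kappa$ versus $c_2$ across $\Sigma$; the coefficient of $\chi(\Sigma)$ comes from the Euler-class term in the normal contribution. This is exactly the case $\nu=2$ of the computation in \cite{KM-gtes-I} and its extension in \cite{KM-knot-singular}, so the essential point is that dropping orientability of $\Sigma$ and allowing $w_2(P_\Delta)$ nonzero on the linking tori does not change the local density: near any point of $\Sigma$ the picture is identical to the orientable case after passing to the local branched double cover, and the quantities $\chi(\Sigma)$ and $\Sigma\cdot\Sigma$ are the correct global invariants even when $\Sigma$ is non-orientable (as remarked after \eqref{eq:l-formula}).

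Concretely I would carry this out in four steps: (1) reduce to $A=A_1$ and invoke deformation invariance and excision to write $\ind\mathcal{D} = \ind_{\mathrm{closed}} + \ind_{\mathrm{local}}(\Sigma)$; (2) evaluate $\ind_{\mathrm{closed}}$ as $8\kappa-\tfrac32(\chi(X)+\sigma(X))$ using that $\kappa$ is the Chern--Weil action \eqref{eq:action}; (3) identify $\ind_{\mathrm{local}}(\Sigma)$ with the $\alpha=1/4$ specialization of the Kronheimer--Mrowka normal contribution, giving $\chi(\Sigma)+\tfrac12(\Sigma\cdot\Sigma)$ — here one may either redo the eta-invariant/residue computation on the disk bundle with cone angle $\pi$, or simply match coefficients against the known answer in \cite{KM-gtes-I} in the orientable case and observe the density is unchanged; (4) confirm consistency with the examples of section~\ref{subsec:RP2-examples}: for instance the regular irreducible solution on $(S^4,\RP^2)$ with $\kappa=1/8$ should give index $8(1/8)-\tfrac32(2+0)+\chi(\RP^2)+\tfrac12(2) = 1-3+1+1 = 0$, matching a $0$-dimensional moduli point, and the reducible solutions with $\kappa=0$ or $\kappa=3/8$ should yield the indices $-1$ and $-3$ recorded there. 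The main obstacle is step (3): being sure that the non-orientability of $\Sigma$ and the non-trivial $w_2$ on the normal sphere bundle produce no extra term. The resolution is that the local index density is computed in a neighborhood where, after the branched double cover, the bundle data is pulled back from the orientable local model at $\alpha=1/4$, so the only global input is through $\chi(\Sigma)$ and the integer $\Sigma\cdot\Sigma$, both of which are defined independently of orientability once $X$ is oriented.
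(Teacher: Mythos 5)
Your proposal is correct and follows essentially the same route as the paper: homotopy invariance of the index and excision reduce the computation to the known orientable formula of \cite{KM-gtes-I} together with one new non-orientable model case, which is pinned down by the $(S^{4},\RP^{2})$ examples of section~\ref{subsec:RP2-examples}. One small slip in your step (4): the solutions with $\kappa=3/8$ on $(S^{4},\RP^{2}_{-})$ are irreducible and regular with index $0$; the index $-3$ belongs instead to the flat reducible $[B]$ on that pair, with $\kappa=0$ and $\Sigma\cdot\Sigma=-2$.
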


\begin{proof}
   For the case of orientable surfaces with $\Delta$ trivial, this
   formula reduces to the formula proved in 
  \cite{KM-gtes-I}, where it appears as
   \begin{equation}\label{eq:old-k-l-dim}
    8k  + 4l -\frac{3}{2}\bigl(\chi(X)+\sigma(X)\bigr)+\chi(\Sigma).
   \end{equation}
   Just as in \cite{KM-gtes-I}, the general case can be proved by
   repeatedly applying excision and the homotopy invariance of the index,
   to reduce the problem to a few model cases. In addition to the
   model cases from the proof in \cite{KM-gtes-I}, it is now necessary
   to treat one model case in which $w_{1}(\Delta)^{2}$ is non-zero on
   $\Sigma$. Such an example is
   provided by the flat connection $[B]$ on $(S^{4},\RP^{2})$ from the
   previous subsection. In this case, $\kappa$ is $0$ and the formula
   in the lemma above predicts that the index of $\mathcal{D}$ should
   be $-3$. This is indeed the index of $\mathcal{D}$ in this case, as
   we have already seen.
\end{proof}

\subsection{Orientability of moduli spaces}

We continue to consider the moduli space $M(X,\Sigma,\bP)$ associated
to a closed pair $(X,\Sigma)$ equipped with an
orbifold metric and singular bundle data $\bP$. The irreducible,
regular solutions form a subset of $M(X,\Sigma,\bP)$ that is a smooth
manifold of the dimension given by Lemma~\ref{lem:index}, and our next
objective is to show that this manifold is orientable. As usual, the
orientability of the moduli space is better expressed as the
triviality of the real determinant line of the family of operators
$\mathcal{D}$ over the space $\bonf^{*}_{k}(X,\Sigma,\bP)$ of all
irreducible connections modulo the determinant-1 gauge group.

\begin{proposition}
\label{prop:orientable}
    The real line bundle $\det \mathcal{D}$ of the family of
    operators $\mathcal{D}$ over $\bonf^{*}_{k}(X,\Sigma,\bP)$ is trivial.
\end{proposition}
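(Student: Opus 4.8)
The plan is to prove triviality of $\det\mathcal{D}$ over $\bonf^{*}_{k}(X,\Sigma,\bP)$ by the standard excision-and-reduction strategy familiar from Donaldson theory, adapting it to the singular/orbifold setting. First I would recall that the determinant line bundle is defined over the configuration space of all connections (not just ASD ones), so it suffices to work with $\mathcal{D} = d^{+}_{A}\oplus -d^{*}_{A}$ as a family of Fredholm operators parametrized by $A\in\cA_{k}(X,\Sigma,\bP)$, modulo $\G_{k+1}$. Triviality of a real line bundle over $\bonf^{*}_{k}$ is equivalent to the statement that $w_{1}(\det\mathcal{D})=0$, and since $\bonf^{*}_{k}$ is connected, this is equivalent to showing that $\det\mathcal{D}$ is orientable, i.e. that parallel transport of an orientation around any loop in $\bonf^{*}_{k}$ is trivial. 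Because $\pi_{1}(\bonf^{*}_{k})$ is generated by loops coming from $\pi_{0}(\G_{k+1})$ together with loops already present in the based configuration space, the key is to understand the action of the components of the gauge group on orientations, using Lemmas~\ref{lem:pi0H} and~\ref{lem:pi0-K} which compute $\pi_{0}(\Gtop)$.

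The main steps I would carry out are: (1) Reduce to a neighbourhood of a single connection and a generating set of loops, expressing each generator either as a gauge-transformation loop or as a loop supported in a ball where one can use the addition-of-instanton/monopole constructions described in section~\ref{subsec:topol-class-sing}. (2) For the gauge-theoretic loops: a loop in $\bonf^{*}_{k}$ arising from a path in $\cA_{k}$ joining $A$ to $g\cdot A$ for $g$ representing a class in $\pi_{0}(\G_{k+1})\cong\pi_{0}(\Gtop)$ transports the orientation by a sign which is an index-theoretic quantity; one shows this sign is $+1$ by a spectral-flow computation. The classes in $\pi_{0}(\Gtop)$ coming from $H_{1}(X\sminus\Sigma;\Z)$ and the possible extra $\Z/2$ from Lemma~\ref{lem:pi0-K}, as well as the classes mapping onto $H_{1}(\Sigma;\Z_{\Delta})$ from Lemma~\ref{lem:pi0H}, must each be checked. (3) For the ``bubbling'' loops: use the excision principle for the index (as in Lemma~\ref{lem:index}) to reduce the orientation-transport computation to model configurations — a standard instanton on $S^{4}$, a monopole/anti-monopole on the double $D$, and the new case of a connection with $w_{1}(\Delta)^{2}\ne 0$ modelled on the flat solution $[B]$ on $(S^{4},\RP^{2})$ — and verify orientability directly in each model, where the moduli spaces and operators were described explicitly in section~\ref{subsec:RP2-examples}. (4) Assemble: since every generating loop transports orientations trivially, $\det\mathcal{D}$ is trivial.

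The hard part will be step (3) in the genuinely new case, namely the model with $\Delta$ non-trivial on a component of $\Sigma$ so that $w_{1}(\Delta)^{2}$ is non-zero there — this is exactly the feature absent from \cite{KM-gtes-I} and \cite{KM-knot-singular}. One must check that the reducible flat model connection $[B]$ on $(S^{4},\RP^{2})$, whose operator $\mathcal{D}$ has index $-3$, contributes no orientation obstruction, and more importantly that the gauge transformations in $\mathcal{K}$ not lifting to $\SU(2)$ (the $\Z/2$ ambiguity of Lemma~\ref{lem:pi0-K}) and those detecting $H_{1}(\Sigma;\Z_{\Delta})$ act trivially on $\det\mathcal{D}$. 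A clean way to handle the latter is to fix a base connection with an $O(2)$-reduction near $\Sigma$ and track how the determinant line of the linearized operator decomposes under that reduction, reducing the sign to the behaviour of a complex (hence canonically oriented) Dirac-type operator twisted by a line bundle; complex orientations then give triviality. I would also need to check compatibility with the choices of cut-and-paste regions, i.e. that the excision isomorphisms for $\det\mathcal{D}$ are themselves consistent — a routine but careful verification along the lines of \cite{Donaldson-Kronheimer}.
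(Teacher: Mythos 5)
Your overall strategy --- reduce triviality of $\det\mathcal{D}$ to showing that orientation transport is trivial around a set of generating loops of $\pi_{1}(\bonf^{*}_{k})$, which is identified with $\pi_{0}(\G_{k+1})/\{\pm 1\}\cong\pi_{0}(\Gtop)$ and computed by Lemmas~\ref{lem:pi0H} and~\ref{lem:pi0-K} --- is exactly the paper's starting point. But there is a genuine gap in where you then direct the effort. First, your step (3) treats ``bubbling loops'' (addition of instantons and monopoles) as loops in $\bonf^{*}_{k}(X,\Sigma,\bP)$; they are not, since these operations change the singular bundle data $\bP$. They are relevant to the separate statement about propagating orientations between different $\bP$-marked bundles (Proposition~\ref{prop:orientations-propogate}), not to the orientability of $\det\mathcal{D}$ over a fixed $\bonf^{*}_{k}$. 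Second, and more importantly, you misidentify the genuinely new case. It is \emph{not} the situation where $\Delta$ is non-trivial with $w_{1}(\Delta)^{2}\neq 0$ (that model, the flat connection on $(S^{4},\RP^{2})$, is the new ingredient for the \emph{index} formula of Lemma~\ref{lem:index}, not for orientability); indeed, every generator of $\pi_{0}(\calH)\cong H_{1}(\Sigma;\Z_{\Delta})$ is represented by a loop $\gamma\subset\Sigma$ whose tangent bundle is identified with $o(\Delta)$, so $\Delta$ is automatically trivial along any such loop. The new case is a loop $\gamma$ along which $\Delta$ is trivial but $\Sigma$ is \emph{non-orientable}; the generators lying over $X\sminus\Sigma$ and the extra $\Z/2$, as well as loops in orientable parts of $\Sigma$, are already handled in \cite{KM-gtes-II}.

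Your proposed mechanism for the hard case --- reducing to a complex-linear Dirac-type operator via an $O(2)$-reduction and invoking complex orientations --- would require an almost-complex structure adapted to $(X,\Sigma)$, which need not exist on a general closed oriented $4$-manifold (the paper only deploys that technique later, for cylinders $S^{1}\times Y$ and for cobordisms with boundary, precisely because of this limitation), and in any case it does not engage with the non-orientability of $\Sigma$ along $\gamma$. What is actually needed is the following gluing argument: represent the class of $\gamma$ by a loop $\gamma^{*}$ in $\bonf^{*}_{k}$ obtained by gluing a standard monopole from $(S^{4},S^{2})$ onto a fixed connection $A'$ at the point $x\in\gamma$ and dragging $x$ around $\gamma$; the determinant line then satisfies $\delta_{A(x)}=\delta_{A'}\otimes\det(T_{x}\Sigma\oplus\R\oplus\R_{Q})$, where the factors record the gluing parameters. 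The orientation class of this twisting line along $\gamma$ is $o(\Sigma)\otimes o(Q)=o(\Delta)$, which is trivial along $\gamma$ by hypothesis, so orientation transport around $\gamma^{*}$ is trivial. Without this (or an equivalent) computation your proof does not close in the one case that is actually new in this paper.
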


\begin{proof}
    The proof follows that of the corresponding result in
    \cite{KM-gtes-II}, which in turn is based on
    \cite{Donaldson-orientations}.  We must show that the determinant
    line is orientable along all closed loops in
    $\bonf^{*}_{k}(X,\Sigma,\bP)$.
     The fundamental group of
    $\bonf^{*}_{k}(X,\Sigma,\bP)$ is isomorphic to the group of
    components of $\G_{k+1}(X,\Sigma,\bP)/\{\pm 1\}$. The group
    $\pi_{0}(\G_{k+1}(X,\Sigma,\bP)$ is the same as $\pi_{0}(\Gtop)$,
    for which explicit generators can be extracted from
    Lemmas~\ref{lem:pi0H} and \ref{lem:pi0-K}. These generators
    correspond to loops in $X\sminus\Sigma$, loops in $\Sigma$ along
    which $\Delta$ is trivial, and a possible additional $\Z/2$. Of
    these, the only type of generator that is new in the present paper
    is a loop  $\Sigma$ along which $\Delta$ is trivial but $\Sigma$
    is non-orientable. (The orientable case is essentially dealt with
    in \cite{KM-gtes-II}.)  
 
    So let $\gamma$ be such a loop in $\Sigma$ along which $\Delta$ is
    trivialized. There is an element
    $g$ in $\pi_{0}(\G_{k+1})$ which maps to the class $[\gamma]$ in
    $H_{1}(\Sigma;\Z_{\Delta})$ under the map in
    Lemma~\ref{lem:pi0H}. We wish to describe loop $\gamma^{*}$ in
    $\bonf^{*}_{k}$ that represents a corresponding element in
    $\pi_{1}(\bonf^{*}_{k})$. As in \cite[Appendix 1(i)]{KM-gtes-II}, we construct
    $\gamma^{*}$ by gluing in a monopole at and ``dragging it around
    $\gamma$''. To do this, we first let $\bP'$ be singular bundle
    data such that $\bP$ is obtained from $\bP'$ by adding a
    monopole. We fix a connection $A'$ in $\bP'$. We let $J$ be a
    standard solution on $(S^{4}, S^{2})$ with monopole number $1$ and
    instanton number $0$, carried by singular bundle data  with
    $\Delta$ trivialized. For each $x$ in $\gamma$, we form a
    connected sum of pairs,
    \[
                  (X,\Sigma) \#_{x} (S^{4}, S^{2})
     \]
    carrying a connection $A(x) = A' \# J$. Even though $\Sigma$ is
    non-orientable along $\Delta$, a closed loop in
    $\bonf_{k}(X,\Sigma,\bP)$ can be constructed this way, because the
    solution on $(S^{4}, S^{2})$ admits a symmetry which reverses the
    orientation of $S^{2}$ while preserving the trivialization of
    $\Delta$. This is the required loop $\gamma^{*}$. If we write
    $\delta_{A(x)}$ and $\delta_{A'}$ for the determinant lines of
    $\mathcal{D}$ for the two connections, then (much as in
    \cite{KM-gtes-II}) we have the relation
\[
          \delta_{A(x)} = \delta_{A'} \otimes \det(T_{x}\Sigma \oplus
          \R \oplus \R_{Q})        
\]
   where the first $\R$ can be interpreted as the scale parameter in the
   gluing and the factor $\R_{Q}$ is the real orientation bundle of
   $Q$, which arises as the tangent space to the $S^{1}$ gluing
   parameter. Since the product of the orientation bundles of $\Sigma$
   and $Q$ is trivial along $\gamma$ (being the orientation bundle of
   $\Delta$), we see that $\delta$ is trivial along the loop $\gamma^{*}$.    
\end{proof}

The factor $\det(T_{x}\Sigma \oplus \R \oplus \R_{Q})$ above can be
interpreted as the orientation line on the moduli space of framed
singular instantons on the orbifold $(S^{4}, S^{2})$ (or in other
words, the moduli space of finite-action solutions on $(\R^{4},
\R^{2})$ modulo gauge transformations that are asymptotic to $1$ at
infinity. This is a complex space, and therefore has a preferred
orientation, for any choice of instanton and monopole charges. In the
situation that arises in the proof of the previous proposition, there
is therefore a preferred way to orient $\delta_{A(x)}$, given an
orientation of $\delta_{A'}$. To make use of this, we consider the
following setup. Let $\bP$ be singular bundle data, given on
$(X,\Sigma)$, and let us consider the set of
pairs $(\bP' , g')$, where $\bP'$ is another choice of singular bundle
data and 
\[
    g' : \bP'|_{(X_{\Delta}\setminus \bx')} 
    \to \bP|_{(X_{\Delta}\setminus \bx')}
\]
is an isomorphism defined on the complement of a finite set $\bx'$. We
say that $(\bP' , g')$ is isomorphic to 
$(\bP'', g'')$ if there is an isomorphism of singular bundle data, 
$h : \bP' \to \bP''$, such that
the composite $g'' \circ (g')^{-1}$ can be lifted to a
determinant-1 gauge transformation on its domain of definition,
$X_{\Delta}\setminus (\bx' \cup \bx'')$.

\begin{definition}\label{def:P-marked}
    We refer to such a pair $(\bP', g')$ as a $\bP$-marked bundle.
   \CloseDef
\end{definition}

The classification of the isomorphism classes of $\bP$-marked bundles
on $(X,\Sigma)$ can be deduced from the material of
section~\ref{subsec:topol-class-sing}. Every $\bP$-marked bundle can
be obtained from $\bP$ by ``adding instantons and
monopoles''. Furthermore, as in \ref{item:mono-plus-anti} and
\ref{item:mono-plus-mono} on page \pageref{item:mono-plus-anti},
adding a monopole and an anti-monopole to the same orientable
component of $\Sigma$ is equivalent to adding an instanton, while
adding two monopoles to the same non-orientable component is also the
same as adding an instanton. There are no other relations: in
particular, because of the determinant-1 condition in our definition
of the equivalence relation, there is no counterpart here of the
relation \ref{item:Dold-Whitney-extra} from page
\pageref{item:Dold-Whitney-extra}. We now have, as in \cite{KM-gtes-I}:

\begin{proposition}\label{prop:orientations-propogate}
    Using the complex orientations of the framed moduli spaces on
    $(S^{4}, S^{2})$, an orientation for the determinant line of $\calD$ over
    $\bonf(X,\Sigma,\bP)$ determines an orientation of the determinant
    line also over $\bonf(X,\Sigma,\bP')$ for all $\bP$-marked bundles
    $(\bP', g')$. These orientations are compatible with equivalence
    of $\bP$-marked bundles, in that if $h:\bP'\to \bP''$ is an
    equivalence (as above), then the induced map on the corresponding
    determinant line is orientation-preserving. \qed
\end{proposition}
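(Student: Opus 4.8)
The plan is to transport a chosen orientation from $\bP$ to every $\bP$-marked bundle $(\bP',g')$ along the operations of adding instantons and monopoles, using the gluing formula for determinant lines together with the complex structures on the relevant framed moduli spaces. By Proposition~\ref{prop:orientable}, $\det\mathcal{D}$ is a trivial real line bundle over $\bonf(X,\Sigma,\bP')$, so (the space being connected) an orientation of it is the same thing as an orientation of a single fiber; the substantive content is that the fiber orientation produced by the transport is independent of all choices and compatible with equivalences of $\bP$-marked bundles.

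Concretely, given $(\bP',g')$, I would first fix a presentation of $\bP'$ as obtained from $\bP$ by adding instantons at finitely many points $x_{1},\dots,x_{m}$ of $X\sminus\Sigma$ and monopoles at finitely many points $y_{1},\dots,y_{n}$ of $\Sigma$, as described after Definition~\ref{def:P-marked}. Fix a connection $A$ on $\bP$ and form a connection $A'$ on $\bP'$ by connect-summing $A$ with the standard charge-one instanton solution on $S^{4}$ at each $x_{i}$ and the standard monopole solution on $(S^{4},S^{2})$ at each $y_{j}$ (as in the proof of Proposition~\ref{prop:orientable}). The gluing theory of \cite{KM-gtes-I} then provides a canonical isomorphism of determinant lines
\[
   \delta_{A'} \;\cong\; \delta_{A}\otimes\Bigl(\,\bigotimes_{i}\delta^{\mathrm{fr}}_{x_{i}}\Bigr)\otimes\Bigl(\,\bigotimes_{j}\delta^{\mathrm{fr}}_{y_{j}}\Bigr),
\]
where $\delta^{\mathrm{fr}}_{x_{i}}$ is the orientation line of the framed moduli space of instantons on $S^{4}$ and $\delta^{\mathrm{fr}}_{y_{j}}$ is the orientation line of the framed moduli space of singular instantons on $(S^{4},S^{2})$; in the monopole case the latter factor is exactly the $\det(T\Sigma\oplus\R\oplus\R_{Q})$-type factor (at the gluing point) that appeared in the proof of Proposition~\ref{prop:orientable}. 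Both framed moduli spaces are complex manifolds, hence carry preferred orientations, and these canonically orient all the factors $\delta^{\mathrm{fr}}$. Thus an orientation of $\delta_{A}$, equivalently of $\det\mathcal{D}$ over $\bonf(X,\Sigma,\bP)$, determines one of $\delta_{A'}$, and hence of $\det\mathcal{D}$ over $\bonf(X,\Sigma,\bP')$.

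It then remains to check well-definedness and compatibility with equivalence. Independence of the order of the gluings is immediate, since tensor products of $1$-dimensional oriented lines are canonically commutative, and independence of the chosen gluing locations follows by moving the points along paths. The real point is that the two ways of producing an instanton --- from a monopole together with an anti-monopole on an orientable component of $\Sigma$, and from two monopoles on a non-orientable component --- contribute the same \emph{oriented} line; this is the analogue of the verification carried out in \cite{KM-gtes-I}, and reduces to comparing the complex orientations of the framed moduli spaces over $(S^{4},S^{2})$ across the corresponding boundary gluings. For an equivalence $h:\bP'\to\bP''$: near the glued points the needed identifications come from automorphisms of the standard model solutions, which preserve their complex orientations, while the remaining ambiguity $g''\circ(g')^{-1}$ lifts by hypothesis to a determinant-$1$ gauge transformation, and such transformations act trivially on orientations of $\det\mathcal{D}$ --- precisely the triviality of $\det\mathcal{D}$ over $\bonf(X,\Sigma,-)$ given by Proposition~\ref{prop:orientable}. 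This is exactly why Definition~\ref{def:P-marked} uses the determinant-$1$ condition: an $\SO(3)$ gauge transformation not lifting to $\SU(2)$ could reverse the orientation, so no analogue of relation~\ref{item:Dold-Whitney-extra} intervenes.

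I expect the main obstacle to be the comparison in the previous paragraph: confirming that the complex orientations of the framed moduli spaces over $(S^{4},S^{2})$ are mutually consistent under the relations ``monopole plus anti-monopole equals instanton'' and ``two monopoles equals instanton'', so that the transported orientation is genuinely independent of the presentation. This is the step that relies on the detailed gluing-and-orientation analysis of \cite{KM-gtes-I,KM-gtes-II} rather than on formal properties of determinant lines alone.
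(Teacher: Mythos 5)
Your proposal is correct and follows essentially the approach the paper intends: the paper in fact offers no proof of Proposition~\ref{prop:orientations-propogate} beyond citing \cite{KM-gtes-I}, and the surrounding text sets up exactly the transport-by-gluing mechanism you describe, with the factor $\det(T_{x}\Sigma\oplus\R\oplus\R_{Q})$ canonically oriented as the orientation line of a complex framed moduli space on $(S^{4},S^{2})$ and the determinant-$1$ condition in Definition~\ref{def:P-marked} eliminating the problematic relation~\ref{item:Dold-Whitney-extra}. You have also correctly isolated the one step that genuinely requires the detailed gluing analysis of \cite{KM-gtes-I,KM-gtes-II} rather than formal properties of determinant lines, namely the compatibility of the complex orientations under ``monopole plus anti-monopole equals instanton'' and ``two monopoles equals instanton.''
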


\begin{remark}
    The reason for the slightly complex setup in the above proposition
    is that we do not know in complete generality whether gauge
    transformations that do not belong to the determinant-1 gauge
    group give rise to orientation-preserving maps on the moduli
    spaces $M(X,\Sigma,\bP)$. We will later obtain some partial
    results in this direction, in section~\ref{subsec:almost-complex};
    but those results apply, as they stand, only to the case that
    $\Sigma$ is orientable.
\end{remark}

\section{Singular instantons and Floer homology for knots}
\label{sec:sing-inst-floer}

In this section we review how to adapt the gauge theory for singular
connections on pairs $(X,\Sigma)$ to the three-dimensional case of a
link $K$ in a $3$-manifold $Y$, as well as the case of a $4$-manifold
with cylindrical ends. This is all adapted from
\cite{KM-knot-singular}: as in section~\ref{sec:sing-inst-non}, the
new ingredient is that we are no longer assuming that our $\SO(3)$
bundle $P$ on $Y\sminus K$ extends over $K$.

\subsection{Singular connections in the 3-dimensional case}

We fix a closed, oriented, connected
three manifold $Y$ containing a knot or link $K$. The definition of
\emph{singular bundle data} from
Definition~\ref{def:singular-bundle-data} adapts in a straightforward
way to this $3$-dimensional case: such data consists of a double-cover
$K_{\Delta} \to K$, an $\SO(3)$ bundle $P_{\Delta}$ on the
corresponding non-Hausdorff space $Y_{\Delta}$, and a reduction of
structure group to $O(2)$ in a neighborhood of $K_{\Delta}\subset
Y_{\Delta}$, taking a standard form locally along $K_{\Delta}$. Note
that $K$ is always orientable, but has not been oriented. A choice of
orientation for $K$ will fix an isomorphism between the local system
$\Delta$ and the orientation bundle of the $O(2)$ reduction $Q$ in the
neighborhood of $K$. For a given $\Delta$, we can also form the
Hausdorff space $Y^{h}_{\Delta}$, which contains a copy of $2$-sphere
bundle over $K$ (non-orientable if $\Delta$ is non-trivial). The $\SO(3)$-bundle
$P_{\Delta}$ has $w_{2}$ non-zero on the $S^{2}$ fibers. Such
$w_{2}'s$ form an affine copy of $H^{2}(Y;\Z/2)$ in
$H^{2}(Y_{\Delta};\Z/2)$, 
and they classify singular bundle data for the given $\Delta$.

Equipping $Y$ with
an orbifold structure along $K$ with cone angle $\pi/2$ and a compatible
orbifold  Riemannian metric $\orbig$, we can regard singular bundle
data $\bP$ as determining an orbifold bundle $\orbiP \to \orbiY$, as
in the $4$-dimensional case. Thus we construct Sobolev spaces
\[
 \orbiL^2_{k}(\orbiY;\g_{\orbiP}\otimes \Lambda^q)
\]
as before, leading to spaces of $\SO(3)$ connections $\cA_k(Y,K,\bP)$
and determinant-1 gauge transformations
$\cG_{k+1}(Y,K,\bP)$.  The space of connections  $\cA_k(Y,K,\bP)$ is an affine space, and on the 
tangent space 
\[
T_{B}\cA_k(Y,K,\bP)=\orbiL^2_{k}(\orbiY; \g_{\orbiP}\otimes T^{*}\orbiY )
\] 
we define an $L^{2}$ inner product (independent of $B$) by
\begin{equation}\label{eq:inner-prod}
            \langle b, b' \rangle_{L^{2}} = \int_{Y}
             -\tr  (*b \wedge  b'),
\end{equation}
Here $\tr$ denotes the Killing form on $\su(2)$ and 
the Hodge star is the one defined by the
singular metric $\orbig$. We have the \emph{Chern-Simons functional} on
$\cA_k(Y,K,\bP)$
characterized by
\[
           ( \grad \CS)_{B} = *F_{B}.
\]
Critical points of $\CS$ are the flat connections in $\cA_{k}(Y,K,\bP)$.
We denote the set of gauge equivalence classes
of critical points by $\crit=\crit(Y,K,\bP)$.

\subsection{The components of the gauge group on a three manifold.}

We can analyze the component group $\pi_{0}(\G_{k+1}(Y,K,\bP))$ much as
we did in the $4$-dimensional case. The group $\G_{k+1}(Y,K,\bP)$ is
homotopy equivalent to the $\Gtop$, the continuous gauge
transformations of $P_{\Delta}$ respecting the reduction, and 
we have a short exact sequence,
\[
        0 \to \Z \to \pi_{0}(\G_{k+1}(Y,K,\bP)) \to H_{0}(K;
        \Z_{\Delta}) \to 0.
\]
(See Lemma~\ref{lem:pi0H}.) 
The $\Z$ in the kernel has  a generator $g_{1}$ represented by a
gauge transformation supported in a ball disjoint from $K$. The group
$H_{0}(K;\Z_{\Delta})$ arises as $\pi_{0}(\calH)$, where $\calH$ is
the group of sections of the bundle $H_{\Delta}\to K$ with fiber
$S^{1}$. The group $H_{0}(K;\Z_{\Delta})$ is a direct sum of one copy
of $\Z$ for each component of $K$ on which $\Delta$ is trivial and one
copy of $\Z/2$ for each component on which $\Delta$ is non-trivial.

The above sequence is not split in general. We can express the
component group
$\pi_{0}(\G_{k+1}(Y,K,\bP))$ as having generators $g_{1}$ (in the
kernel) and one generator $h_{i}$ for each component $K_{i}$ of $K$,
subject to the relations
\[
           2 h _{i} = g_{1}
\]
whenever $\Delta|_{K_{i}}$ is non-trivial.

The Chern-Simons functional is invariant under the identity component
of the gauge group. Under the generators $g_{1}$ and $h_{i}$ it
behaves as follows:
\[
\begin{aligned}
    \CS(g_{1}(A)) &= \CS(A) - 4\pi^{2} \\
    \CS(h_{i}(A)) &= \CS(A) - 2\pi^{2}.
\end{aligned}
\]

\subsection{Reducible connections and the non-integral condition}

For the construction of Floer homology it is important to understand
whether there are reducible connections in $\cA(Y,K,\bP)$, or at least
whether the critical points of $\CS$ are irreducible. 

This is
discussed in \cite{KM-knot-singular} for the case that $P$ extends to
$Y$ (or equivalently, the case that $\Delta$ is trivial). See in
particular the ``non-integral condition'' of 
\cite[Definition 3.28]{KM-knot-singular}. For this paper, 
we make the following adaptation of the definition, whose
consequences are summarized in the following proposition.

\begin{definition}\label{def:non-int}
Let singular bundle data $\bP$ on $(Y,K)$ be given. We say that an embedded closed
oriented surface $\Sigma$ is a \emph{non-integral surface} if either
\begin{itemize}
\item $\Sigma$ is disjoint from $K$ and $w_{2}(P)$ is non-zero on $\Sigma$; or
\item $\Sigma$ is transverse to $K$ and $K\cdot\Sigma$ is odd.
\end{itemize}
We say that $\bP$ satisfies the \emph{non-integral condition} if there
is a non-integral surface $\Sigma$ in $Y$.
\CloseDef
\end{definition}

\begin{proposition}\label{prop:non-integral}
    If singular bundle data $\bP$ on $(Y,K)$ satisfies the
    non-integral condition, then the Chern-Simons functional on
    $\conf_{k}(Y,K,\bP)$ has no reducible critical
    points. Furthermore, if $\Delta$
    is non-trivial on any component of $K$, 
    then a stronger conclusion holds: the configuration space 
    $\conf_{k}(Y,K,\bP)$ contains no reducible
    connections at all.
\end{proposition}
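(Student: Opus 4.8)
The plan is to analyze reducible connections in $\conf_k(Y,K,\bP)$ by their holonomy decomposition and use the non-integral surface $\Sigma$ to obstruct the required reduction. First I would set up the standard dichotomy: a connection $A$ in $\conf_k(Y,K,\bP)$ is reducible precisely when its $\SO(3)$ structure group reduces (away from $K$, and compatibly with the $O(2)$ structure near $K$) to either $O(2)$ or to a smaller group; in the orbifold setting the relevant statement is that the orbifold $\SO(3)$ bundle $\orbiP$ admits a reduction to an $O(2)$ sub-bundle $\xi$. A \emph{critical} reducible is moreover flat, so in that case the reduction $\xi$ carries a flat $O(2)$ connection; this is the weaker conclusion. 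For the stronger conclusion (no reducibles at all, flat or not), we work purely topologically with the orbifold bundle.

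The key computation is to evaluate $w_2$ (or the appropriate mod-$2$ characteristic class) against $\Sigma$. If $\orbiP$ reduces to $O(2)$, with associated real line bundle $\det\xi$, then $w_2(\orbiP) = w_1(\xi)^2$ in $H^2(Y;\Z/2)$ modulo the contribution along $K$ — more precisely, on the orbifold one has a relation expressing $w_2$ of the reduced bundle in terms of $w_1(\xi)$ together with the data of the $O(2)$ reduction near $K$ (the orientation bundle $\R_Q$, which along $K$ is tied to $\Delta$). I would track this relation carefully. In the first bullet case ($\Sigma$ disjoint from $K$), a reduction would force $w_2(\orbiP)|_\Sigma = w_1(\xi|_\Sigma)^2$; but $w_1(\xi|_\Sigma)^2 = \beta(w_1(\xi|_\Sigma))\bmod 2$ paired against the $\Z/2$-fundamental class is $w_1(\xi|_\Sigma)^2[\Sigma]$, and the point is that for an $O(2)$-reduction this square is constrained so that $\langle w_2(\orbiP),[\Sigma]\rangle$ cannot be the nonzero value it is assumed to take. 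In the second bullet case ($\Sigma$ transverse to $K$ with $K\cdot\Sigma$ odd), the relevant evaluation of the orbifold $w_2$ against $[\Sigma]$ picks up the mod-$2$ intersection number $K\cdot\Sigma$, which is odd, again contradicting the existence of a reduction (for which the corresponding evaluation would be a square and hence forced into the wrong coset). This is essentially the argument of \cite[Definition 3.28 ff.]{KM-knot-singular}, and the local model near $K$ is exactly the $O(2)$-reduction data of Definition~\ref{def:singular-bundle-data}, so the computation goes through verbatim except for bookkeeping of the orientation line bundles.

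The remaining point is the strengthening when $\Delta$ is nontrivial on some component $K_i$: here one rules out \emph{all} reducibles, not merely critical ones, so the flatness hypothesis is unavailable and one cannot use a flat connection to produce the line bundle $\xi$ globally. The resolution is that when $\Delta|_{K_i}$ is nontrivial, the $O(2)$-reduction near $K_i$ is of a type that does not itself extend, and a global $O(2)$-reduction of $\orbiP$ would in particular restrict near $K_i$ to one compatible with the prescribed local model; one then checks that no such reduction of $\orbiP$ exists as a bundle, using Lemma~\ref{lem:H2-XLambda} and the classification of singular bundle data, independently of any connection. Concretely: a global reduction would give a class $w_1(\xi)\in H^1(Y_\Delta;\Z/2)$ with $w_1(\xi)^2$ (corrected by the $\Delta$-twisted term) equal to $w_2(P_\Delta)$, and the nontriviality of $\Delta$ on $K_i$ makes the $2$-sphere-fiber component of $w_2(P_\Delta)$ incompatible with being such a (corrected) square. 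I expect the main obstacle to be precisely this last step: getting the orientation-bundle bookkeeping right so that the "corrected square" relation is stated correctly in the non-orientable, non-extendable setting, and then verifying that the assumed non-integral surface still detects the obstruction — i.e. checking that the non-integral condition is genuinely incompatible with the existence of the reduction at the level of the orbifold bundle, with no appeal to flatness.
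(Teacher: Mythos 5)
Your handling of the first assertion is essentially the paper's: restrict a flat reducible critical point to the non-integral surface $\Sigma$ and run the familiar obstruction from \cite{KM-knot-singular}. One correction to your bookkeeping, though: a merely topological $O(2)$-reduction over $\Sigma$ does \emph{not} force $\langle w_{2},[\Sigma]\rangle=0$, and the relation $w_{2}=w_{1}(\xi)^{2}$ is not the right identity (for $P=\xi\oplus Q$ one gets $w_{2}(P)=w_{1}(\xi)^{2}+w_{2}(Q)$, and the flat connection on $T^{2}$ with holonomy the Klein four-group inside $O(2)\subset\SO(3)$ has $w_{2}\neq 0$ — but that connection is \emph{irreducible}, since the lifts of its holonomy generate a quaternion group). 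What reducibility actually gives you is a stabilizer strictly larger than $\{\pm1\}$, hence holonomy contained in a single $\SO(2)$, hence a parallel splitting $\R\oplus L$ with $L$ flat; it is $c_{1}(L)=0$, together with the odd number of order-two meridional holonomies when $\Sigma$ meets $K$, that yields the contradiction.

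The genuine gap is in the second assertion. You propose a purely topological argument: that $\bP$ admits no $O(2)$-reduction as a bundle, detected by Lemma~\ref{lem:H2-XLambda} and a ``corrected square'' relation for $w_{2}(P_{\Delta})$. This cannot work: the singular bundle data \emph{includes} an $O(2)$-reduction of $P_{\Delta}$ near $K_{\Delta}$ by definition, and in any case irreducibility of a connection is not a characteristic-class statement about the bundle — it is a statement about holonomy, which is exactly why no appeal to flatness is needed. The paper's argument is local and short: for \emph{any} connection in $\conf_{k}(Y,K,\bP)$, the asymptotic holonomy near a component $K'$ with $\Delta|_{K'}$ nontrivial lies in $O(2)$ and contains (a) the meridional holonomy, an order-two element of $\SO(2)$, and (b) the longitudinal holonomy, which lies in $O(2)\setminus\SO(2)$ precisely because the orientation bundle of $Q$ along $K'$ is identified with $\Delta|_{K'}$ and is therefore nontrivial. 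These two rotations by $\pi$ about orthogonal axes lift to anticommuting order-four elements of $\SU(2)$ generating a quaternion group of order eight, whose centralizer in $\SU(2)$ is $\{\pm1\}$; so the stabilizer is $\{\pm1\}$ and the connection is irreducible. (Equivalently: reducibility would confine the holonomy to one $\SO(2)$, whose axis must be the fixed axis of the meridional rotation, and every element of that $\SO(2)$ preserves the orientation of $Q$ — contradicting (b).) Note finally that nontriviality of $\Delta$ on $K'$ already implies the non-integral condition via the boundary torus of a tubular neighborhood of $K'$, a point the paper records and your proposal omits.
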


\begin{proof}
    For the first part, the point is that there are already no
    reducible flat connections in the restriction of $\bP$ to
    $\Sigma$: since $\Delta$ becomes trivial when restricting to the
    surface, there is nothing new in this statement beyond the
    familiar case where $\bP$ extends across $K$.

    If  $\Delta$ is non-trivial on a component $K'$ of
$K$,  then  if $T'$ is a torus which is the boundary of a small
tubular neighborhood of $K'$ we have
\[
\langle w_{2}(P),[T']\rangle \ne 0.
\]
Since $T'$ is disjoint from $K$, we see that $T'$ is a non-integral
surface, and so the non-integral condition is automatically satisfied.
The asymptotic
holonomy group of a connection in $\conf(Y,K,\bP)$ 
in the neighborhood of $K'$ lies in
$O(2)$ and contains both (a) an element of $\SO(2)\subset O(2)$ of
order $2$ (the meridional holonomy), and (b) an element of $O(2)
\setminus \SO(2)$, namely the holonomy along a longitude. Such a
connection therefore cannot be reducible, irrespective of whether it
is flat or not.
\end{proof}

\subsection{Perturbations}
\label{subsec:perturbations}

We now introduce standard perturbations of the Chern-Simons functional
to achieve suitable transversality properties for the both the set of
critical points and the moduli spaces of trajectories for the formal
gradient flow. Section 3.2 of \cite{KM-knot-singular} explains how to
do this, following work of Taubes~\cite{Taubes-Casson} and Donaldson;
and the approach described there needs almost no modification in the
present context. 

The basic function used in constructing perturbations 
is obtained as follows. Choose a lift of the bundle $P \to Y\setminus
K$ to a $U(2)$ bundle $\tilde{P}\to Y\setminus K$, and fix a
connection $\theta$ on $\det\tilde{P}$. Each $B$ in
$\conf_{k}(Y,K,\bP)$ then gives rise to a connection $\tilde{B}$ in
$P$ inducing the connection $\theta$ in $\det\tilde{P}$. Take an immersion
$q:S^1\times D^2$ to $Y\sminus K$, and choose a base point 
$p\in S^1$. For each $x \in D^2$
the holonomy of a connection $\tilde{B}$ about $S^1\times x$ starting at $(p,x)$
gives an element $\mathrm{Hol}_x(\tilde{B})\in U(2)$. Taking a class function 
$h:U(2) \to \R$, we obtain  a gauge-invariant function 
\[
              H_{x} : \cA(Y,K,\bP)\to \R
\]
as the composite
$H_{x}(B) = h \circ\mathrm{Hol}_x(\tilde B)$.
For analytic purposes it is useful to mollify this function by introducing
\[
            f_q(B) =\int_{D^2} H_x(B)\mu
\]
where $\mu$ is choice of volume form on $D^2$ which we take
to be supported in the interior of $D^2$ and have integral $1$.

More generally taking a collection of such immersions ${\mathbf{q}}=(q_1,\ldots,q_l)$
so that they all agree on $p\times D^2$, the holonomy about the
$l$ loops determined by $x\in D^2$ gives a map $\mathrm{Hol}_x:\cA_{k} \to U(2)^l$.
Now taking an conjugation invariant function $h:U(2)^l \to \R$
we obtain again a gauge invariant function 
$H_{x}= h\circ \mathrm{Hol}_x\colon\cA_{k} \to \R$. Mollifying this we obtain
\[
\begin{gathered}
    f_{\mathbf{q}} : \cA_{k}(Y,K,\bP) \to \R \\
    f_{\mathbf{q}}(B) =\int_{D^2} H_x(B)\mu.
\end{gathered}
\]
These are smooth gauge invariant functions on $\cA$
and are called \emph{cylinder functions}. 

Our typical perturbation $f$ will be an infinite linear combination of
such cylinder functions. In \cite{KM-knot-singular} it is explained how to
construct an infinite collection $\mathbf{q}^{i}$ of immersions and a separable
Banach space $\Pert$ of sequences $\pert=\{\pert_{i}\}$, with norm
\[
         \| \pert \|_{\Pert} = \sum_{i}C_{i} |\pert_{i}|
\]       
such that for each $\pert\in \Pert$, the sum
\[
        f_{\pi} = \sum_{i} \pert_{i} f_{\mathbf{q}^{i}}
\]
is convergent and defines a smooth, bounded function $f_{\pi}$ on
$\cA_{k}$. Furthermore, the formal $L^{2}$ gradient of $f_{\pi}$
defines a smooth vector field on the Banach space $\cA_{k}$, which we
denote by $V_{\pert}$. The analytic properties that we require for
$V_{\pert}$ (all of which can be achieved by a suitable choice of
$\Pert$) are summarized in \cite[Proposition 3.7]{KM-knot-singular}.

We refer to a function $f_{\pert}$ of this sort as a \emph{holonomy
  perturbation}. Given such a function, we then consider the perturbed
Chern-Simons functional $\CS+f_{\pert}$, The set of gauge
equivalence classes of critical points for the perturbed functional 
is denoted 
\[
\crit_{\pert}(Y,K,P,\rho) \subset \bonf_{k}(Y,K,\bP),
\]
or simply as $\crit_{\pert}$.
 Regarding
the utility of these holonomy perturbations, $\Pert$
we have \cite[Proposition 3.12]{KM-knot-singular}:

\begin{proposition}\label{prop:residual-crit}
    There is a residual subset of the Banach space $\Pert$ such that for
    all $\pert$ in this subset, all the irreducible critical points of the perturbed
    functional $\CS + f_{\pert}$ in $\cA^{*}(Y,K,P,\rho)$ are
    non-degenerate in the directions transverse to the gauge orbits. \qed
\end{proposition}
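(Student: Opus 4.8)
The plan is to establish Proposition~\ref{prop:residual-crit} by a standard Sard--Smale argument applied to a parametrized moduli problem, adapting the treatment in \cite[Section 3.2]{KM-knot-singular} to the present slightly more general setting (where $P$ need not extend across $K$ and $K_\Delta \to K$ may be non-trivial). First I would set up the parametrized critical-point equation: over the Banach space $\Pert$ of perturbation parameters, consider the bundle over $\bonf^{*}_{k}(Y,K,\bP) \times \Pert$ whose fiber is the appropriate $\orbiL^{2}_{k-1}$ completion of $\g_{\orbiP}$-valued $1$-forms, and let $\mathfrak{F}(B,\pert) = *F_{B} + V_{\pert}(B)$ be the section cutting out the perturbed critical set. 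The key analytic input, already quoted from \cite[Proposition 3.7]{KM-knot-singular}, is that $V_{\pert}$ is a smooth vector field on the Banach configuration space with good compactness and regularity properties, so that $\mathfrak{F}$ is a smooth Fredholm section.

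The heart of the argument is to show that $\mathfrak{F}$ is \emph{transverse to the zero section} on the irreducible locus. Fixing $(B,\pert)$ with $\mathfrak{F}(B,\pert)=0$ and $B$ irreducible, one must show the linearization, which includes the Hessian of $\CS+f_\pert$ together with the derivative in the $\pert$ directions, is surjective onto a suitable slice (after imposing a gauge-fixing condition, e.g.\ working in a Coulomb slice so the relevant operator is the self-adjoint $*d_{B}+d_{B}^{*}$ plus a compact perturbation, hence Fredholm with closed range). The cokernel is a finite-dimensional space of $\orbiL^{2}_{k}$ sections $v$ solving a linear equation; the task is to show that if $v$ is orthogonal to the image of all $D_{\pert}\mathfrak{F}$, then $v=0$. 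This is exactly where the holonomy perturbations do their work: the variation $D_{\pert}f_{\mathbf{q}}$ at $B$ in a direction $\pert$ can be written as an integral over $D^2$ of traces of $v$ paired against $\mathrm{ad}$ of holonomies around the loops $q^i$, and by a unique-continuation argument (if $v$ vanishes on an open set it vanishes everywhere) combined with the density of the chosen family of immersions $\{\mathbf{q}^i\}$ in the space of based loops in $Y\sminus K$, one concludes that $v$ being orthogonal to every such variation forces $v\equiv 0$ near a generic loop, hence $v\equiv 0$. The irreducibility of $B$ is what guarantees that the holonomy around suitable loops moves $v$ nontrivially; this is the standard Taubes--Donaldson mechanism and the reason the proposition is stated only for irreducible critical points.

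With transversality of $\mathfrak{F}$ in hand, the universal perturbed critical set $\mathcal{M} = \mathfrak{F}^{-1}(0) \cap (\bonf^{*}_{k} \times \Pert)$ is a Banach manifold, and the projection $\mathcal{M}\to\Pert$ is a smooth Fredholm map of index $0$. The Sard--Smale theorem then provides a residual subset of $\Pert$ of regular values, and for $\pert$ in this subset the fiber is a smooth $0$-manifold on which the Hessian is invertible transverse to the gauge orbit, i.e.\ every irreducible critical point of $\CS+f_\pert$ is non-degenerate. The only points needing care beyond a verbatim citation of \cite{KM-knot-singular} are the orbifold Sobolev setup along $K$ (already in place from section~\ref{subsec:orbi-analysis}) and the fact that $\Delta$ may be non-trivial; but the perturbations $f_\pert$ are built from holonomies along loops in $Y\sminus K$, where the bundle and its $U(2)$ lift $\tilde P$ are defined exactly as before, so neither the non-orientability of a spanning surface nor the non-triviality of $\Delta$ affects the perturbation theory. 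The main obstacle, as always in this kind of result, is the unique-continuation / spanning argument showing that the holonomy perturbations are rich enough to kill the cokernel, and this is precisely the step one inherits from \cite[Proposition 3.12]{KM-knot-singular} and Taubes' original construction.
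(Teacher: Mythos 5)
Your proposal is correct and follows essentially the same route as the source: the paper itself states this result without proof, quoting it directly from \cite[Proposition 3.12]{KM-knot-singular}, where the argument is exactly the parametrized Sard--Smale scheme you describe (transversality of the universal critical-point equation, with the cokernel killed by holonomy perturbations via irreducibility and unique continuation, using compactness of the parametrized critical set). Your closing observation --- that the only new feature here is that the perturbations live on loops in the link complement and so are insensitive to the non-triviality of $\Delta$ --- is precisely why the authors can cite the earlier proposition verbatim.
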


This says nothing yet about the reducible critical points. We will be
working eventually with configurations $(Y,K,\bP)$ satisfying the
non-integral condition of Definition~\ref{def:non-int}. In case the
bundle does not extend there can be no reducible connections at all
(by the second part of Proposition~\ref{prop:non-integral}), but in
the case where $P$ does extend, there may be reducible critical points
if the perturbation is large. However, for small perturbations, there
are none:

\begin{lemma}[Lemma 3.11 of \cite{KM-knot-singular}]
\label{lem:pert}
     Suppose $(Y,K,\bP)$ satisfies the non-integral condition.
    Then there exists $\epsilon>0$ such that for all $\pert$ with $\| \pert \|_{\Pert} \le
    \epsilon$, the critical points of $\CS + f_{\pert}$ in
    $\conf_{k}(Y,K,\bP)$ are
    all irreducible. \qed
\end{lemma}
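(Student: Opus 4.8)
\emph{Proof proposal.} The plan is to argue by contradiction, combining a compactness property of the critical sets of the perturbed functionals with the first part of Proposition~\ref{prop:non-integral}, which asserts that under the non-integral condition the unperturbed functional $\CS$ has no reducible critical points. First I note that if $\Delta$ is non-trivial on some component of $K$ then, by the second part of Proposition~\ref{prop:non-integral}, $\conf_{k}(Y,K,\bP)$ contains no reducible connections at all and there is nothing to prove; so I may assume throughout that $\Delta$ is trivial.

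Suppose the lemma fails. Then there are perturbation parameters $\pert_{n}\in\Pert$ with $\|\pert_{n}\|_{\Pert}\to 0$ and, for each $n$, a reducible critical point $[B_{n}]\in\conf_{k}(Y,K,\bP)$ of $\CS+f_{\pert_{n}}$, so that $*F_{B_{n}}=-V_{\pert_{n}}(B_{n})$. By the analytic properties of holonomy perturbations recorded in \cite[Proposition 3.7]{KM-knot-singular} there is a constant $c$, independent of $n$ and of the connection, with $\|V_{\pert_{n}}(B_{n})\|_{C^{0}}\le c\,\|\pert_{n}\|_{\Pert}$; hence $\|F_{B_{n}}\|_{C^{0}}\to 0$, and in particular the energies $\int_{Y}|F_{B_{n}}|^{2}\,d\mathrm{vol}_{\orbig}$ tend to $0$. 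I would then invoke the Uhlenbeck-type compactness theory for singular connections of \cite{KM-gtes-I}, in the three-dimensional form used in \cite{KM-knot-singular}: once the energies lie below the energy of the smallest bubble there is no bubbling, so after applying determinant-1 gauge transformations and passing to a subsequence we obtain convergence $[B_{n}]\to[B_{\infty}]$ in $\bonf_{k}(Y,K,\bP)$, the topological sector being unchanged precisely because no bubble forms. Since $V_{\pert_{n}}(B_{n})\to 0$, the limit satisfies $F_{B_{\infty}}=0$, so $[B_{\infty}]$ is a critical point of $\CS$ on $\conf_{k}(Y,K,\bP)$.

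It remains to see that $[B_{\infty}]$ is reducible, which would contradict the first part of Proposition~\ref{prop:non-integral}. For each $n$ the stabilizer of $B_{n}$ in the determinant-1 gauge group strictly contains $\{\pm1\}$, hence contains a circle; choose $g_{n}$ in that circle with $g_{n}^{2}=-1$, an element uniformly bounded away from $\pm1$. Because $g_{n}$ is $B_{n}$-parallel, the same gauge transformations that bring $B_{n}$ into convergence bring the $g_{n}$ into a family with uniform $\orbiL^{2}_{k+1}$ bounds, so a further subsequence converges to a $B_{\infty}$-parallel $g_{\infty}$ with $g_{\infty}^{2}=-1$; thus $g_{\infty}\in\mathrm{Stab}(B_{\infty})\setminus\{\pm1\}$ and $[B_{\infty}]$ is a reducible critical point of $\CS$, the desired contradiction.

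The step I expect to be the main obstacle is the compactness input in the singular setting: one must know that a sequence of critical points of $\CS+f_{\pert_{n}}$ with curvature tending to zero converges, modulo the determinant-1 gauge group, inside the orbifold configuration space, losing no energy either to interior bubbles or to the singular locus $K$. This is exactly what the compactness results of \cite{KM-gtes-I} provide, and is the reason those results are proved with the holonomy parameter $\alpha=1/4$, for which the local model along $K$ is well controlled; the perturbing vector field $V_{\pert}$ introduces no new difficulty, since it and its derivatives are bounded by \cite[Proposition 3.7]{KM-knot-singular}. The remaining points---the energy estimate, the convergence of the $g_{n}$, and the appeal to Proposition~\ref{prop:non-integral}---are routine.
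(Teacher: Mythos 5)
Your argument is correct and is essentially the paper's: the paper disposes of the lemma by citing the properness of the projection $\crit_{\bullet}\to\Pert$ from the parametrized critical set, which is exactly the compactness statement you unpack (a sequence of reducible critical points for $\pert_{n}\to 0$ converges after gauge transformation to a flat connection, which inherits reducibility from the limiting stabilizer elements and so contradicts Proposition~\ref{prop:non-integral}). The only cosmetic difference is that you make the Uhlenbeck-type convergence and the closedness of the reducible locus explicit rather than quoting the properness result wholesale.
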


This lemma is corollary of the compactness properties of the perturbed
critical set: in particular, the fact that the projection
\[ \crit_{\bullet} \to \Pert \] from the parametrized critical-point set
$\crit_{\pert}\subset \Pert\times \bonf_{k}(Y,K,\bP)$ is a
proper map. This properness also ensures that the $\crit_{\pert}$ is
finite whenever all the critical points of the perturbed functional
are non-degenerate.

\subsection{Trajectories for the perturbed gradient flow}

Given a holonomy perturbation $f_{\pert}$ for the Chern-Simons
functional on the space $\cA_{k}(Y,K,\bP)$, we get a perturbation
of the anti-self-duality equations on the cylinder. The ``cylinder''
here is the product pair,
\[
    (Z,S) = \R \times (Y,K)
\]
viewed as a $4$-manifold with an embedded surface. Since $Y$ is
equipped with a singular Riemannian metric to become an orbifold
$\orbiY$, so also $Z$ obtains a product orbifold structure: it becomes
an orbifold $\orbiZ$ with singularity along $S$, just as in our discussion
from section~\ref{subsec:orbi-analysis} for a closed pair $(X,\Sigma)$.

To write down the perturbed equations, let the $4$-dimensional
connection be expressed as
\[
A=B+c dt,
\]
with $B$ a $t$-dependent (orbifold) connection on $(Y,K)$ and $c$ a
$t$-dependent section of $\g_{\orbiP}$. We write
\[
\hat V_{\pert}(A) = P_{+}(dt\wedge V_{\pert}(B))
\]
where $P_{+}$ the projection onto the self-dual 2-forms, and
$V_{\pert}(B)$ is viewed as a $\g_{\orbiP}$-valued 1-form on $\R\times
\orbiY$ which evaluates to zero on multiples of $d/dt$. The
$4$-dimensional self-duality equations on $\R\times Y$, perturbed by
the holonomy perturbation $V_{\pert}$, are the equations
\begin{equation}\label{eq:ASDpert}
   F^{+}_{A}+\hat V_{\pert}(A)=0.
\end{equation}
These equations are invariant under the $4$-dimensional gauge group.
Solutions to the downward gradient flow equations for the perturbed
Chern-Simons functional correspond to solutions of these equations
which are in temporal gauge (i.e. have $c=0$ in the above
decomposition of $A$).  The detailed mapping properties of $\hat
V_{\pert}$ and its differential are given in Proposition 3.15 of
\cite{KM-knot-singular}.

Let $\pert$ be chosen so that all critical points in $\crit_{\pert}$
are irreducible and non-degenerate. 
Let $B_{1}$ and $B_{0}$ be critical points in $\cA_{k}(Y,K,\bP)$, and
let $\beta_{1}, \beta_{0}\in \crit_{\pert}$ be their gauge-equivalence
classes. Let $A_{o}$ be a connection on $\R\times Y$ which agrees with
the pull-back of $B_{1}$ and $B_{0}$ for large negative and large
positive $t$ respectively. The connection $A_{o}$ determines a path
$\gamma : \R\to \bonf_{k}(Y,K,\bP)$, from $\beta_{1}$ to $\beta_{0}$,
which is constant outside a compact set. The relative homotopy class
\[
      z \in \pi_{1}(\bonf_{k} , \beta_{1},\beta_{0})
\]
of the path $\gamma$ depends on the choice of $B_{1}$ and $B_{0}$
within their gauge orbit. Given $A_{o}$, we can construct a space of
connections
\[
       \cA_{k,\gamma}(Z,S,\bP; B_{1}, B_{0})
\]
as the affine space
\[
\{\, A \mid A - A_{o}\in \orbiL^{2}_{k,A_{o}}(\orbiZ ;
T^{*}\orbiZ\otimes \g_{\orbiP})\,\}.
\]
There is a corresponding gauge group, the space of sections of the bundle
$G(P)\to Z\sminus S$ defined by
\[
                \G_{k+1}(Z,S,\bP) = \{\,
                g  \mid
                \text{ $\nabla_{A_{o}}g,\dots,
                \nabla^{k}_{A_{o}}g \in
               \check L^{2}(Z\sminus S)$}\,\}.
\]
We have the quotient space
\[
\cB_{k,z}(Z,S,\bP;\beta_{1},\beta_{0})=
  \cA_{k,\gamma}(Z,S,\bP;B_{1},B_{0})\big/\G_{k+1}(Z,S,\bP)
\]
Here $z$ again denotes the homotopy class of $\gamma$ in
$\pi_{1}(\cB_{k}(Y,K,\bP);\beta_{1},\beta_{0})$.

We can now construct the moduli space of solutions to the perturbed
anti-self-duality equations (the $\pert$-ASD connections) as a
subspace of the above space of connections modulo gauge:
\[
    M_{z}(\beta_{1},\beta_{0})=\bigl\{ \,[A] \in
    \bonf_{k,z}(Z,S,\bP;\beta_{1},\beta_{0})\bigm|
         F^{+}_{A} + \hat V_{\pert}(A)=0\,\bigr\}.
\]
This space is homeomorphic to the space of trajectories of the formal
downward gradient-flow equations for $\CS+f_{\pert}$ running from
$\beta_{1}$ to $\beta_{0}$ in the relative homotopy class $z$.
Taking the union over all $z$, we write
\[
M(\beta_{1},\beta_{0})= \bigcup_{z} M_{z}(\beta_{1},\beta_{0}).
\]
The action of $\R$ by translations on $\R\times Y$ induces
an action on $M(\beta_{1},\beta_{0})$.  The action is free
except in the case of $M(\beta_{1},\beta_{1})$ and constant
trajectory.
The quotient of the space of non-constant solutions by this action of $\R$ is
denoted $\Mu(\beta_{1},\beta_{0})$ and typical elements are denoted
$[\breve A]$.

\medskip

The linearization of the $\pert$-ASD condition at a connection $A$ in
$\conf_{k,\gamma}(B_{1},B_{0})$ 
is the map
\[
   d^{+}_{A}+D\hat V_{\pert} : \orbiL^{2}_{k,A}(\orbiZ;\Lambda^{1}\otimes\g_{\orbiP})
            \to
            \orbiL^{2}_{k-1}(\orbiZ;\Lambda^{+}\otimes\g_{\orbiP}).
\]
When $B_{1}$ and $B_{1}$ are irreducible and non-degenerate, we have a
good Fredholm theory for this linearization together with gauge
fixing. For $A$ as above, we write (as in \eqref{eq:D-4d}, but now
with the perturbation)
\[
        \calD_{A} = (d^{+}_{A} + D\hat{V}_{\pert}) \oplus -d^{*}_{A}
\]
which we view as an operator
\[
\orbiL^{2}_{k,A}(\orbiZ;\Lambda^{1}\otimes\g_{\orbiP}) \to
\orbiL^{2}_{k-1}(\orbiZ;(\Lambda^{+}
\oplus\Lambda^{0})\otimes\g_{\orbiP})
\]
Viewed this was, $\calD_{A}$ is a Fredholm operator.  When $\calD_{A}$ is 
surjective we say that $A$ is a \emph{regular} solution, and in this case
 $M_{z}(\beta_{1},\beta_{0})$ is a smooth 
manifold near the gauge equivalence class of $[A]$, of dimension equal
to the index of $\calD_{A}$.
The index of the operator, which can be interpreted as a spectral flow,
will be denoted by
\[
  \gr_{z}(\beta_{1},\beta_{0}).
\]
We will need the moduli spaces of fixed relative grading
\[
M(\beta_{1},\beta_{0})_{d}= \bigcup_{z\,\mid\,\gr_{z}=d}
M_{z}(\beta_{1},\beta_{0})
\]

For the four-dimensional equations we have the following
transversality result.

\begin{proposition}[\protect{\cite[Proposition 3.18]{KM-knot-singular}}]
\label{prop:4d-transversality}
Suppose that $\pert_{0}$ is a perturbation such that  all the
critical points in $\Crit_{\pert_{0}}$ are non-degenerate and have
stabilizer $\pm 1$.   
Then there exists $\pert\in \Pert$ such that:
    \begin{enumerate}
        \item $f_{\pert}=f_{\pert_{0}}$ in a neighborhood of all the critical
        points of $\CS+f_{\pert_{0}}$;
        \item the set of critical points for these two perturbations
        are the same, so that $\Crit_{\pert} = \Crit_{\pert_{0}}$;
        \item for all critical points $\beta_{1}$ and $\beta_{0}$ in
        $\Crit_{\pert}$ and all paths $z$, the moduli spaces
        $M_{z}(\beta_{1},\beta_{0})$ for the perturbation $\pert$
        are regular. \qed
        \end{enumerate}
\end{proposition}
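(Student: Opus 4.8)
The plan is to carry out the standard Sard--Smale transversality argument of Donaldson, Floer and Taubes, exactly as it is set up for singular connections in \cite[Section~3.2]{KM-knot-singular}. Since every holonomy perturbation $f_{\pert}$ is built from holonomies of connections along loops in $Y\sminus K$, the fact that $P$ does not extend across $K$ is invisible to the perturbation scheme, and the analytic package for $\hat V_{\pert}$ and its differential (the analogue of \cite[Proposition~3.15]{KM-knot-singular}) carries over unchanged. I would first reduce to a single triple of data. The set $\Crit_{\pert_{0}}$ is finite by the properness of the projection $\crit_{\bullet}\to\Pert$ used in the proof of Lemma~\ref{lem:pert}, and for fixed endpoints $\beta_{1},\beta_{0}$ the relative homotopy classes $z$ form a countable set; as a countable intersection of residual subsets of a fixed open ball about $\pert_{0}$ in the Banach (hence Baire) space $\Pert$ is residual, it suffices to produce, for each $(\beta_{1},\beta_{0},z)$, a residual set of perturbations near $\pert_{0}$ making $M_{z}(\beta_{1},\beta_{0})$ regular.

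To preserve the critical set (conditions~(1) and~(2)) I would work inside the affine Banach space $\mathcal{P}$ of perturbations of the form $f_{\pert_{0}}+\sigma\cdot g$, where $\sigma$ is a smooth, gauge-invariant cut-off function on $\bonf_{k}(Y,K,\bP)$ that vanishes on a neighbourhood $\mathcal{U}$ of $\Crit_{\pert_{0}}$ and $g$ runs over a Banach space of sums of cylinder functions; that these cut-off perturbations retain the bounds needed for a Fredholm theory is verified as in \cite{KM-knot-singular}. For $\pert$ in a small enough ball, the properness/compactness argument of Lemma~\ref{lem:pert} moreover prevents any new critical points from appearing, so $\Crit_{\pert}=\Crit_{\pert_{0}}$ on that ball.

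The heart of the matter is to show that the universal moduli space
\[
   \mathcal{M}_{z}=\bigl\{\,([A],\pert)\in\bonf_{k,z}(Z,S,\bP;\beta_{1},\beta_{0})\times\mathcal{P}\ \bigm|\ F^{+}_{A}+\hat V_{\pert}(A)=0\,\bigr\}
\]
is a smooth Banach manifold near every solution with $\beta_{1},\beta_{0}$ irreducible and non-degenerate. By the implicit function theorem and the Fredholm theory for $\calD_{A}$, this reduces to surjectivity of the total linearization
\[
   (a,\dot\pert)\ \longmapsto\ d^{+}_{A}a+D\hat V_{\pert}(a)+\bigl(D_{\pert}\hat V\bigr)_{A}(\dot\pert),\qquad d^{*}_{A}a=0,
\]
and since $\calD_{A}$ already has closed range with finite-dimensional cokernel, this amounts to showing that any $w\in L^{2}$ with $\calD_{A}^{*}w=0$ that is $L^{2}$-orthogonal to $(D_{\pert}\hat V)_{A}(\dot\pert)$ for every $\dot\pert$ must vanish. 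I expect this to be the main obstacle. The equation $\calD_{A}^{*}w=0$ is first-order elliptic, so by the Aronszajn--Agmon--Nirenberg unique continuation theorem in the orbifold form of \cite{KM-gtes-I, KM-knot-singular}, valid across the singular set $S$, it is enough to show $w$ vanishes on some non-empty open set. If $[A]$ is not a constant trajectory it is genuinely $t$-dependent, so its restriction to some slice $\{t_{0}\}\times Y$ lies outside $\mathcal{U}$; choosing $(t_{0},y)$ with $y\in Y\sminus K$ and $w(t_{0},y)\ne 0$, and using that the immersions and class functions defining the cylinder functions are chosen in \cite{KM-knot-singular} so that their differentials span a dense subspace near any connection, one produces a $\dot\pert$ supported near $(t_{0},y)$ with $\langle w,(D_{\pert}\hat V)_{A}(\dot\pert)\rangle_{L^{2}}\ne 0$ --- a contradiction (cf.\ \cite{Taubes-Casson}). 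This entire argument takes place in $Y\sminus K$ and away from where perturbations are frozen, so nothing in it is affected by the present generalization. Regularity at the constant trajectory in $M_{z}(\beta_{1},\beta_{1})$ is immediate from non-degeneracy of $\beta_{1}$.

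Granting that $\mathcal{M}_{z}$ is a Banach manifold, the projection $\mathcal{M}_{z}\to\mathcal{P}$ is Fredholm of index $\gr_{z}(\beta_{1},\beta_{0})$, so by the Sard--Smale theorem its regular values form a residual subset of $\mathcal{P}$, and a perturbation is a regular value precisely when the associated $M_{z}(\beta_{1},\beta_{0})$ is cut out transversally. Intersecting these residual sets over the countably many triples $(\beta_{1},\beta_{0},z)$ and over the fixed small ball about $\pert_{0}$ yields the required $\pert$: it lies in $\mathcal{P}$, hence satisfies~(1) and~(2) by construction and, being in the small ball, $\Crit_{\pert}=\Crit_{\pert_{0}}$; and it satisfies~(3) by Sard--Smale. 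As indicated above, the only points requiring care beyond \cite{KM-knot-singular} are the behaviour of the unique-continuation step at the orbifold singularity along $S$ and the presence of nontrivial $w_{2}$, and both are confined to regions (a neighbourhood of $S$, and $Y\sminus K$) where the situation is identical to that treated there.
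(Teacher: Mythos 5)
Your proposal is correct and is essentially the argument the paper relies on: the paper gives no proof here, simply quoting Proposition~3.18 of \cite{KM-knot-singular}, and your reconstruction (perturbations cut off near the critical set to preserve conditions (1)--(2), properness of the parametrized critical set to rule out new critical points, a universal moduli space made regular via unique continuation plus the abundance of holonomy perturbations at a slice where a non-constant trajectory leaves the frozen region, then Sard--Smale and a countable intersection of residual sets) is exactly the structure of that cited proof. The one point deserving explicit care is that the cut-off perturbation $\sigma\cdot g$ must itself be realized inside the Banach space $\Pert$ of sums of cylinder functions --- which the reference arranges by building the cut-off out of holonomy coordinates --- and that "genuinely $t$-dependent" should be upgraded to "exits a sufficiently small neighborhood of the critical set," which follows from the strict decrease of the perturbed Chern--Simons functional along non-constant trajectories.
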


From this point on, we will always suppose that our perturbation has
been chosen in this way. For reference, we state:

\begin{hypothesis}
\label{hyp:morse-smale}
    We assume that $Y$ is a connected, oriented, closed $3$-manifold,
    that $K$ is a link in $Y$ (possibly empty), and that $\bP$ is
    singular bundle data satisfying the non-integral condition. 
    We suppose that an
    orbifold metric $\orbig$ and perturbation $\pert \in \Pert$ are
    chosen so that $\crit_{\pert}$ consists only of non-degenerate,
    irreducible critical points, and all the moduli spaces
    $M_{z}(\beta_{1},\beta_{0})$ are regular.
\end{hypothesis}

\subsection{Orientations and Floer homology}

The Fredholm operators $\calD_{A}$ form a family over the space
$\bonf_{k,z}(\beta_{1}, \beta_{0})$ whose determinant line
$\det(\calD_{A})$ is orientable. This follows from the corresponding
result for the closed pair $(X,\Sigma) = S^{1}\times (Y,K)$
(Proposition~\ref{prop:orientable}) by an application of excision. It
follows that the (regular) moduli space $M_{z}(\beta_{1},\beta_{0})$
is an orientable manifold. Moreover, given two different paths $z$ and
$z'$ between the same critical points, if we choose an orientation for
the determinant line over $\bonf_{k,z}(\beta_{1},\beta_{0})$, then it
canonically orients the determinant line over
$\bonf_{k,z'}(\beta_{1},\beta_{0})$ also: this follows from the
corresponding result for closed manifolds
(Proposition~\ref{prop:orientations-propogate}), 
because any two paths are related by the addition
of instantons and monopoles. We may therefore define
\[
         \Lambda(\beta_{1},\beta_{0})
\]
as the two-element set of orientations of $\det(\calD_{A})$ over
$\bonf_{k,z}(\beta_{1},\beta_{0})$, with the understanding that this
is independent of $z$.

If $\beta_{1}$ and $\beta_{0}$ are arbitrary connections in
$\bonf_{k}(Y,K,\bP)$ rather than critical points, then we can still
define $\Lambda(\beta_{1},\beta_{0})$ in essentially the same way. The
only point to take care of is that, if the Hessian of the perturbed
functional is singular at either $\beta_{1}$ or $\beta_{0}$, then the
corresponding operator $\calD_{A}$ is not Fredholm on the usual
Sobolev spaces. As in \cite{KM-knot-singular}, we adopt the convention
that $\calD_{A}$ is considered as a Fredholm operator acting on the
weighted Sobolev spaces
\begin{equation}\label{eq:weighted}
         e^{-\epsilon t} \orbiL^{2}_{k,A_{o}}         
\end{equation}
for a small positive weight $\epsilon$; and we then define
$\Lambda(\beta_{1},\beta_{0})$ for any $\beta_{i}$ using this
convention. 

In particular, we can choose any basepoint $\theta$ in
$\bonf_{k}(Y,K,\bP)$ and \emph{define}
\begin{equation}\label{eq:Lambda-beta}
         \Lambda(\beta) = \Lambda(\theta,\beta)
\end{equation}
for any critical point $\beta$. Without further input, there is no a
priori way to rid this definition of its dependence on $\theta$. In
the case considered in \cite{KM-knot-singular}, when $K$ was oriented
and $\Delta$ was trivial, we had a preferred choice of $\theta$
arising from a reducible connection. When $\Delta$ is non-trivial
however, the space $\bonf_{k}(Y,K,\bP)$ contains no reducibles, so an
arbitrary choice of $\theta$ is involved. 

Having defined $\Lambda(\beta)$ in this way, we have canonical identifications
\[
\Lambda(\beta_{1},\beta_{0}) = \Lambda(\beta_{1})\Lambda(\beta_{0}),
\]
where the product on the right is the usual product of 2-element sets
(defined, for example, as the set of bijections from
$\Lambda(\beta_{1})$ to $\Lambda(\beta_{2})$). What this implies is
that a choice of orientation for a component of the moduli space
$M_{z}(\beta_{1},\beta_{0})$ (or equivalently, a choice of
trivializations of the determinant on
$\bonf_{k,z}(\beta_{1},\beta_{0})$) determines an identification
$\Lambda(\beta_{1})\to\Lambda(\beta_{0})$. In particular, each
one-dimensional connected component  \[ [\breve A] \subset
M(\beta_{1},\beta_{0})_{1}, \] being
just a copy of $\R$ canonically oriented by the action of
translations, determines an isomorphism $\Lambda(\beta_{1}) \to
\Lambda(\beta_{0})$. As in \cite{KM-knot-singular,KM-book}, we denote by
$\Z\Lambda(\beta)$ the infinite cyclic group whose two generators are
the two elements of $\Lambda(\beta)$, and we denote by
\[
        \epsilon[\breve A] : \Z\Lambda(\beta_{1}) \to \Z\Lambda(\beta_{0})
\]
the resulting isomorphism of groups.

\medskip
We now have everything we need to define Floer homology groups. Let
$(Y,K)$ be an unoriented link in a closed, oriented, connected
$3$-manifold $Y$, and let $\bP$ be singular bundle data satisfying the
non-integral condition, Definition~\ref{def:non-int}. Let a metric
$\orbig$ and perturbation $\pert$ be chosen satisfying
Hypothesis~\ref{hyp:morse-smale}. Finally, let a basepoint $\theta$ in
$\bonf_{k}(Y,K,\bP)$ be chosen.
Then we define the chain complex $(C_{*}(Y,K,\bP),\partial)$ of
free abelian groups by setting
\begin{equation}\label{eq:chain-complex}
            C_{*}(Y,K,\bP) = \bigoplus_{\beta\in \Crit_{\pert}}
            \Z\Lambda(\beta),
\end{equation}
and
\begin{equation}\label{eq:boundary-map-def}
    \partial = \sum_{(\beta_{1},\beta_{0},z)} 
    \sum_{[\breve{A}]\subset M(\beta_{1},\beta_{0})_{1}}
    \epsilon[\breve{A}]
\end{equation}
where the first sum runs over all triples with
$\gr_{z}(\beta_{1},\beta_{0})=1$.  That this is a finite sum follows
from the compactness theorem, Corollary 3.25 of
\cite{KM-knot-singular}. (As emphasized in \cite{KM-knot-singular}, it
is the compactness result here depends crucially  on that fact 
that our choice of holonomy for our singular
connections satisfies a ``monotone'' condition: that is, the formula
for the dimension of moduli spaces in Lemma~\ref{lem:index} involves
the topology of the bundle $\bP$ only though the action $\kappa$.

\begin{definition}
\label{def:basic-I}
    For $(Y,K)$ as above, with singular bundle data $\bP$ satisfying
    the non-integral condition, Definition~\ref{def:non-int}, and
    choice of $\orbig$, $\pert$ and $\theta$ as above, we define the
    instanton Floer homology group
    \[
           I(Y,K,\bP)
    \]
   to be the homology of the complex $(C_{*}(Y,K,\bP),\partial)$.
  \CloseDef
\end{definition}

As usual, we have presented the definition of $I(Y,K,\bP)$ as
depending on some auxiliary choices. The standard type of cobordism
argument (using the material from the following subsection) shows that
$I(Y,K,\bP)$ is independent of the choice of $\orbig$, $\pert$ and
$\theta$. There is a slight difference from the usual presentation of
(for example) \cite{KM-knot-singular} however, which stems from our
lack of a canonical choice of basepoint $\theta$. The result of this
is that, if $(\orbig,\pert,\theta)$ and $(\orbig', \pert',\theta')$
are two choices for the auxiliary data, then the isomorphism between
the corresponding homology groups $I(Y,K,\bP)$ and $I'(Y,K,\bP)$ is
well-defined only up to an overall choice of sign.

\subsection{Cobordisms and manifolds with cylindrical ends}

Let $(W,S)$ be a cobordism of pairs, from $(Y_{1},K_{1})$ to $(Y_{0},
K_{0})$. We assume that $W$ is connected and oriented, but $S$ need
not be orientable. Let $\bP$ be singular bundle data on $(W,S)$, and
let $\bP_{i}$ be its restriction to $(Y_{i}, K_{i})$.  We shall recall
the standard constructions whereby $(W,S,\bP)$ induces a map on the
Floer homology groups,
\[
       I(W,S,\bP) : I(Y_{1}, K_{1}, \bP_{1}) \to  I(Y_{0}, K_{0},
       \bP_{0}),
\]
which is well-defined up to an overall sign.

To set this up, we assume  that auxiliary data is given,
\begin{equation}\label{eq:aux-data}
\begin{aligned}
    \aux_{1} &= (\orbig_{1}, \pert_{1}, \theta_{1}) \\
     \aux_{0} &= (\orbig_{0}, \pert_{0}, \theta_{0}),
\end{aligned}
\end{equation}
on $(Y_{1}, K_{1},\bP_{1})$ and $(Y_{0}, K_{0},\bP_{0})$ respectively,
and that our standing assumptions hold
(Hypothesis~\ref{hyp:morse-smale}). We equip the interior of $W$ with
an orbifold metric $\orbig$ (with orbifold singularity along $S$)
having two cylindrical ends
\[
\begin{aligned}
    (-\infty, 0\,] &\times \orbiY_{1} \\
    [\, 0,\infty) &\times \orbiY_{0} 
\end{aligned}
\]
where the metric $\orbig$ is given by
\[
\begin{gathered}
    dt^{2} + \orbig_{1}\\  dt^{2} + \orbig_{0}
\end{gathered}
\]
respectively.
To do this, we consider an open collar neighborhood $[\,0,1) \times
\orbiY_{1}$ of $\orbiY_{1}$, with coordinate $r$ for the first
factor, and we set $t =  \ln(r)$. Symmetrically, we take an open
collar $(-1,0\,]\times \orbiY_{0}$ at the other end, and set $t = -
\ln(-r)$ there.

Now we make the following choices (an adaptation, for the case of
manifolds with boundary, of the definition of $\bP$-marked bundle from
Definition~\ref{def:P-marked}. We choose singular bundle data
$\bP'$ on $(W,S)$ differing from $\bP$ by the addition of instantons
and monopoles, so that $\bP$ and $\bP'$ are identified outside a
finite set by a preferred map $g' : \bP'\to \bP$. 
We choose gauge representatives $B'_{1}$ and $B'_{0}$ for
$\beta_{1}$ and $\beta_{0}$, as connections in $\bP_{1}$ and $\bP_{0}$
respectively. And we choose an orbifold connection $A_{o}$  in $\bP'$
that coincides with the pull-back of $B'_{1}$ and $B'_{0}$ in the collar
neighborhoods of the two ends. We can then construct as usual an
affine space of connections
\[
       \cA_{k}(\bP',A_{o})
\]
consisting of all $A$ with 
\[
       A-A_{o} \in \orbiL_{k,A_{o}}(\orbiW, \Lambda^{1}\otimes \g_{\bP'})
\]
where the Sobolev space is defined using the cylindrical-end metric $\orbig$.
As in the case of closed manifolds (Definition~\ref{def:P-marked}), 
we say that choices $(\bP', B'_{1}, B'_{0})$ and $(\bP'', B''_{1},
B''_{0})$ are isomorphic if there is a determinant-1 gauge
transformation $\bP' \to \bP''$ pulling back $B''_{i}$ to
$B'_{i}$. (The notion of ``determinant 1'' has meaning here, because
both $\bP'$ and $\bP''$ are identified with $\bP$ outside a finite set.)
We denote by $z$ a typical isomorphism-classes of choices:
\[
         z = [\bP', B'_{1}, B'_{0}].
\]
The quotient of $\cA_{k}(\bP', A_{o})$ by the determinant-1 gauge
group $\G_{k+1}(\bP',A_{o})$ depends only on $z$, and we write
\[
       \bonf_{z}(W,S,\bP; \beta_{1}, \beta_{0})
           =  \cA_{k}(\bP',A_{o}) \big/ \G_{k+1}(\bP',A_{o})
\]

\begin{remark}
    The discussion here is not quite standard. If $S$ has no closed
    components, then every isomorphism class $z$ has a representative
    $[\bP, B_{0}, B_{1}]$ in which the singular bundle data is
    $\bP$. If $S$ has closed components however, then we must allow
    $\bP' \ne \bP$ in order to allow differing monopole charges on the
    closed components. In the case of a cylinder $(W,S) = I \times
    (Y,K)$, the set of $z$'s coincides with the previous space of
    homotopy-classes of paths from $\beta_{1}$ to $\beta_{0}$.
\end{remark}

 The perturbed version of
the ASD equations that we shall use is defined as follow $\pert_{i}$
be the chosen holonomy perturbations on the $Y_{i}$. We consider
perturbation of the $4$-dimensional equations on $\orbiW$
\begin{equation}\label{eq:fourdpertU}
F^{+}_{A} + \hat V(A)=0
\end{equation} 
where $\hat V$ is a holonomy-perturbation 
supported on the cylindrical ends. To define this on the cylindrical
end $[\,0,\infty) \times Y_{0}$, we take the given $\pert_{0}$ from
the auxiliary data $\aux_{0}$ and an additional term $\pert'_{0}$. We
then set
\begin{equation}\label{eq:fourdpertdef}
       \hat{V}(A) =  \phi(t)\hat{V}_{\pert_{0}}(A) +
       \psi(t)\hat{V}_{\pert'_{0}}(A),
\end{equation}
where $\phi(t)$ is a cut-off function equal to $1$ on $[\,1,\infty)$
and equal to $0$ near $t=0$, while $\psi(t)$ is a bump-function
supported in $[0,1]$. The perturbation is defined similarly on the
other end, using $\pert_{1}$ and an additional $\pert'_{1}$.
This sort of perturbation is used in
\cite[Section~24]{KM-book} and again in \cite{KM-knot-singular}. 
We write
\[
              M_{z}(W,S,\bP; \beta_{1}, \beta_{0}) \subset 
             \bonf_{k,z}(W,S,\bP; \beta_{1}, \beta_{0})
\]
for the moduli 
space of solutions to the perturbed anti-self-duality equations. Note
that $\pert'_{i}$ contributes a perturbation that is compactly
supported in the cobordism. We refer to these additional terms as
\emph{secondary perturbations.}

Just as in the cylindrical case, we can combine the linearization of
the left hand side of \eqref{eq:fourdpertU} with gauge fixing to
obtain a Fredholm operator $\calD_{A}$.  We say that the moduli space
is regular if $\calD_{A}$ is surjective at all solutions. Regular
moduli spaces are smooth manifolds, of dimension equal to the index of
$\calD_{A}$, and we write as
\[
        \gr_{z}(W,S,\bP ; \beta_{1},\beta_{0}).
\]
We again set
\[
              M(W,S,\bP; \beta_{1}, \beta_{0})_{d}
     = \bigcup_{\gr_{z}=d}  M_{z}(W,S,\bP; \beta_{1}, \beta_{0}).
\]
The arguments use to prove Proposition 24.4.7 of \cite{KM-book}
can be used to prove the following genericity result.

\begin{proposition}
    Let $\pert_{1}$, $\pert_{0}$ be perturbations satisfying
    Hypothesis~\ref{hyp:morse-smale}.  Let $(W,S,\bP)$ be given,
    together with a cylindrical-end metric $\orbig$ as above. 
    Then there are secondary
    perturbations  $\pert'_{1}$, $\pert'_{0}$
    so that for all $\beta_{1}$, $\beta_{0}$ and $z$, all the moduli
    spaces $M_{z}(W,S,\bP;\beta_{1},\beta_{0})$ of solutions to the
    perturbed equations  \eqref{eq:fourdpertU}
    are regular. \qed
\end{proposition}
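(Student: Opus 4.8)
The plan is to run the standard Sard--Smale argument, following the proof of Proposition~24.4.7 of \cite{KM-book} and its cylindrical-case analogue \cite[Prop.~3.18]{KM-knot-singular} (our Proposition~\ref{prop:4d-transversality}). Fix critical points $\beta_{1},\beta_{0}$ in $\crit_{\pert}$ and a class $z$, and let $\Pert'=\Pert\oplus\Pert$ be the Banach space of secondary perturbations $\pert'=(\pert'_{1},\pert'_{0})$ appearing in \eqref{eq:fourdpertdef}. Form the universal moduli space
\[
  \mathcal{M}_{z}=\bigl\{\,([A],\pert')\in\bonf_{k,z}(W,S,\bP;\beta_{1},\beta_{0})\times\Pert'\bigm| F^{+}_{A}+\hat V(A)=0\,\bigr\}.
\]
The first step is to show that $\mathcal{M}_{z}$ is a smooth Banach manifold, i.e.\ that at every solution the linearization of the parametrized equation --- the sum of $\calD_{A}$ (with gauge fixing) and the derivative of $\hat V$ in the $\Pert'$ direction --- is surjective. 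Since $\calD_{A}$ is Fredholm, this reduces to the claim that any $\phi$ in the cokernel of $\calD_{A}$ --- equivalently, a solution of the adjoint equation $\calD_{A}^{*}\phi=0$ in the appropriate weighted Sobolev space --- that is $L^{2}$-orthogonal to all variations $\psi(t)\,\hat V_{\pert''_{i}}(A)$, $\pert''_{i}\in\Pert$, must vanish.

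The heart of the proof, and the step I expect to be the main obstacle, is this vanishing statement. Each variation $\psi(t)\,\hat V_{\pert''_{i}}(A)$ is supported in the collar $[0,1]\times\orbiY_{i}$, so the support of all the admissible perturbations is the union of these two \emph{compact} collars rather than all of $\orbiW$; one must check that this restricted support is still enough. Here one uses the richness of the holonomy-perturbation family (as in \cite[Prop.~3.7, 3.18]{KM-knot-singular}): over the open subset of a collar on which $A$ is irreducible, the span of the $\hat V_{\pert''}(A)$ is $L^{2}$-dense in the relevant space of self-dual forms, so orthogonality to all of them forces $\phi\equiv0$ on a nonempty open set. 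Irreducibility of $A$ somewhere in each collar is guaranteed because $\beta_{1},\beta_{0}$ are irreducible and $\bP$ satisfies the non-integral condition (Hypothesis~\ref{hyp:morse-smale}). A unique-continuation argument for the first-order elliptic operator $\calD_{A}^{*}$ (Aronszajn's theorem, as used in \cite{KM-book}) then propagates $\phi\equiv0$ from this open set to all of $W\sminus S$, completing the step and showing that $\mathcal{M}_{z}$ is a smooth Banach manifold.

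The remaining step is routine. The projection $\mathcal{M}_{z}\to\Pert'$ is a smooth Fredholm map of index $\gr_{z}(W,S,\bP;\beta_{1},\beta_{0})$, so by the Sard--Smale theorem its set of regular values is residual in $\Pert'$, and for any such value the fibre $M_{z}(W,S,\bP;\beta_{1},\beta_{0})$ is a smooth manifold of the predicted dimension. There are finitely many pairs $(\beta_{1},\beta_{0})$ and countably many classes $z$, so the intersection of the corresponding residual sets is again residual, hence nonempty by the Baire category theorem; any $\pert'=(\pert'_{1},\pert'_{0})$ in it has the required property. Finally, the cutoff functions $\phi(t)$, $\psi(t)$ in \eqref{eq:fourdpertdef} are arranged so that changing $\pert'_{i}$ alters neither the end perturbations $\pert_{i}$ nor the critical sets, so Hypothesis~\ref{hyp:morse-smale} continues to hold for $\pert_{1},\pert_{0}$.
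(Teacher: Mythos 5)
Your proposal is correct and is precisely the argument the paper invokes by citing Proposition~24.4.7 of \cite{KM-book}: a universal moduli space over the Banach space of secondary perturbations, surjectivity of the parametrized linearization via the abundance of holonomy perturbations on the collars together with unique continuation (Aronszajn) for $\calD_{A}^{*}$, and then Sard--Smale plus Baire over the countably many $(\beta_{1},\beta_{0},z)$. The only quibble is your phrase ``$L^{2}$-dense'': the differentials of the cylinder functions are not literally dense in the space of self-dual forms, but they do suffice to pair nontrivially with any nonzero cokernel element on the collar, which is all the argument needs.
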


\subsection{Maps from cobordisms}
\label{subsec:maps-from-cob}

The moduli spaces $M_{z}(W,S,\bP;\beta_{1},\beta_{0})$ are orientable,
because the determinant line $\det(\calD_{A})$ over
$\bonf_{z}(W,S,\bP;\beta_{1},\beta_{0})$ is trivial. Furthermore, when
we 
orient $\det(\calD_{A})$ for one particular $[A]$ in
$\bonf_{z}(W,S,\bP;\beta_{1},\beta_{1})$, then this canonically
determines an orientation for the moduli spaces
$M_{z'}(W,S,\bP;\beta_{1},\beta_{0})$ for all other $z'$. 

To see what is involved in specifying an orientation for the moduli
spaces, let us recall first that we have chosen basepoints
$\theta_{i}$ in $\bonf(Y_{i},K_{i};\bP_{i})$ for $i=0,1$, as part of
the auxiliary data $\aux_{i}$. We have defined $\Lambda(\beta_{i})$ to
be $\Lambda(\theta_{i},\beta_{i})$. Choose gauge representatives
$\Theta_{i}$ for the connections $\theta_{i}$, an let
$A_{\theta}$ be a connection on $(W,S,\bP)$ which is equal to the
pull-back of $\Theta_{i}$ on the two cylindrical ends. The operator
$\calD_{A_{\theta}}$ is Fredholm on the weighted Sobolev spaces
\eqref{eq:weighted} for small $\epsilon$, and we define
\[
           \Lambda(W,S,\bP)
\]
to be the two-element set of orientations of
$\det(\calD_{A_{\theta}})$. (This set is dependent on $\theta_{1}$ and
$\theta_{0}$, though our notation hides this.) In this context, we
make the a definition:

\begin{definition}\label{def:I-orientation}
   Let $(Y_{1}, K_{1},\bP_{1})$ and 
   $(Y_{0}, K_{0},\bP_{0})$ be manifolds with singular bundle data,
   and let  $(W,S,\bP)$ be a  cobordism from the first to the second.
   Let auxiliary data $\aux_{1}$,
   $\aux_{0}$ be given on the two ends, as above. We define an
   \emph{$I$-orientation} of $(W,S,\bP)$ to be a choice of element
   from $\Lambda(W,S,\bP)$, or equivalently, an orientation of the
   determinant line $\det(\calD_{A_{\theta}})$.  \CloseDef
\end{definition}

The definition of $\Lambda(W,S,\bP)$ 
is constructed so that the two-element set of
orientations of the determinant line over $\bonf_{z}(W,S,\bP;
\beta_{1},\beta_{0})$ is isomorphic to the product
\[
     \Lambda(\beta_{1})\Lambda(\beta_{0}) \Lambda(W,S,\bP).
\]
Thus, once an $I$-orientation of $(W,S,\bP)$ is given, a choice of
orientation for a component of any moduli space
$M_{z}(W,S,\bP;\beta_{1},\beta_{0})$ determines an isomorphism
$\Z\Lambda(\beta_{1})\to\Z\Lambda(\beta_{0})$. In particular each
point $[A]$ in a zero-dimensional moduli space $M_{0}(W,S,\bP ;
\beta_{1},\beta_{0})$ determines such an isomorphism,
\[
     \epsilon([A]) : \Z\Lambda(\beta_{1}) \to  \Z\Lambda(\beta_{0}).
\]

In this way, given a choice of an $I$-orientation of $(W,S,\bP)$, 
we obtain a homomorphism
\begin{equation}\label{eq:chain-map-def}
    m = \sum_{\beta_{1},\beta_{0}} 
    \sum_{[A]\in M(W,S,\bP; \beta_{1},\beta_{0})_{0}}
    \epsilon([A]),
\end{equation}
which by the usual arguments is a chain map
\[
          m: C_{*}(Y_{1},K_{1},\bP_{1}) \to  C_{*}(Y_{0},K_{0},\bP_{0}).
\]
The induced map in homology depends only on $(W,S,\bP)$, the
original auxiliary data $\aux_{0}$ and $\aux_{1}$ on the two ends, and
the $I$-orientation, not
on the choice of the secondary perturbations $\pert'_{i}$ or on the
choice of Riemannian metric $\orbig$ on the interior of
$W$. Furthermore, if $(W,S,\bP)$ is expressed as the union of two
cobordisms $(W',S',\bP')$ and $(W'', S'', \bP'')$, then the chain map
$m$ is chain-homotopic to the composite,
\[
        m \simeq m'' \circ m'.
\]
 By the
standard approach, taking $(W,S,\bP)$ to be a cylinder, we deduce that
$I(Y_{0},K_{0},\bP_{0})$ is independent of the choice of auxiliary
data $\aux_{0}$. More precisely, if $\aux_{0}$ and $\aux_{1}$ are two
choices of auxiliary data for $(Y,K,\bP)$, then there is a canonical
pair of isomorphisms $\{\,m_{*}, -m_{*}\,\}$ differing only in sign,
\[
         \pm m_{*}: I(Y,K,\bP)_{\aux_{1}} \to I(Y,K,\bP)_{\aux_{0}}.
\]
Thus $I(Y,K,\bP)$ is a topological invariant of $(Y,K,\bP)$. Note that
we have no a priori way of choosing $I$-orientations to resolve the
signs in the last formulae: the dependence on the auxiliary data
$\aux_{i}$ means that cylindrical cobordisms do not have canonical
$I$-orientations.

\subsection{Families of metrics and compactness}
\label{subsec:families}

The proof from \cite{KM-knot-singular} that the chain-homotopy class
of the map $m$ above is independent of the choice of $\pert_{i}'$ and
the metric on the interior of $W$ follows standard lines and exploits
a parametrized moduli space, over a family of Riemannian metrics and
perturbations. For our later applications, we will need to consider
parameterized moduli spaces where the metric is allowed to vary in
certain more general, controlled non-compact families.

Let $(Y_{1},K_{1}, \bP_{1})$ and $(Y_{0}, K_{0}, \bP_{0})$ be given
and let $\aux_{1}$ and $\aux_{0}$ be auxiliary data as in
\eqref{eq:aux-data}, so that the transversality conditions of
Proposition \ref{prop:4d-transversality} hold.  Let $(W,S,\bP)$ be a
cobordism between these, equipped with singular bundle data $\bP$
restricting to the $\bP_{i}$ at the ends.  By a family of metrics
parametrized by a smooth manifold $G$ we mean a smooth orbifold
section of $\mathrm{Sym}^2(T^*\orbiW)\times G \to \orbiW \times G$
which restricts to each $W\times \{\orbig\}$ as a Riemannian metric
denoted $\orbig$.  These metrics will always have an orbifold
singularity along $S$. Choose a family of secondary perturbations
$\pert'_{i,\orbig}$, supported in the collars of the two boundary
components as before, but now dependent on the parameter $\orbig\in
G$. We have then a corresponding perturbing term $\hat V_{g}$ for the
anti-self-duality equations on $\orbiW$.  For any pair of critical
points $(\beta_{1}, \beta_{0}) \in \crit_{\pert_{1}}(Y_{1}) \times
\crit_{\pert_{2}}(Y_{2})$, we can now form a moduli space
\[
     M_{z,G}(W,S,\bP;\beta_{1},\beta_{0}) \subset 
                 \cB_{z}(W,S,\bP;\beta_{1},\beta_{0}) \times G
\]
of pairs $([A],g)$ 
and where $[A]$
solves the equation
\begin{equation}\label{eq:fourdpertG}
  F^{+_{g}}_{A}+ \hat V_{g}(A)=0.
\end{equation} 
Here $+_{g}$ denote the projection onto $g$-self-dual two forms.  A
solution $([A],g)$ is called to Equation~\eqref{eq:fourdpertG} is
called regular if the differential of the map
\[
(A,g)\mapsto F^{+_{g}}_{A}+ \hat V_{g}(A)
\]
is surjective.
The arguments use to prove Propostion 24.4.10 of \cite{KM-book}
can be used to prove the following genericity result.

\begin{proposition}
    Let $\pert_{i}$ be perturbations satisfying the conclusions of
    Proposition \ref{prop:4d-transversality} and let $G$ be a family
    of metrics as above.  There is a family of secondary perturbations
    $\pi'_{i,g}$, for $i=0,1$, parameterized by $g\in G$ so that for all
    $\beta_{i}\in \Crit_{\pi_{i}}$ and all paths $z$, the moduli space
\[
     M_{z,G}(W,S,\bP;\beta_{1},\beta_{0})
\]
consists of regular solutions.  \qed
\end{proposition}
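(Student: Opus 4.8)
The plan is to run the standard Sard--Smale argument, exactly as in the proof of Proposition~24.4.10 of \cite{KM-book} and its adaptation to the singular setting in \cite{KM-knot-singular}; neither the non-orientability of $S$ nor the failure of $\bP$ to extend across $S$ affects the analytic heart of the matter, which is entirely local near the cylindrical ends. To set it up, fix the primary perturbations $\pert_{i}$ (satisfying Proposition~\ref{prop:4d-transversality}) and the family of metrics $\{\orbig\}_{g\in G}$, and let $\Pert'$ be the separable Banach space of secondary perturbation data $\pert'=(\pert'_{1},\pert'_{0})$ supported in the collars $[0,1]\times Y_{i}$, giving rise to the perturbing term $\hat V_{g,\pert'}$ as in \eqref{eq:fourdpertdef}. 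For fixed $\beta_{1},\beta_{0}\in\Crit_{\pert_{i}}$ and a path-type $z$, form the universal moduli space
\[
 \mathcal{M} = \bigl\{\,([A],g,\pert') \bigm| F^{+_{g}}_{A}+\hat V_{g,\pert'}(A)=0\,\bigr\}
   \subset \bonf_{z}(W,S,\bP;\beta_{1},\beta_{0})\times G\times \Pert',
\]
the zero set of the section $\Upsilon([A],g,\pert')=F^{+_{g}}_{A}+\hat V_{g,\pert'}(A)$ of the appropriate Banach bundle. The key point to establish is that the linearization of $\Upsilon$ in all three variables is surjective at every solution.

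Granting this, $\mathcal M$ is a smooth Banach manifold and the forgetful projection $\Pi:\mathcal M\to\Pert'$ is Fredholm, its fibrewise linearization being $\calD_{A}$ together with the finitely many $G$-directions. By Sard--Smale the regular values of $\Pi$ form a residual subset of $\Pert'$, and for any such value $\pert'$ the fibre $\Pi^{-1}(\pert')=M_{z,G}(W,S,\bP;\beta_{1},\beta_{0})$ consists of points at which the differential of $(A,g)\mapsto F^{+_{g}}_{A}+\hat V_{g}(A)$ is onto --- which is precisely the regularity asserted in the statement. Intersecting the countably many residual subsets obtained as $(\beta_{1},\beta_{0},z)$ vary, and exhausting $G$ by compact sub-families if $G$ is non-compact, produces the desired $\pert'$; note that a $g$-independent choice already suffices, so the more flexible $g$-dependent family allowed in the statement is certainly available.

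The substantive point --- and the main obstacle --- is the surjectivity of the universal linearization: one must show that secondary holonomy perturbations supported only in the collars of $\partial W$ are rich enough to fill out any cokernel of $\calD_{A}$. This is the usual unique-continuation argument. Suppose $([A],g,\pert')\in\mathcal M$ and $\phi$ is a nonzero element of the cokernel; then $\phi$ lies in the relevant weighted $L^{2}$ space and satisfies the formal adjoint equation $\calD_{A}^{*}\phi=0$ away from the support of the perturbations. Since $A$ is asymptotic to the non-degenerate critical points $\beta_{i}$ on the cylindrical ends, $\phi$ decays exponentially there, and the unique continuation theorem for the elliptic operator $\calD_{A}^{*}$ in the orbifold/singular setting --- as established in \cite{KM-gtes-I} for holonomy parameter $\alpha=1/4$ --- forces $\phi$ to be non-vanishing on every open set, and in particular on the collars where the holonomy perturbations act. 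Finally, the mapping properties of $D\hat V_{\pert'}$ recorded in \cite{KM-knot-singular} guarantee that the first-order variations coming from the cylinder functions built on the immersions $\mathbf q^{i}$ cannot all be $L^{2}$-orthogonal to a section that is nonzero on such a region; hence some variation of $\pert'$ pairs non-trivially with $\phi$, contradicting $\phi\perp\mathrm{range}(\calD_{A})$. This establishes surjectivity of the universal linearization and completes the argument.
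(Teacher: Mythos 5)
Your argument is the standard Sard--Smale scheme --- universal moduli space over the Banach space of secondary perturbations, surjectivity of the universal linearization via unique continuation for $\calD_{A}^{*}$ and the separating property of holonomy perturbations at irreducible connections, then a residual-set intersection over the countably many $(\beta_{1},\beta_{0},z)$ --- which is precisely the argument of Proposition~24.4.10 of \cite{KM-book} that the paper invokes without further detail. So your proposal is correct and follows essentially the same route as the paper's (cited) proof.
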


In the situation of the proposition, the moduli space
$M_{z,G}(W,S,\bP;\beta_{1}, \beta_{0})$ is smooth of dimension
$\gr_{z}(\beta_{1},\beta_{0}) + \dim G$.  We use
$M_{G}(W,S,\bP;\beta_{1}, \beta_{0})_{d}$ to denote its
$d$-dimensional components. To orient the moduli space,
we orient both the determinant line bundle $\det(\calD)$ on
$\bonf_{z}(W,S,\bP;\beta_{1},\beta_{0})$ and the parametrizing
manifold $G$, using a fiber-first convention.

Consider now the case that $G$ is a compact, oriented manifold with
oriented boundary $\partial G$. We omit $(W,S,\bP)$ from our notation
for brevity, and denote the moduli space by
$M_{G}(\beta_{1},\beta_{0})$. Let us suppose (as we may) that the
secondary perturbations are chosen so that both
$M_{G}(\beta_{1},\beta_{0})$ and $M_{\partial G}(\beta_{1},\beta_{0})$
are regular. The first moduli space will then be a (non-compact)
manifold with boundary, for we have
\[
\partial M_{G}(\beta_{1},\beta_{0})_{d} = M_{\partial
  G}(\beta_{1},\beta_{0})_{d-1}.
\]
But this will not be an equality of \emph{oriented} manifolds. Our
fiber-first convention for orienting $M_{G}$ and the standard
outward-normal-first convention for the boundary orientations interact
here to give
\begin{equation}
    \label{eq:boundary-M}
      \partial M_{G}(\beta_{1},\beta_{0})_{d} =(-1)^{d-\dim G} 
     M_{\partial G}(\beta_{1},\beta_{0})_{d-1}.
\end{equation}

When an orientation of $G$ and an element of $\Lambda(W,S,\bP)$ are
chosen, the count of the solutions $[A]$ in
$M_{G}(\beta_{1},\beta_{0})_{0}$ defines a group homomorphism
\[
      m_{G} :  C(Y_{1},K_{1},\bP_{1}) \to C(Y_{0}, K_{0}, \bP_{0}).
\]
Similarly $M_{\partial G}(\beta_{1},\beta_{0})_{0}$ defines a group
homomorphism $m_{\partial G}$. These two are related by the following
chain-homotopy formula:
\[
  m_{\partial G} +  (-1)^{\dim G} m_{G}\circ \partial =  \partial \circ m_{G}.
\]
 (See \cite[Proof of Proposition 25.3.8]{KM-book}, though there is a
 sign error in \cite{KM-book} at this point.)
The proof of this formula is to count the endpoints in the
$1$-dimensional moduli spaces $M_{G}(\beta,\alpha)_{1}$ on $(W,S,\bP)$.

\begin{remarks}
    Although we will not have use for greater generality here, it is a
    straightforward matter here to extend this construction by
    allowing $G$ to parametrize not just a family of metrics on a
    fixed cobordism $(W,S)$, but a smooth family of cobordisms (a
    fibration over $G$) with fixed trivializations of the family at
    the two ends $Y_{1}$ and $Y_{0}$. There is an obvious notion of an
    $I$-orientation for such a family (whose existence needs to be a
    hypothesis), and one then has a formula just like the one above
    for a compact family $G$ with boundary. One can also consider the
    case that $G$ is a simplex in a simplicial complex $\Delta$. In
    that case, given a choice of perturbations making the moduli
    spaces transverse over every simplex, and a coherent choice of
    $I$-orientations of the fibers, the above formula can be
    interpreted as saying that we have a chain map
\[
             \underbar{m} : C(\Delta) \otimes C(Y_{1},K_{1},\bP_{1})
             \to  C(Y_{0}, K_{0}, \bP_{0})
\]
   where $C(\Delta)$ is the simplicial chain complex (and the usual
   convention for the signs of the differential on a product complex
   apply). If the local system defined by the $I$-orientations of the
   fibers is non-trivial, then there is a similar chain map, but we
   must then use the chain complex $C(\Delta;\xi)$ with the
   appropriate local coefficients $\xi$.
\end{remarks}

Next we wish to generalize some of the stretching arguments that are
used (for example) in proving the composition law for the chain-maps
$m$ induced by cobordisms.  For this purpose we introduce the notion
of a broken Riemannian metric on a cobordism $(W,S)$ from
$(Y_{1},K_{1})$ to $(Y_{0},K_{0})$.  A \emph{cut} of $(W,S)$ is an orientable
codimension-1 submanifold $Y_{c} \subset \mathrm{int}( W )$ so that the intersection
$Y_{c}\cap S= K_{c}$ is transverse.  A cylindrical-end metric $\orbig$
on $(W,S)$ is \emph{broken along a cut} $ Y_{c}$ if it is a
complete Riemannian metric on $({\mathrm{int}} W)\sminus Y_{c}$ and there is
a normal coordinate collar neighborhood $(-\epsilon,\epsilon)\times Y_{c}$
with normal coordinate $r_{c}\in (-\epsilon,\epsilon)$ so that
\[
g= (d r_{c} / r_{c})^{2}+\orbig_{Y_{c}}
\] 
where $\orbig_{Y_{c}}$ is a metric on $Y_{c}$ with orbifold singularity along
$K_{c}$.  Note that $Y_{c}$ may be disconnected, may have parallel components, and
may have components parallel to the boundary.  The manifold 
$({\mathrm{int}} W)\sminus Y_{c}$ equipped with this metric has two
extra cylindrical ends (each perhaps with several components), namely
the ends
\begin{equation}\label{eq:extra-ends}
\begin{aligned}
    (-\epsilon, 0) &\times Y_{c} \\
    (0, \epsilon) &\times Y_{c}.
\end{aligned}
\end{equation}
We will assume that each component of $(Y_{c}, K_{c},\bP|_{Y_{c}})$
satisfies the non-integral condition.

Given perturbations $\pi_{i}$
for the $(Y_{i},K_{i})$ (for $i=0,1$)  and $\pi_{c}$ for $(Y_{c},K_{c})$ and secondary
perturbations $\pi_{i}'$ and $\pi_{c,+}'$ and  $\pi_{c,-}'$, 
we can write down perturbed
ASD-equations on ${\mathrm{int}}(W)\sminus Y_{c}$ as in
\eqref{eq:fourdpertU}. The perturbation $\hat V$ is defined as in
\eqref{eq:fourdpertdef}, using the perturbation $\pi_{c}$ together with the secondary
perturbations $\pi'_{c,-}$, $\pi'_{c,+}$ on the two ends
\eqref{eq:extra-ends} respectively.

Given a pair of configurations $\beta_{i}$ in $\bonf(Y_{i},K_{i},
\bP_{i})$, for $i=1,0$, and a cut $Y_{c}$, a \emph{cut path} from
$\beta_{1}$ to $\beta_{0}$ along $(W,S,\bP)$ is a continuous connection
$A$ in a  $\bP' \to (W,S)$ in singular bundle data $\bP'$ equivalent
to $\bP$, such that $A$ is smooth on
${\mathrm{int}}(W)\sminus Y_{c}$ and its restriction to
$(Y_{i},K_{i})$ belongs to the gauge equivalence class $\beta_{i}.$ A
\emph{cut trajectory} from $\beta_{1}$ to $\beta_{0}$ along $(W,S,\bP)$
is a cut path from $\beta_{1}$ to $\beta_{0}$ along $(W,S,\bP)$ whose
restriction to ${\mathrm{int}}(W)\sminus Y_{c}$ is a solution to
the perturbed ASD equation \eqref{eq:fourdpertU}.

Given a cut $Y_{c}$ of $(W,S)$ we can construct a family of
Riemannian metrics on $\mathrm{int}(W)$ which degenerates to a broken
Riemannian metric. To do this, we start with  a Riemannian metric
$\orbig_{o}$ on $W$ (with orbifold singularity along $S$) which
contains a collar neighborhood of $Y_{c}$ on which the metric is a
product
\[
d r^{2} + \orbig_{Y_{c}}
\]
where $r\in [-1,1]$ denotes the signed distance from $Y_{c}$.
Let 
\[
        f_{s} : \R \to \R
\]
be a family of functions parametrized by $s\in [0, \infty]$ that
smooths out the function which is given by
\[
\frac{1+1/s^{2}}{r^{2}+1/s^{2}}
\]
for $r \in [-1,1]$ and $1$ otherwise.  Note that the above expression
is $1$ on the boundary $r=\pm 1$ as well as when $s=0$.  Also note
that $\lim_{s\mapsto \infty}f_{s}(r)=1/r^{2}$.  For each component of
\[ Y_{c}=\coprod _{i=1}^{N}Y_{c}^{i}\] we introduce a parameter $s_{i}$
and modify the metric by
\[
f_{s_{i}}(r)dr^{2} +\orbig_{Y_{c}}.
\]
When $s_{i}=\infty$ the metric is broken along the $Y^{i}_{c}$.  
In the way we get a family of Riemannian metrics
parametrized by $[0,\infty)^{N}$ which compactifies naturually
to a family of broken metrics parameterized by 
$[0,\infty]^{N}.$ In the neighborhood of $\{\infty\}^{N}$, each metric
is broken along some subset of the components of $Y_{c}$. We can
further elaborate this  construction slightly by allowing the original
metric also to vary in a family $G_{1}$, while remaining unchanged in the collar
neighborhood of $Y_{c}$, so that we have a family of metrics
parametrized by
\begin{equation}\label{eq:corners}
           [0,\infty]^{N} \times G_{1}
\end{equation}
for some $G_{1}$. Given the cut $Y_{c}$, we say that a family of
singular metrics on $(W,S)$ is a ``model family'' for the cut $Y_{c}$ if
it is a family of this form.

To describe suitable perturbations for the equations over such a model
family of singular metrics, we choose again a generic perturbation
$\pert_{c}$ for $Y_{c}$ and write its component belonging to
$Y_{c}^{i}$ as $\pert_{c}^{i}$. When the coordinate $s_{i}\in
[0,\infty]$ is large, the metric contains a cylindrical region
isometric to a product
\[
          [-T_{i} ,T_{i} ] \times Y_{c}^{i}
\]
where $T_{i} \to \infty$ as $s_{i} \to\infty$. We require that for
large $s_{i}$, the perturbation on this cylinder has the form
\[
    \hat{V}_{\pert_{c}^{i}}  +
       \psi_{-}(t)\hat{V}_{(\pert^{i}_{c,-})'} + 
  \psi_{+}(t)\hat{V}_{(\pert^{i}_{c,+})'}
\]
where the functions $\psi_{+}$ and $\psi_{-}$ are bump-functions
supported near $t=-T_{i}$ and $t=T_{i}$ respectively. Here
$(\pert^{i}_{c,\pm})'$ are secondary perturbations which are allowed to
vary with the extra parameters $G_{1}$.

We can now consider a general family of metrics which degenerates like
the model family at the boundary. Thus we consider a manifold with
corners, $G$, parametrizing a family of broken Riemannian metrics $\orbig$ on
$W$ (with orbifold singularity along $S$), and we ask that in the
neighborhood of each point  of every codimension-$n$ facet, there
should be a cut $Y_{c}$ with exactly $n$ components, so that in the
neighborhood of this point the family is equal to a neighborhood of
\[ \{\infty\}^{n}\times G_{1}\] in some model family for the cut
$Y_{c}$. (Note that the cut $Y_{c}$ will vary.)  Whenever we talk of a
``family of broken metrics'', we shall mean that the family has this
model structure at the boundary. When considering perturbations, we
shall need to have fixed perturbations $\pert_{c}^{i}$ for every
component of every cut, and also secondary perturbations which have
the form described above, in the neighborhood of each point of the
boundary. We suppose that these are chosen so that the parametrized
moduli spaces over all strata of the boundary are regular.

Suppose now that we are given a family of broken metrics of this sort,
parametrized by a compact manifold-with-corners $G$. Over the interior
$\mathrm{int}(G)$, we have a smooth family of metrics and hence a
parametrized moduli space 
\[
M_{z,\mathrm{int}(G)}(\beta_{1},\beta_{0})
\] on the
cobordism $(W,S,\bP)$.  This moduli space has a natural
completion involving cut paths (in the above sense) on $(W,S)$,
as well as broken trajectories on the cylinders over $(Y^{i}_{c},
K^{i}_{c})$ for each component $Y^{i}_{c}\subset Y_{c}$. We denote the
completion by
\[
      \Mbk_{z,G}(\beta_{1},\beta_{0}) \supset M_{z,\mathrm{int}(G)}(\beta_{1},\beta_{0}).
\]

The completion is a space stratified by manifolds, whose top
stratum is $M_{z,\mathrm{int}(G)}(\beta_{1},\beta_{0})$. The
codimension-1 strata are of three sorts. 

\begin{enumerate}
\item \label{item:boundary-a}
 First, there are the cut paths on $(W,S,\bP)$ from $\beta_{1}$
    to $\beta_{0}$, cut along some \emph{connected} $Y_{c}$. These
    form a codimension-1 stratum of the compactification lying over a
    codimension-1 face of $G$ (the case $N=1$ in \eqref{eq:corners}).
\item  \label{item:boundary-b}
Second there are the strata corresponding to a trajectory sliding
    off the incoming end of the cobordism, having the form
    \[
                    \Mu_{z_{1}}(\beta_{1},\alpha_{1}) \times M_{z-z_{1},G}(\alpha_{1},\beta_{0})
     \]
     where the first factor is a moduli space of trajectories on
     $Y_{1}$ and $\alpha_{1}$ is a critical point.
\item  \label{item:boundary-c}
Third there is the symmetrical case of a trajectory sliding off
    the outgoing end of the cobordism:
    \[
                   M_{z-z_{0},G}(\beta_{1},\alpha_{0}) \times 
                 \Mu_{z_{0}}(\alpha_{0},\beta_{0}).
     \]
\end{enumerate}
In the neighborhood of a point in any one of these codimension-1
strata, the compactification $\Mbk_{z,G}(\beta_{1},\alpha_{0})$ has
the structure of a $C^{0}$ manifold with boundary. 

The completion $\Mbk_{z,G}(\beta_{1},\beta_{0})$ is not in general
compact, because of bubbling off of instantons and monopoles. However
it will be compact when it has dimension less than $4$.

An $I$-orientation for $(W,S,\bP)$, together with a choice of element
from $\Lambda(\beta_{1})$ and $\Lambda(\beta_{0})$, gives rise not only
to an orientation of the moduli space
$M_{z}(W,S\bP;\beta_{1},\beta_{0})$ for a fixed smooth metric
$\orbig$, but also to an orientation of the moduli spaces of cut
paths, for any cut $Y_{c}$, by a straightforward generalization of the
composition law for $I$-orientations. Thus, given such an
$I$-orientation and an orientation of $G$, we obtain oriented moduli
spaces over both $G$ and over the codimension-1 strata of $\partial G$. 

Just as in the case of a smooth family of metrics parametrized by a
compact manifold with boundary, a family of broken Riemannian metrics
parametrized by an oriented manifold-with-corners $G$, together with
an $I$-orientation of $(W,S,\bP)$, gives rise to a chain-homotopy
formula,
\[
  m_{\partial G} +  (-1)^{\dim G} m_{G}\circ \partial =  \partial \circ m_{G}.
\]
Again, the proof is obtained by considering the endpoints of
$1$-dimensional moduli spaces over $G$:
the three terms correspond to the three different types of
codimension-1 strata. The map $m_{G}$ is defined as above by counting
solutions in zero-dimensional moduli spaces over $G$; and
$m_{\partial G}$ is a sum of similar terms, one for each codimension-1
face of $G$ (with the outward-normal first convention). 

The case of most interest to us is when each face of $G$ corresponds
to a cut $Y_{c}$ whose single connected component separates $Y_{1}$
from $Y_{0}$, so separating $W$ into two cobordisms: $W'$ from $Y_{1}$
to $Y_{c}$, and $W''$ from $Y_{c}$ to $Y_{0}$. Let us suppose that
$G$ has $l$ codimension-1 faces, $G_{1}, \dots, G_{l}$, all of this
form. Each face corresponds to a cut which expresses $W$ as a union,
\[
           W = W'_{j} \cup W''_{j}, \quad j=1,\dots, l.
\]
 In the neighborhood of
such a point of $G_{j}$, $G$ has the structure
\[
           (0,\infty] \times G_{j}.
\]
Let us suppose also that $G_{j}$ has the form of a product,
\[ G_{j} = G'_{j} \times G''_{j},\] 
where the two factors parametrize families of
metrics on $W'_{j}$ and $W''_{j}$. Let us also equip $W'_{j}$ and
$W''_{j}$ with $I$-orientations so that the composite
$I$-orientation is that of $W$. We can write the chain-homotopy
formula now as
\[
\Bigl( \sum_{j }m_{G_{j}} \Bigr) + (-1)^{\dim G} m_{G}\circ \partial
= \partial \circ m_{G},
\]
where we are still orienting $G_{j}$ as part of the boundary of $G$.
On the other hand, we can interpret $m_{G_{j}}$ (which counts cut
paths on $W$) as the composite of the two maps obtained from the
cobordisms $W'_{j}$ and $W''_{j}$. When we do so, there is an
additional sign, because of our convention that puts the $G$ factors
last: we have
\[
       m_{G_{j}} =(-1)^{\dim G_{j}''\dim G_{j}'} m''_{j} \circ m'_{j}
\]
where $m'_{j}$ and $m''_{j}$ are the maps induced by the two
cobordisms with their respective families of metrics. Thus the
chain-homotopy formula can be written
\begin{equation}
    \label{eq:chain-homotopy-faces}
    \Bigl( \sum_{j }(-1)^{\dim G_{j}''\dim G_{j}'} m''_{j} \circ m'_{j} \Bigr) + (-1)^{\dim G} m_{G}\circ \partial
    = \partial \circ m_{G}.  
\end{equation}

\section{Topological constructions}
\label{sec:classical}

In this section, we shall consider how to ``package'' the
constructions of section~\ref{sec:sing-inst-floer}, so as to have a
functor on a cobordism category whose definition can be phrased in
terms of more familiar topological notions.

\subsection{Categories of 3-manifolds, bundles and cobordisms}

When viewing Floer homology as a ``functor'' from a category of
3-manifolds and cobordisms to the category of groups, one needs to
take care in the definition of a ``cobordism''. Thus, by a cobordism
from a 3-manifold $Y_{1}$ to $Y_{0}$ we should mean a
$4$-manifold $W$ whose boundary $\partial W$ comes equipped with a
diffeomorphism $\phi : \partial W \to Y_{1} \cup Y_{0}$. If the
manifolds $Y_{i}$ are oriented (as our $3$-manifolds always are), then
$(W,\phi)$ is an \emph{oriented cobordism} if $\phi$ is
orientation-reversing over $Y_{1}$ and orientation-preserving over
$Y_{0}$. Two oriented cobordisms $(W,\phi)$ and $(W',\phi')$ between
the same $3$-manifolds are isomorphic if there is an
orientation-preserving diffeomorphism $W\to W'$ intertwining $\phi$
with $\phi'$. Closed, oriented $3$-manifolds form the objects of a
category in which the morphisms are isomorphism classes or oriented
cobordisms. We can elaborate this idea a little, to include
$\SO(3)$-bundles over our $3$-manifolds, defining a category $\buncob$
as follows.

An object of the category  $\buncob$
consists of the following data:
\begin{itemize}
\item a closed, connected, oriented $3$-manifold $Y$;
\item an $\SO(3)$-bundle $P\to Y$ satisfying the \emph{non-integral
    condition} (so that $w_{2}(P)$ has odd evaluation on some oriented surface).
\end{itemize}
A morphism from $(Y_{1}, P_{1})$ to $(Y_{0},  P_{0})$
is an \emph{equivalence class} of data of the following sort:
\begin{itemize}
\item an oriented cobordism $W$ from $Y_{1}$ to
    $Y_{0}$;
\item a bundle $P$ on $W$;
\item an isomorphism of the bundle  $P|_{\partial W}$ with
    $P_{1} \cup P_{0}$, covering the given diffeomorphism of the
    underlying manifolds.
\end{itemize}
Here we say that $(W,P)$ and $(W',P')$ are \emph{equivalent} (and are the
same morphism in this category) 
if there is a diffeomorphism
\[
  \psi: W \to W'
\]
intertwining the given diffeomorphism of $\partial W$ and $\partial
W'$ with $-Y_{0} \cup Y_{1}$, together a bundle isomorphism on the
complement of a finite set $x$,
\[
            \Psi : P|_{W\setminus x} \to P'_{W'\setminus \psi(x)},
\]
covering the map $\psi$ and intertwining the given bundle maps at the
boundary.

The category $\buncob$ is essentially
the category for which Floer constructed his instanton homology
groups: we can regard instanton homology as a \emph{projective
  functor} $I$ from $\buncob$ to groups. Here, the word
``projective'' means that we regard the morphism $I(W,P)$
corresponding to a cobordism as being defined only up to an overall
sign. Alternatively said, the target category is the category
$\pgroup$ of
abelian groups in which the morphisms are taken to be unordered
pairs $\{h,-h\}$
of group homomorphisms, so that $I$ is a functor
\[
     I : \buncob \to \pgroup
\]

\medskip
On a $3$-manifold $Y$ a bundle $P$ is determined by its
Stiefel-Whitney class; but knowing the bundle only up to
isomorphism is not sufficient to specify an actual object in 
$\buncob$, nor can we make any useful category whose objects are such
isomorphism classes of bundles. However, if instead of just specifying
the Stiefel-Whitney class we specify a particular geometric
representative for this class, then we will again have a sensible
category with which to work, as we now explain.

Consider first the following situation. Let $\omega$ be a finite, $2$-dimensional
simplicial complex and let $V$ be a $2$-disk bundle over $\omega$ (not
necessarily orientable). Let $T\subset V$ be the circle bundle and
$V/T$ the Thom space. There is a distinguished class $\tau$ in
$H^{2}(V,T; \Z/2)$, the Thom class. If $(P,\phi)$ is an $\SO(3)$
bundle on $V$ with a trivialization $\phi$ on $S$, then it has a
well-defined relative Stiefel-Whitney class in $H^{2}(V,T;\Z/2)$. We
can therefore seek a pair $(P,\phi)$ whose relative Stiefel-Whitney class
is the Thom class. By the usual arguments of obstruction theory
applied to the cells of $V/T$, one sees that such a $(P,\phi)$ exists,
and that its isomorphism class is unique up to the action of adding
instantons to the $4$-cells of $V$.  Furthermore, if $(P',\phi')$ is
already given over some subcomplex $\omega' \subset \omega$, then it can be
extended over all of $\omega$. 
If  $\omega$ is a closed surface, perhaps non-orientable, in a
$4$-manifold $X$, then we can apply the observation of this
paragraph to argue (as in the proof of the theorem of Dold and Whitney
\cite{Dold-Whitney}) that there exists bundle $P \to X$ with a
trivialization outside a tubular neighborhood of $\omega$, whose
relative Stiefel-Whitney class is dual to $\omega$.  

This leads us to consider a category in which an object is a closed,
connected, oriented 
$3$-manifold $Y$ together with an embedded, unoriented $1$-manifold
$\omega\subset Y$ (thought of as a dual representative for $w_{2}$ of
some bundle). A morphism in this category, from $(Y_{0},\omega_{0})$
to $(Y_{1},\omega_{1})$ is simply a cobordism of pairs $(W,\omega)$,
with $W$ an oriented cobordism, and $\omega$ unoriented. Morphisms are
composed in the obvious way. We call this category $\buncobw$. 

It is tempting to suppose that there is a functor from $\buncobw$ to
$\buncob$, which should assign to an object
$(Y,\omega)$ a pair $(Y,P)$ where $P$ is a bundle with $w_{2}(P)$ dual
to $\omega$. As it stands, however, $P$ is unique only up to
isomorphism. To remedy this, we analyze the choice involved. 
Given an object $(Y,\omega)$ in
$\buncobw$, let us choose a tubular neighborhood $V$ for $\omega$ and choose
a bundle $(P_{0},\phi_{0})$, with a trivialization $\phi_{0}$ outside
$V$, whose relative $w_{2}$ is the Thom class. 
If $(P_{1},\phi_{1})$
is another such choice, then the objects $(Y,P_{1})$ and $(Y,P_{0})$
are connected by a uniquely-determined morphism in
$\buncob$. Indeed, in the cylindrical cobordism $[0,1]\times Y$,
we have a product embedded surface $[0,1]\times \omega$, and there is an
$\SO(3)$ bundle $P$ with trivialization outside the tubular
neighborhood of $[0,1]\times \omega$, which extends the given data at
the two ends of the cobordism. The resulting bundle is unique up to
the addition of instantons, so it gives a well-defined morphism -- in
fact an isomorphism -- in
$\buncob$. 

In this way, from an object $(Y,\omega)$  in $\buncobw$, we obtain not an
object in $\buncob$, but a commutative diagram in
$\buncob$, consisting of all possible bundles $P_{i}$ arising
this way and the canonical morphisms between them. Applying the
projective functor $I$, we nevertheless obtain from this a
well-defined projective functor,
\[
          I : \buncobw \to \pgroup
\]
We shall write $I^{\omega}(Y)$ for the instanton homology of
$(Y,\omega)$  in this context.

\subsection{Introducing links: singular bundles}

The discussion from the previous subsection can be carried over to the
more general situation in which a we have a knot or link $K \subset
Y$, rather than a $3$-manifold $Y$ alone.  We can define a category
$\buncoblink$ in which objects are pairs $(Y,K)$ carrying singular
bundle data $\bP$.  Thus $Y$ is again a closed, oriented, connected
$3$-manifold, $K$ is an unoriented link in $Y$, and we are given singular
bundle data $\bP$ in the form of a double-cover $K_{\Delta}\to K$, an
$\SO(3)$ bundle $P_{\Delta}\to K_{\Delta}$ and an $O(2)$ reduction in
the neighborhood of $K_{\Delta}$. The singular bundle data is required
to satisfy the non-integral condition from
Definition~\ref{def:non-int}. 
A morphism in this category, from
$(Y_{1},K_{1},\bP_{1})$ to $(Y_{0},K_{0},\bP_{0})$ is a cobordism of
pairs, $(W,S)$, with $W$ an oriented cobordism from $Y_{1}$ to
$Y_{0}$ and $S$ and unoriented cobordism from $K_{1}$ to $K_{0}$,
equipped with singular bundle data $\bP$ and an identification of all
this data with the given data at the two ends of the cobordism. Two
such cobordisms with singular bundle data are regarded as being the
same morphism if they are equivalent, in the same sense that we
described for $\buncob$. Just
as in the case of $\buncob$, singular instanton homology defines a
functor,
\[
                   I : \buncoblink \to \pgroup.
\]
This is essentially the content of section~\ref{subsec:maps-from-cob}.

In addition to $\buncoblink$, we would like to have a version
$\buncoblinkw$ an analogous to $\buncobw$ above, in which we replace
the bundle data $\bP$ with a codimension-2 locus $\omega$ representing
the dual of $w_{2}$. To begin again with the closed case, 
suppose that we are given a $4$-manifold $X$, an embedded surface $\Sigma$ and
a surface-with-boundary, $(\omega,\partial\omega) \subset
(X,\Sigma)$. We require that $\omega \cap \Sigma$ is a
collection of circles and points: the circles are $\partial \omega$,
along which $\omega$ should meet $\Sigma$ normally; and the points are
transverse intersections of $\omega$ and $\Sigma$ in $X$. 
From the circles $\partial\omega\subset \Sigma$
we construct a double-cover $\Sigma_{\Delta}$ by starting with a
trivialized double-cover of $\Sigma\sminus\partial\omega$, say
\[
        (\Sigma\sminus\partial\omega) \times \{1,-1\},
\]
and then
identifying across the cut with an interchange of the two sheets. Thus
$w_{1}(\Delta)$ is dual to $[\partial\omega]$ in
$H^{1}(\Sigma;\Z/2)$. 

Because $\Delta$ is trivialized, by construction, on
$\Sigma\sminus\partial\omega$, we have  distinguished copies of
$\Sigma\sminus\partial\omega$ inside the non-Hausdorff space
$X_{\Delta}$. Let 
\[
     \Sigma_{-} \subset X_{\Delta}
\]
be the closure of the copy $(\Sigma\sminus\partial\omega) \times
\{-1\}$ in $X_{\Delta}$. This is a 
surface whose boundary is two copies of $\partial\omega$. Let
$\omega_{1}$ denote the two-dimensional complex
\[
         \omega_{1} = \pi^{-1}(\omega) \cup \Sigma_{-}
\]
in $X_{\Delta}$. Figure~\ref{fig:omega-h} shows the inverse image
$\omega^{h}_{1}$ of $\omega_{1}$ in the Hausdorff space
$X^{h}_{\Delta}$, in a schematic lower-dimensional picture.
\begin{figure}
    \begin{center}
        \includegraphics[height=3 in]{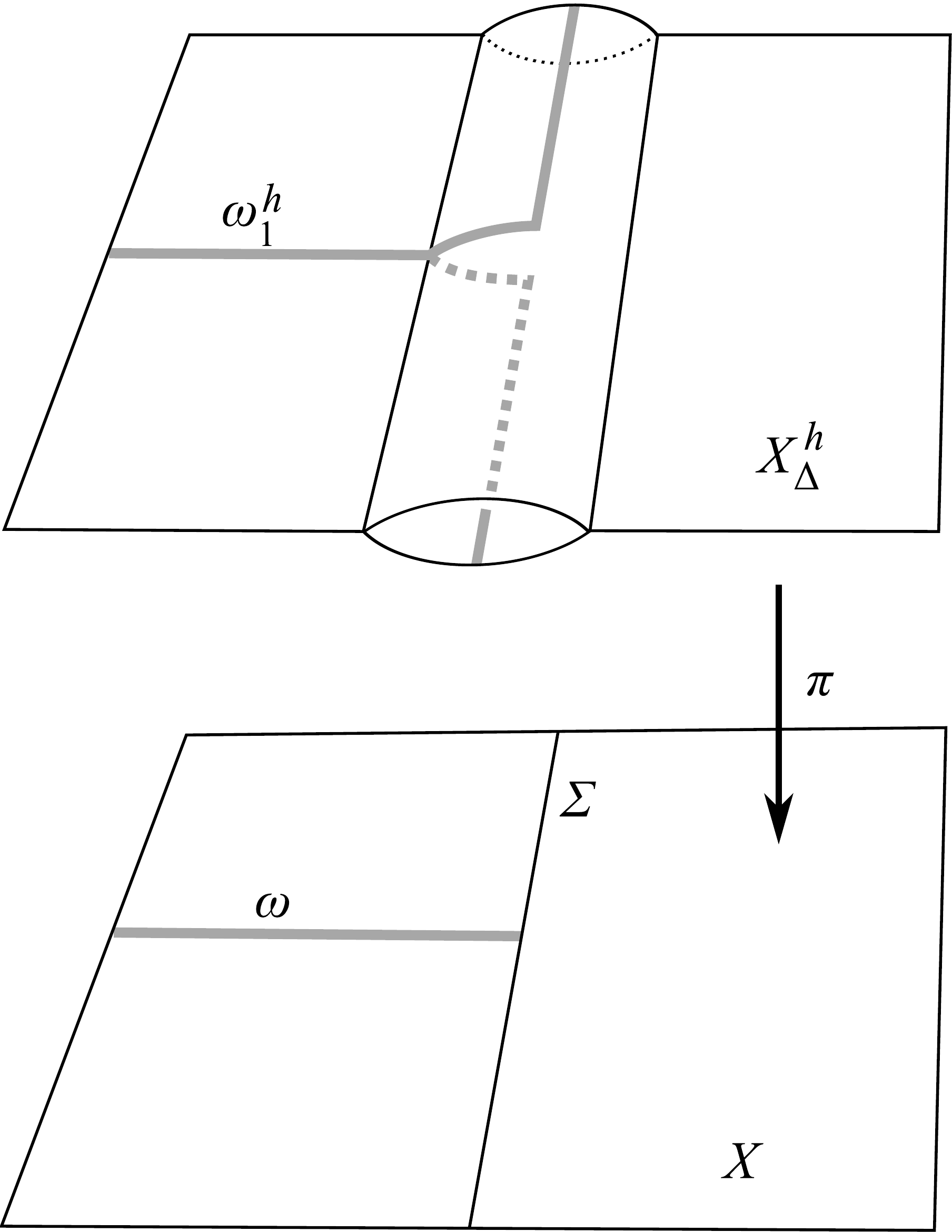}
    \end{center}
    \caption{\label{fig:omega-h}
    The codimension-2 subcomplex $\omega^{h}_{1}$, illustrated in a
    lower-dimensional picture.}
\end{figure}
A regular
neighborhood of $\omega^{h}_{1}$ in the complex $X^{h}_{\Delta}$ is a
disk bundle over $\omega^{h}_{1}$, so there is a well-defined dual
class, as in our previous discussion. There is therefore a bundle
$P_{\Delta}$ on $X_{\Delta}^{h}$, with a trivialization outside a
regular neighborhood of $\omega_{1}^{h}$, whose $w_{2}$ is the Thom
class. In this way, the original surface $\omega$ determines a bundle 
$P_{\Delta} \to X_{\Delta}$, uniquely up to the addition
of instantons and monopoles. This $P_{\Delta}$ in turn determines
singular bundle data $\bP$ on $X_{\Delta}$, up to isomorphism and the
addition of instantons and monopoles.

 We now set up the category $\buncoblinkw$ in
which objects are triples $(Y,K,\omega)$, where:
\begin{itemize}
\item $Y$ is a closed, oriented, connected $3$-manifold;
\item $K$ is an unoriented link in $Y$;
\item $\omega$ is an embedded $1$-manifold with $\omega\cap K
    = \partial\omega$, meeting $K$ normally at its endpoints.
\end{itemize}
The triples $(Y,K,\omega)$ are required to satisfy the non-integral
condition, Definition~\ref{def:non-int}.
The morphisms from $(Y_{1},K_{1},\omega_{1})$ to $(Y_{0}, K_{0},
\omega_{0})$ are isomorphism classes of triples $(W,S,\omega)$,
where
\begin{itemize}
\item $(W,S)$ is a cobordism of pairs, with $W$ an oriented
    cobordism;
\item $\omega\subset W$ is a $2$-manifold with corners, whose boundary is the
    union of $\omega_{1}$, $\omega_{0}$, and some arcs in $S$,
    along which $\omega$ normal to $S$. The intersection
    $\omega\cap S$ is also allowed to contain finitely many
    points where the intersection is transverse.
\end{itemize}
Just as with $\buncobw$, an object $(Y,K,\omega)$ in $\buncoblinkw$ gives
rise to a commutative diagram of objects $(Y,K,\bP)$ in
$\buncoblink$. Singular instanton homology therefore defines a
functor,
\[
                   I : \buncoblinkw \to \pgroup
\]
as in the previous arguments. We denote this by
$I^{\omega}(Y,K)$. Thus a morphism from $(Y_{1}, K_{1}, \omega_{1})$ to 
$(Y_{0}, K_{0}, \omega_{0})$ represented by a cobordism $(W,S,
\omega)$ gives rise to a group homomorphism (well-defined up to an
overall sign),
\[
              I^{\omega}(W,S) : I^{\omega_{1}}(Y_{1}, K_{1}) 
                        \to I^{\omega_{0}}(Y_{0}, K_{0}).
\]
Most often, the particular choice of $\omega$ is clear from the
context, and we will often write simply a generic ``$\omega$'' in
place of the specific $\omega_{1}$, $\omega_{0}$, etc. 

\subsection{Constructions for classical knots}
\label{subsec:classical}

If $K$ is a classical knot or link in $S^{3}$, or more generally a
link in a homology sphere $Y$, then the triple $(Y,K,\omega)$
satisfies the non-integral condition if and only if some component of
$K$ contains an odd number of endpoints of $\omega$. In particular, we
cannot apply the functor $I$ to such a triple when $\omega$ is empty,
so we do not directly obtain an invariant of classical knots and links
without some additional decoration.

As described in the introduction however, we can use a simple
construction to obtain an invariant
of a link with a given \emph{basepoint}. More precisely, we consider a
link $K$ in an arbitrary $3$-manifold $Y$, together with a basepoint
$x\in K$ and a given normal vector $v$ to $K$ at $x$. Given this data,
we let $L$ be a circle at the boundary of a standard disk centered at
$x$ in the tubular neighborhood of $K$, and we let $\omega$ be a
radius of the disk: a standard arc joining $x\in K$ to a point in $L$, with
tangent vector $v$ at $x$. We write $K^{\natural}$ for the new link
\[
       K^{\natural} = K \amalg L
\]
and we define
\[
             I^{\natural}(Y,K,x,v) = I^{\omega}(Y, K^{\natural}).
\]
See Figure~\ref{fig:KNatural}.
\begin{figure}
    \begin{center}
        \includegraphics[scale=0.5]{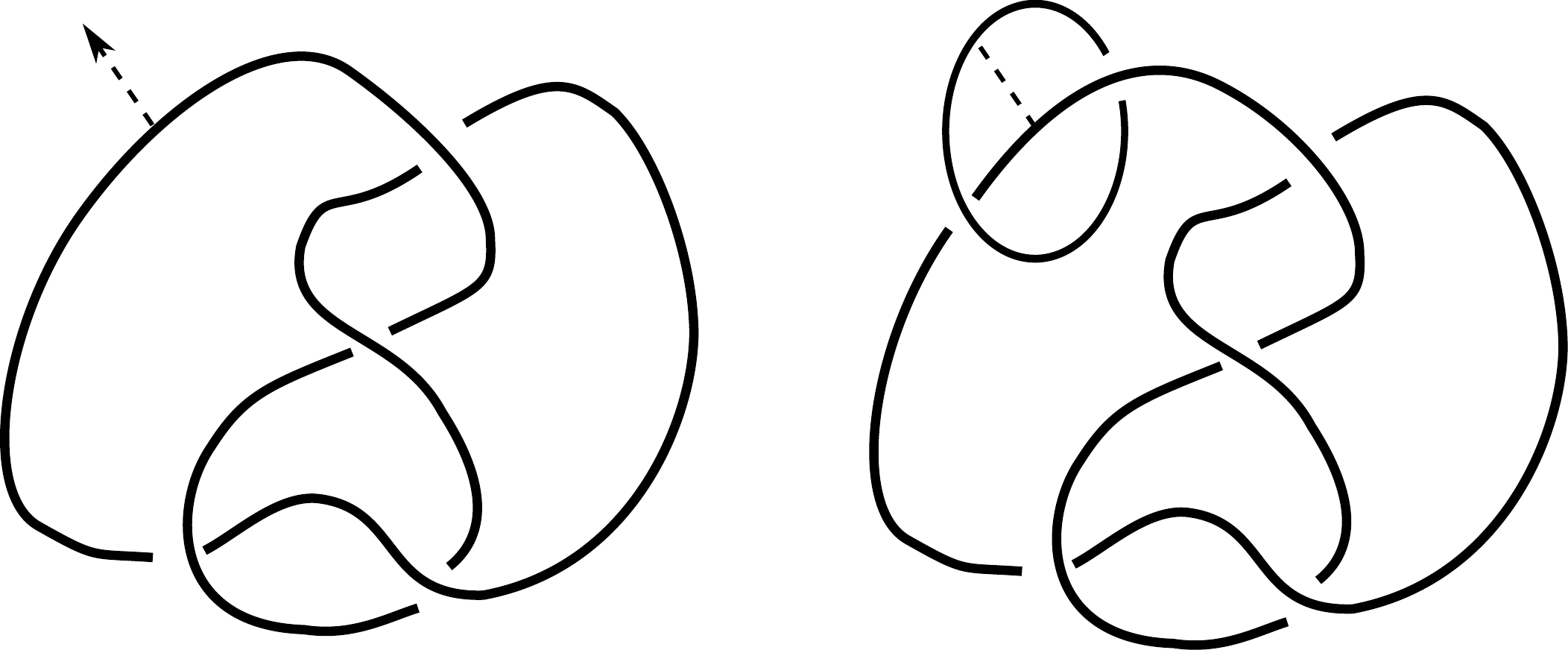}
    \end{center}
    \caption{\label{fig:KNatural}
    The link $K$ (left) and the link $K^{\natural}$ (right) 
    obtained from it by adding a meridional circle.}
\end{figure}
We shall usually omit $x$ and $v$ (particularly $v$) from our
notation, and simply write $I^{\natural}(Y,K)$ for this invariant. 

The role of the normal vector $v$ here is in making the construction
functorial. We can construct a category in which a morphism from
$(Y_{0}, K_{0}, x_{0}, v_{0})$ to $(Y_{1}, K_{1}, x_{1}, v_{1})$ is a
quadruple, $(W,S,\gamma,v)$, where $(W,S,\gamma)$ is a
cobordism of triples (so that $\gamma$ is 1-manifold with boundary
$\{x_{0}, x_{1}\}$) and $v$ is a normal vector to $S$ along
$\gamma$, coinciding with the given $v_{0}$, $v_{1}$ on the
boundary. Only $W$ is required to be oriented here, providing an
oriented cobordism between the $3$-manifolds as usual. We call this
category $\linkstar$.

The
construction that forms $K^{\natural}$ from $K$ can be applied (in a
self-evident manner) to morphisms $(W,S,\gamma,v)$ in this
category: one replaces $S$ with a new cobordism
\[
         S^{\natural} = S \amalg T
\]
where $T$ is a normal circle bundle along $\gamma$, sitting in the tubular
neighborhood of $S\subset W$; and one takes $\omega$ to be the
$I$-bundle over $\gamma$ with tangent direction $V$ along
$\gamma$. Thus we obtain a functor
\[
       \natural : \linkstar \to \buncoblinkw.
\]
Applying instanton homology gives us a projective functor,
\[
                 I^{\natural} : \linkstar \to \pgroup.
\]
Of course, when considering $I^{\natural}(Y,K)$ as a group only up to
isomorphism, we can regard it as an invariant of a link $K\subset Y$,
with a marked \emph{component}. 

We have the following basic calculation:

\begin{proposition}\label{prop:unknot-calc}
    For the unknot $U\subset S^{3}$, we have $\Inat(S^{3}, U) = \Z$.
\end{proposition}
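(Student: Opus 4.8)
The plan is to compute the singular instanton homology directly from the definition by exhibiting the critical set and showing there are no nontrivial differentials. Recall that $\Inat(S^3, U) = I^\omega(S^3, U^\natural)$, where $U^\natural = U \amalg L$ is the unknot together with a small meridional circle $L$, and $\omega$ is a radial arc joining $U$ to $L$ inside the disk. The double cover $K_\Delta \to U^\natural$ is the one determined by $\omega$: it is trivial on each of $U$ and $L$ but the arc $\omega$ forces $w_2(P)$ to be nonzero on the $2$-sphere obtained by doubling a normal disk along $\omega$. Equivalently, $\bP$ is an $\SO(3)$ bundle on $S^3\sminus U^\natural$ with $w_2$ dual to $\omega$. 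First I would verify the non-integral condition: a $2$-sphere in $S^3$ meeting $U$ once and $L$ once (or a torus linking one component an odd number of times) does the job, so Proposition~\ref{prop:non-integral} applies and there are no reducible critical points.

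Next I would identify the critical set $\crit(S^3, U^\natural, \bP)$. The complement $S^3 \sminus U^\natural$ deformation retracts onto something whose fundamental group is free of rank~$2$ (the link $U \amalg L$ is, up to isotopy, a Hopf link, whose complement is $T^2\times(0,1)$ — actually $U$ and its meridian $L$ form a Hopf link), so flat $\SO(3)$ connections with the prescribed holonomy — meridional holonomy of order $2$ around each component, coming from the singular structure with $\alpha = 1/4$, together with the constraint imposed by $w_2$ being nonzero — form a space I would compute by hand. The expectation (forced by Proposition~\ref{prop:unknot-calc}'s conclusion) is that after a small holonomy perturbation there is exactly one critical point, irreducible and non-degenerate, contributing a single $\Z$. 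Concretely, the flat connection is the one whose holonomies around the two meridians are a pair of anticommuting order-$2$ elements of $\SO(3)$ (which is why $w_2$ is odd on the relevant torus), and this representation is rigid, hence the unique critical point; I would check non-degeneracy by computing that the twisted cohomology $H^1$ of $S^3\sminus U^\natural$ with coefficients in the adjoint representation vanishes, so no perturbation is even needed.

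With a single generator and the chain complex concentrated in one grading, the differential is automatically zero and $\Inat(S^3,U) = \Z\Lambda(\beta) \cong \Z$ as a group; the residual sign ambiguity in the identification with $\Z$ is exactly the overall-sign indeterminacy already built into the invariant, so this is the correct statement. The main obstacle I anticipate is the fundamental-group / representation-variety bookkeeping: one must set up the singular-connection holonomy conventions (the $\alpha = 1/4$ cone structure translates meridional loops into order-$2$ holonomy, and the $\omega$-arc / non-trivial $w_2$ translates into the anticommuting condition) carefully enough to be sure the representation variety is a single reduced point rather than, say, a circle's worth of representations or a point with nontrivial obstruction space. Once that local model is pinned down — and it is essentially the computation already implicit in \cite{KM-knot-singular} for the unknot — the rest is immediate. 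An alternative, perhaps cleaner route would be to use the excision/skein machinery of section~\ref{sec:an-excis-argum} to reduce $(S^3, U^\natural, \omega)$ to a standard model pair whose Floer homology is known to be $\Z$, but the direct representation-variety computation seems the most self-contained.
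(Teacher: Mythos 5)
Your proposal is correct and follows essentially the same route as the paper: identify $U^{\natural}$ as a Hopf link, observe that the critical set is the space of representations of the fundamental group of the complement with the prescribed order-two meridional holonomy and holonomy $-1$ around $\omega$, and check that this consists of a single irreducible, non-degenerate point, so the complex has one generator. The only quibbles are cosmetic: the Hopf link complement has fundamental group $\Z^{2}$ rather than free of rank $2$, and the "anticommuting" condition lives in the $\SU(2)$ lifts (this being exactly the non-vanishing of $w_{2}$), not literally in $\SO(3)$ where the two half-turns about perpendicular axes commute.
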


\begin{proof}
    The link $K^{\natural}$ is a Hopf link and $\omega$ is an arc
    joining the components.
    The set of critical points for the unperturbed Chern-Simons
    functional is the set of representations (up to conjugacy) of the
    fundamental group of $S^{3}\setminus (K^{\natural} \cup \omega)$ in
    $\SU(2)$, subject the constraint that the holonomy on the links of
    $K^{\natural}$ is conjugate to $\bi$ and the holonomy on the links
    of $\omega$ is $-1$. There is one such representation up to
    conjugation, and it represents a non-degenerate critical point. So
    the complex that computes $\Inat(S^{3},U)$ has just a single generator.
\end{proof}

To obtain an invariant of a link $K\subset Y$ without need of a
basepoint or marked component, we can always replace $K$ by $K\amalg
U$, where $U$ is a new unknotted circle contained in a ball disjoint
from $K$. We put the basepoint $x$ on the new component $U$, and we
define
\[
        K^{\sharp} = (K\amalg U)^{\natural}.
\]
To say this more directly, $K^{\sharp}$ is the disjoint union of $K$
and a Hopf link $\Hopf$ contained in a ball disjoint from $K$, as
illustrated in Figure~\ref{fig:KSharp}.
\begin{figure}
    \begin{center}
        \includegraphics[scale=0.5]{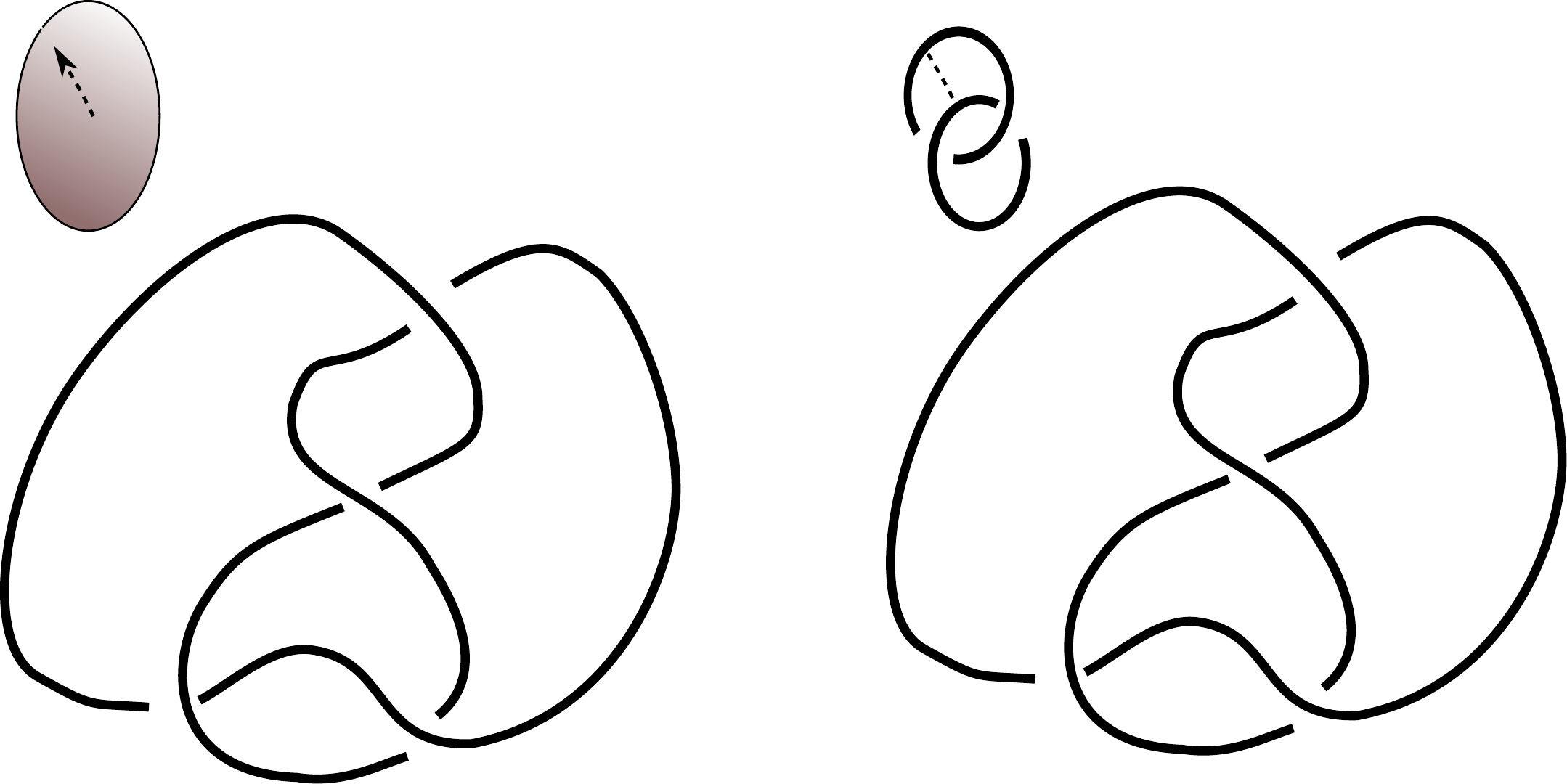}
    \end{center}
    \caption{\label{fig:KSharp}
    The link $K$ (left) and the link $K^{\sharp}$ obtained 
    as the union of $K$ with a Hopf link located
    at a marked basepoint.}
\end{figure}
 It comes
with an $\omega$ which is a standard arc joining the two components of
$\Hopf$. Thus we can define an invariant of links $K$ without basepoint by
defining
\[
\begin{aligned}
    I^{\sharp}(Y,K) &= I^{\omega}(Y,K^{\sharp})\\
            &=  I^{\omega}(Y, K \amalg \Hopf) 
\end{aligned}
\]

To make the construction $\sharp$ functorial, we need to be given $(Y,K)$
together with:
\begin{itemize}
\item a basepoint $y\in Y$ disjoint from $K$;
\item a preferred $2$-plane $p$ in $T_{y}Y$;
\item a vector $v$ in $p$.
\end{itemize}
We can then form $\Hopf$ by taking the first circle $U$ to be a standard
small circle in $p$, and then applying the $\natural$ construction to
$U$, using $v$ to define the framed basepoint on $U$. Thus
$I^{\sharp}$ is a projective functor from a category whose objects are
such marked pairs $(Y,K, y, p, v)$. The appropriate definition of the
morphisms in this category can be modeled on the example of
$\linkstar$ above.

\medskip
The constructions $\Inat$ and $\Isharp$ are the ``reduced'' and
``unreduced'' variants of singular instanton Floer homology. The
$\Isharp$ variant can be applied to the empty link, and the following
proposition is a restatement of Proposition~\ref{prop:unknot-calc}.

\begin{proposition}\label{prop:isharpemptyset}
    For the empty link $\emptyset$ in $S^{3}$, we have $\Isharp(S^{3},
    \emptyset) = \Z$. \qed
\end{proposition}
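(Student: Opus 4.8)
The plan is to reduce the assertion to the already-proved Proposition~\ref{prop:unknot-calc} by unwinding the definition of the unreduced invariant $\Isharp$. Recall that $\Isharp(Y,K) = I^{\omega}(Y,K^{\sharp})$ with $K^{\sharp} = (K\amalg U)^{\natural}$, where $U$ is a small unknotted circle in a ball disjoint from $K$, carrying a framed basepoint. Taking $K=\emptyset$ gives $\emptyset^{\sharp} = U^{\natural} = U\amalg L$, which is a Hopf link equipped with the standard arc $\omega$ joining its two components --- that is, precisely the configuration $(S^{3},U^{\natural})$ computing $\Inat(S^{3},U)$. Hence
\[
    \Isharp(S^{3},\emptyset) \;=\; I^{\omega}(S^{3}, U^{\natural}) \;=\; \Inat(S^{3},U) \;=\; \Z,
\]
the last equality being Proposition~\ref{prop:unknot-calc}.

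Alternatively, one may simply repeat the computation in the proof of Proposition~\ref{prop:unknot-calc} verbatim: the unperturbed critical set of the Chern--Simons functional for $(S^{3},\Hopf,\omega)$ is identified with the set of conjugacy classes of $\SU(2)$ representations of $\pi_{1}\bigl(S^{3}\sminus(\Hopf\cup\omega)\bigr)$ whose holonomy is conjugate to $\bi$ on the meridians of the two components of $\Hopf$ and is $-1$ on the meridian of $\omega$; there is exactly one such representation up to conjugacy, and it is a non-degenerate critical point. The chain complex defining $\Isharp(S^{3},\emptyset)$ therefore has a single generator and zero differential, so its homology is $\Z$.

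There is no substantial obstacle in this proof; as the text notes, the proposition is a restatement of Proposition~\ref{prop:unknot-calc}. The only point deserving a remark is that the auxiliary decorating data entering the definition of $\Isharp$ --- the basepoint $y\in S^{3}$, the $2$-plane $p\subset T_{y}S^{3}$, and the framing vector $v\in p$ --- are immaterial: any two choices are carried into one another by an ambient isotopy, producing isomorphic objects of $\buncoblinkw$, so the projective functor $I$ assigns them canonically isomorphic groups. Hence $\Isharp(S^{3},\emptyset)$ is well-defined and equal to $\Z$.
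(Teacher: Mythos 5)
Your proof is correct and follows exactly the paper's reasoning: the paper presents this proposition as a direct restatement of Proposition~\ref{prop:unknot-calc}, since $\emptyset^{\sharp} = U^{\natural}$ is the Hopf link with the standard arc $\omega$, and offers no further argument. Your unwinding of the definitions, together with the optional recapitulation of the one-point representation variety computation, matches the intended proof.
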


When dealing with $\Isharp$ for classical knots and links $K$, we will
regard $K$ as lying in $\R^{3}$ and take the base-point at infinity in
$S^{3}$. We will simply write $\Isharp(K)$ in this context.

\subsection{Resolving the ambiguities in overall sign}
\label{subsec:i-orient}

Thus far, we have been content with having an overall sign ambiguity
in the homomorphisms on Floer homology groups which arise from
cobordisms. We now turn to consider what is involved in resolving
these ambiguities.

We begin with the case of $\buncoblink$, in which a typical morphism
from $(Y_{1}, K_{1}, \bP_{1})$ to $(Y_{0}, K_{0}, \bP_{0})$ is
represented by a cobordism with singular bundle data, $(W,S,\bP)$. We
already have a rather tautological way to deal with the sign issue in
this case: we need to enrich our category by including all the data we
used to make the sign explicit. Thus, we can define a category
$\widetilde\buncoblink$ in which an object is a tuple \[(Y_{0},
K_{0},\bP_{0},\aux_{0}),\] where $\aux_{0}$ is the auxiliary data
consisting of a choice of Riemannian metric $\orbig_{0}$, a choice of
perturbation $\pert_{0}$, and a choice of basepoint $\theta_{0}$. (See
equation~\eqref{eq:aux-data} above.) A morphism in
$\widetilde\buncoblink$, from $(Y_{1}, K_{1},\bP_{1},\aux_{1})$ to
$(Y_{0}, K_{0},\bP_{0},\aux_{0})$, consists of the previous data
$(W,S,\bP)$ together with a choice of an $I$-orientation for
$(W,S,\bP)$ (Definition~\ref{def:I-orientation}). 
With such a definition, we have a functor (rather than a
projective functor)
\[
            I:   \widetilde\buncoblink \to \group.
\]

We can make something a little more concrete out of this for the
functor 
\[
     \Inat : \linkstar \to \pgroup.
\]
We would like to construct a category $\widetilde\linkstar$ and a
functor
\[
      \Inat :\widetilde\linkstar \to \group.
\]
To do this, we first define the objects of $\widetilde\linkstar$ to be
quadruples $(Y,K,x,v)$, where $K$ is now an \emph{oriented} link in
$Y$ and $x$ and $v$ are a basepoint and normal vector to $K$ as
before. (The orientation is the only additional ingredient here.)
Given this data, we can orient the link $K^{\natural}$ by orienting
the new component $L$ so that it has linking number $1$ with $K$ in
the standard ball around the basepoint $x$. There is a standard
cobordism of oriented pairs, $(Z,F)$, from $(Y,K)$ to
$(Y,K^{\natural})$: the $4$-manifold $Z$ is a product $[0,1]\times Y$
and the surface $F$ is obtained from the product surface $[0,1]\times
K$ by the addition of a standard embedded $1$-handle. Alternatively,
$(Z,F)$, is a boundary-connect-sum of pairs, with the first summand
being $[0,1]\times (Y,K)$ and the second summand being $(B^{4}, A)$,
where $A$ is a standard oriented annulus in $B^{4}$ bounding the
oriented Hopf link in $S^{3}$. The arc $\omega$ in $Y$ joining $K$ to
$L$ in the direction of $v$ is part of the boundary of a disk $\omega_Z$ in
$Z$ whose boundary consists of $\omega\subset Y$ together with a standard arc
lying on $F$. 

Although $(Y,K,\emptyset)$ may not satisfy the non-integral condition,
we can nevertheless form the space of connections $\bonf(Y,K) =
\bonf^{\emptyset}(Y,K)$ as before: this is a space of $\SU(2)$
connections on the link complement, with holonomy asymptotic to the
conjugacy class of the element
\begin{equation}\label{eq:bi}
         \bi = \begin{pmatrix} i & 0 \\ 0 & -i \end{pmatrix}.
\end{equation}
We can now exploit that fact that there is (to within some inessential
choices) a preferred basepoint $\theta$ in $\bonf(Y,K)$ arising from a
reducible connection, as in section 3.6 of
\cite{KM-knot-singular}. Thus, the singular connection $\theta$ is
obtained from the trivial product connection in $\SU(2)\times Y$ by
adding a standard singular term
\[
                \beta(r) \frac{\bi}{4} \eta
\]
where $\beta$ is a cut-off function on a tubular neighborhood of $K$
and $\eta$ is (as before) a global angular $1$-form, constructed this
time using the given orientation of $K$. If $\beta$ is a critical
point for the perturbed functional in
$\bonf^{\omega}(Y,K^{\natural})$, we let $A_{\theta,\beta}$ be any
chosen connection in $\bonf^{\omega_Z}(Z,F; \theta,\beta)$ (i.e. a
connection on the cobordism, asymptotic to $\theta$ and $\beta$ on the
two ends). We then \emph{define}
\begin{equation}
    \label{eq:Lambda-nat}
     \Lambda^{\natural}(\beta)
\end{equation}
to be the two-element set of orientations for the determinant line
$\det(\calD_{A_{\theta,\beta}})$.

To summarize, we have defined $\Lambda^{\natural}(\beta)$ much as we
defined $\Lambda(\beta)$ earlier in \eqref{eq:Lambda-beta}. The
differences are that we are now using the non-trivial cobordism
$(Z,F,\omega_Z)$ rather than the product, and we are exploiting the
presence of a distinguished reducible connection on the other end of
this cobordism, defined using the given orientation of $K$. We can
define a chain complex
\[
            C^{\natural}(Y,K) = \bigoplus_{\beta} \Z\Lambda^{\natural} (\beta),
\]
and we can regard $\Inat(Y,K)$ as being defined by the homology of
this chain complex.

Consider next a morphism in $\linkstar$, say $(W,S,\gamma,v)$ from
$(Y_{1}, K_{1},x_{1}, v_{1})$ to $(Y_{0},K_{0},x_{0},v_{0})$. We shall
choose orientations for $K_{1}$ and $K_{0}$, as we did in the previous
paragraphs. We do not assume that the surface $S$ is an oriented
cobordism, but we do require that it looks like one in a neighborhood
of the path $\gamma$: that is, we assume that if $u_{i}$ is an
oriented tangent vector to $K_{i}$ at $x_{i}$, then there is an
oriented tangent vector to $S$ along $\gamma$, normal to $\gamma$,
which restricts to $u_{1}$ and $u_{0}$ at the two ends.

We have
an associated morphism $(W, S^{\natural}, \omega)$ between $(Y_{1},
K^{\natural}_{1},\omega_{1})$ and $(Y_{0},K^{\natural}_{0},
\omega_{0})$. Let 
\[
\begin{aligned}
    \beta_{1} & \in \bonf^{\omega_{1}}(Y_{1}, K_{1}) \\
    \beta_{0} & \in \bonf^{\omega_{0}}(Y_{0}, K_{0})
\end{aligned}
\]
be critical points, let $[A_{\beta_{1},\beta_{0}}]$ be a connection in
$\bonf^{\omega}(W,S^{\natural};\beta_{1},\beta_{0})$, and consider the
problem of orienting the determinant line \[\det(\calD_{A_{\beta_{1},\beta_{0}}}).\]
Let $(Z_{1}, F_{1})$ and $(Z_{0}, F_{0})$ be the standard cobordisms,
described above,
\[
\begin{aligned}
    (Z_{i}, F_{i}) : (Y_{i}, K_{i}) \to (Y_{i}, K^{\natural}_{i}).
\end{aligned}
\]
There is an evident diffeomorphism, between two different composite
cobordisms, from $(Y_{1}, K_{1})$ to $(Y_{0}, K_{0}^{\natural})$:
\[
           (Z_{1}, F_{1}, \omega_{Z_{1}}) \cup_{Y_{1}} (W,
           S^{\natural}, \omega) = (W, S, \emptyset) \cup_{Y_{0}}
           (Z_{0}, F_{0}, Y_{0}).
\]
From this we obtain an isomorphism of determinant lines,
\begin{equation}\label{eq:det-iso}
             \det(\calD_{A_{\theta_{1},\beta_{1}}}) \otimes
                   \det(\calD_{A_{\beta_{1},\beta_{0}}}) = 
                   \det(\calD_{A_{\theta_{1},\theta_{0}}}) \otimes \det(\calD_{A_{\theta_{0},\beta_{0}}}).
\end{equation}
Here $\theta_{i}$ are the preferred reducible connections in
$\bonf(Y_{i}, K_{i})$ as above, and $A_{\theta_{1},\theta_{0}}$ is a
connection joining them across the cobordism $(W,S)$. 

If we wish the cobordism $(W,S^{\natural},\omega)$ to give rise to a
chain map 
\[
       C^{\natural}(Y_{1},K_{1}) \to C^{\natural}(Y_{0}, K_{0})
\]
with a well-defined overall sign,
then we need to specify an isomorphism
\[
                    \det(\calD_{A_{\beta_{1},\beta_{0}}}) 
                       \to \mathrm{Hom}
                       \bigl(\Z\Lambda^{\natural}(\beta_{1}),
                       (\Z\Lambda^{\natural}(\beta_{0}) 
            \bigr);
\]
and by the definition of $\Lambda^{\natural}$ and the isomorphism
\eqref{eq:det-iso}, this means that we must orient
\[
          \det(\calD_{A_{\theta_{1},\theta_{0}}}).
\]
Thus we are led to the following definition:

\begin{definition}
    Let  $(Y_{1}, K_{1})$ and $(Y_{0}, K_{0})$ be two pairs of
    oriented links in closed oriented $3$-manifolds, and let
    et $(W,S)$ be a cobordism of pairs, from $(Y_{1}, K_{1})$ to
    $(Y_{0}, K_{0})$, with $W$ an oriented cobordism, and $S$ an
    oriented cobordism (and possible non-orientable). Then an
    \emph{$I^{\natural}$-orientation} for $(W,S)$ will mean an
    orientation for the determinant line
    $\det(\calD_{A_{\theta_{1},\theta_{0}}})$, where $\theta_{i}$ are
    the reducible singular $\SU(2)$ connections on $Y_{i}$, described
    above and determined by the given orientations of the $K_{i}$. 
  
    In the special case that $W$ is a product $[0,1]\times Y$, an
    $\Inat$-orientation for an embedded cobordism $S$ between oriented
    links will mean an $\Inat$-orientation for the pair $([0,1]\times
    Y, S)$.
    \CloseDef
\end{definition}

We are now in a position to define $\widetilde\linkstar$. Its 
objects are quadruples $(Y_{i}, K_{i}, x_{i}, v_{i})$
with $K_{i}$ an \emph{oriented} link, and its morphisms are quintuples
$(W,S,\gamma,v,\lambda)$, where
\begin{itemize}
\item $(W,S)$ is a cobordism of pairs, with $W$ and oriented
    cobordism;
\item $\gamma$ is a path from $x_{1}$ to $x_{0}$, with the property
    that $S$ is an oriented cobordism along $\gamma$;
\item $v$ is a normal vector to $S$ along $\gamma$, restricting to
    $v_{1}$, $v_{0}$ at the two ends; and
\item $\lambda$ is a choice of $\Inat$-orientation for $(W,S)$.
\end{itemize}
With this definition, we have a well-defined functor to groups.

\begin{remarks}
Our definition of $\Inat$-orientation still rests on an analytic
index, so some comments are in order. First of all, the definition
makes it apparent that there is a natural composition law for
$\Inat$-orientations of composite cobordisms. Second, if $S$ is
actually an \emph{oriented} cobordism from $K_{1}$ to $K_{0}$, then an
$\Inat$-orientation of $(W,S)$ becomes equivalent to a
homology-orientation of the cobordism, as discussed in \cite{KM-book}
and \cite{KM-knot-singular}. Indeed, the case that $S$ is oriented is
precisely the case considered in \cite{KM-knot-singular}, where
homology-orientations of the cobordisms are shown to fix the signs of
the corresponding chain-maps. In particular, if $W$ is a product
$[0,1]\times Y$, then an oriented cobordism $S$ between oriented links
in $Y$ has a canonical $\Inat$-orientation, and these canonical
$\Inat$-orientations are preserved under composition.
\end{remarks}

When using the unreduced functor $\Isharp$ for knots in $\R^{3}$, we
have adopted the convention of putting the extra Hopf link ``at
infinity'' in a standard position. With this setup, we have a category
\begin{equation}\label{eq:link-R3-cat}
            \widetilde\link(\R^{3})
\end{equation}
whose objects are oriented links in $\R^{3}$ and whose morphisms are
$\Isharp$-oriented cobordisms in $[0,1]\times \R^{3}$.

\subsection{\texorpdfstring{Absolute $\Z/4$ gradings}{Absolute Z/4 gradings}}

For a general $(Y,K,\bP)$ and its corresponding configuration space
$\bonf(Y,K,\bP)$, 
the path-dependent relative grading, $\gr_{z}(\beta_{1},\beta_{0}) \in
\Z$, descends to a path-independent relative grading,
\[\bar\gr(\beta_{1},\beta_{0}) \in \Z/4.\] 
(This is because any two paths differ by the addition of instantons
and monopoles, both of which contribute multiples of $4$ to the
relative grading.)  As a consequence, both $\Inat(K)$ and $\Isharp(K)$
are homology theories with an affine $\Z/4$ grading. In the case of
$\Inat$ however (and hence also for $\Isharp$), we can define an
\emph{absolute} $\Z/4$ grading in a fairly straightforward
manner. Such an absolute $\Z/4$ grading should assign to each $\beta
\in \crit_{\pert}(Y, K^{\natural})$ an element
\[
           \bar\gr(\beta) \in \Z/4
\]
(depending $\pert$ and the metric, as well as on $\beta$).

\begin{proposition}\label{prop:canonical-Z/4}
    There is a unique way to define an absolute $\Z/4$ grading,
    $\bar\gr(\beta) \in \Z/4$ for $\beta\in \crit_{\pert}(Y,
    K^{\natural})$  such that the following two conditions
    hold:
    \begin{enumerate}
    \item the grading is normalized by having $\bar\gr(\beta_{0}) = 0$
        for the unique critical point in the case of the unknot in
        $S^{3}$ with $\pert=0$;
    \item if $(W,S, \gamma, v, \lambda)$ is a morphism from $(Y_{1},
        K_{1}, x_{1},v_{1})$ to $(Y_{0}, K_{0},x_{0},v_{0})$ in the
        category $\linkstar$, and $\beta_{1}$, $\beta_{0}$ are
        corresponding critical points, then 
          \[
            \gr_{z}(W,S^{\natural},\beta_{1},\beta_{0})
                        = \bar\gr(\beta_{1}) - \bar\gr(\beta_{0})
                           + \iota(W,S) \pmod{4}
           \]
          where
          \begin{multline*}
              \iota(W,S) = 
         - \chi(\Tigma) +
        b_{0} (\partial^{+} \Tigma) - b_{0}(\partial^{-}\Tigma)  \\ -
        \frac{3}{2}(\chi(W) + \sigma(W))    + \frac{1}{2}(b^{1}(Y_{0})
        - b^{1}(Y_{1})).
            \end{multline*}
    \end{enumerate}
    Similarly, for $\Isharp$ there is a canonical $\Z/4$ grading such
    that the generator for $\Isharp(\emptyset)$ is in degree $0$.
\end{proposition}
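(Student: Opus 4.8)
The plan is to fix the absolute grading by comparison with the single reference critical point provided by the unknot. Uniqueness is immediate: given $(Y,K,x,v)$ and $\beta\in\crit_{\pert}(Y,K^{\natural})$, there is always a morphism $(W,S,\gamma,v)$ in $\linkstar$ from the unknot $(S^{3},U)$ to $(Y,K)$ (take any oriented cobordism $W$ from $S^{3}$ to $Y$, carrying an embedded, possibly non-orientable, surface $S$ from $U$ to $K$ together with a path $\gamma$ and normal framing $v$; such data always exists, since every $3$-manifold is oriented-cobordant to $S^{3}$ and the surface can be produced by the usual tubing arguments). Applying condition~(2) to this morphism, to the unique critical point $\beta_{0}$ of the unknot, and to the normalization $\bar\gr(\beta_{0})=0$ of condition~(1) and Proposition~\ref{prop:unknot-calc}, forces
\[
   \bar\gr(\beta)\ \equiv\ \iota(W,S)-\gr_{z}(W,S^{\natural};\beta_{0},\beta)\pmod 4 .
\]
Thus the entire content of the proposition is that the right-hand side depends only on $\beta$; once this is known, one \emph{defines} $\bar\gr(\beta)$ by this formula, condition~(1) holds by construction, and condition~(2) for a general morphism follows by the composition law together with the additivity discussed below.

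Independence of the path $z$ and of the choice of singular bundle data on $(W,S^{\natural})$ lifting the given $\omega$ is the easy step. Any two paths between the same critical points, and any two choices of singular bundle data compatible with the same $\omega$, differ by adding instantons and monopoles (section~\ref{subsec:topol-class-sing}); by the dimension formula these alter the index $\gr$ by $8$ and by $4$ respectively, hence by $0$ modulo $4$. So $\gr_{z}(W,S^{\natural};\beta_{0},\beta)\bmod 4$ is already a well-defined function of the morphism and its two endpoints, and so is the displayed right-hand side.

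The crux is independence of the cobordism $(W,S,\gamma,v)$ itself. Both terms on the right are additive under composition of cobordisms: the index $\gr$ by the standard gluing formula for Fredholm operators (equivalently, by excision, as used throughout section~\ref{subsec:maps-from-cob}), and $\iota(W,S)$ by direct inspection -- $\chi(W)+\sigma(W)$ is additive by Novikov additivity of the signature, the combination $-\chi(\Tigma)+b_{0}(\partial^{+}\Tigma)-b_{0}(\partial^{-}\Tigma)$ is built to add correctly across a cut, and the $\tfrac12 b^{1}(Y)$ terms telescope. Given two cobordisms $(W,S)$ and $(W',S')$ with the same ends, gluing $W$ to the reversed cobordism $\overline{W}'$ along $(Y,K)$ and capping the two remaining copies of $(S^{3},U)$ with the standard pair $(B^{4},A)$ of Proposition~\ref{prop:isharpemptyset} produces a \emph{closed} pair $(X,\Sigma)$ with singular bundle data, and after the fixed contributions of the two caps cancel, the equality of the two candidate values of $\bar\gr(\beta)$ reduces to the single assertion $\gr(X,\Sigma,\bP)\equiv\iota(X,\Sigma)\pmod 4$ for closed pairs. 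This is where the dimension formula of Lemma~\ref{lem:index} enters: one has $\gr(X,\Sigma,\bP)=8\kappa-\tfrac32(\chi(X)+\sigma(X))+\chi(\Sigma)+\tfrac12\,\Sigma\cdot\Sigma$, and combining this with $\kappa=-\tfrac14 p_{1}(P_{\Delta})[X_{\Delta}]+\tfrac1{16}\Sigma\cdot\Sigma$ and the Dold--Whitney identity $\bar p_{1}(P_{\Delta})=\mathcal{P}(w_{2}(P_{\Delta}))$ recalled in section~\ref{subsec:topol-class-sing} (evaluated against $2[X_{\Delta}]$, a Wu-type computation) turns the assertion into a congruence purely among characteristic numbers of $(X,\Sigma)$ and the double cover. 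I expect this congruence -- tracking the contribution of $\kappa$ modulo $4$, and being careful in the cases where the orientation double cover $\Delta$ of the closed-up surface is non-trivial -- to be the main obstacle; everything else is bookkeeping with the composition law.

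Finally, the $\Isharp$ statement follows as a corollary. Since $\Isharp(Y,K)=\Inat(Y,K\amalg U)$, the canonical $\Z/4$ grading on $\Isharp$ is just the grading constructed above for $\Inat$ applied to $K\amalg U$; and its normalization is consistent, because $\emptyset^{\sharp}$ in $S^{3}$ is precisely the pair $K^{\natural}$ for the unknot $U$ (a Hopf link with $\omega$ an arc joining its components), whose single generator we have placed in degree $0$ by Proposition~\ref{prop:isharpemptyset}.
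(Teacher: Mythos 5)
Your skeleton is the same as the paper's: fix the grading against the unknot, observe that path-dependence and bundle-dependence only shift the index by multiples of $4$, and reduce everything, via composition/gluing, to a single congruence for closed pairs, namely that $\ind\calD \equiv -\tfrac{3}{2}(\chi(X)+\sigma(X))-\chi(\Sigma) \pmod 4$ independently of the singular bundle data. The problem is that you stop exactly there: you write that you ``expect this congruence \dots\ to be the main obstacle,'' which means the one substantive assertion in the proposition is left unproved. Your proposed route through $\kappa=-\tfrac14 p_{1}(P_{\Delta})[X_{\Delta}]+\tfrac1{16}\Sigma\cdot\Sigma$ and the Pontryagin square is not wrong in spirit, but it is genuinely incomplete: $[X_{\Delta}]$ is not an integral class, so $8\kappa=-2p_{1}(P_{\Delta})[X_{\Delta}]+\tfrac12\Sigma\cdot\Sigma$ is not obviously $\equiv 0 \pmod 4$, and what you would actually have to establish is $2p_{1}(P_{\Delta})[X_{\Delta}]\equiv \Sigma\cdot\Sigma+2\chi(\Sigma)\pmod 4$, which uses not just Dold--Whitney but the constraint that $w_{2}(P_{\Delta})$ is odd on the $2$-sphere fibers of $D$. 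None of this is carried out.

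The paper closes this gap with a short but essential arithmetic observation. Working with the form \eqref{eq:old-k-l-dim} of the index, $\ind\calD = 8k+4l-\tfrac32(\chi(X)+\sigma(X))+\chi(\Sigma)$, the term $8k$ vanishes mod $4$ because $k\in\Z$, and the term $4l$ is controlled because the monopole number $l$ is a half-integer with $2l\equiv\chi(\Sigma)\pmod 2$; hence $4l\equiv 2\chi(\Sigma)\pmod 4$ and $\ind\calD\equiv -\tfrac32(\chi(X)+\sigma(X))-\chi(\Sigma)\pmod 4$, with all bundle-dependence gone. That parity statement for $l$ is the missing idea in your writeup, and without it (or an equivalent characteristic-class computation of the kind you only sketch) the well-definedness of $\bar\gr$ is not established. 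The rest of your argument --- the uniqueness via a connecting cobordism from the unknot, the mod-$4$ invariance under adding instantons and monopoles, the additivity of $\iota$ under composition (Novikov additivity and the telescoping $b^{1}$ and $b_{0}$ terms), and the deduction of the $\Isharp$ statement --- is correct and matches what the paper leaves implicit.
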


\begin{proof}
    The uniqueness is clear. For the question of existence, we return
    first to a closed
   pair $(X,\Sigma)$, 
   with $w_{2}(P)=0$ so that $\Delta$ is trivial. The dimension
   formula \eqref{eq:old-k-l-dim} 
   tells us in this case that the index of the linearized operator
    $\calD$ satisfies
    \[
               \ind \calD =  4l -
        \frac{3}{2}(\chi(X) + \sigma(X)) + \chi(\Sigma) \pmod{4}.
     \]
     The instanton number $k$ is an integer, so the term $8k$ can be
     omitted; but the monopole number $l$ is potentially a
     half-integer and $2l$ is congruent to $\chi(\Sigma)$ mod 2. So
     the formula can be rewritten as
     \[
               \ind \calD =  -
        \frac{3}{2}(\chi(X) + \sigma(X)) - \chi(\Sigma) \pmod{4}.
     \]

    Given this formula, the existence of $\bar\gr$ now follows from
    the additivity of the terms involved.
\end{proof}

\section{Two applications of Floer's excision theorem}
\label{sec:an-excis-argum}

The proof of Proposition~\ref{prop:singular-to-sutured} rests on
Floer's excision theorem (slightly adapted to our situation). The
statement of the excision theorem generally involves $3$-manifolds $Y$
that may have more than one component, or cobordisms with more than
two boundary components. Disconnected $3$-manifolds do
not create any difficulties when defining instanton homology (see
below for a brief review); but they do introduce a new problem when we
look at cobordisms and functoriality: this problem stems from the fact
that the stabilizer of an irreducible connection on a disconnected
$3$-manifold is no longer just $\pm 1$, but is $(\pm 1)^{n}$, where
$n$ is the number of components; and not all of these elements of the
stabilizer will necessarily extend to locally constant gauge
transformations on the cobordism. This results in extra factors of two
when gluing. The way to resolve these problems is to carefully enlarge
the gauge group, by allowing some automorphisms of the $\SO(3)$ bundle
that do not lift to determinant-1 gauge transformations. Our first
task in this section is to outline how this is done. The issue
appears (and is dealt with) already in Floer's original proof of
excision (as presented in \cite{Braam-Donaldson}), but we need a more
general framework.

When enlarging the gauge group in this way, the standard approach to
orienting moduli spaces breaks down, and an alternative method is
needed. We turn to this first.

\subsection{Orientations and almost-complex structures}
\label{subsec:almost-complex}

Given an $\SO(3)$ bundle $P$ on a closed, oriented, Riemannian
$4$-manifold $X$, we have considered the moduli space of ASD
connections, $M(X,P)$, by which we mean the quotient of the space of
ASD connections by the \emph{determinant-1} gauge group
$\G(P)$. This moduli space, when regular, is orientable; and orienting
it amounts to trivializing the determinant line $\calD_{A}\to
\bonf(X,P)$. Two approaches to choosing a trivialization are available
and described in \cite{Donaldson-orientations}. The first relies on
having a $U(2)$ lift of $P$ (or equivalently, an integral lift $v$ of
$w_{2}(P)$) together with a homology orientation of $X$: this is the
standard approach generally used in defining Donaldson's polynomial
invariants. The second approach described in
\cite{Donaldson-orientations} relies on having an almost complex
structure $J$ on $X$: in the presence of $J$, there is a standard
homotopy from the operators $\calD_{A}$ to the complex-linear operator
\[
\bar\partial_{A} + \bar\partial^{*}_{A} :
\Omega^{0,1}_{X}\otimes_{\R}\g_{P} \to (\Omega^{0,0}_{X}\oplus
\Omega^{0,2}_{X})\otimes_{\R}\g_{P}.
\]
The complex orientation of the operator at the end of the homotopy
provides a preferred orientation for the determinant line.

The second of these approaches has the disadvantage of requiring the
existence of $J$. On the other hand, when $J$ exists, the argument
provides a simple and direct proof of the orientability of the
determinant line, because the homotopy can be applied to the entire
family of operators over $\bonf(X,P)$. More importantly for us, this
second approach to orientations can be used to establish the
orientability of the determinant line over the quotient of
$\conf(X,P)$ by the \emph{full} automorphism group $\Aut(P)$, not just
the group $\G(P)$ of determinant-1 gauge transformations.

To set up this approach to orientations, we consider a pair $(Y,K)$ as
usual with singular bundle date $\bP$. Recall that the center $Z$ of
$\G(\bP)$ is $\{\pm 1\}$ and that we
have the isomorphism
\[
       \Aut(\bP)/ (\G(\bP)/Z) \cong H^{1}(Y ; \Z/2).
\]
Thus $H^{1}(Y ; \Z/2)$ acts on $\bonf = \bonf(Y,K,\bP)$. Fix a
subgroup $\phi \subset H^{1}(Y;\Z/2)$, and consider the quotient
\[
      \bar\bonf =   \bonf(Y,K,\bP)/\phi.
\]
We shall require that the non-integral condition hold as usual, so
that all critical points of the Chern-Simons functional are
irreducible; but we also want $\phi$ to act freely on the set of
critical points. This can be achieved (as the reader may verify) by
strengthening the non-integral condition as follows. 

\begin{definition}\label{def:ph-nonint}
We say that
$(Y,K,\bP)$ satisfies the \emph{$\phi$-non-integral} condition if
there is a non-integral surface $\Sigma\subset Y$ (in the sense of
Definition~\ref{def:non-int}) satisfying the additional constraint
that $\phi|_{\Sigma}$ is zero.\CloseDef
\end{definition}

When such a condition holds, there is no difficulty in choosing
$\phi$-invariant holonomy perturbations so that all critical points
are non-degenerate and all moduli spaces of trajectories are
regular. We will write $\bar\alpha$, $\bar\beta$ for typical critical
points in the quotient $\bar\bonf$, and $M(\bar\alpha,\bar\beta)$ for
the moduli spaces of trajectories. To show these moduli spaces are
orientable (and to orient eventually orient them), we start on the
closed manifold $S^{1}\times Y$ which we equip with an
$S^{1}$-invariant orbifold-complex structure: in a neighborhood of
$S^{1}\times K$, the model is the $\Z/2$ quotient of a complex disk
bundle over the complex manifold $S^{1}\times K$. Up to homotopy, such
an almost complex structure is determined by giving a non-vanishing
vector field on $Y$ that is tangent to $K$ along $K$. As described
above, the family of operators $\calD_{A}$ on the orbifold
$S^{1}\times \orbiY$ is homotopic to a family of complex operators. As
in the standard determinant-1 story, by
excision, we deduce that the moduli spaces
$M(\bar\alpha,\bar\beta)$ on $\R\times (Y,K)$ are orientable. We also
see that if $\bar z$ and $\bar z'$ are two homotopy classes of paths
from $\bar\alpha$ to $\bar\beta$, then an orientation for $M_{\bar
  z}(\bar\alpha,\bar\beta)$ determines an orientation for  $M_{\bar
  z'}(\bar\alpha,\bar\beta)$. This allows us to define
$\bar\Lambda(\bar\alpha,\bar\beta)$ as the two-element set that
orients all of these moduli spaces simultaneously. For any given
$\bar\beta$, we can also consider on $\R\times Y$ the $4$-dimensional
operator that interpolates between $\calD_{\bar\beta}$ at the
$+\infty$ end and its complex version $\bar\partial_{\bar\beta} +
\bar\partial^{*}_{\bar\beta}$ at the $-\infty$ end. If we define
$\bar\Lambda(\bar\beta)$ to be the set of orientations of the
determinant of this operator, then  we have isomorphisms
\[
            \bar\Lambda(\bar\alpha,\bar\beta) = 
                         \bar\Lambda(\bar\alpha)\bar\Lambda(\bar\beta).
\]
We are therefore able to define the Floer complex in this situation by
the usual recipe: we write it as
\[
        C(Y,K,\bP)^{\phi} = \bigoplus_{\bar\beta} \bar\Lambda(\bar\beta)
\]
and its homology as $I(Y,K,\bP)^{\phi}$.

The construction of $I(Y,K,\bP)^{\phi}$ in this manner depends on the
choice of almost-complex structure $J$ on $S^{1}\times Y$. If $J$ and
$J'$ are two $S^{1}$-invariant complex structures, then the class
\[
     (c_{1}(J) - c_{1}(J'))/2
\]
determines a character $\xi: \phi \to \{\pm 1\}$, and hence a local system
$\Z_{\xi}$ with fiber $\Z$ on $\bar\bonf = \bonf/\phi$. The
corresponding Floer homology groups $I$ and $I'$ defined using the
orientations arising from $J$ and $J'$ are related by
\[
              I'(Y,K,\bP)^{\phi} =  I(Y,K,\bP;\Z_{\xi})^{\phi},
\] 
as can be deduced from the calculations in
\cite{Donaldson-orientations}. 

In the special case that $\phi$ is
trivial, the group $I(Y,K,\bP)^{\phi}$ coincides with $I(Y,K,\bP)$ as
previously defined: a choice of isomorphism between the two depends
on a choice of trivialization of $J$-dependent $2$-element set 
$\bar\Lambda(\theta)$, where $\theta$ is the chosen
base-point. Another special case is the following:

\begin{proposition}\label{prop:simple-doubling}
    Suppose that $\phi$ is a group of order $2$ in $H^{1}(Y;\Z/2)$ and
    that we are in one of the following two cases:
    \begin{enumerate}
    \item the link $K$ is empty and the non-zero element of $\phi$ has
        non-trivial pairing with $w_{2}(P)$; or
    \item $K$ is non-empty and its fundamental class has non-zero
        pairing with $\phi$.
    \end{enumerate}
     Then we have
      \[
             I(Y,K,\bP) = I(Y,K,\bP)^{\phi} \oplus I(Y,K,\bP)^{\phi}.
       \]
\end{proposition}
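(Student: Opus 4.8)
The plan is to identify the full Floer complex $C(Y,K,\bP)$ with the pull-back of the quotient complex $C^{\phi}=\bigoplus_{\bar\beta}\bar\Lambda(\bar\beta)$ along a connected double cover of the grading circle, and then to observe that such a pull-back always has homology equal to two copies of the base. Write $\phi=\{1,g\}$. The hypotheses in (1) and (2) amount to a mild strengthening of the non-integral condition in the spirit of Definition~\ref{def:ph-nonint}: in case (1) a surface dual to $g$ pairs oddly with $w_{2}(P)$, and in case (2) such a surface meets $K$ oddly. As in the discussion preceding the proposition, this guarantees that after a $\phi$-invariant holonomy perturbation $g$ acts freely on $\crit_{\pert}(Y,K,\bP)$, and that $\bonf(Y,K,\bP)\to\bar\bonf$ is a genuine double cover over the critical set and over the relevant $1$-dimensional moduli spaces of trajectories.

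Next I would promote $g$ to an order-two chain map of $C(Y,K,\bP)$. The subtlety is with orientations: $\Lambda(\beta)$ is defined using a path from the basepoint $\theta$, and pushing forward by $g$ produces a path from $\theta$ to $g\theta$ rather than from $\theta$ to $g\beta$. This is resolved by the almost-complex formalism of this subsection: fixing an $S^{1}$-invariant orbifold-complex structure $J$ on $S^{1}\times(Y,K)$ and using the resulting complex orientations $\bar\Lambda$ to identify $C(Y,K,\bP)$ with the complex $C(Y,K,\bP)^{\phi}$ taken with $\phi$ trivial, the automorphism $g\in\Aut(\bP)$ acts $\C$-linearly on the relevant Cauchy--Riemann-type operators over $S^{1}\times Y$ (the complex structure acts on the form factor, $g$ on the bundle factor), hence preserves the complex orientations. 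Thus $g$ acts on $C(Y,K,\bP)$, freely permuting the basis $\{\Z\Lambda(\beta)\}$ and commuting with $\partial$; identifying the generators of $C^{\phi}$ with the $\phi$-orbits of generators of $C(Y,K,\bP)$ yields a chain isomorphism $C(Y,K,\bP)\cong\Z[\phi]\otimes_{\Z}C^{\phi}$, the differential being determined by $\partial(1\otimes x)=1\otimes\partial_{0}x+g\otimes\partial_{1}x$ with $\partial_{0}+\partial_{1}=\partial^{\phi}$.

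The crux is the grading. The relative grading descends to $\Z/N$, where $N=8$ when $K=\emptyset$ (only instantons are available, shifting the grading by $8$) and $N=4$ when $K\neq\emptyset$ (monopoles are also available, shifting by $4$). I claim that under the hypotheses $g$ shifts the relative grading by exactly $N/2$. For $K=\emptyset$ this is a Pontryagin-square computation on the closed manifold $S^{1}\times Y$: the bundle twisted by $g$ has $w_{2}$ differing from the untwisted one by $g\times t$, with $t$ the $\Z/2$-generator of $H^{1}(S^{1})$; since $\mathcal{P}(g\times t)=0$ while $w_{2}(P)\cupprod g$ has odd pairing with $[Y]$ by hypothesis~(1), the index formula of Lemma~\ref{lem:index} (which here reads $\ind\calD=8\kappa$) gives a grading shift $\equiv 4\pmod 8$. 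For $K\neq\emptyset$ the analogous computation, using the formula for $\kappa$ in terms of $p_{1}(P_{\Delta})[X_{\Delta}]$ and hypothesis~(2) (that $g$ is odd on $[K]$), gives a shift $\equiv 2\pmod 4$. Granting this, $C(Y,K,\bP)$, regraded by $\Z/N$ with $g$ carrying degree $j$ isomorphically onto degree $j+N/2$ compatibly with $\partial$, is precisely the pull-back of the $\Z/(N/2)$-graded complex $C^{\phi}$ along the connected double cover $\Z/N\to\Z/(N/2)$ of the grading circle. A degree-by-degree computation of homology then shows that the homology in degrees $0,\dots,N/2-1$ reproduces $H_{*}(C^{\phi})=I^{\phi}$, while $g$ supplies isomorphisms $H_{j}\cong H_{j+N/2}$; hence $I(Y,K,\bP)=H_{*}(C(Y,K,\bP))\cong I^{\phi}\oplus I^{\phi}$.

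The main obstacle I expect is the grading computation: one must check in each case that $g$ shifts the relative grading by precisely $N/2$ — in particular that it shifts it at all, since a priori the shift could be $0$ or $N$ — and this is exactly where the two hypotheses enter, through the Dold--Whitney/Pontryagin-square analysis of Section~\ref{subsec:topol-class-sing} applied to the twisted bundle over $S^{1}\times(Y,K)$. The remaining point, that $g$ lifts to a chain map compatibly with orientations, is more routine and is handled by the almost-complex-structure machinery of this subsection.
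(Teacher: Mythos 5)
Your proposal is correct and follows essentially the same route as the paper: the hypotheses force the nontrivial element of $\phi$ to shift the relative $\Z/8$ (resp.\ $\Z/4$) grading by $4$ (resp.\ $2$), so that $C(Y,K,\bP)$ is the unwrapping of the $\Z/4$- (resp.\ $\Z/2$-) graded complex $C(Y,K,\bP)^{\phi}$, and its homology is two copies of $I(Y,K,\bP)^{\phi}$. The paper's proof is terser — it simply asserts the grading shift and the unwrapping — whereas you supply the Pontryagin-square justification of the shift and the orientation bookkeeping, both of which are consistent with the framework of section~\ref{subsec:almost-complex}.
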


\begin{proof}
    In the first case, the complex $C(Y,K,\bP)$ (or simply $C(Y,P)$)
    has a relative $\Z/8$ grading, and the action of the non-trivial
    element of $\phi$ on the set of critical points
    shifts the grading by $4$, so that $I(Y,P)^{\phi}$ is $\Z/4$
    graded. 
    In the second case, there is a relative
    $\Z/4$ grading on $C(Y,K,\bP)$, 
    and the action shifts the grading by $2$, so that
    $I(Y,K,\bP)^{\phi}$ has only a $\Z/2$ grading. In either case, the
    group $I(Y,K,\bP)$ is obtained from $I(Y,K,\bP)^{\phi}$ by simply
    unwrapping the grading, doubling the period from 4 to 8, or from
    2 to 4 respectively.
\end{proof}

As a simple example, we have the following result for the $3$-torus:

\begin{lemma}\label{lem:T3-phi-calc}
    In the case that $Y$ is a $3$-torus and $K$ is empty, let $P\to
    T^{3}$ be an $\SO(3)$ bundle and $\phi$ any two-element subgroup
    of $H^{1}(T^{3} ; \Z/2)$. Suppose that $w_{2}(P)$ pairs
    non-trivially with the non-zero element of $\phi$. Then
    $I(T^{3},P)^{\phi}=\Z$.
\end{lemma}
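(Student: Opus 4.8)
The plan is to describe the generators of the Floer complex of $(T^{3},P)$ explicitly and, after passing to the $\phi$-quotient, read off the answer; Proposition~\ref{prop:simple-doubling} supplies a convenient cross-check.

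First I would note that the hypotheses are precisely case~(1) of Proposition~\ref{prop:simple-doubling}: the link is empty and the non-zero element $\alpha$ of $\phi$ satisfies $\alpha\cup w_{2}(P)\ne 0$ in $H^{3}(T^{3};\Z/2)$. The $\phi$-non-integral condition of Definition~\ref{def:ph-nonint} also holds: after a linear change of coordinates on $T^{3}$ we may assume $\alpha$ is dual to the third coordinate curve, and then the coordinate $2$-torus $\Sigma$ spanned by the first two directions has $\langle w_{2}(P),[\Sigma]\rangle=1$ (this is forced by $\alpha\cup w_{2}(P)\ne 0$), while $\alpha$ restricts to zero on $\Sigma$. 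Thus $I(T^{3},P)^{\phi}$ is defined and Proposition~\ref{prop:simple-doubling} gives $I(T^{3},P)=I(T^{3},P)^{\phi}\oplus I(T^{3},P)^{\phi}$, so it suffices to compute $I(T^{3},P)$.

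Next I would analyse the critical points of the unperturbed Chern--Simons functional, namely the flat $\SO(3)$-connections on $T^{3}$ with associated second Stiefel--Whitney class $w_{2}(P)\ne 0$, taken modulo the \emph{determinant-$1$} gauge group. The monodromy group of such a connection is abelian; any abelian subgroup of $\SO(3)$ other than the Klein four-group $V$ of $\pi$-rotations about three mutually perpendicular axes is conjugate into $\SO(2)$, and a flat $\SO(2)$-bundle on $T^{3}$ is topologically trivial (since $H^{2}(T^{3};\Z)$ is torsion-free), hence has $w_{2}=0$; so the monodromy group must be $V$. A short computation then shows that, for the given class $w_{2}(P)$, there is exactly one such representation $\rho\colon\Z^{3}\to\SO(3)$ up to conjugacy; it is irreducible (its stabiliser among the constant gauge transformations is $\{\pm1\}$) and non-degenerate, because the adjoint local system $\g_{\rho}$ on $T^{3}$ is a sum of three real flat line bundles associated to non-trivial characters $\Z^{3}\to\{\pm1\}$, each with vanishing cohomology. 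The key step --- and, I expect, the main obstacle --- is the passage from conjugacy classes to orbits of the determinant-$1$ gauge group: the stabiliser of $\rho$ in the full automorphism group $\Aut(P)$ consists of the constant gauge transformations valued in the centraliser $V$ of $\rho(\Z^{3})$, and under the surjection $\Aut(P)\to H^{1}(T^{3};\Z/2)$ whose kernel is the image of the determinant-$1$ gauge group, this stabiliser maps onto the index-$2$ subgroup $\{\beta\mid\beta\cup w_{2}(P)=0\}$. Therefore the single conjugacy class of flat connections becomes exactly \emph{two} gauge-equivalence classes modulo the determinant-$1$ gauge group, and $\alpha\in\phi$ interchanges them.

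It then follows quickly that $I(T^{3},P)^{\phi}=\Z$. Since $\alpha$ acts freely on this two-element critical set, the quotient configuration space $\bonf(T^{3},P)/\phi$ has a single critical point, still non-degenerate, so the complex $C(T^{3},P)^{\phi}$ has one generator; its relative self-grading in $\Z/4$ is $0$, so the differential vanishes and $I(T^{3},P)^{\phi}=\Z$. As a consistency check against Proposition~\ref{prop:simple-doubling}: the complex $C(T^{3},P)$ has two generators lying, by the proof of that proposition, in relative $\Z/8$-degrees differing by $4$, so its differential is zero, $I(T^{3},P)\cong\Z^{2}$, and the splitting $I(T^{3},P)=I(T^{3},P)^{\phi}\oplus I(T^{3},P)^{\phi}$ again yields $I(T^{3},P)^{\phi}\cong\Z$.
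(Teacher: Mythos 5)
Your proof is correct and follows essentially the same route as the paper's: the paper's (very terse) proof likewise invokes case (1) of Proposition~\ref{prop:simple-doubling} together with the known computation $I(T^{3},P)=\Z\oplus\Z$, and notes alternatively that the representation variety in $\bonf(T^{3},P)/\phi$ is a single point. Your write-up simply supplies the details (uniqueness of the Klein-four representation, its non-degeneracy, and the splitting of the single conjugacy class into two determinant-$1$ gauge orbits interchanged by $\alpha$) that the paper leaves to the reader, and these details check out.
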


\begin{proof}
    This is an instance of the first item in the previous lemma, given
    that $I(T^{3},P) = \Z\oplus\Z$. Alternatively, it can be seen
    directly, as the representation variety in $\bonf(T^{3},P)/\phi$
    consists of a single point.
\end{proof}

We can similarly enlarge the gauge group on 4-dimensional cobordisms,
so as to make the groups $I(Y,K,\bP)^{\phi}$ functorial. Thus, suppose
we have a cobordism $(W,S,\bP)$ from $(Y_{1},K_{1},\bP_{1})$ to
$(Y_{0},K_{0},\bP_{0})$. Let $\phi \subset H^{1}(W;\Z/2)$ be chosen
subgroup, and let $\phi_{i}$ be its image in $H^{1}(Y_{i};\Z/2)$ under
the restriction map. Suppose that the singular bundle data $\bP_{i}$
satisfies the $\phi_{i}$-non-integral condition for $i=0,1$. After
choosing metrics and perturbations, the group
$\phi$ acts on the usual moduli spaces on
$M(W,S,\bP;\beta_{1},\beta_{0})$, and we have quotient moduli spaces
$\bar{M}(W,S,\bP;\bar\beta_{1},\bar\beta_{0})$. To set up
orientations, we need to choose an almost-complex structure $J$ on $(W,S)$:
i.e, an almost-complex structure on $W$ such that $S$ is an
almost-complex submanifold. This
is always possible when $W$ has boundary, \emph{as long as $S$ is
  orientable}. (The orientability of $S$ is a restriction on the
applicability of this framework.) We denote by $J_{i}$ the
translation-invariant complex structure on $\R\times Y_{i}$ which
arises from restricting $J$ to the ends, and we use $J_{i}$ in
constructing the Floer groups $I(Y_{i},K_{i},\bP_{i})^{\phi_{i}}$ as
above. Then $J$ on $(W,S)$ orients the moduli spaces appropriately and we
have a well-defined chain map
\[
                      C(Y_{1},K_{1},\bP_{1})^{\phi_{1}}
           \to  C(Y_{0},K_{0},\bP_{0})^{\phi_{0}},
\] 
and a map on homology,
\[
          I(W,S,\bP)^{\phi}: 
                      I(Y_{1},K_{1},\bP_{1})^{\phi_{1}}
           \to  I(Y_{0},K_{0},\bP_{0})^{\phi_{0}}.
\]
In this way we have a functor (not just a projective functor) from the
category whose morphisms are cobordisms $(W,S,\bP)$ equipped with a
subgroup $\phi$ of $H^{1}(W;\Z/2)$ and an almost-complex structure
$J$.

The use of almost-complex structures also provides a canonical mod 2
grading on the Floer groups $I(Y,K,\bP)^{\phi}$. In the chain complex,
a generator corresponding to a critical points $\bar\beta$ is in even
or odd degree according to the parity index of the operator whose
determinant line is $\bar\Lambda(\bar\beta)$, as defined above.

\subsection{Disconnected 3-manifolds}

We now extend the above discussion to the case of $3$-manifolds that
are not necessarily connected: we begin with a possibly disconnected
closed, oriented $3$-manifold $Y$, containing a link $K$ with singular
bundle data $\bP$.  The configuration space $\bonf(Y,K,\bP)$ is the
product of the configuration spaces of the components. This is acted
on by $H^{1}(Y;\Z/2)$, and we suppose that we are given a subgroup
$\phi$ of this finite group with which to form the quotient space
$\bar\bonf(Y,K,\bP)$.  We require that the $\phi$-non-integral
condition hold on each component of $Y$. We choose a
translation-invariant almost-complex structure $J$ on $(\R\times Y,
\R\times K)$ in order to determine orientations of moduli spaces, and
after choosing metrics and perturbations we arrive at Floer homology
groups
\[
      I(Y,K,\bP)^{\phi}
\]
with no essential changes needed to accommodate the extra
generality. We do not exclude the case that $Y$ is empty, in which
case its Floer homology is $\Z$.

Since the complex that computes $I(Y,K,\bP)^{\phi}$ is essentially a
product, there is a K\"unneth-type theorem that describes this
homology. 
To illustrate in the case of two
components, if $Y= Y^{1} \amalg Y^{2}$, and $\phi=\phi^{1}\times
\phi^{2}$, then there is an isomorphism of  chain complexes
\[
         C(Y^{1},K^{1},\bP^{1})^{\phi^{1}} \otimes   C(Y^{2},K^{2},\bP^{2})^{\phi^{2}}
            \to   C(Y,K,\bP)^{\phi}
\]
under which the Floer differential $d$ on the right becomes the
differential on the tensor product given by
\[
         d(a \otimes b) = d^{1}a \otimes b + \epsilon^{1}a \otimes  d^{2} b,
\]
where $\epsilon^{1}$ is the sign operator on $C(Y^{1}, K^{1},\bP^{1})^{\phi^{1}}$ 
that is $-1$ on generators which
have odd degree and $+1$ on
generators which have even degree.  As a result of this isomorphism on
chain complexes, there is  a short exact sequence
relating the homology groups $I = I(Y,K,\bP)^{\phi}$ to the groups $I_{i} =
I(Y_{i},K_{i},\bP_{i})^{\phi^{i}}$:
\begin{equation}\label{eq:Kunneth}
         \mathrm{Tor}( I^{1},
               I^{2} ) \hookrightarrow I
              \twoheadrightarrow I^{1}\otimes
                I^{2}.
\end{equation}
Note that
we also have an isomorphism of complexes
\[
         C(Y^{2},K^{2},\bP^{2})^{\phi^{2}} \otimes   C(Y^{1},K^{1},\bP^{1})^{\phi^{1}}
            \to   C(Y,K,\bP)^{\phi}
\]
where the differential on the left is $d^{2}\otimes 1 + \epsilon^{2}
\otimes d^{1}$. These two isomorphisms are intertwined by the map
\[
     a \otimes b \mapsto \epsilon^{2} b \otimes \epsilon^{1} a.
\]
The change in sign results from the need to identify the determinant line
of the direct sum of two operators with the tensor product of the
determinant lines (see \cite{KM-sutures} for a discussion).

Now consider again a cobordism $(W,S,\bP)$ between
(possibly disconnected) $3$-manifolds with singular bundle data,
$(Y_{1}, K_{1},\bP_{1})$ and $(Y_{0}, K_{0}, \bP_{0})$. We do not
require that $W$ is connected; but to avoid reducibles, we do suppose
that $W$ has no closed components. Let $\phi$ be
a subgroup of $H^{1}(W;\Z/2)$ and let $\phi_{i}$ be its restriction to
$Y_{i}$. Let $J$ be a complex structure on $(W,S)$ and $J_{i}$ its
restriction to the two ends, $i=1,0$. With metrics and perturbations
in place, we obtain a chain map and induced map of Floer homology
groups,
\[
 I(W,S,\bP)^{\phi} :    
I(Y_{1},K_{1},\bP_{1})^{\phi_{1}} \to  I(Y_{0},K_{0},\bP_{0})^{\phi_{0}} .
\]
The previous definitions need no modification. There is a difference
however when we consider the composition law and
functoriality. Suppose that the above cobordism $(W,S)$ is broken into
the union of two cobordisms along some intermediate manifold-pair
$(Y_{1/2}, K_{1/2})$, so  $(W,S) = (W',S') \cup (W'', S'')$, with $W'$
a cobordism from $Y_{1}$ to $Y_{1/2}$. By restriction, $\phi$ gives
rise to $\phi'$ and $\phi''$, and $J$ gives rise to $J'$ and $J''$.
The composite map is equal to
the map arising from the composite cobordism only if an additional
hypothesis holds:

\begin{proposition}
\label{prop:composite-phi}
    In the above setting, we have
\[
        I(W,S,\bP)^{\phi} =  I(W'',S'',\bP'')^{\phi''} \circ I(W',S',\bP')^{\phi'}
\]
   provided that the group $\phi\subset H^{1}(W;\Z/2)$ contains the
   image $\psi$ of the connecting map in the Mayer-Vietoris sequence,
\[
             H^{0}(Y_{1/2} ; \Z/2) \to H^{1} (W; \Z/2).
\]
\end{proposition}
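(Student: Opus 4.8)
The plan is to follow the standard gluing/composition argument for Floer homology, but to track carefully the locally constant gauge transformations on the closed-up $3$-manifold $Y_{1/2}$. Recall that the composition law $I(W,S,\bP)^\phi = I(W'',S'',\bP'')^{\phi''}\circ I(W',S',\bP')^{\phi'}$ is proved, as in \cite{KM-book}, by inserting a long neck isometric to $[-T,T]\times Y_{1/2}$ into $(W,S)$, letting $T\to\infty$, and identifying the limit of the $0$-dimensional moduli spaces with the fibre product of the $0$-dimensional moduli spaces over $W'$ and $W''$ over the critical points of $\CS+f_\pert$ on $(Y_{1/2}, K_{1/2},\bP_{1/2})$. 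The only point at which this can fail is in the bookkeeping of the gauge groups: a connection on $W'$ asymptotic to $\bar\beta_{1/2}\in\bar\bonf(Y_{1/2})$ and a connection on $W''$ asymptotic to the same $\bar\beta_{1/2}$ glue to a connection on $W$ only up to the action of the group of locally constant (on the components of $Y_{1/2}$) automorphisms that do \emph{not} individually extend over $W'$ and $W''$ inside the enlarged gauge groups; each such automorphism that survives in $\phi$ on $W$ but is invisible on either side produces a spurious factor. So the substance of the proof is to show that the hypothesis $\psi\subset\phi$ exactly cancels this discrepancy.

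First I would set up the gauge-theoretic input precisely. On a disconnected $3$-manifold $Y_{1/2}$ with components $Y_{1/2}^{(1)},\dots,Y_{1/2}^{(m)}$, the stabilizer of an irreducible configuration in $\bonf(Y_{1/2},K_{1/2},\bP_{1/2})$ is $(\pm 1)^m$, and the action of $H^1(Y_{1/2};\Z/2)$ on $\bonf$ includes, via the components dual to the $Y_{1/2}^{(j)}$, the factor $H^0(Y_{1/2};\Z/2)=(\Z/2)^m$. When we form $\bar\bonf = \bonf/\phi_{1/2}$, some of these component-switching/flipping symmetries are divided out and some are not. The Mayer–Vietoris sequence for $W = W'\cup_{Y_{1/2}} W''$ gives
\[
     H^0(W';\Z/2)\oplus H^0(W'';\Z/2) \to H^0(Y_{1/2};\Z/2) \xrightarrow{\ \delta\ } H^1(W;\Z/2),
\]
and $\psi = \operatorname{im}\delta$ is precisely the subgroup of $H^1(W;\Z/2)$ generated by those ``component classes'' of $Y_{1/2}$ that, while non-trivial on $Y_{1/2}$, become trivial (i.e. extend) on both $W'$ and $W''$ separately. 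The next step is to observe: given $\bar\beta_{1/2}\in\bar\bonf(Y_{1/2})/\phi_{1/2}$ and $I$-oriented regular moduli spaces on the two sides, the contribution of cut trajectories through $\bar\beta_{1/2}$ to the boundary of the $1$-dimensional moduli space over the long-necked $W$ equals $|\psi_{1/2}\cap(\text{stabilizer slack})|$ times the product $m''\circ m'$, where $\psi_{1/2}$ is the image of $\psi$ in $H^1(Y_{1/2};\Z/2)$. Requiring $\psi\subset\phi$ forces this image to already act trivially on $\bar\bonf(Y_{1/2})$ (because $\phi$ restricts to $\phi'$, $\phi''$ and to $\phi_{1/2}\supset\psi_{1/2}$ on the neck), so the multiplicity is exactly $1$ and the gluing is clean.

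Concretely, I would organize the argument as: (1) reduce, by the neck-stretching argument of \cite[\S24]{KM-book} adapted to singular connections as in \cite{KM-knot-singular}, to counting broken cut trajectories on $(W,S)$ factoring through critical points $\bar\beta_{1/2}$ on $(Y_{1/2},K_{1/2},\bP_{1/2})$; (2) for a fixed $\bar\beta_{1/2}$, analyze the gluing map $M_0(W',S',\bP';\bar\beta_1,\bar\beta_{1/2})\times M_0(W'',S'',\bP'';\bar\beta_{1/2},\bar\beta_0)\to M_0(W,S,\bP;\bar\beta_1,\bar\beta_0)$, showing it is a bijection onto its image with the correct sign (the sign compatibility of orientations coming from the almost-complex structures $J'$, $J''$ gluing to $J$, exactly as the determinant-line identifications in \S\ref{subsec:almost-complex}); (3) identify the failure of this map to be a bijection, in general, with the index of the image of $H^0(Y_{1/2};\Z/2)$ in the quotient group controlling the enlarged gauge equivalence, and observe that the hypothesis $\psi\subseteq\phi$ makes this index trivial. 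The main obstacle is step (2)–(3): one must be scrupulous about which automorphisms of $P_\Delta$ over $Y_{1/2}$ — including those not lifting to determinant-$1$ gauge transformations — are being quotiented on each of the three manifolds $W'$, $W''$, $W$, and verify that the surviving discrepancy group is exactly $\psi$, not something larger coming from $H^1$-classes that are already killed in $\phi$ by restriction. The orientation-sign part is routine once the almost-complex-structure framework of the previous subsection is in hand, since $J$ restricting to $J'$ and $J''$ makes the gluing of complex-linear model operators manifestly orientation-compatible.
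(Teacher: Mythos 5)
Your proposal is correct and takes essentially the same route as the paper: the paper's own proof is exactly the observation that the composite moduli space maps many-to-one onto the fiber product because the stabilizer $(\pm1)^m$ of an irreducible configuration on the $m$-component $Y_{1/2}$ is not exhausted by ratios of stabilizers from the two sides, and that enlarging the gauge group on $W$ by $\psi$ (the classes dual to the components of $Y_{1/2}$) makes the restriction map to the fiber product a bijection. One small bookkeeping slip: the "image of $\psi$ in $H^{1}(Y_{1/2};\Z/2)$" is automatically zero by Mayer--Vietoris exactness, so the group actually controlling the discrepancy is not that restriction but rather $H^{0}(Y_{1/2};\Z/2)$ itself, acting as the component-wise stabilizer modulo what extends over $W'$ and $W''$ — which is what your step (3) correctly identifies.
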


\begin{remarks}
    Note that $\psi$ is non-zero only if $Y_{1/2}$ has more than one
    component. In general, knowing $\phi'$ and
    $\phi''$ does not determine $\phi$ without additional information;
    but  the additional hypothesis that $\psi$ is contained in
    $\phi$, is enough to determine $\phi$. 
\end{remarks}

\begin{proof}[Proof of the Proposition]
  The hypothesis of the proposition ensures that the relevant moduli
  spaces on the composite cobordism have a fiber-product
  description. To illustrate the point in a simpler situation (so as
  to reduce the amount of notation involved), consider a closed Riemannian
  $4$-manifold $X$ decomposed into $X'$ and $X''$ along a 3-manifold
  $Y$ with more than one component. Suppose that the metric is
  cylindrical near $Y$. Let $P$ be an $\SO(3)$ bundle on
  $X$, let $M(X,P)$ be the moduli space of anti-self-dual connections,
  and suppose that all these anti-self-dual connections restrict to
  irreducible connections on each component of $Y$. We can then
  consider the Hilbert manifolds of anti-self-dual connections,
  $M(X',P')$ and $M(X'', P'')$ on the two manifolds with boundary, in
  a suitable $L^{2}_{k}$ completion of the spaces of connections (as in
  \cite[section~24]{KM-book}). There are restriction maps
   \[
              \xymatrix{
                          M(X', P') \ar[dr] & & M(X'', P'')\ar[dl] \\
                          &         \bonf_{k-1/2}(Y, P|_{Y}) &                 
	   }
   \]
   and one can ask whether the fiber product here is the same as
   $M(X,P)$. The answer is no, in general, because of our use of the
   determinant-1 gauge group: in the determinant-1 gauge
   transformation, the stabilizer of an irreducible connection on $Y$
   is $\{\pm 1\}^{n}$, where $n$ is the number of components of $Y$;
   but not every element in the stabilizer can be expressed as a
   ratio $(g')^{-1}(g'')$ of elements in the stabilizer of the
   connections on $X'$ and $X''$. Thus $M(X,P)$ may map many-to-one
   onto the fiber product. The hypothesis on $\psi$ in the proposition
   ensures that we are using gauge groups for which the corresponding
   moduli space on $X$ is exactly the fiber product.
\end{proof}

We can summarize the situation by saying that we have a functor to the
category of abelian groups from a suitable cobordism category. The morphisms in
this category consist of cobordisms $(W,S,\bP)$ equipped with an
almost-complex structure $J$ on $(W,S)$ and a subgroup $\phi$ of
$H^{1}(W;\Z/2)$. When composing two cobordisms, we must define $\phi$ on
the composite cobordism to be the \emph{largest} subgroup of
$H^{1}(W;\Z/2)$ which restricts to the given subgroups on the two
pieces. 

There is a variant of the above proposition corresponding to the case
that we glue one outgoing component of $W$ to an incoming
component. The context for this is that we have $(W,S)$ a cobordism
from $(Y_{1}, K_{1})$ to $(Y_{0},K_{0})$, where
\[
\begin{aligned}
    Y_{1} &= Y_{1,1} \cup\dots\cup Y_{1,r} \\
    Y_{ 0} &= Y_{0,1} \cup\dots\cup Y_{0,s} 
\end{aligned}
\]
with $Y_{1,r}=Y_{1,s}$. We suppose also that $\bP$ is given so that
its restriction to $Y_{1,r}$ and $Y_{0,s}$ are identified. From
$(W,S,\bP)$ we then form $(W^{*},S^{*}, \bP^{*})$ by gluing these two
components. This is a cobordism between manifolds $(Y^{*}_{1},K_{1})$
and $(Y^{*}_{0}, K^{*}_{0})$ with $r-1$ and $s-1$ components
respectively. 
Let $\phi^{*}$ be a subgroup of $H^{1}(W^{*};\Z/2)$ \emph{which
contains the class dual to the submanifold} $Y_{1,r}=Y_{0,s}$ where the
gluing has been made. Let $\phi$ be the subgroup of $H^{1}(W;\Z/2)$
obtained by pull-back via the map $W\to W^{*}$.  We then have:

\begin{proposition}\label{prop:trace}
    The map
      \[
      I(W^{*}, S^{*},\bP^{*})^{\phi^{*}} : I(Y^{*}_{1},
      K^{*}_{1},\bP^{*}_{1})^{\phi_{1}^{*}} \to I(Y^{*}_{0},
      K^{*}_{0},\bP^{*}_{0})^{\phi_{0}^{*}}
      \]
    is obtained from the map
      \[
      I(W, S,\bP)^{\phi} : I(Y_{1},
      K_{1},\bP_{1})^{\phi_{1}} \to I(Y_{0},
      K_{0},\bP_{0})^{\phi_{0}}
      \]
    by taking the alternating trace at the chain level over the
    $\Z/2$-graded factor
    \[
                   \Hom(C(Y_{1,r} , K_{1,r})^{\phi_{1,r}}, 
                       C(Y_{0,s} , K_{0,s})^{\phi_{0,s}}).
     \]
\end{proposition}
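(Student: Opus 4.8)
The plan is to prove this by stretching the neck along the hypersurface $Y_{c} = Y_{1,r} = Y_{0,s}\subset W^{*}$ along which the gluing was made, exactly as Proposition~\ref{prop:composite-phi} is proved by stretching along a separating hypersurface. Write $V = C(Y_{1,r},K_{1,r})^{\phi_{1,r}} = C(Y_{0,s},K_{0,s})^{\phi_{0,s}}$ (the two are equal by hypothesis, since the bundle data and the restricted subgroups $\phi_{1,r}$, $\phi_{0,s}$ all coincide with $\phi^{*}|_{Y_{c}}$), so that $\Hom(C(Y_{1,r},K_{1,r})^{\phi_{1,r}}, C(Y_{0,s},K_{0,s})^{\phi_{0,s}}) = \mathrm{End}(V)$. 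Using the K\"unneth identification, $C(Y_{1},K_{1},\bP_{1})^{\phi_{1}} = C_{1}\otimes V$ and $C(Y_{0},K_{0},\bP_{0})^{\phi_{0}} = C_{0}\otimes V$, where $C_{1} = C(Y_{1}^{*},K_{1}^{*},\bP_{1}^{*})^{\phi_{1}^{*}}$ and $C_{0} = C(Y_{0}^{*},K_{0}^{*},\bP_{0}^{*})^{\phi_{0}^{*}}$ are the chain complexes of the remaining components. The ``alternating trace over the $\Z/2$-graded factor $\mathrm{End}(V)$'' is then the supertrace $\mathrm{str}_{V}\colon\Hom(C_{1}\otimes V, C_{0}\otimes V)\to\Hom(C_{1},C_{0})$, which is a chain map of $\Hom$-complexes; the $\Z/2$-grading used on $V$ is the parity grading determined by the almost-complex structure, as in section~\ref{subsec:almost-complex}. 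The goal is to exhibit a chain-level representative $m_{W}$ of $I(W,S,\bP)^{\phi}$ with $m_{W^{*}} = \mathrm{str}_{V}(m_{W})$.

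First I would equip $W^{*}$ with a metric that is a product $\R\times Y_{c}$ in a collar of $Y_{c}$, and introduce the one-parameter family of stretched metrics $\orbig_{T}$ of neck-length $2T$, together with its broken-metric limit, using the ``model family'' machinery of section~\ref{subsec:families}; the broken limit is $W$ carrying two additional cylindrical ends modeled on $Y_{c}$. Since $(Y_{c},K_{c},\bP|_{Y_{c}})$ satisfies the $\phi_{c}$-non-integral condition, choose $\phi$-invariant perturbations so that all relevant moduli spaces are regular: those on $W^{*}$ for the metrics $\orbig_{T}$, those on $W$ with its two extra ends, and the trajectory spaces on $\R\times Y_{c}$. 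The standard gluing theorem then gives, for all large $T$, a bijection between the zero-dimensional moduli space $M(W^{*},S^{*},\bP^{*};\beta_{1},\beta_{0})_{0}$ and the disjoint union over critical points $\gamma$ of $Y_{c}$ of the zero-dimensional spaces $M(W,S,\bP;(\beta_{1},\gamma),(\beta_{0},\gamma))_{0}$---that is, index-zero solutions on $W$ whose asymptotic value on the incoming $Y_{1,r}$-end and on the outgoing $Y_{0,s}$-end is the \emph{same} critical point $\gamma$. The one delicate point is gauge-theoretic: gluing the two $Y_{c}$-ends to one another identifies the asymptotic connections by a ``gluing parameter'' lying in the stabilizer of $\gamma$, and the hypothesis that $\phi^{*}$ contains the class dual to $Y_{c}$ is exactly what guarantees---by the fiber-product argument in the proof of Proposition~\ref{prop:composite-phi}---that $M(W^{*},S^{*},\bP^{*};\beta_{1},\beta_{0})$ really is the honest fibered union over $\crit(Y_{c})$ with no spurious multiplicity. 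Reading off matrix coefficients, this gives $\langle m_{W^{*}}(a),b\rangle = \sum_{\gamma}\pm\langle m_{W}(a\otimes\gamma),b\otimes\gamma\rangle$ for generators $a$ of $C_{1}$ and $b$ of $C_{0}$.

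It remains to identify the signs $\pm$ with the supertrace signs $(-1)^{|\gamma|}$, and I expect this to be the main obstacle: the analytic and gauge-theoretic inputs are routine adaptations of machinery already in place (the stretching construction of section~\ref{subsec:families}, the fiber-product argument of Proposition~\ref{prop:composite-phi}, and the complex-orientation theory of section~\ref{subsec:almost-complex}), whereas checking that the glued orientations assemble into the \emph{super}trace rather than some other signed trace requires care. One tracks the complex orientations through the gluing: closing up the cobordism forces the incoming and outgoing orientation data at $\gamma$ to be identified, and comparing the determinant line of the glued operator with the tensor product of the determinant lines of the pieces produces exactly the parity factor $(-1)^{|\gamma|}$, where $|\gamma|$ is the mod-$2$ grading of section~\ref{subsec:almost-complex}---the same sign phenomenon already visible in the K\"unneth discussion preceding this proposition (the operators $\epsilon^{i}$ and the swap $a\otimes b\mapsto\epsilon^{2}b\otimes\epsilon^{1}a$). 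Finally, since the chain map induced by $(W^{*},S^{*},\bP^{*},\orbig_{T})$ is chain-homotopic to $m_{W^{*}}$ for every $T$ and, for $T$ large, equals $\mathrm{str}_{V}(m_{W})$ on the nose, passing to homology gives the assertion.
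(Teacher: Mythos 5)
Your proposal is correct and follows essentially the same route as the paper: the paper's (very terse) proof identifies exactly the two points you flag — that the signs assemble into the alternating (super)trace, which it defers to Lemma~2.4 of \cite{KM-sutures}, and that the gauge groups must be enlarged so that the glued moduli space is the honest fiber product, which is the same point as in Proposition~\ref{prop:composite-phi}. Your write-up simply expands the neck-stretching/gluing and orientation-tracking that the paper leaves implicit.
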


\begin{proof}
    There are two issues here. The first is issue of signs, to verify
    that it is indeed the alternating trace that arises: this  issue
    is dealt with in \cite[Lemma~2.4]{KM-sutures}, and the same
    argument applies here. The second issue is the choice of correct
    gauge groups, and this is the same point that arises in the
    previous proposition.
\end{proof}

\begin{remark}
    As a simple illustration, suppose that $Y$ is connected and $(W,S)$ is
    a trivial product cobordism from $(Y,K)$ to $(Y,K)$.
    Suppose that $\phi$ is trivial and $\phi^{*}$ is
    generated by the class dual to $Y$ in the closed
    manifold $W^{*}$ obtained by gluing the two ends. Then the closed
    pair $(W^{*}, S^{*})$ has an associated moduli space
    $M(W^{*},S^{*},\bP^{*})^{\phi^{*}}$ and an associated integer
    invariant obtained by counting points with sign. The proposition
    asserts that this integer is the euler characteristic of
    $I(Y,K,\bP)$. If we had stuck with the determinant-1 gauge group
    and examined $M(W^{*},S^{*},\bP^{*})$ instead, then the integer
    invariant of this closed manifold would have been twice as large.
\end{remark}

\subsection{Floer's excision revisited}

Floer's excision principle \cite{Floer-Durham, Braam-Donaldson}
concerns the following situation (see also \cite{KM-sutures}). Let $Y$
be a closed, oriented $3$-manifold \emph{not necessarily connected},
and let $T_{1}$, $T_{2}$ be a pair of oriented tori in $Y$, supplied
with an identification $h: T_{1}\to T_{2}$. Let $Y'$ be obtained from $Y$
by cutting along $T_{1}$ and $T_{2}$ and reglueing using the given
identification (attaching each boundary component arising from the cut
along $T_{1}$ to the corresponding boundary component arising from the
cut along $T_{2}$, respecting the boundary orientations). Again, $Y'$
need not be connected. We suppose that $Y$ contains a link $K$,
disjoint from the tori, and we denote by $K'$ the resulting link in
$Y'$. We also suppose that singular bundle data $\bP$ is given on $Y$,
and that the identification $h : T_{1} \to T_{2}$ is lifted to an
identification $\bP|_{T_{1}} \to \bP|_{T_{2}}$, so that we may form
singular bundle data $\bP'$ on $Y'$. We require as a hypothesis that
$w_{2}(\bP)$ is non-zero on $T_{1}$ (and hence on $T_{2}$). From an
alternative point of view, we may regard $\bP$ as being determined by
a $1$-manifold $\omega$, in which case we ask that $\omega\cdot T_{1}$
is odd and that $h$ maps the transverse intersection $\omega\cap
T_{1}$ to $\omega\cap T_{2}$, so that we may form $\omega'$ in $Y'$.

When $K$ and $K'$ are absent (which was the case in Floer's original
setup), the condition that $\omega\cdot T_{1}$ is odd forces $T_{1}$
to be non-separating. In the presence of $K$, however, it may be that
$T_{1}$ separates $Y$, in which case $\omega$ must have an arc which
joins components of $K$ which lie in different components of
$Y\setminus T_{1}$.

Let $\phi \subset H^{1}(Y;\Z/2)$ be the subgroup generated by the
duals of $T_{1}$ and $T_{2}$, and let $\phi' \subset H^{1}(Y';\Z/2)$
be defined similarly using the tori in $Y'$. As long as $Y$ and $Y'$
have no components disjoint from  the tori, the $\phi$-non-integral
condition is satisfied on account of the presence of the surfaces
$T_{i}$. If there are any other components of $Y$ (and therefore of
$Y'$), we impose the non-integral condition as a hypothesis, as usual
(though such components are irrelevant in what follows). In order to
fix signs, we use almost-complex structures as in
section~\ref{subsec:almost-complex}: we fix an $\R$-invariant complex
structure $J$ on $(\R\times Y, \R\times K)$, which we choose in such a
way that the manifolds $\{0\}\times T_{i}$ are almost-complex
submanifolds (with their given orientations). By cutting and gluing we
also obtain an almost complex structure $J'$ on $Y'$.

\begin{theorem}\label{thm:excision}
    Under the above hypotheses, there are mutually-inverse
    isomorphisms
    \[
                 I(Y,K,\bP)^{\phi} \longleftrightarrow I(Y', K', \bP')^{\phi'},
     \]
     or equivalently
    \[
           I^{\omega}(Y,K)^{\phi} \longleftrightarrow I^{\omega'}(Y', K')^{\phi'},
     \]
     arising from standard cobordisms $(W,S)$ and $(\bar W, \bar S)$, from $(Y, K)$ to $(Y',
     K')$ and  from $(Y',K')$ to $(Y,K)$ respectively.
\end{theorem}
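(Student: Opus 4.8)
The plan is to prove Theorem~\ref{thm:excision} by adapting Floer's original argument \cite{Floer-Durham} (in the form given by Braam--Donaldson \cite{Braam-Donaldson}, and as extended in \cite{KM-sutures}) to the present setting, which adds a link $K$, a codimension-$2$ locus $\omega$, and the $\phi$-refined gauge group of section~\ref{subsec:almost-complex}. Since the $\bP$-version and the $\omega$-version of the statement are identified by the correspondence of section~\ref{sec:classical}, it is enough to treat one; I will use the bundle-data formulation. First I would construct the standard cobordism of pairs $(W,S)$ from $(Y,K)$ to $(Y',K')$: away from the tori it is the product $\bigl(Y\sminus N(T_{1}\cup T_{2})\bigr)\times[0,1]$, with $S=K\times[0,1]$ there, while in the tubular neighbourhood of $T_{1}\sqcup T_{2}$ it has the standard form $T^{2}\times D^{2}$, glued in along the four copies of $T^{2}\times[0,1]$ produced by the cut, and arranged so that its two ends realise the two re-gluing patterns that yield $Y$ and $Y'$. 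The singular bundle data $\bP$ (equivalently $\omega$) extends over $W$ as a product away from the tori and in the standard way over $T^{2}\times D^{2}$, keeping $w_{2}$ odd on every torus $\{\mathrm{pt}\}\times T^{2}$ over an interior point; these tori are non-integral surfaces persisting along the whole cobordism, so there are no reducibles anywhere on $W$ and the $\phi$-non-integral condition is inherited at all slices. I would then fix an almost-complex structure $J$ on $(W,S)$ restricting to the chosen $J$ and $J'$ at the two ends (possible because $S=K\times[0,1]$ is orientable) and a subgroup $\phi_{W}\subset H^{1}(W;\Z/2)$ restricting to $\phi$ and $\phi'$ and large enough for the composition law of Proposition~\ref{prop:composite-phi} to apply. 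By the functoriality of section~\ref{subsec:maps-from-cob}, upgraded as in section~\ref{subsec:almost-complex}, the decorated cobordism $(W,S,\bP,J,\phi_{W})$ then induces an honest homomorphism $I(Y,K,\bP)^{\phi}\to I(Y',K',\bP')^{\phi'}$, and the mirror cobordism $(\bar W,\bar S)$ induces one in the other direction.

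Next I would show these two maps are mutually inverse. Consider the composite $(\bar W,\bar S)\cup_{(Y',K')}(W,S)$, a decorated cobordism from $(Y,K)$ to itself. Up to isotopy and the addition of instantons and monopoles --- which do not change the induced morphism, by the conventions of section~\ref{sec:classical} --- this composite splits as the product cobordism $[0,1]\times(Y,K)$ with its product bundle data, glued along an internal $T^{3}=T^{2}\times S^{1}$ to a standard closed piece carrying the $\omega$-data that has been ``absorbed'' by the re-gluing. A neck-stretching argument along this internal $T^{3}$ then computes the induced map: the unique flat connection on $T^{3}$ with non-trivial $w_{2}$ is irreducible and non-degenerate (this is the content behind Lemma~\ref{lem:T3-phi-calc}, together with the corresponding relative statement for the cap $T^{2}\times D^{2}$), so the stretched moduli spaces have a clean fibre-product description over a single point, and the induced map is the identity on $C(Y,K,\bP)^{\phi}$ times the closed invariant of the capping piece. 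That invariant is $\pm1$, and the complex orientation supplied by $J$ on the closed almost-complex piece pins the sign to $+1$. Hence $I(\bar W,\bar S)^{\phi'}\circ I(W,S)^{\phi}=\mathrm{id}$, and symmetrically $I(W,S)^{\phi}\circ I(\bar W,\bar S)^{\phi'}=\mathrm{id}$, so both maps are isomorphisms, giving the two displayed pairs of mutually-inverse isomorphisms.

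The step I expect to be the main obstacle is the analytic input of the neck-stretching along the tori. Because $w_{2}(\bP)$ is odd on each $T_{i}$, the restriction to $T_{i}$ of any perturbed flat connection lies in the twisted $\SU(2)$ representation variety of $\pi_{1}(T^{2})=\Z^{2}$, which is a single conjugacy class (that of the quaternion pair $(i,j)$) with stabiliser $\pm1$, and this point is non-degenerate on the relevant $T^{3}$-slices since $H^{1}(T^{2};\g_{A})=0$ for the twisted local system. This rigidity is exactly what makes Floer's gluing go through, producing no broken configurations escaping through the tori. What needs checking is that the classical analysis of \cite{Braam-Donaldson} carries over verbatim in the present generality: the link $K$ and the locus $\omega$ cause no trouble because they are disjoint from the stretching region and the singular metric is a product there; the passage from the determinant-$1$ gauge group to the $\phi$-refined gauge group is handled precisely by Proposition~\ref{prop:composite-phi}, provided $\phi_{W}$ contains the Mayer--Vietoris classes of all the cuts (and, where self-gluings occur, Proposition~\ref{prop:trace}); and the sign bookkeeping over $\Z$ is controlled throughout by the almost-complex orientations of section~\ref{subsec:almost-complex}. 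Assembling these pieces yields the stated pair of mutually-inverse isomorphisms in both formulations.
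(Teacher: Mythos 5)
Your proposal is correct and follows essentially the same route as the paper: the standard Floer excision cobordisms $(W,S)$ and $(\bar W,\bar S)$, the $\phi$-enlarged gauge group controlled by Propositions~\ref{prop:composite-phi} and~\ref{prop:trace}, almost-complex structures to fix orientations over $\Z$, and the neck-stretch along the internal $3$-torus using $I(T^{3},P)^{\phi}=\Z$ (Lemma~\ref{lem:T3-phi-calc}) together with the relative invariant of $T\times D^{2}$. The one imprecision is topological rather than substantive: the composite $V=\bar W\cup_{Y'}W$ is not the product cobordism glued along a $T^{3}$ to a closed summand; it becomes the product only after cutting along the single torus $T\times\gamma$ and capping the two resulting boundary components with copies of $T\times D^{2}$ (equivalently, $V$ arises from the product by a self-gluing, which is exactly where Proposition~\ref{prop:trace} enters) --- since you invoke precisely this machinery, the slip does not affect the argument.
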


\begin{proof}
    The standard cobordism $W$ from $Y$ to $Y'$ is the one that
    appears in Floer's original theorem and is illustrated in
    \cite[Figure 2]{KM-sutures}. The relevant part of this cobordism is redrawn
    here in Figure~\ref{fig:excision-1}.
\begin{figure}
    \begin{center}
        \includegraphics[height=2in]{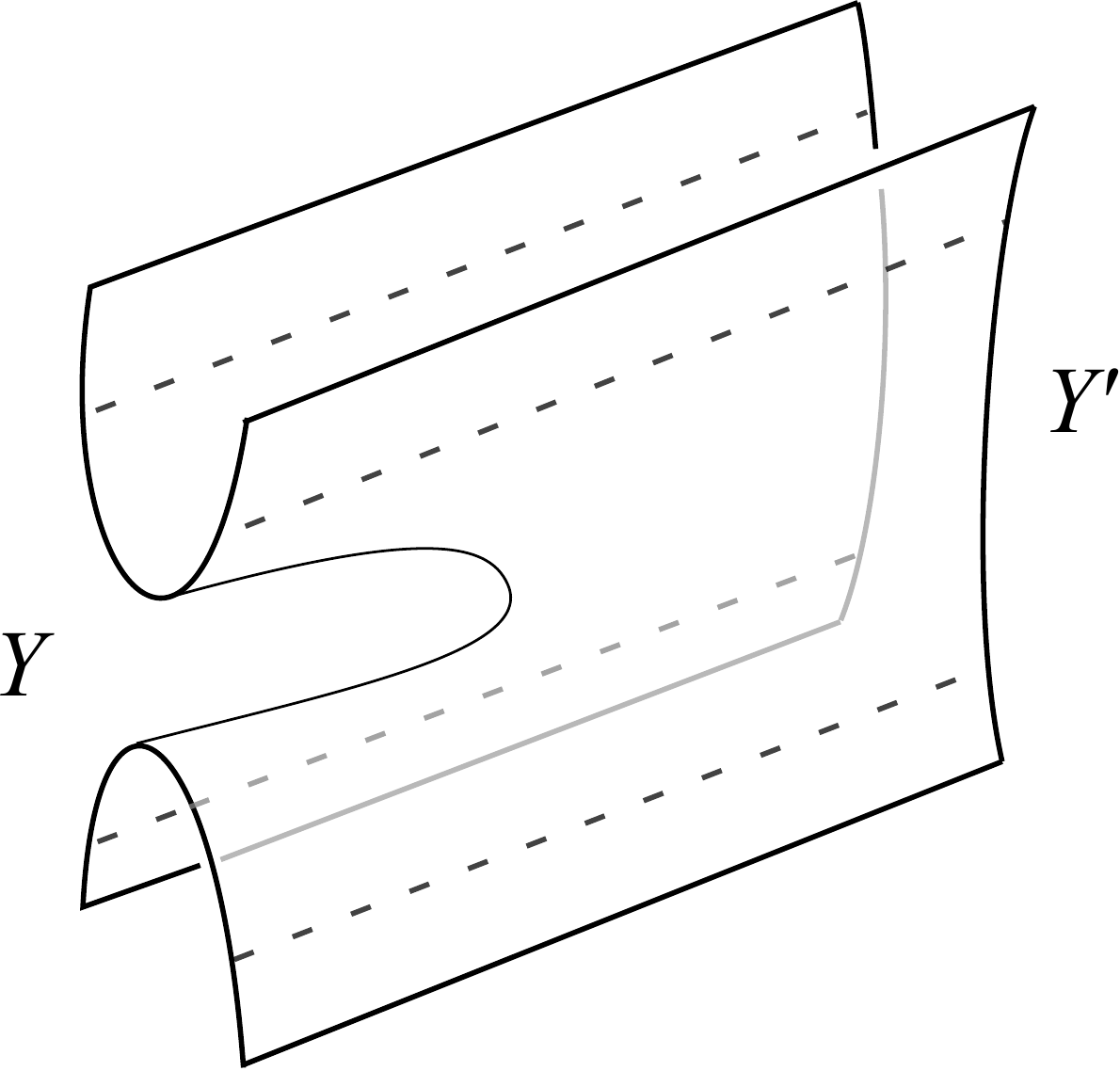}
    \end{center}
    \caption{\label{fig:excision-1}
    The excision cobordism $W$ from $Y$ to $Y'$.}
\end{figure}
    This part is a product
    $T\times U$, where $T$ is the torus obtained by identifying
    $T_{1}$ and $T_{2}$ using $h$ and $U$ is the $2$-manifold with
    corners depicted as the shaded part in the figure. The subset
    $T\times U\subset W$ meets $Y$ and $Y'$ in $2$-sided collar
    neighborhoods of $T_{1}\cup T_{1}$ and $T'_{1}\cup T'_{2}$.
    The links $K$
    and $K'$ are contained in the parts of $Y$ and $Y'$ that are
    disjoint from these collars, and there is a surface $S=[0,1]\times
    K$ lying in the remaining part of $W$, providing the cobordism
    between them. Our choices equip $W$ with singular bundle date
    $\bP_{W}$ and an almost-complex structure $J_{W}$ which is a
    product structure on the subset $T\times U$. 

    Let $T_{i}^{+}$ and $T_{i}^{-}$ be positive and negative push-offs
    of $T_{i}$ in $Y_{i}$ (for $i=1,2$). The dual classes to these
    four tori redundantly generate the same group $\phi\subset
    H^{1}(Y;\Z/2)$. Let $(T'_{i})^{\pm}$ be defined similarly in
    $Y'$. These eight tori sit over the eight corners of $U$ in the
    figure. For each of the four dotted edges of $U$, there is copy of
    $[0,1]\times T$ lying above it in $W$. 
    Let $\phi_{W}\subset H^{1}(W;\Z/2)$ be the subgroup
    generated by the classes dual to these four copies of $[0,1]\times
    T$. This subgroup restricts to $\phi$ and $\phi'$ on the two
    ends. Equipped with this data and its complex structure $J_{W}$,
    the cobordism thus gives rise to a map
\[
  I(W,S,\bP_{W})^{\phi_{W}} : I(Y,K,\bP)^{\phi} \longrightarrow I(Y',
  K', \bP')^{\phi'}.
\]
    An entirely symmetrical construction gives a map in the opposite
    direction,
\[
  I(\bar W,\bar S,\bP_{\bar W})^{\phi_{\bar W}} : I(Y',
  K', \bP')^{\phi'}\longrightarrow  I(Y,K,\bP)^{\phi} .
\]
 
   To show that the maps arising from these cobordisms are mutually
   inverse, we follow Floer's argument, as described in
   \cite{Braam-Donaldson, KM-sutures}. The setup is symmetrical, so we
   need only consider one composite, say the union
   \[
              V = W\cup_{Y'} \bar W
    \]
    with its embedded surface $S_{V} = S\cup\bar S$. This composite
    cobordism, part of which is depicted in
    Figure~\ref{fig:excision-composite}, 
\begin{figure}
    \begin{center}
        \includegraphics[height=2.4in]{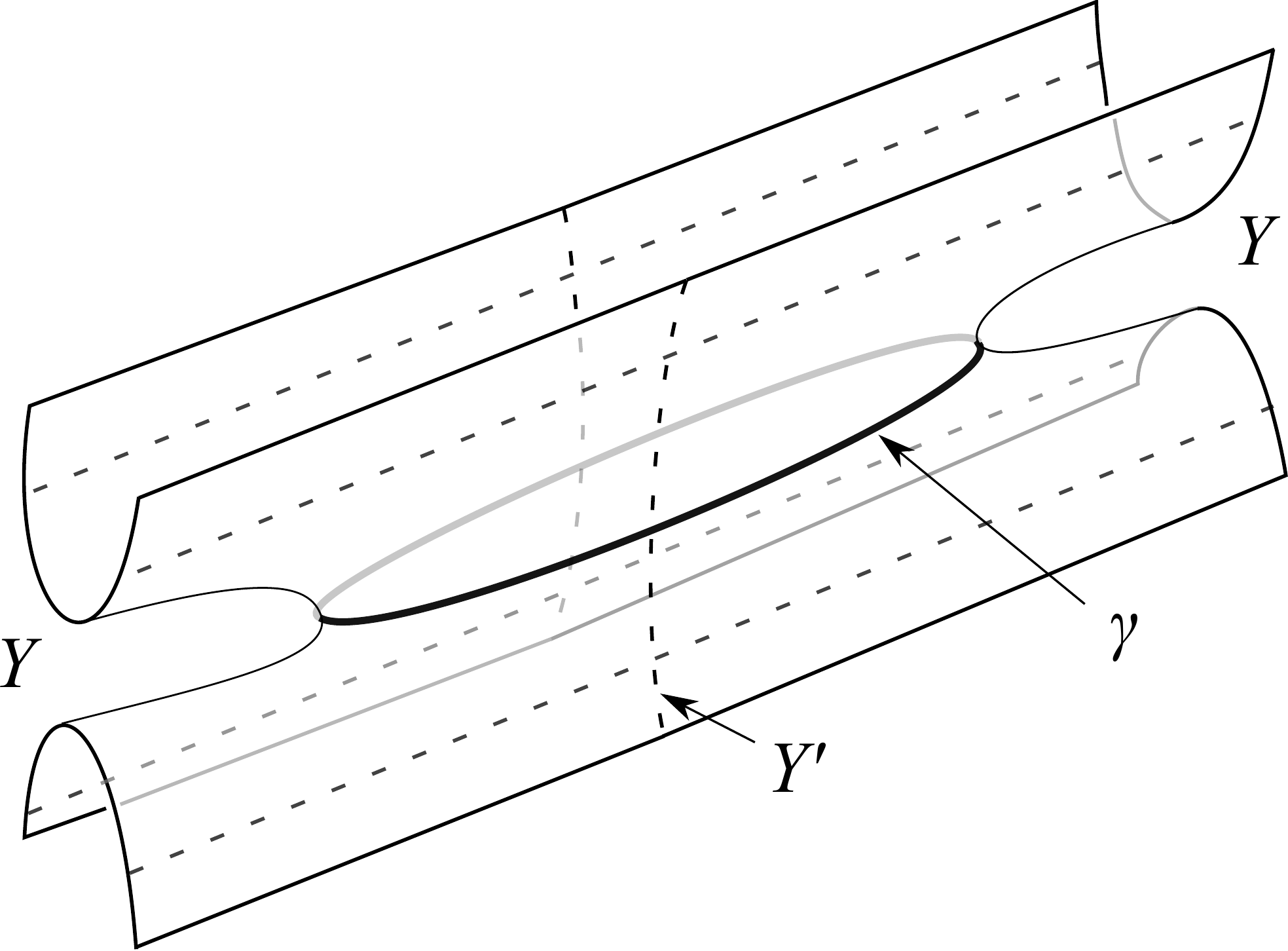}
    \end{center}
    \caption{\label{fig:excision-composite}
    The composite cobordism $V = W\cup_{Y'} \bar W$ from $Y$ to $Y$.}
\end{figure}
    contains four copies of
    $[0,2]\times T$ (lying above the dotted lines again). We take
     \[
              \phi_{V} \subset H^{1}(V;\Z/2)
      \]
      to be the subgroup generated by the dual classes of these four
      hypersurfaces together with the image of the boundary map in the
      Mayer-Vietoris sequence: i.e. the classes dual to the components
      of $Y'\subset V$. The complex structure on $V$ is the product
      structure on the subset $T\times (U \cup  \bar U)$ shown in the
      figure. By Proposition~\ref{prop:composite-phi}, the composite
      of the maps obtained from the cobordisms $(W,S)$ and
      $(\bar W,\bar S)$ is the map
       \[
                         I(V, S_{V}, \bP_{V})^{\phi_{V}}.
        \]
         So we must show that this map is the identity.

         Floer's proof rests on the fact that the cobordism $V$ can be
         changed to a product cobordism by cutting along the $3$-torus
         $T\times \gamma$ (where $\gamma$ is the curve depicted in the
         figure) and gluing two copies of $T\times D^{2}$ to the
         resulting boundary components. If we write $V'$ for the
         resulting product cobordism, we see that it contains  a
         product surface $S_{V}$ and acquires from $V$ (by cutting and
         gluing) a complex structure $J_{V'}$ which  respects the
         product structure. Furthermore, the subgroup $\phi_{V}$
         becomes a group $\phi_{V'}$ in $H^{1}(V;\Z/2)$ which is the
         pull-back of $\phi$ from $H^{1}(Y;\Z/2)$. Thus
       \[
                   I(V', S_{V'}, \bP_{V'})^{\phi_{V'}} = \mathrm{Id}
        \]
        and we are left with the task of showing that $V$ and $V'$
        (with their attendant structures) give the same map on
        $I(Y,S,\bP)^{\phi}$. This last task is easily accomplished by
        using the fact that $\phi_{V}$ restricts to the $3$-torus
        $T\times \gamma$ to give the non-trivial $2$-element subgroup
        of $H^{1}(T\times\gamma;\Z/2)$ generated by $T\times
        \{\mathrm{point}\}$, so that
         \[
                         I(T\times\gamma, P_{V})^{\phi_{V}} = \Z,
         \]
         by Lemma~\ref{lem:T3-phi-calc}. The subgroup $\phi_{V}$ also
         contains the class dual to $T\times\gamma$, so that
         Proposition~\ref{prop:trace} applies. The relative invariant of
         $T\times D^{2}$ is $1\in \Z$ in the Floer group of
         $T\times\gamma$, so the result follows, just as in Floer's
         original argument.
\end{proof}

\subsection{Proof of Proposition~\ref{prop:singular-to-sutured}}

We now apply the excision principle, Theorem~\ref{thm:excision}, to
prove Proposition~1.4 from the introduction. The proposition can be
generalized to deal with knots $K$ in $3$-manifolds other than
$S^{3}$, so we consider a general connected, oriented $Y$ and a knot 
$K\subset Y$. From $(Y,K)$ we form a new closed $3$-manifold
$T^{3}_{K}$, depending on a choice of framing for $K$, 
as follows. We take a standard coordinate circle $C\subset
T^{3}$ and we glue together the knot complements to form
\[
      T^{3}_{K} =    \bigl(  T^{3} \sminus N^{\circ}(C) \bigr) \cup
                     \bigl( Y \sminus N^{\circ}(K) \bigr),
\]
gluing the longitudes of $K$ (for the chosen framing) to the meridians
of $C$ and vice versa. In $T^{3}$, let $R$ be coordinate $2$-torus
parallel to $C$ and disjoint from it, and let $\psi \subset
H^{1}(T^{3}_{K};
\Z/2)$ be the $2$-element subgroup generated by the dual to $[R]$ in
$T^{3}_{K}$. Let $\omega_{1}\subset T^{3}$ be another coordinate circle
transverse to $R$ and meeting it in one point.

We may now consider the instanton Floer homology group
$I^{\omega_{1}}(T^{3}_{K})$, 
as well as its companion $I^{\omega_{1}}(T^{3}_{K})^{\psi}$ defined using
the larger gauge group. From the first half of
Proposition~\ref{prop:simple-doubling}, we have
\begin{equation}\label{eq:T3K-doubles}
I^{\omega_{1}}(T^{3}_{K}) = I^{\omega_{1}}(T^{3}_{K})^{\psi} \oplus
I^{\omega_{1}}(T^{3}_{K})^{\psi}.
\end{equation}
Our application of excision is the following result:

\begin{proposition}\label{singular-to-sutured-2}
    There is an isomorphism $\Inat(Y,K) \cong I^{\omega_{1}}(T^{3}_{K})^{\psi}$,
    respecting the relative $\Z/4$ grading of the two groups.
\end{proposition}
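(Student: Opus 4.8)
The plan is to deduce this from Floer's excision theorem (Theorem~\ref{thm:excision}), following the pattern by which $\KHI$ is constructed in \cite{KM-sutures}. Recall that $\Inat(Y,K)=I^{\omega}(Y,K^{\natural})$, where $K^{\natural}=K\amalg L$ with $L$ a meridional circle at the framed basepoint and $\omega$ a standard arc from $K$ to $L$; and recall that $T^{3}_{K}$ is formed by gluing the knot exterior $Y\sminus N^{\circ}(K)$ to the complement $T^{3}\sminus N^{\circ}(C)$ of a coordinate circle, interchanging meridians and longitudes, and is equipped with the coordinate circle $\omega_{1}$ and the enlarging subgroup $\psi$ dual to the coordinate torus $R$. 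I would first choose the excision data: form a disjoint union of $(Y,K^{\natural})$ with a fixed model closed object $Z$ whose relevant Floer homology is $\Z$ --- a suitable copy of $(S^{3},U^{\natural})$, so that Proposition~\ref{prop:unknot-calc} applies, or a $T^{3}$ with its enlarged gauge group, so that Lemma~\ref{lem:T3-phi-calc} applies. In $(Y,K^{\natural})$ one cuts along a torus $T_{2}$ (for instance the boundary of a tubular neighbourhood containing $K^{\natural}$ together with $\omega$), on which $w_{2}$ is odd because $\Delta$ is non-trivial along $K$ (Proposition~\ref{prop:non-integral}); in $Z$ one cuts along a torus $T_{1}$ with $w_{2}$ odd; and the gluing $h\colon T_{1}\to T_{2}$ is chosen so as to interchange meridian and longitude. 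The point of the construction is that the cut-and-reglue carries the orbifold neighbourhood of $K^{\natural}$ entirely into one closed component --- a copy of the model, with invariant $\Z$ --- while the complementary component is the \emph{smooth} manifold $T^{3}_{K}$ with its circle $\omega_{1}$, the exterior $Y\sminus N^{\circ}(K)$ having been spliced to the standard piece $T^{3}\sminus N^{\circ}(C)$ in the meridian-longitude pattern of the definition.

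Granting this, Theorem~\ref{thm:excision} gives an isomorphism $\Inat(Y,K)\otimes\Z\cong \Z\otimes I^{\omega_{1}}(T^{3}_{K})^{\psi'}$. The bookkeeping of gauge groups is then routine: $T_{2}$ is null-homologous in $Y$, so the $(Y,K^{\natural})$ side carries its ordinary determinant-$1$ group and contributes $\Inat(Y,K)$; and on the reglued side the enlargement $\psi'$ generated by the duals of $T_{1}$ and $T_{2}$ restricts to $T^{3}_{K}$ as exactly $\psi$ (the dual of $T_{2}$ being zero there as well, since that torus separates $T^{3}_{K}$ into the two halves of its defining decomposition). Since a free $\Z$ tensor factor is involved, the Künneth sequence \eqref{eq:Kunneth} degenerates and we obtain $\Inat(Y,K)\cong I^{\omega_{1}}(T^{3}_{K})^{\psi}$. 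For the $\Z/4$-grading assertion, both sides carry the path-independent relative $\Z/4$ grading, and the excision cobordism $(W,S)$ shifts the absolute $\Z/4$ grading by the topological constant $\iota(W,S)$ of Proposition~\ref{prop:canonical-Z/4}; hence the isomorphism is one of relatively $\Z/4$-graded groups.

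The main obstacle is the geometric core of the first step: producing excision tori $T_{1},T_{2}$ and a gluing $h$ for which the cut-and-reglue yields exactly $T^{3}_{K}$ with its prescribed meridian-longitude convention, and in particular verifying that the orbifold (singular) structure of $(Y,K^{\natural})$ is disposed of into the trivial $\Z$ component, leaving a genuinely smooth complementary component, and that the circle $\omega_{1}$ and the torus $R$ end up where the statement requires. Once the right picture is set up, everything else --- that $w_{2}$ is odd on $T_{2}$, the identification $I^{\omega_{1}}(T^{3})^{\langle R\rangle}=\Z$, the vanishing of the enlargement on $Y$, the degeneration of Künneth, and the $\Z/4$ count --- follows from the apparatus already assembled in Sections~\ref{sec:sing-inst-floer}, \ref{sec:classical} and~\ref{sec:an-excis-argum}.
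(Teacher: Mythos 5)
Your overall strategy (excision, a standard piece contributing $\Z$, the other piece identified with $T^{3}_{K}$) is the right one, but the specific excision data you propose does not satisfy the hypotheses of Theorem~\ref{thm:excision}, and the gap is not repairable within your setup. The excision theorem requires $w_{2}(\bP)$ to be odd on each excision torus, i.e.\ $\omega\cdot T_{i}$ odd. Your $T_{2}$, the boundary of a neighbourhood containing all of $K^{\natural}$ \emph{and} $\omega$, has $\omega\cdot T_{2}=0$: Proposition~\ref{prop:non-integral} gives odd $w_{2}$ only on the boundary of a small tubular neighbourhood of a single component on which $\Delta$ is non-trivial (such a torus is crossed once by the arc $\omega$), not on a torus enclosing the whole configuration. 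Worse, on the auxiliary side there is no admissible $T_{1}$ at all: to produce the piece $T^{3}\sminus N^{\circ}(C)$ you must cut $T^{3}$ along $\partial N(C)$, which is separating, so the closed curve $\omega_{1}$ meets it evenly and $w_{2}$ is again even there. A symptom of the same problem is your gauge-group bookkeeping: after your regluing, the image of $T_{1}\equiv T_{2}$ in $T^{3}_{K}$ would be the separating torus $\partial N(C)$, whose dual class in $H^{1}(T^{3}_{K};\Z/2)$ is zero, so your argument would yield $I^{\omega_{1}}(T^{3}_{K})$ with the determinant-$1$ gauge group rather than $I^{\omega_{1}}(T^{3}_{K})^{\psi}$ --- which by \eqref{eq:T3K-doubles} is twice as large. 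The subgroup $\psi$ is generated by the dual of the \emph{non-separating} torus $R$, and a correct proof must produce $R$ as the image of the excision tori.

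The paper's proof takes both tori inside $Y$ and uses no auxiliary manifold: $T_{1}=\partial N(K)$ and $T_{2}=-\partial N(L)$ are the boundaries of small disjoint tubular neighbourhoods of the two components of $K^{\natural}$, so the arc $\omega$ (running from $K$ to $L$) crosses each exactly once and $w_{2}$ is odd on both; $h$ interchanges longitudes and meridians. Regluing sends $N(K)\cup N(L)$ to an $S^{3}$ containing the Hopf link, with $\omega'$ the arc joining its components, and this factor contributes $\Z$ by Proposition~\ref{prop:unknot-calc}; meanwhile the complement $Y\sminus(N(K)\cup N(L))$ is glued to itself along its two boundary tori to become $T^{3}_{K}$, the common image of $T_{1}$ and $T_{2}$ there being exactly the non-separating coordinate torus $R$ (which is where $\psi$ comes from), and the middle portion of $\omega$ closing up into the circle $\omega_{1}$. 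If you want to salvage your write-up, replace your choice of tori with these; the K\"unneth and $\Z/4$-grading remarks at the end of your argument are then fine.
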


\begin{proof}
    By definition, we have $\Inat(Y,K) = I^{\omega}(Y,K^{\natural})$, where
    $K^{\natural}$ is the union of $K$ and a meridional circle $L$ and
    $\omega$ is an arc joining
    the two components.  The circle $L$ has a preferred framing,
    because it is contained in a small ball in $Y$. 
    Let $N(K)$ be a tubular neighborhood of $K$
    that is small enough to be disjoint from $L$, and let $T_{1}$ be
    its oriented boundary. Let $T_{2}$ be the oppositely-oriented
    boundary of a disjoint tubular neighborhood of $L$ in $Y$. Let
    $h:T_{1}\to T_{2}$ be an orientation-preserving diffeomorphism
    that maps the longitudes of $K$ to the meridians of $L$ and vice
    versa, and maps $\omega\cap T_{1}$ to $\omega\cap T_{2}$. We can
    cut along $T_{1}$ and $T_{2}$ and reglue using
    $h$. Theorem~\ref{thm:excision} applies. Note that because $T_{1}$
    and $T_{2}$ are null-homologous, the relevant subgroup $\phi$ in
    $H^{1}(Y;\Z/2)$ is trivial. The pair $(Y', K')$ that results from
    cutting and gluing has two components. One component is a sphere
    $S^{3}$ containing $K'$ which is the standard Hopf link $H$. 
   The other component
    is $T^{3}_{K}$. The tori $T'_{1}$ and $T'_{2}$ are respectively 
    a standard
    torus separating the two components of the Hopf link in $S^{3}$,
    and the torus $R$ in $T^{3}_{K}$. The subgroup of $H^{1}(Y';\Z/2)$
    that they generate is the 2-element group $\psi$, while $\omega'$
    is the union of an arc joining the two components of $H$ and the
    coordinate circle $\omega_{1}$ in $T^{3}$. 
   Since $I^{\omega'}(S^{3}, H) =\Z$,  the excision
    theorem gives
\[
\begin{aligned}
    I^{\omega}(Y,K^{\natural}) &\cong I^{\omega'}(S^{3}, H) \otimes
    I^{\omega'}(T^{3}_{K})^{\psi} \\
                     &=  I^{\omega_{1}}(T^{3}_{K})^{\psi} ,
\end{aligned}
\]
which is what the proposition claims.
\end{proof}

The group $I^{\omega_{1}}(T^{3}_{K})$ (defined using the determinant-1
gauge group) is exactly the group that Floer associates to a knot $K$
in \cite[section 3]{Floer-Durham}. In that paper, this homology group is
written $I_{*}(P, K)$, but to keep the distinctions a little clearer,
let us write Floer's group as $I^{\mathrm{Floer}}(Y,K)$. 
Because of the relation \eqref{eq:T3K-doubles},
we can recast the result of the previous proposition as
\[
            I^{\mathrm{Floer}}(Y,K) = \Inat(Y,K) \oplus \Inat(Y,K).
\]
On the other hand, in \cite{KM-sutures}, it was explained that, over
$\Q$ at least, one can decompose $I^{\mathrm{Floer}}(Y,K)$ into the
generalized eigenspaces of degree-4 operators $\mu(\mathrm{point})$,
belonging to the eigenvalues $2$ and $-2$. The generalized eigenspace
for $+2$ is, by definition, the group $\KHI(Y,K)$ of
\cite{KM-sutures}. Since the two generalized eigenspaces are of equal
dimension, we at least have
\[
    I^{\mathrm{Floer}}(Y,K;\Q) = \KHI(Y,K;\Q) \oplus  \KHI(Y,K;\Q)
\]
as vector spaces. Thus we eventually have
\[
           \Inat(Y,K) \otimes \Q \cong \KHI(Y,K;\Q)
\]
as claimed in Proposition~\ref{prop:singular-to-sutured}. \qed

\subsection{A product formula for split links}

For a second application of Floer's excision theorem, consider a pair
of connected $3$-manifolds $Y_{1}$, $Y_{2}$, and their connected sum
$Y_{1}\# Y_{2}$, as well as their disjoint union $Y=Y_{1}\cup Y_{2}$. 
Given links $K_{i}\subset Y_{i}$ for $i=1,2$, chosen so as to be
disjoint from the embedded balls that are used in making the connected
sum, we obtain a link $K_{1} \cup K_{2}$, in $Y_{1}\# Y_{2}$. In the special
case that $Y_{1}$ and $Y_{2}$ are both $S^{3}$, the resulting link
is a split link (as long as both $K_{i}$ are non-empty).  Of course,
we can also form the union $K$ in $Y=Y_{1}\cup Y_{2}$.  Note that any
cobordism $S$ from $K$ to $K'$ in the disjoint union $Y$ gives rise
also to a ``split'' cobordism $S$ between the corresponding links in
$Y_{1}\# Y_{2}$, as long it is disjoint from the balls.

Let
$H_{i}\subset Y_{i}$ be a Hopf  link contained in a standard ball, and
let $\omega_{i}$ be an arc joining its two components, so that
\[
         \Isharp(Y_{i}, K_{i}) = I^{\omega_{i}}(Y_{i}, K_{i} \cup H_{i}).
\]
Write $\omega = \omega_{1} \cup \omega_{2}$. The group
\[
           I^{\omega}(Y, K \cup H_{1} \cup H_{2})
\]
is isomorphic to the tensor product 
\[
 \Isharp(Y_{1}, K_{1}) \otimes  \Isharp(Y_{2}, K_{2}) 
\]
over the rationals, on a account of the K\"unneth theorem
\eqref{eq:Kunneth}. On the other hand, it is also related to the
connected sum:

\begin{proposition}
    There is an isomorphism 
     \[
           I^{\omega}(Y, K \cup H_{1} \cup H_{2})
                  \cong \Isharp (Y_{1}\# Y_{2}, K)
     \]
   which respects the $\Z/4$ gradings, and is natural for ``split cobordisms''.
\end{proposition}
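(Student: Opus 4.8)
The plan is to deduce the isomorphism from Floer's excision theorem (Theorem~\ref{thm:excision}), following the strategy used in the proof of Proposition~\ref{prop:singular-to-sutured}, but with the surgery tori arranged so that the cut-and-reglue realizes the connected sum $Y_1\#Y_2$ directly on one side and leaves a trivial $S^3$-summand on the other.

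First I would set up the cut-and-reglue. Write each Hopf link as $H_i = U_i\cup L_i$, with $L_i$ a meridian of $U_i$ and $\omega_i$ a radial arc from $U_i$ to $L_i$; arrange $L_i$, $K_i$, $\omega_i$ to lie outside a small tubular neighbourhood $N(U_i)$, and set $T_i = \partial N(U_i)$, so that $\omega\cdot T_i$ is odd (the non-integral condition for excision is met, and since each $T_i$ bounds a solid torus the relevant subgroups $\phi$, $\phi'$ are trivial). Take the gluing $h:T_1\to T_2$ to interchange the meridian of $N(U_1)$ with the Seifert longitude of $U_2$ and vice versa, carrying $\omega_1\cap T_1$ to $\omega_2\cap T_2$. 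Cutting $Y = Y_1\sqcup Y_2$ along $T_1\cup T_2$ and regluing by $h$ produces $(Y',K',\omega')$ with two components. One component, $N(U_1)\cup_h N(U_2)$, is two solid tori glued by the meridian--longitude flip, hence $S^3$, containing the cores $U_1\cup U_2$ as a Hopf link joined by the arc $(\omega_1\cap N(U_1))\cup(\omega_2\cap N(U_2))$; by Proposition~\ref{prop:unknot-calc} its singular instanton homology is $\Z$, in degree $0$. The other component is $(Y_1\sminus N(U_1))\cup_h(Y_2\sminus N(U_2))$: writing $Y_i\sminus N(U_i)$ as $(Y_i\sminus B^3)$ glued to the complementary solid torus $V_i'$ of the unknot, one checks by a routine $3$-manifold calculation that $V_1'\cup_h V_2' = S^3$ and that the reglued piece is $Y_1\#Y_2$, under which $K_1\cup K_2$ becomes $K$, while $L_1\cup L_2$ — each $L_i$ being isotopic in $Y_i\sminus N(U_i)$ to the core of $V_i'$ — becomes a Hopf link contained in a standard ball disjoint from $K$, joined by the arc inherited from $\omega$. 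Thus this component is exactly the data computing $\Isharp(Y_1\#Y_2,K)$.

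Now I would apply Theorem~\ref{thm:excision} (with $\phi$, $\phi'$ trivial) together with the K\"unneth sequence~\eqref{eq:Kunneth} for the disjoint union $Y'$: since one factor is $\Z$ in degree $0$, the sequence collapses to an isomorphism $I^{\omega}(Y, K\cup H_1\cup H_2)\cong \Isharp(Y_1\#Y_2,K)$. For the $\Z/4$ grading, the left-hand side carries a canonical absolute $\Z/4$ grading coming from the $\natural$-structure on each $H_i$ (equivalently, from the K\"unneth description $\Isharp(Y_1,K_1)\otimes\Isharp(Y_2,K_2)$, each factor having a canonical grading by Proposition~\ref{prop:canonical-Z/4}), and the excision isomorphism is induced by the standard excision cobordism $(W,S)$; the grading shift is the quantity $\iota(W,S)$ of Proposition~\ref{prop:canonical-Z/4}, which I would evaluate from the stated formula in terms of $\chi$, $\sigma$ and $b^1$ and check to vanish mod $4$, the cobordism being a product away from the $T^2\times U$ piece so that all terms are immediate. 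Naturality for split cobordisms then follows from the composition law for cobordism maps (Section~\ref{subsec:maps-from-cob}): a split cobordism is supported away from the tori $T_i$, so the excision cobordism commutes with it up to chain homotopy, giving the required commuting square.

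The main obstacle is the geometric identification in the second step: verifying that $(Y_1\sminus N(U_1))\cup_h(Y_2\sminus N(U_2))$ really is $Y_1\#Y_2$, with $L_1\cup L_2$ sitting in a standard ball disjoint from $K$ as precisely the Hopf link used to define $\Isharp$, and with the arc becoming the standard connecting $\omega$. To a lesser extent the bookkeeping of the $\Z/4$ grading through $\iota(W,S)$ requires care; the analytic content is entirely contained in Theorem~\ref{thm:excision} and the K\"unneth formula~\eqref{eq:Kunneth}.
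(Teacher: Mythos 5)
Your argument is correct and is essentially the paper's own proof: excise along the boundaries $T_i=\partial N(U_i)$ of tubular neighborhoods of one component of each Hopf link, with $h$ the meridian--longitude swap, so that regluing yields an $S^3$ containing a Hopf link (the cores $U_1\cup U_2$) on one side and $Y_1\# Y_2$ containing $K\cup H'$ (with $H'=L_1\cup L_2$) on the other, and then collapse the K\"unneth sequence using the fact that the first factor is $\Z$. The one detail you gloss over is that obtaining the two reglued pieces you describe --- rather than the ``crossed'' gluing $N(U_1)\cup_h(Y_2\sminus N(U_2))$, which would instead produce $S^1\times S^2$ summands --- requires giving $T_1$ and $T_2$ opposite orientations, as the paper specifies.
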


\begin{proof}
    In $Y$, let $T_{1}$ be a the boundary of a tubular neighborhood of
    one of the two components of the Hopf link $H_{1}\subset Y_{1}$,
    and let $T_{2}\subset Y_{2}$ be defined similarly, but with the
    opposite orientation. Choose an orientation-preserving
    diffeomorphism $h$ between these tori, interchanging longitudes
    with meridians. The manifold $(Y', K')$ obtained by cutting and
    gluing as in the excision theorem is disconnected. Its first
    component $Y'_{1}$ is a 3-sphere containing a standard Hopf link
    $K'_{1}$. Its second component $Y'_{2}$ is the connected sum
    $Y_{1}\# Y_{2}$ containing the link
    \[
                       K'_{2} = K_{1} \cup K_{2} \cup H',
     \]
     where $H'$ is a standard Hopf link. All the tori involved in this
     application of excision are null-homologous, so no non-trivial
     subgroups of $H^{1}(Y;\Z/2)$ are involved. Thus we obtain from
     Theorem~\ref{thm:excision} an isomorphism
     \begin{equation}\label{eq:split-excision}
         \begin{aligned}
             I^{\omega}(Y, K \cup H_{1} \cup H_{2}) & \cong
             I^{\omega'}(Y'_{1}, K'_{1}) \otimes I^{\omega'}(Y'_{2},
             K'_{2}) \\
            & = I^{\omega'}(Y'_{1}, K'_{1}) \otimes I^{\omega'}(Y_{1}
            \# Y_{2},
             K_{1} \cup K_{2} \cup H') \\
         \end{aligned}
      \end{equation}
      On the right, the first factor is $\Z$. The curve $\omega'$ in
      the second factor is an arc joining the two components of the
      Hopf link $H'$. So the right-hand side is simply
      $\Isharp(Y_{1}\# Y_{2}, K_{1}\cup K_{2})$ as desired.
\end{proof}

\begin{corollary}\label{cor:split-link}
    If at least one of $\Isharp(Y_{i}, K_{i})$ is torsion-free, we
    have an isomorphism
    \[
          \Isharp(Y_{1}, K_{1}) \otimes \Isharp(Y_{2}, K_{2}) 
                    \to \Isharp (Y_{1} \# Y_{2}, K_{1} \cup K_{2})
     \]
     arising from an excision cobordism.  The isomorphism is natural
     for the maps induced by ``split'' cobordisms. \qed
\end{corollary}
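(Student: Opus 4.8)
The plan is to deduce this immediately from the proposition just proved together with the K\"unneth sequence \eqref{eq:Kunneth}. Write $Y = Y_1 \cup Y_2$ for the disjoint union and $\omega = \omega_1 \cup \omega_2$, and recall from the previous proposition that there is a grading-preserving isomorphism
\[
    I^{\omega}(Y, K_1\cup K_2 \cup H_1 \cup H_2) \;\cong\; \Isharp(Y_1 \# Y_2, K_1\cup K_2),
\]
natural for split cobordisms, realized by an excision cobordism.

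Next I would observe that, because $Y$ is the disjoint union $Y_1\cup Y_2$ with the singular bundle data a product, the chain complex computing the left-hand side is the tensor product of the complexes computing $\Isharp(Y_i,K_i) = I^{\omega_i}(Y_i, K_i\cup H_i)$, $i=1,2$; this is precisely the setting of \eqref{eq:Kunneth}, which gives the short exact sequence
\[
    \mathrm{Tor}\bigl(\Isharp(Y_1,K_1),\, \Isharp(Y_2,K_2)\bigr)
      \hookrightarrow I^{\omega}(Y, K_1\cup K_2 \cup H_1 \cup H_2)
      \twoheadrightarrow \Isharp(Y_1,K_1)\otimes \Isharp(Y_2,K_2).
\]
Under the hypothesis that at least one of $\Isharp(Y_i,K_i)$ is torsion-free, the Tor term vanishes, so the right-hand arrow is an isomorphism. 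Composing its inverse with the isomorphism of the previous proposition produces the required map
\[
    \Isharp(Y_1,K_1)\otimes \Isharp(Y_2,K_2) \stackrel{\cong}{\longrightarrow} \Isharp(Y_1 \# Y_2, K_1\cup K_2),
\]
which is realized, up to the algebraic K\"unneth identification, by the excision cobordism of that proposition.

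Finally, for naturality: a split cobordism from $K_1\cup K_2$ to $K'_1\cup K'_2$ in $Y$ induces on the tensor product the product of the two cobordism maps, the tensor-product decomposition of the chain complexes being visibly compatible with the cobordism map of a disconnected cobordism (up to the usual K\"unneth signs carried by the operator $\epsilon^1$), while the same cobordism viewed inside $Y_1\# Y_2$ induces the corresponding map there; naturality of both the K\"unneth isomorphism and of the isomorphism from the previous proposition then gives the claim. I do not anticipate a real obstacle: the only points needing attention are the sign and grading bookkeeping in the K\"unneth decomposition and the verification that the excision cobordism of the previous proposition does induce the product map under the K\"unneth identification, and the latter is already part of the content of that proposition.
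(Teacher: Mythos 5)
Your proposal is correct and is exactly the argument the paper intends: the corollary is stated as an immediate consequence of the preceding proposition combined with the K\"unneth sequence \eqref{eq:Kunneth}, with the torsion-free hypothesis killing the $\mathrm{Tor}$ term. Nothing is missing.
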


\section{Cubes}
\label{sec:cubes}
\subsection{The skein cobordisms}

We consider three links $K_{2}$, $K_{1}$ and $K_{0}$ in a closed
$3$-manifold $Y$ which are related by the unoriented skein moves, as
shown in Figure~\ref{fig:Tetrahedra-skein}. What this means is that
there is a standard $3$-ball in $Y$ outside which the three links
coincide, while inside the ball the three links appear as shown. Two
alternative views are given in the figure. In the top row, we draw the
picture as it is usually presented for classical links, when a
projection in the plane is given: here the links $K_{1}$ and $K_{0}$
have one fewer crossings that $K_{2}$. In the bottom row of the
figure, an alternative picture is drawn which brings out the symmetry
more clearly: we see that there is a round ball $B^{3}$ in $Y$ with the
property that all three links meet the boundary sphere in four
points. These four points
form the vertices of a regular tetrahedron, and the links $K_{2}$,
$K_{1}$ and $K_{0}$ are obtained from the three different ways of 
joining the four vertices in two pairs, by pairs of arcs isotopic to
pairs of edges
of the tetrahedron.

\begin{figure}
    \begin{center}
        \includegraphics[scale=0.5]{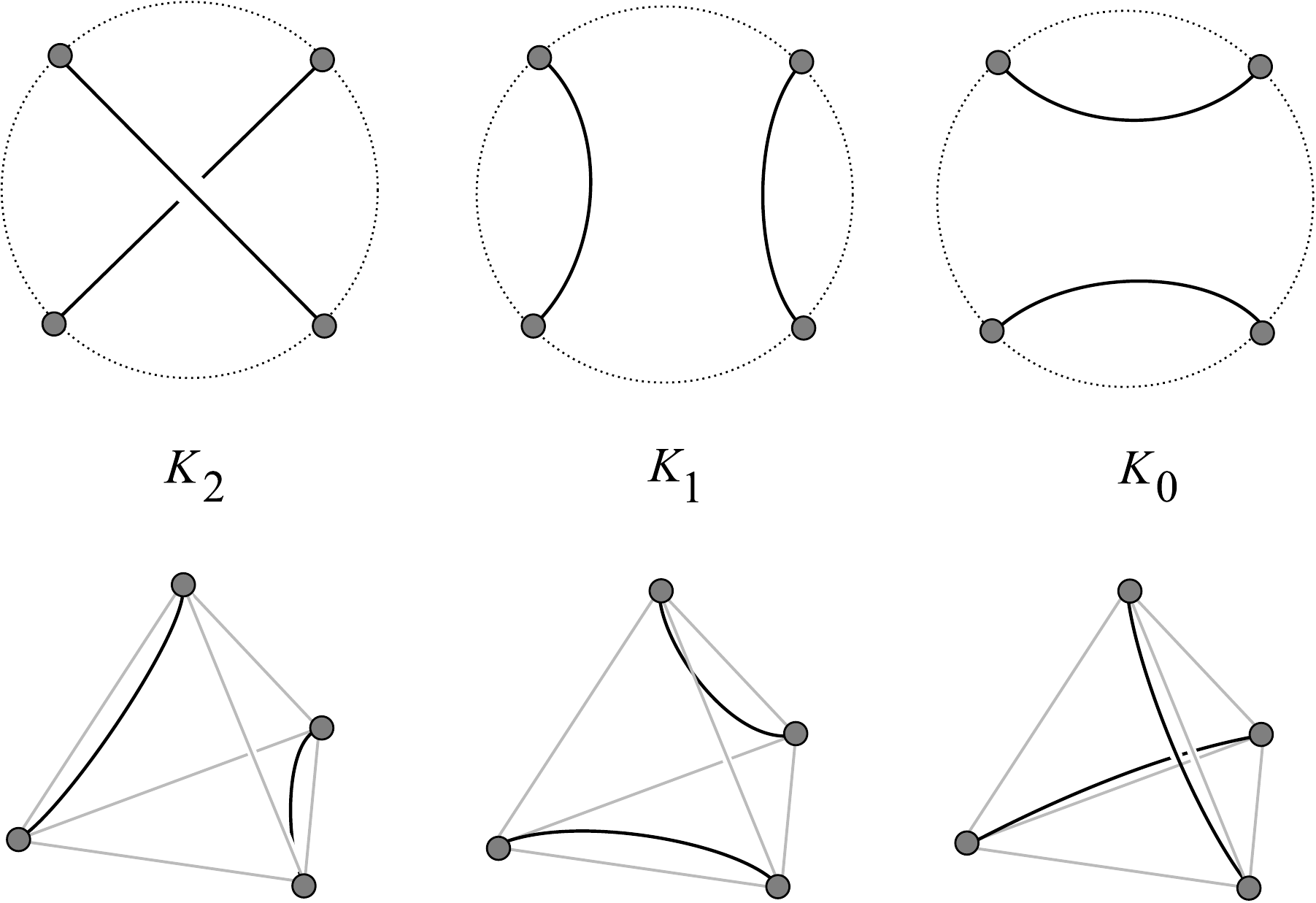}
    \end{center}
    \caption{\label{fig:Tetrahedra-skein}
    Knots $K_{2}$, $K_{1}$ and $K_{0}$ differing by the unoriented
    skein moves, in two different views.}
\end{figure}

The second view in Figure~\ref{fig:Tetrahedra-skein} makes clear the
cyclic symmetry of the three links. Note that there \emph{is} a
preferred cyclic ordering determined by the pictures: if the picture
of $K_{i}$ is rotated by a right-handed one-third turn about any of
the four vertices of the tetrahedron, then the result is the picture
of $K_{i-1}$. We may consider links $K_{i}$ for all integers $i$ by
repeating these three cyclically.

\begin{figure}
    \begin{center}
        \includegraphics[scale=0.5]{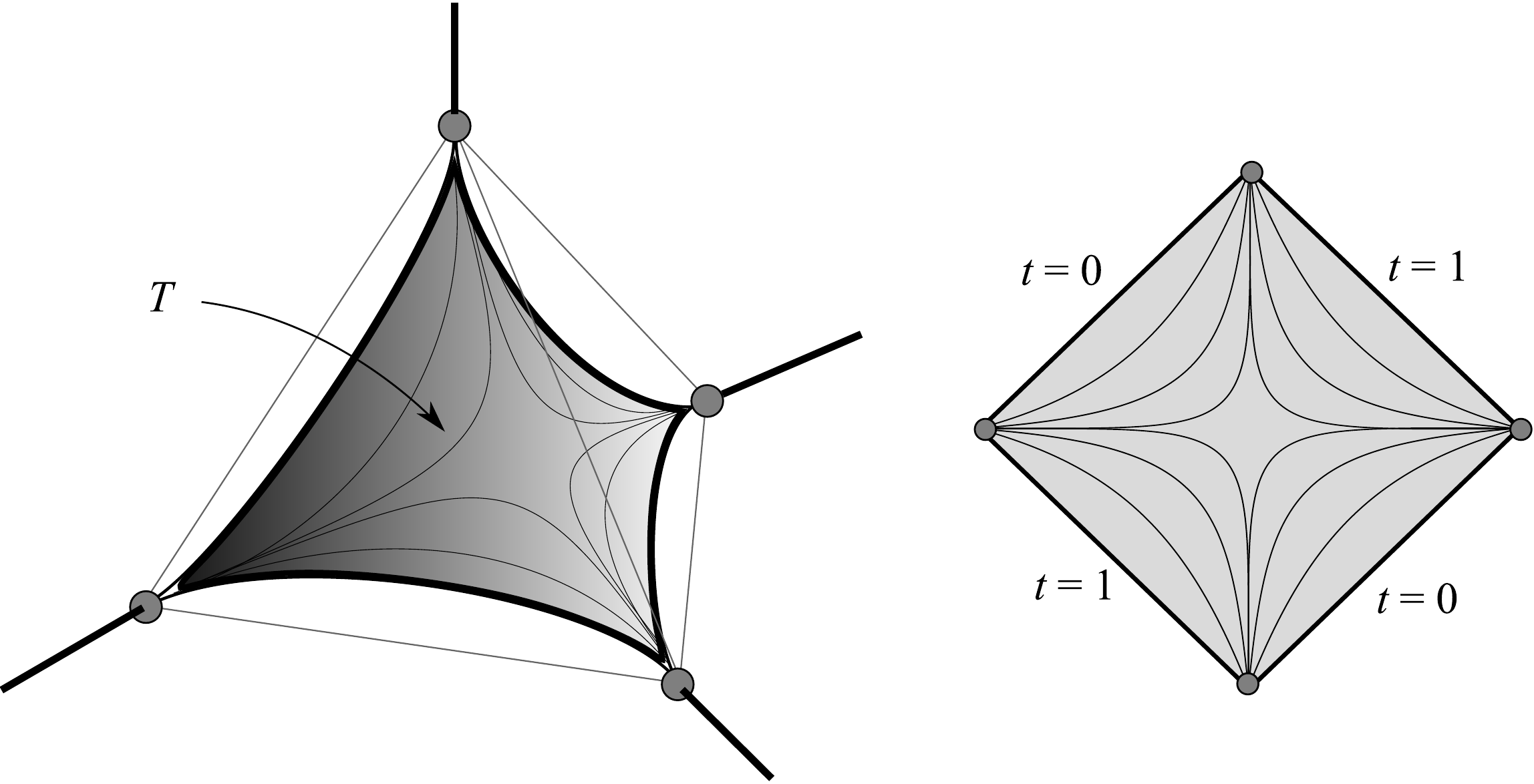}
    \end{center}
    \caption{\label{fig:Tetrahedron-2}
    The twisted rectangle $T$ that gives rise to the cobordism
    $\Tigma_{2,1}$ from $K_{2}$ to $K_{1}$.}
\end{figure}

We next describe a standard cobordism surface $\Tigma_{i,i-1}$ from
$K_{i}$ to $K_{i-1}$ inside the cylindrical $4$-manifold $[0,1]\times
Y$, for each $i$. Because of the cyclic symmetry, it is sufficient to
describe the surface $\Tigma_{2,1}$. This cobordism will be a product
surface outside $[0,1]\times B^{3}$. Inside $[0,1]\times B^{3}$, the
first coordinate $t\in[0,1]$ will have a single index-1 critical point
on $\Tigma_{2,1}$. The intrinsic topology of $\Tigma_{2,1}$ is
therefore described by the addition of a single $1$-handle.  To
describe the geometry of the embedding, begin with the $1/4$-twisted
rectangular surface $T\subset B^{3}$ shown in
Figure~\ref{fig:Tetrahedron-2}, and let $T^{o}$ be the complement of
the $4$ vertices of $T$. Let $t$ be a Morse function on $T^{o}$ with
$t=0$ on the two arcs of $K_{2}$ and $t=1$ on the two arcs of $K_{1}$,
and with a single critical point on the center of $T$ with critical value $1/2$. The graph of
this Morse function places $T^{o}$ into $[0,1]\times B^{3}$. The
cobordism $\Tigma_{2,1}$ is the union of this graph with the product
part outside $[0,1]\times B^{3}$.

We put an orbifold Riemannian metric $\orbig$ on $[0,1] \times Y$ (with
cone-angle $\pi$ along the embedded surface $\Tigma_{2,1}$ as
usual). We choose the metric so that it is a cylindrical product
metric of the form
\[
              dt^{2} + \orbig_{Y}
\]
on the subset $[0,1] \times (Y\setminus B^{3})$. We also require that
the metric be cylindrical in collar neighborhoods of $\{0\} \times Y$
and $\{1\}\times Y$.
\bigskip\bigskip

Suppose now that instead of a single ball we are
given $N$ disjoint balls $B_1,\ldots, B_N$ in $Y$. Generalizing 
the above notation, we may consider a collection of links $K_v \subset
Y$ for $v\in \{0,1,2\}^N$: all these links coincide outside the union
of the balls, while $K_{v}\cap B_{i}$ consists of a pair of arcs as in
Figure~\ref{fig:Tetrahedra-skein}, according as the $i$-th coordinate
$v_{i}$ is $0$, $1$ or $2$. We extend this family of links to a family
parametrized by $v\in \Z^{N}$, making the family periodic with period
$3$ in each coordinate $v_{i}$.

We give $\Z^N$ the product partial order. We also define norms
\[
\begin{aligned}
    |v|_{\infty} &= \sup_{i} |v_{i}| \\
    |v|_{1} &= \sum_{i} |v_{i}| . \\
\end{aligned}
\]
Then for a pair
$v,u\ \in \Z^N$ with $v\ge u$ and $|v-u|_{\infty}=1$, we define a cobordism $\Tigma_{vu}$
from $K_v$ to $K_u$ in $[0,1]\times Y$ by repeating the construction of $\Tigma_{v_{i},u_{i}}$
for each ball $B_i$ for which $v_{i}=u_{i}+1$. For the sake of uniform
notation, we also write $\Tigma_{vv}$ for the product cobordism. These
cobordisms satisfy
\[
          \Tigma_{wu}=\Tigma_{vu}\comp \Tigma_{wv}
\]
whenever $w\ge v \ge u$ with $|w-u|_{\infty}\le 1$.

It is notationally convenient to triangulate $\R^{N}$ as a
simplicial complex $\DDelta^{N}$  with vertex
set $\Z^{N}$ by
declaring the $n$-simplices to be all ordered $(n+1)$-tuples of vertices
$(v^{0}, \dots, v^{n})$ with
\[
       v^{0} > v^{1} > \dots > v^{n}
\]
and $|v^{0}-v^{n}|_{\infty} \le 1$. In this simplicial decomposition,
each unit cube in $\Z^{N}$ is decomposed into $N!$ simplices of
dimension $N$. The non-trivial cobordisms $\Tigma_{vu}$ correspond to
the $1$-simplices $(v,u)$ of $\DDelta^{N}$. We also talk of
\emph{singular} $n$-simplices for this triangulation, by which we mean
$(n+1)$-tuples $(v^{0}, \dots, v^{n})$ with
\[
       v^{0} \ge v^{1} \ge \dots \ge v^{n}
\]
and $|v^{0}-v^{n}|_{\infty} \le 1$. We can regard these singular
simplices as the generators of the singular simplicial chain complex,
which computes the homology of $\R^{N}$.

We will be applying instanton homology, to associate a chain complex
$C^{\omega}(Y,K_{v})$ to each link $K_{v}$. To do so, we need to have
an $\omega$ so that $(Y,K_{v},\omega)$ satisfies the non-integral
condition. We choose an $\omega$ which is disjoint from all the
balls $B_{1},\dots, B_{N}$. Such a choice for one $K_{v}$ allows us to
use the same $\omega$ for all other $K_{u}$. When considering the
cobordisms $\Tigma_{vu}$, we extend $\omega$ as a product. We impose
as a hypothesis the condition \emph{that $(Y,K_{v},\omega)$ satisfies
  the non-integral condition, for all $v$}.

We put an orbifold metric $\orbig_{v}$ on $(Y,K_{v})$ for every $v$,
arranging that these are all isometric outside the union of the $N$
balls. As in the case $N=1$ above, we then put an orbifold metric
$\orbig_{vu}$ on the cobordism of pairs, $([0,1]\times Y, S_{vu})$ for
every $1$-simplex $(v,u)$ of $\DDelta^{N}$. We choose these again so that
they are product metrics in the neighborhood of $\{0\}\times Y$ and
$\{1\}\times Y$, and also on the subset $[0,1] \times (Y\setminus B)$,
where $B$ is the union of the balls. Inside $[0,1]\times B_{i}$ we can
take standard metric for the cobordism, which depends only on $i$, not
otherwise on $u$ or $v$.

In order to have chain-maps with a well-defined sign, we need to
choose $I$-orientations for all the cobordisms that arise. From the
definition, this entails first choosing singular bundle data
\[
           \bP_{v} \to (Y,K_{v})
\]
for each $v$, corresponding to the chosen $\omega$. It also entails
choosing auxiliary data $\aux_{v}$ (a metric, perturbation and
basepoint in $\bonf^{\omega}(Y,K_{v})$), for all $v$. The metric is
something we have already discussed, but for the perturbation and
basepoint we make arbitrary choices. We fix
$(\bP_{v},\aux_{v})$ once and for all, and make no further reference
to them. For each cobordism $\Tigma_{vu}$, we have a well-defined
notion of an $I$-orientation, in the sense of
Definition~\ref{def:I-orientation}. We wish to choose $I$-orientations
for all the cobordisms, so that they behave coherently with respect to
compositions. The following lemma tells us that this is possible.

\begin{lemma}\label{lem:orientation-consistency}
    It is possible to choose $I$-orientations $\mu_{vu}$ for each
    cobordism $\Tigma_{vu}$, so that whenever $(w,v,u)$ is a singular $2$-simplex
    of $\DDelta^{N}$, the corresponding
    $I$-orientations are consistent with the composition, so that
\begin{equation}\label{eq:comp-mu}
           \mu_{wu}= \mu_{vu} \comp \mu_{wv}.
\end{equation}
\end{lemma}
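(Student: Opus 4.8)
The plan is to choose the $I$-orientations $\mu_{vu}$ by induction on $|v-u|_1$, exploiting the fact that the composition law for $I$-orientations is itself associative (and has canonical behavior for product cobordisms). First I would fix, for every $1$-simplex $(v,u)$ of $\DDelta^N$ with $|v-u|_\infty=1$, an arbitrary $I$-orientation $\mu_{vu}$; and for the degenerate $1$-simplices $(v,v)$ I would use the canonical $I$-orientation of the product cobordism (which is compatible with composition on both sides, so that $\mu_{vu}\comp\mu_{vv}=\mu_{vu}=\mu_{uu}\comp\mu_{vu}$ automatically). For a singular $2$-simplex $(w,v,u)$ with $w>v>u$ and $|w-u|_\infty\le 1$, the cobordism $\Tigma_{wu}$ is literally the composite $\Tigma_{vu}\comp\Tigma_{wv}$, and the two sides of \eqref{eq:comp-mu} differ by a sign $\sigma(w,v,u)\in\{\pm1\}$, measuring the discrepancy between our chosen $\mu_{wu}$ and the composite $\mu_{vu}\comp\mu_{wv}$. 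The task is to adjust the initial choices so that all these signs become $+1$.

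The key observation is that $\sigma$ is a $1$-cocycle in a suitable sense. Concretely: associativity of the composition law for $I$-orientations says that for any singular $3$-simplex $(x,w,v,u)$ we have
\[
   \sigma(x,w,u)\,\sigma(w,v,u)=\sigma(x,v,u)\,\sigma(x,w,v),
\]
because both sides compute the discrepancy between $\mu_{xu}$ and the triple composite $\mu_{vu}\comp\mu_{wv}\comp\mu_{xw}$ via two different bracketings, and the composition law is associative on the nose. Thus $\sigma$ represents a class in $H^2$ of the relevant complex. Since $\DDelta^N$ triangulates $\R^N$, which is contractible, and the objects in question (the finite set $\{\pm1\}$ of $I$-orientations of each cobordism, acted on freely and transitively by $\{\pm1\}$) form a local system of torsors over the $1$-skeleton with $\{\pm1\}$ coefficients, the obstruction to a globally consistent choice lies in $H^2(\R^N;\Z/2)=0$. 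More elementarily: I would simply build $\mu_{vu}$ by induction on $|v-u|_1$. The base case $|v-u|_1\le 1$ is the arbitrary choice above. For the inductive step, given $v>u$ with $|v-u|_\infty\le1$ and $|v-u|_1\ge 2$, pick any intermediate vertex $v>v'>u$ with $|v-v'|_\infty\le1$ and $|v'-u|_\infty\le1$ (this exists since $|v-u|_\infty\le1$), and \emph{define}
\[
  \mu_{vu}:=\mu_{v'u}\comp\mu_{vv'}.
\]
One then checks that this is independent of the choice of $v'$: if $v'$ and $v''$ are two such intermediate vertices, then (after possibly passing through a common refinement, i.e. a vertex $\le$ both) associativity of the composition law together with the inductive hypothesis forces $\mu_{v'u}\comp\mu_{vv'}=\mu_{v''u}\comp\mu_{vv''}$. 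With $\mu_{vu}$ so defined for all $1$-simplices, \eqref{eq:comp-mu} for a general singular $2$-simplex $(w,v,u)$ follows by the same associativity/well-definedness argument.

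The main obstacle is the well-definedness check in the inductive step — showing that the two intermediate vertices $v',v''$ give the same $\mu_{vu}$. The clean way to handle this is to observe that any two vertices $v',v''$ strictly between $u$ and $v$ (in the cube $[u,v]$) admit a common lower bound $v'''$ with $u\le v'''\le v',v''$ and $|v-v'''|_\infty\le1$ (e.g.\ the coordinatewise minimum of $v'$ and $v''$, which still lies in the cube), and then
\[
  \mu_{v'u}\comp\mu_{vv'}
   =\mu_{v'''u}\comp\mu_{v'v'''}\comp\mu_{vv'}
   =\mu_{v'''u}\comp\mu_{vv'''}
   =\mu_{v'''u}\comp\mu_{v''v'''}\comp\mu_{vv''}
   =\mu_{v''u}\comp\mu_{vv''},
\]
where the outer equalities use the inductive hypothesis applied to the $2$-simplices $(v,v',v''')$ and $(v,v'',v''')$ (all of whose edges have $\ell^1$-length smaller than $|v-u|_1$), the middle equalities use the strict associativity of the composition of $I$-orientations recorded in section~\ref{subsec:maps-from-cob}, and we use throughout that the relevant composites of cobordisms are equal as cobordisms with $I$-orientation. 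This completes the induction and proves the lemma.
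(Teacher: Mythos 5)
Your first argument is essentially the paper's proof: choose arbitrary $I$-orientations on the $1$-simplices, record the failure of \eqref{eq:comp-mu} as a $\{\pm 1\}$-valued $2$-cochain $\sigma$ (not a $1$-cocycle, as you write at one point), verify the cocycle identity $\sigma(x,w,u)\,\sigma(w,v,u)=\sigma(x,v,u)\,\sigma(x,w,v)$ from strict associativity of composition of $I$-orientations, and then trivialize $\sigma$ by adjusting the edge choices by a $1$-cochain, using $H^{2}(\R^{N};\Z/2)=0$. The paper verifies the cocycle condition by comparing the four monotone edge-paths around a $3$-simplex, but the content is identical to your bracketing argument. Up to the routine point about degenerate simplices (one needs $\mu_{vv}$ to act as the identity under composition), this part of your proposal is correct and complete.

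The ``more elementary'' inductive alternative, however, is circular exactly in the first nontrivial case, and you should not rely on it. If $|v-u|_{1}=2$, say $v=u+e_{i}+e_{j}$ with intermediate vertices $v'=u+e_{i}$ and $v''=u+e_{j}$, then the coordinatewise minimum is $v'''=u$, so the $2$-simplices $(v,v',v''')$ and $(v,v'',v''')$ are $(v,v',u)$ and $(v,v'',u)$: their long edge is $(v,u)$ itself, so they are \emph{not} covered by the inductive hypothesis (your claim that all their edges have strictly smaller $\ell^{1}$-length is false), and your two middle equalities read $\mu_{v'u}\circ\mu_{vv'}=\mu_{vu}=\mu_{v''u}\circ\mu_{vv''}$ --- precisely the well-definedness statement being proved. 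Moreover no repair is possible without revisiting the base case: flipping the arbitrarily chosen orientation of a single edge flips the discrepancy of every square containing that edge, so for generic initial edge choices the two composites around a square genuinely disagree. Some global adjustment of the edge choices --- i.e.\ the coboundary correction furnished by the vanishing of $H^{2}$ --- is therefore unavoidable, which is why the cohomological argument is the right one.
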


\begin{proof}
    The proof only depends on the fact that the composition law for
    $I$-orientations is associative.
    Begin by choosing an arbitrary $I$-orientation $\mu'_{vu}$
    for each singular $1$-simplex $(v,u)$. For any singular $2$-simplex
    $(w,v,u)$, define $\eta(w,v,u) \in \Z/2$ according to whether or
    not the desired composition rule \eqref{eq:comp-mu} holds: that
    is,
\[
                      \mu'_{wu}= (-1)^{\eta(w,v,u)}\mu'_{vu} \comp \mu'_{wv}.
\]
   We seek  new orientations
\[
 \mu_{vu}= (-1)^{\theta(v,u)} \mu'_{vu}
\]
  so that the \eqref{eq:comp-mu} holds for all $2$-simplices. The
  $\theta$ that we seek can be viewed as a $1$-cochain on $\DDelta^{N}$
  with values in $\Z/2$, and the desired relation \eqref{eq:comp-mu}
  amounts to the condition that the coboundary of $\theta$ is $\eta$:
\[
          \delta\theta = \eta.
\]
  Because the second cohomology of $\R^{N}$ is zero, we can find such
  a $\theta$ if and only if $\eta$ is coclosed.

 To verify that $\eta$ is indeed coclosed, consider a singular $3$-simplex
$(w,v,u,z)$.  The value of $\delta \eta$ on this $3$-simplex is the sum of
the values of $\eta$ on its four faces. Choose any $I$-orientation $\nu$
for $\Tigma_{wz}$. There are four paths from $w$ to $z$ along oriented
$1$-simplices, $\gamma_{0}, \dots, \gamma_{3}$. Viewing these as
singular $1$-chains, they are:
\[
\begin{aligned}
          \gamma_{0} &= (w,z) \\
          \gamma_{1} &= (w,u) + (u,z) \\
          \gamma_{2} &= (w,v) + (v,u) + (u,z) \\
          \gamma_{3} &= (w,v) + (v,z) .
\end{aligned}
\]
For each path $\gamma_{a}$, $a=0,\dots, 3$, define $\phi_{a}\in \Z/2$
by declaring that $\phi_{a} = 0$ if and only if the composite of the
chosen $I$-orientations $\mu'$ along the $1$-simplices of $\gamma_{a}$
is equal to $\nu$. Thus, for example,
\[
          \mu'_{wv}\comp \mu'_{vu} \comp \mu'_{uz} = (-1)^{\phi_{2}}\nu.
\]
In the given cyclic ordering of the $1$-chains $\gamma_{a}$, the
differences $\gamma_{a+1} - \gamma_{a}$ is the boundary of a face of
the $3$-simplex, for each $a\in\Z/4$. Thus we see,
\[
\begin{aligned}
             \phi_{1}-\phi_{0} &= \eta(w,u,z) \\
             \phi_{2}-\phi_{1} &= \eta(w,v,u) \\
             \phi_{3}-\phi_{2} &= \eta(v,u,z) \\
             \phi_{0}-\phi_{3} &= \eta(w,v,z) .
\end{aligned}
\]
The sum of the value of $\eta$ on the $4$ faces of the singular $3$-simplex is
therefore zero, as required.
\end{proof}

To define maps in the Floer homology we make the pair
$([0,1]\times Y, \Tigma_{vu})$ into a pair of cylindrical manifolds,
by adding the half cylinders $(-\infty,0\,]\times (Y,K_v)$ and
$[\,1,\infty)\times (Y,K_u)$. The four manifold is simply
$\R\times Y$, and 
we call  the
$\R$ coordinate $t$.
The metric is locally a product metric everywhere
except on the subset
\[
             [0,1] \times (B_{i_{1}} \cup \dots \cup B_{i_{d}})
\]
where the union is over all $i$ with $v_{i}\ne u_{i}$. With a slight
abuse of notation, we continue to denote by $S_{vu}$ the non-compact
embedded surface with cylindrical ends,
\[
         S_{vu} \subset \R\times Y.
\]

This orbifold metric on $\R\times Y$ is one of a natural family
of metrics, which we now describe. Let us write
\[
\begin{aligned}
    I &= \{ \, i_{1}, \dots, i_{d}\,\} \\
      &= \supp (v-u) \\
      & \subset \{ \, 1,\dots, N \,\},
\end{aligned}
\]
and let us denote our original orbifold metric by $\orbig_{vu}(0)$.
For each
\[
    \tau = (\tau_{i_{1}}, \dots , \tau_{i_{d}}) \in \R^{I} 
   \cong \R^{d},
\]
we construct a Riemannian manifold
\[
              ( \R\times Y, \orbig_{vu}(\tau) )
\]
by starting with $( \R\times Y, \orbig_{vu}(0) )$,
cutting out the subsets $\R\times B_{i_{m}}$ for $m=1,\dots, d$, and
gluing them back via the isometry of the boundaries $\R\times  S^{3}$
given by translating the $t$ coordinate by $\tau_{i_{m}}$. After an
adjustment of our parametrization, we can assume that the $t$
coordinate on $\R\times\orbiY$ has exactly $d$ critical points when
restricted to the singular locus $S_{vu}$, and that these occur in
$\{\tau_{i_{m}}\} \times B_{i_{m}}$ for $m=1,\dots, d$. (See
Figure~\ref{fig:metrics}.)
\label{page:Gvu-setup}
 We write $G_{vu}$ for this family of
Riemannian metrics.
\begin{figure}
    \begin{center}
        \includegraphics[height=2in]{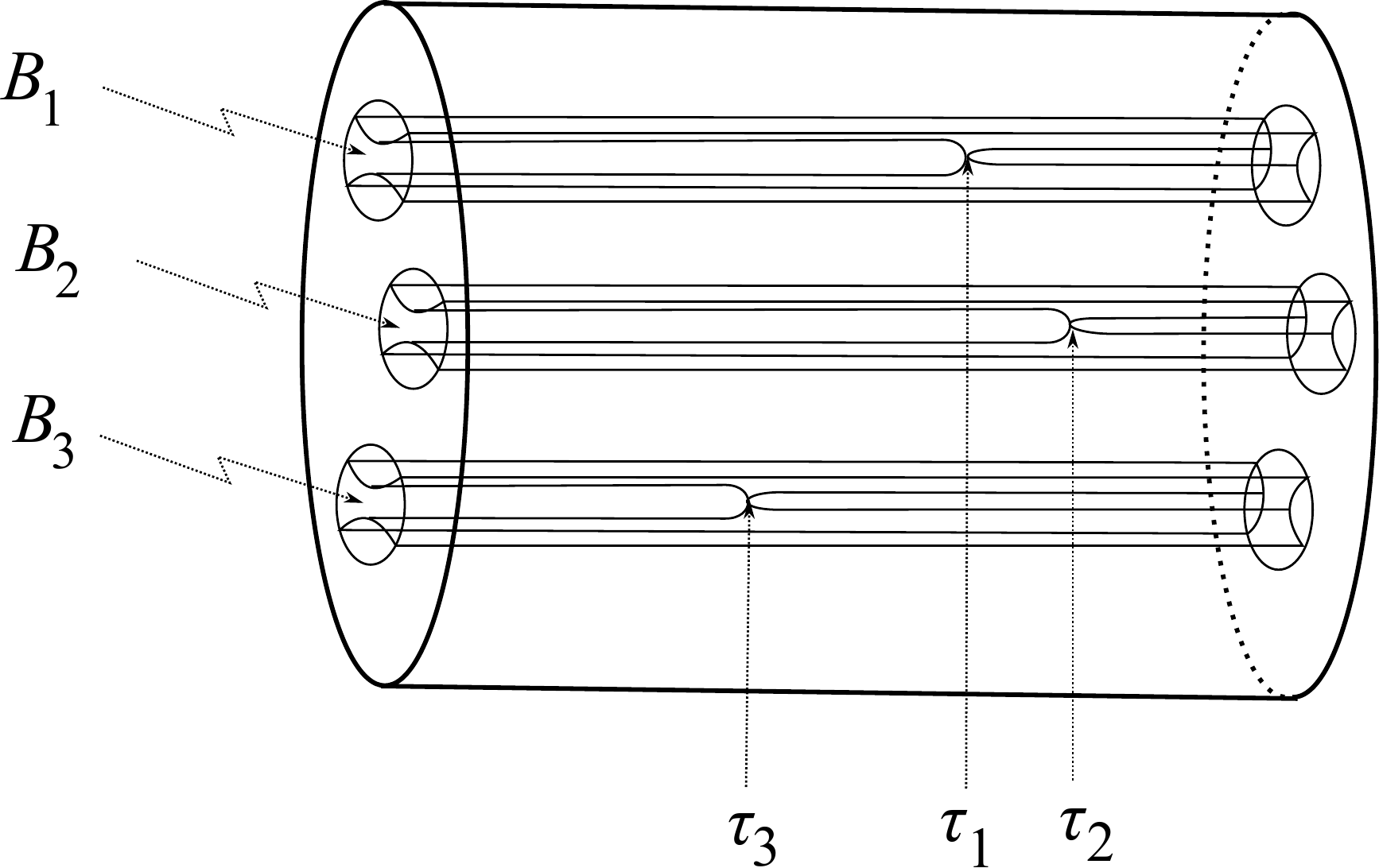}
    \end{center}
    \caption{\label{fig:metrics}
    The family of metrics $G_{vu}$ parametrized by $\tau\in\R^{I}$
    ($\R^{3}$ in this example).}
\end{figure}

Suppose now that $(w,v,u)$ be a singular $2$-simplex.  Let $I$ and $J$
be the support of $w-v$ and $v-u$ respectively, so that $I\cup J$ is
the support of $w-u$ and $I\cap J$ is empty. There is a natural
identification
\begin{equation}\label{eq:simple-product}
       G_{wu} \to G_{wv} \times G_{vu}
\end{equation}
arising from $\R^{I\cup J} \to \R^{I} \times\R^{J}$.

\begin{lemma}\label{lem:G-orient}
    Orientations can be chosen for $G_{vu}$ for all singular $1$-simplices
    $(v,u)$ of $\DDelta^{N}$ such that for all singular $2$-simplices $(w,v,u)$, the
    natural identification \eqref{eq:simple-product} is
    orientation-preserving.
\end{lemma}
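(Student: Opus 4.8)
The plan is to adapt the proof of Lemma~\ref{lem:orientation-consistency} essentially word for word, the only change being that the associative composition law for $I$-orientations is replaced by the strictly associative direct-sum decomposition of the parameter spaces. An orientation of $G_{vu}$ is the same thing as an orientation of $\R^{\supp(v-u)}$, and for a degenerate $1$-simplex $(v,v)$ the space $G_{vv}$ is a point, which we equip once and for all with its canonical orientation. First I would choose an arbitrary orientation $o'_{vu}$ of $G_{vu}$ for every singular $1$-simplex $(v,u)$ of $\DDelta^{N}$, taking $o'_{vv}$ to be the canonical one. For each singular $2$-simplex $(w,v,u)$, with $I=\supp(w-v)$ and $J=\supp(v-u)$ disjoint and $I\cup J=\supp(w-u)$, define $\eta(w,v,u)\in\Z/2$ to be $0$ if and only if the identification \eqref{eq:simple-product} carries the product orientation $o'_{wv}\times o'_{vu}$ to $o'_{wu}$.

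Next I would look for modified orientations $o_{vu}=(-1)^{\theta(v,u)}o'_{vu}$, where $\theta$ is a $\Z/2$-valued $1$-cochain on $\DDelta^{N}$ with $\theta(v,v)=0$; the requirement that \eqref{eq:simple-product} be orientation-preserving for every singular $2$-simplex is then exactly the equation $\delta\theta=\eta$. Since $\DDelta^{N}$ triangulates $\R^{N}$, we have $H^{2}(\DDelta^{N};\Z/2)=0$, so such a $\theta$ exists as soon as $\eta$ is a cocycle, which (with $o_{vv}$ canonical) automatically takes care of the degenerate $2$-simplices as well.

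So the one substantive point is to check that $\eta$ is a cocycle, and this is where strict associativity enters. Given a singular $3$-simplex $(x,w,v,u)$, set $A=\supp(x-w)$, $B=\supp(w-v)$, $C=\supp(v-u)$, which are pairwise disjoint. Both natural ways of iterating \eqref{eq:simple-product} — splitting $G_{xu}\cong G_{xv}\times G_{vu}$ and then $G_{xv}\cong G_{xw}\times G_{wv}$, versus splitting $G_{xu}\cong G_{xw}\times G_{wu}$ and then $G_{wu}\cong G_{wv}\times G_{vu}$ — produce the very same oriented identification $G_{xu}\cong\R^{A}\times\R^{B}\times\R^{C}$, because a product of oriented manifolds is associative on the nose, with no sign. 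Feeding this into exactly the bookkeeping used in the proof of Lemma~\ref{lem:orientation-consistency} — fix an auxiliary orientation of $G_{xu}$; run along the four monotone paths from $x$ to $u$ through subsets of $\{x,w,v,u\}$; record for each path the $\Z/2$ discrepancy between the composite of the chosen $o'$'s and the auxiliary orientation; and note that consecutive discrepancies (in the cyclic ordering of the paths) differ by the value of $\eta$ on one face of the $3$-simplex — shows that the four face-values of $\eta$ sum to zero, so $\delta\eta=0$. I do not expect any real obstacle here: the only thing to be careful about is the sign/orientation bookkeeping, which is routine and formally identical to the previous lemma. (If an explicit answer is wanted, one can instead take $o'_{vu}$ to be the orientation of $\R^{\supp(v-u)}$ given by the increasing order of $\supp(v-u)\subset\{1,\dots,N\}$; then $\eta(w,v,u)$ becomes the parity of the riffle shuffle interleaving $\supp(w-v)$ with $\supp(v-u)$, and its cocycle property is precisely the standard fact that the parity of a merge-sort is independent of how the successive merges are bracketed.)
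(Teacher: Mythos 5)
Your proposal is correct and is exactly the argument the paper intends: the paper's proof of this lemma consists of the single sentence that it is ``essentially the same as the proof of the previous lemma,'' i.e.\ the cocycle argument with $H^{2}(\R^{N};\Z/2)=0$, where the associativity of composition of $I$-orientations is replaced by the strict associativity of the product decompositions of the $G_{vu}$. Your explicit choice at the end (ordering $\supp(v-u)$ increasingly) is a nice bonus but not needed.
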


\begin{proof}
   The proof is essentially the same as the proof of the previous lemma.
\end{proof}

There is an action of $\R$ on the space of metrics $G_{vu}$ by translation
(adding a common constant to each 
coordinate of $\tau \in \R^{d}$). We can therefore normalize $\tau$ by requiring that
\[
   \sum \tau_{i_{a}}=0.
\]  
We write $\bG_{vu} \subset G_{vu}$ for this normalized family (which
we can also regard as the quotient of $G_{vu}$ by the action of
translations).  As coordinates on $\bG_{vu}$ we can take the differences
\[
        \tau_{i_{a+1}}-\tau_{i_{a}}, \qquad a=1,\dots, d.
\]
There is a natural compactification of $\bG_{vu}$, which we can think
of informally as resulting from allowing some of the differences
to become infinite: it parametrizes a family of broken Riemannian metrics of
the sort considered in section~\ref{subsec:families}. This
compactification, which we call $\bG_{vu}^{+}$, is constructed as
follows. We consider all simplices $\sigma  = (v^{0},\dots, v^{n})$ with
$v^{0}=v$ and $v^{n}=u$ (including the $1$-simplex $(v,u)$ itself
amongst these). For each such simplex $\sigma$, we write
\[
             \bG_{\sigma} = \bG_{v^{0}v^{1}} \times \dots \times \bG_{v^{n-1}v^{n}}.
\]
The compactification $\bG^{+}_{vu}$ is then the union
\begin{equation}\label{eq:bG-compactification}
              \bG^{+}_{vu} = \bigcup_{\sigma} \bG_{\sigma}.
\end{equation}
The definition of the topology on this union follows the usual
approach for broken trajectories: see for example \cite{KM-book}.
The space $\bG^{+}_{vu}$ is a polytope: if $\sigma$ is
an $n$-simplex, then the corresponding subset
$\bG_{\sigma}\subset\bG^{+}_{vu}$ is the interior of a face of
codimension $n-1$. In particular, the codimension-1 faces of the
compactification are the parts
\[
            \bG_{vs}\times \bG_{su}
\]
for all $s$ with $v>s>u$. Thus each face parametrizes broken
Riemannian metrics broken along a single cut $(Y, K_{s})$, for some
$s$.
(A family of Riemannian metrics with much the
same structure occurs in the same context in \cite{Bloom}, where it is
observed that the polytope is a permutohedron.)

In section~\ref{subsec:families}, when we considered general families
of broken metrics, we chose to orient the boundary faces of the family
using the boundary orientation. In our present situation, we need to
compare the naturally-arising orientations to the boundary orientation:

\begin{lemma}\label{lem:bG-sign}
    Suppose that orientations have been chosen for $G_{vu}$ for all
    $1$-simplices $(v,u)$ so as to satisfy the conditions of
    Lemma~\ref{lem:G-orient}. Orient $\bG_{vu}$ by making the
    identification
\[
             G_{vu} = \R\times \bG_{vu}
\]
   where the $\R$ coordinate is the center of mass of the coordinates
   $\tau_{i_{a}}$ on $G_{vu}$. Then for any $2$-simplex $(w,v,u)$ the
   product orientation on $\bG_{wv}\times \bG_{vu}$ differs from the
   boundary orientation of
\[
                     \bG_{wv}\times \bG_{vu} \subset \partial \bG_{wu}
\]
  by the sign $(-1)^{\dim \bG_{wv}}$.
\end{lemma}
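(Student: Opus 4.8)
The plan is to track orientations through the two identifications $G_{wu}\cong G_{wv}\times G_{vu}$ (from \eqref{eq:simple-product}, orientation-preserving by Lemma~\ref{lem:G-orient}) and $G_{vu}\cong\R\times\bG_{vu}$ (the center-of-mass splitting), and compare the resulting orientation on $\bG_{wv}\times\bG_{vu}$ with the boundary orientation it inherits as a codimension-$1$ face of $\bG_{wu}$. First I would fix coordinates: write $\tau\in\R^{I\cup J}$ for the parameters on $G_{wu}$, with $I=\supp(w-v)$, $J=\supp(v-u)$, and let $c_{I},c_{J}$ denote the partial centers of mass over the $I$- and $J$-blocks, $c$ the total center of mass. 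Then $G_{wv}=\R\times\bG_{wv}$ splits as $(c_{I},\text{rest})$ and similarly for $G_{vu}$, while $G_{wu}=\R\times\bG_{wu}$ splits as $(c,\text{rest})$. The point where the two cuts disagree is the passage from the ordered pair of scalars $(c_{I},c_{J})$ to the pair $(c,\,c_{I}-c_{J})$ — equivalently, which of the two ``$\R$-directions'' is pulled out first and how the boundary face $\{c_{I}-c_{J}=+\infty\}$ is oriented by the outward normal.

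Concretely, the face $\bG_{wv}\times\bG_{vu}\subset\partial\bG_{wu}$ is approached by sending the difference $c_{I}-c_{J}\to+\infty$ (with total center of mass normalized to $0$). The boundary orientation is ``outward normal first,'' i.e. the $\partial/\partial(c_{I}-c_{J})$ direction followed by the product orientation of $\bG_{wv}\times\bG_{vu}$. Meanwhile the natural product orientation comes from writing $G_{wu}\cong G_{wv}\times G_{vu}\cong(\R\times\bG_{wv})\times(\R\times\bG_{vu})$ and then quotienting by the diagonal $\R$. The discrepancy is thus a purely linear-algebra computation: reordering the two scalar factors $c_{I},c_{J}$ past the $\dim\bG_{wv}$ coordinates of $\bG_{wv}$, and identifying the quotient-by-diagonal orientation with the $(c_{I}-c_{J})$-normal-first orientation. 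The first reordering contributes exactly the sign $(-1)^{\dim\bG_{wv}}$ (moving the second scalar factor across the block $\bG_{wv}$), and the remaining bookkeeping — comparing $(c,c_{I}-c_{J})$ to $(c_{I},c_{J})$ as an oriented change of basis of $\R^2$, and checking the sign of the outward normal — I expect to contribute $+1$ with the stated conventions, or to be absorbed into the choice already made in Lemma~\ref{lem:G-orient}.

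So the steps, in order: (1) set up the coordinates and record the two splittings $G=\R\times\bG$; (2) express the boundary orientation of the face $\bG_{wv}\times\bG_{vu}$ explicitly in terms of the $(c_{I}-c_{J})$ normal; (3) expand the product orientation via Lemma~\ref{lem:G-orient} and the definition of the $\bG$-orientations; (4) compute the sign of the change of basis relating the two, isolating the $(-1)^{\dim\bG_{wv}}$ from transposing the scalar factor past $\bG_{wv}$ and verifying everything else cancels. The main obstacle is step~(4): keeping the outward-normal and fiber-first conventions straight so that no spurious sign creeps in, and confirming that the ``center of mass'' normalization interacts with the product decomposition \eqref{eq:simple-product} exactly as the naive coordinate count suggests. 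As in the proof of Lemma~\ref{lem:orientation-consistency}, once the $2$-simplex case is verified the associativity of the identifications guarantees consistency across the whole polytope $\bG^{+}_{vu}$, though only the codimension-$1$ comparison is needed here.
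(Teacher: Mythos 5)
Your proposal follows the same route as the paper's proof: split off the centers of mass using the orientation-preserving identification of Lemma~\ref{lem:G-orient}, pass to the total center of mass together with a difference coordinate, and attribute the sign to transposing the normal direction past the $\bG_{wv}$ block. The identification of $(-1)^{\dim\bG_{wv}}$ with that single transposition is exactly the paper's argument.

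The one step you have not closed is precisely the one you flag as the main obstacle, and as written it contains a sign slip. The face $\bG_{wv}\times\bG_{vu}$ is reached by stretching the neck at $(Y,K_{v})$; since the incoming end of a cobordism sits at $t=-\infty$ in the paper's conventions, the critical points indexed by $I=\supp(w-v)$ recede to $-\infty$ relative to those indexed by $J=\supp(v-u)$, so the face lies at $c_{J}-c_{I}\to+\infty$, not at $c_{I}-c_{J}\to+\infty$. The paper takes the difference coordinate to be $t_{3}=c_{J}-c_{I}$: then $(c_{I},c_{J})\mapsto(c,t_{3})$ has positive determinant (because $c$ is a positive weighted sum of $c_{I}$ and $c_{J}$), the outward normal is $+\partial/\partial t_{3}$, and the only surviving sign is the transposition of that normal past $\bG_{wv}$. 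With your coordinate $s=c_{I}-c_{J}$ the change of basis is orientation-reversing and the outward normal is $-\partial/\partial s$; these two extra signs cancel, so the lemma still comes out right, but your stated expectation that the residual bookkeeping ``contributes $+1$'' only becomes correct after fixing the direction of degeneration --- taken literally, your setup (face at $s=+\infty$, outward normal $+\partial/\partial s$) would produce an extra overall $-1$. Nor can this residual sign be ``absorbed into the choice already made in Lemma~\ref{lem:G-orient}'': that lemma fixes the orientations of the $G_{vu}$ once and for all subject to a consistency condition, and the comparison with the boundary orientation is then a determined computation, not a further choice.
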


\begin{proof}
    From the identification $G_{wu} = G_{wv}\times G_{vu}$ we obtain
    an orientation-preserving identification
\[
           \R_{0} \times \bG_{wu} \cong (\R_{1}\times \bG_{wv}) \times
           (\R_{2}\times \bG_{vu}),
\]
where the $\R_{0}$, $\R_{1}$, $\R_{2}$ factors correspond to centers
of mass of the appropriate $\tau_{i}$. Thus, the $\R_{0}$ coordinate
on the left is a positive weighted sum of the $\R_{1}$ and $\R_{2}$
coordinates on the right. This becomes an orientation-preserving
identification
\[
            \bG_{wu} \cong  \bG_{wv}  \times
           \R_{3}\times \bG_{vu},
\]
where the coordinate $\R_{3}$ is related to the previous $\R_{1}$ and
$\R_{2}$ coordinates by $t_{3}=t_{2}-t_{1}$. The boundary component
$\bG_{wv}\times\bG_{vu}$ in $\partial \bG_{wu}^{+}$ arises by letting
the $\R_{3}$ coordinate go to $+\infty$, so
\[
 \bG_{wu}^{+} \supset   \bG_{wv}  \times
           (-\infty,+\infty] \times \bG_{vu}.
\]
The orientation of the boundary is determined by the
outward-normal-first convention, which involves switching the order of
the first two factors on the right. This introduces the sign $(-1)^{\dim \bG_{wv}}$.
\end{proof}

\subsection{Maps from the cobordisms}

We continue to consider the collection of links $K_{v}$ in $Y$
indexed by $v \in \Z^{N}$. Recall that we have singular bundle data
$\bP_{v}$ over each $(Y,K_{v})$ satisfying the non-integral condition,
and fixed auxiliary data $\aux_{v}$, so that we realize explicit chain
complexes for each Floer homology group: we write
\[
        C_{v} =  C_{*}(Y,K_{v},\bP_{v})
\]
for this chain complex. Its homology is $I^{\omega}(Y,K_{v})$. 
We write $\Crit_{v}$ for
the critical points, so that
\[
      C_{v} = \bigoplus_{\beta\in \Crit_{v}} \Z\Lambda(\beta).
\]

Now suppose that $(v,u)$ is a singular $1$-simplex, and let
$\beta \in \Crit_{v}$, $\alpha\in \Crit_{u}$. The cobordism of pairs,
with cylindrical ends attached, namely the pair
\[
         (\R\times Y, \Tigma_{vu}),
\]
carries the family of Riemannian metrics $G_{vu}$ (trivial in the case
that $v=u$). We choose generic secondary perturbations as in
section~\ref{subsec:families}, and we write
\[
   M_{vu}(\beta,\alpha)  \to G_{vu}
\]
for the corresponding parametrized moduli space. 

There is an action of $\R$ on $M_{vu}$ by translations, covering the
 action of $\R$ on $G_{vu}$ given by
\[
    (\tau_{i_{1}}, \dots, \tau_{i_{d}}) \mapsto 
         (\tau_{i_{1}}-t, \dots, \tau_{i_{d}}-t).
\]
(The choice of sign here is so as to match a related convention in
\cite{KM-book}.) In the special case that $v=u$, the cobordism is a
cylinder, and  $t\in \R$ acts on $M_{vv}$ by pulling back by the
translation $(\tau, y) \mapsto (\tau+t,y)$ of $\R\times Y$. When
$v=u$, we exclude the translation-invariant part of
$M_{vv}(\alpha,\alpha)$, so $\Mu_{vv}$ is the quotient by $\R$ only of
the non-constant solutions. When $v\ne
u$, the action of $\R$ on $G_{vu}$ is free and we can form the
quotient  $\bG_{vu}$ considered earlier, so that we have
\[
  \Mu_{vu}(\beta,\alpha)  \to \bG_{vu}.
\]
We can also choose to normalize by the condition
\[
 \sum_{a=1}^{d} \tau_{i_{a}}=0
\]
and so regard $\bG_{vu}$ as a subset of $G_{vu}$.

We have not specified the
instanton and monopole numbers here, so each $\Mu_{vu}(\beta,\alpha)$
is a union of pieces of different dimensions. We write
\[
   \Mu_{vu}(\beta,\alpha)_{d} \subset \Mu_{vu}(\beta,\alpha)
\]
for the union of the $d$-dimensional components, if any. 

As in section~\ref{subsec:families}, we may consider a natural completion of the space
$\Mu_{vu}(\beta,\alpha)$ over the polytope of broken Riemannian
metrics 
$\bG^{+}_{vu}$ (defined at \eqref{eq:bG-compactification} above):  we
call this completion $\Mubk_{vu}(\beta,\alpha)$. 
To describe it explicitly in the present set-up, 
we consider all singular simplices $\sigma  = (v^{0},\dots, v^{n})$ with
$v^{0}=v$ and $v^{n}=u$ (including the $1$-simplex $(v,u)$ itself
amongst these). For each such simplex $\sigma$ and each sequence
\[
   \bbeta = (\beta^{0}, \dots \beta^{n})
\]
with $\beta^{0}=\beta$ and $\beta^{n}=\alpha$, we consider the product
\begin{equation}\label{eq:Msigma}
   \Mu_{\sigma}(\bbeta) =  
 \Mu_{v^{0}v^{1}}(\beta^{0},\beta^{1}) \times \dots \times \Mu_{v^{n-1}v^{n}}(\beta^{n-1},\beta^{n}).
\end{equation}
As a set, the completion is the union
\[
    \Mubk_{vu}(\beta,\alpha) =  \bigcup_{\sigma} \bigcup_{\bbeta} \Mu_{\sigma}(\bbeta).
\]
For each
singular simplex $\sigma$, there is a map
\[
      \Mu_{\sigma}(\bbeta) \to \bG_{\sigma'}
\]
where $\sigma'$ is obtained from $\sigma$ by removing repetitions
amongst the vertices. The union of these maps is a map
\[
       \Mubk_{vu}(\beta,\alpha) \to \bG^{+}_{vu}.
\]
The space $\Mubk_{vu}(\beta,\alpha)$ is given a topology by the same
procedure as in the spaces of broken trajectories (see \cite{KM-book} again).

\begin{proposition}\label{prop:1-manifold-boundary}
    For a fixed singular $1$-simplex $(v,u)$, 
   and any $\beta$, $\alpha$ for which the
    $1$-dimensional part $\Mu_{vu}(\beta,\alpha)_{1}$ is non-empty,
    the completion $\Mubk_{vu}(\beta,\alpha)_{1}$ is a compact
    1-manifold with boundary. Its boundary consists of all
    zero-dimensional products of the form \eqref{eq:Msigma} with
    $n=2$,
\[
     \Mu_{v,v^{1}}(\beta,\beta^{1}) \times \Mu_{v^{1},u}(\beta^{1},\alpha),
\]
   corresponding to singular $2$-simplices $(v,v^{1},u)$.
\end{proposition}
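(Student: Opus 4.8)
The plan is to mimic the standard proof that moduli spaces of dimension one on a family of metrics are compact one-manifolds, adapting the usual Floer-theoretic compactness and gluing arguments to the parametrized situation over the polytope $\bG^{+}_{vu}$. First I would observe that the parametrized moduli space $\Mu_{vu}(\beta,\alpha)$ over $\bG_{vu}$ is a smooth manifold of dimension $\gr(\beta,\alpha)+\dim\bG_{vu}-1$ (the $-1$ from dividing by the translation action), by the transversality result quoted from Proposition~24.4.10 of \cite{KM-book}; restricting to the one-dimensional components gives a smooth open one-manifold, so the only issue is to identify its ends and show they are accounted for by the stated boundary strata. The three possible sources of non-compactness are: (a) the metric parameter $\tau$ running off to infinity, i.e.\ degenerating to a broken metric in a codimension-one face of $\bG^{+}_{vu}$; (b) a trajectory bubbling or breaking along the incoming end $Y=K_{v}$; and (c) the same along the outgoing end $Y=K_{u}$. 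Items (b) and (c) contribute products $\Mu_{vv'}(\beta,\beta^{1})\times \Mu_{v'u}(\beta^{1},\alpha)$ with $v'=v$ or $v'=u$; these are exactly the $n=2$ products in \eqref{eq:Msigma} corresponding to \emph{singular} $2$-simplices with a repeated vertex. Item (a) contributes, at a codimension-one face $\bG_{vs}\times\bG_{su}$, a product $\Mu_{vs}(\beta,\beta^{1})\times\Mu_{su}(\beta^{1},\alpha)$ for $v>s>u$, which is the $n=2$ product for a non-degenerate singular $2$-simplex $(v,s,u)$. Together these give precisely the claimed boundary.

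The key steps, in order, would be: (1) invoke the dimension formula and transversality to get the smooth open one-manifold structure on $\Mu_{vu}(\beta,\alpha)_{1}$; (2) apply the compactness theorem for sequences of $\pert$-ASD connections over a family of (possibly broken) metrics---this is the parametrized analogue of Corollary~3.25 of \cite{KM-knot-singular}, combined with the discussion of families of broken metrics in section~\ref{subsec:families}---to conclude that any sequence in $\Mu_{vu}(\beta,\alpha)_{1}$ has a subsequence converging in $\Mubk_{vu}(\beta,\alpha)$ to a point of some stratum $\Mu_{\sigma}(\bbeta)$; (3) do the dimension bookkeeping: since the total dimension is $1$ and every factor in a product $\Mu_{\sigma}(\bbeta)$ of length $n\ge 2$ has nonnegative dimension (being either a genuinely parametrized moduli space or a $\Mu$ of trajectories mod $\R$, hence $\ge 0$ after transversality), the only limit points lying in codimension-one strata are the zero-dimensional products with $n=2$; any stratum of length $n\ge 3$, or with some factor of dimension $\ge 1$, has codimension $\ge 2$ and hence does not occur as a boundary point; (4) rule out instanton and monopole bubbling: the monotonicity condition (the dimension formula in Lemma~\ref{lem:index} depends on $\bP$ only through $\kappa$) means a bubble would drop the formal dimension by at least $4$, so in a one-dimensional moduli space no bubbling occurs, exactly as in the closed-string compactness argument cited after \eqref{eq:boundary-map-def}; (5) finally, gluing: near each such zero-dimensional product $\Mu_{v,v^{1}}(\beta,\beta^{1})\times\Mu_{v^{1},u}(\beta^{1},\alpha)$, run the standard gluing construction---either the usual broken-trajectory gluing (for a repeated vertex) or the gluing along a cut $Y_{c}=K_{s}$ for a family of broken metrics (as set up in section~\ref{subsec:families}, with the corner structure \eqref{eq:corners})---to produce a half-open collar, showing $\Mubk_{vu}(\beta,\alpha)_{1}$ is a $C^{0}$ manifold with boundary at such points; and assemble these into the statement that $\Mubk_{vu}(\beta,\alpha)_{1}$ is a compact one-manifold with the asserted boundary.

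I expect the main obstacle to be step (5), the gluing theorem in the parametrized-over-broken-metrics setting, since the model structure of $\bG^{+}_{vu}$ near a codimension-one face $\bG_{vs}\times\bG_{su}$ requires a gluing parameter that simultaneously neck-stretches the metric along the cut $(Y,K_{s})$ \emph{and} glues the two sub-moduli-spaces; making this precise is exactly the content of the broken-metrics formalism of section~\ref{subsec:families}, and here one must check that the dimensions match ($\dim\bG_{vs}+\dim\bG_{su}+1=\dim\bG_{vu}$, so the glued family has the right dimension) and that no additional ends are introduced. Everything else---the dimension counting, the exclusion of bubbling via monotonicity, and the ordinary end-of-the-cylinder breaking---is routine and parallels the arguments already used in \cite{KM-knot-singular} and \cite{KM-book}, so I would treat those briefly and concentrate the writing on the interaction between the metric degeneration and trajectory breaking.
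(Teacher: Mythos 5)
Your proof is correct and follows essentially the same route as the paper, which simply cites the general discussion of families of broken metrics in section~\ref{subsec:families} and matches the three types of codimension-one strata (\ref{item:boundary-a}--\ref{item:boundary-c}) to the three kinds of singular $2$-simplices $(v,v^{1},u)$ with $v>v^{1}>u$, $v^{1}=v$, or $v^{1}=u$. Your additional elaboration of the dimension counting, the exclusion of bubbling via monotonicity, and the parametrized gluing is exactly the content that the paper defers to that earlier section and to the cited references.
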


\begin{proof}
    This follows from the general discussion in
    section~\ref{subsec:families}. There it is explained that the
    compactified moduli space over the family has three types of
    boundary points, described as
    \ref{item:boundary-a}--\ref{item:boundary-c} on
    page~\pageref{item:boundary-a}. The three case described there
    corrrespond to the cases:
\begin{enumerate}
\item
    the case $v > v^{1} > u$ (i.e. a face of
    $\bG^{+}_{vu}$ arsing from a non-degenerate $2$-simplex);
\item
    the case $v^{1}=v$, corresponding to a singular $2$-simplex;
\item
    the case $v^{1}=u$, which is also  a singular $2$-simplex, and
    which may coincide with the previous case if $v=u$, i.e. if the
    original $1$-simplex is singular.
\end{enumerate}
\end{proof}

We now consider orientations in the context of the proposition
above. For this purpose, let us fix $I$-orientations $\mu_{vu}$ for
all cobordism $\Tigma_{vu}$ satisfying the conclusion of
Lemma~\ref{lem:orientation-consistency}, and let us fix also
orientations for all $G_{vu}$ satisfying the conclusion of
Lemma~\ref{lem:G-orient}.  If we are then given elements of
$\Lambda(\alpha$ and $\Lambda(\beta)$, we may orient
$M_{vu}(\beta,\alpha)$ using the fiber-first convention as in
section~\ref{subsec:families}.  Having oriented $M_{vu}(\beta,\alpha)$
we then orient $\Mu_{vu}(\beta,\alpha)$ as a quotient: giving $\R$ its
standard orientation, and putting it first, we write
\begin{equation}\label{eq:orient-Mu}
     M_{vu}(\beta,\alpha) = \R\times \Mu_{vu}(\beta,\alpha)
\end{equation}
as oriented manifolds. Note that there is another way to orient
$\Mu_{vu}$ that is different from this one: we could orient $\bG_{vu}$
(as we have done) as the quotient of $G_{vu}$, and then orient
$\Mu_{vu}$ as a parametrized moduli space over $\bG_{vu}$. The
difference between these two orientations is a sign
\begin{equation}\label{eq:Mu-sign-fudge}
   (-1)^{\dim G_{vu}-1} = (-1)^{|v-u|_{1}-1}.
\end{equation}
We shall always use the first orientation \eqref{eq:orient-Mu} for $\Mu_{vu}$.

Having so oriented our moduli space $\Mu_{vu}(\beta,\alpha)$, we
obtain a group homomorphism
\[
        \brm_{vu} : C_{v} \to C_{u}
\]
by counting points in zero-dimensional moduli spaces, as in
section~\ref{subsec:families}.

\begin{lemma}\label{lem:M-boundary-sign}
     For every singular $1$-simplex $(w,u)$ we have
\[
         \sum_{v} (-1)^{|v-u|_{1} (|w-v|_{1}-1) + 1} \brm_{vu}\circ
         \brm_{wv}
                    =0.
\]
   where the sum is over all $v$ with $w\ge v\ge u$.
\end{lemma}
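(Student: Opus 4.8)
The plan is to count the endpoints of the oriented $1$-manifold given by Proposition~\ref{prop:1-manifold-boundary}, namely
\[
      \Mubk_{wu}(\beta,\alpha)_{1} \to \bG^{+}_{wu},
\]
where $(w,u)$ is the given singular $1$-simplex and $\beta\in\Crit_{w}$, $\alpha\in\Crit_{u}$ are arbitrary. Since a compact oriented $1$-manifold has signed endpoint count zero, the sum over the boundary points (each contributing $\pm 1$ via the chosen $I$-orientations and the elements of $\Lambda(\beta)$, $\Lambda(\alpha)$) must vanish. By the proposition, the boundary consists precisely of the zero-dimensional products $\Mu_{w,v}(\beta,\beta^{1})\times\Mu_{v,u}(\beta^{1},\alpha)$ ranging over all $v$ with $w\ge v\ge u$ and all intermediate critical points $\beta^{1}\in\Crit_{v}$. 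Summing over $\beta^{1}$, such a boundary face contributes (up to sign) the matrix coefficient of $\brm_{vu}\circ\brm_{wv}$ between the generators $\Z\Lambda(\beta)$ and $\Z\Lambda(\alpha)$. So the statement reduces to verifying that the sign attached to this face is exactly $(-1)^{|v-u|_{1}(|w-v|_{1}-1)+1}$.

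First I would assemble the sign bookkeeping. There are three sources of sign: (a) the fiber-first orientation convention for the parametrized moduli spaces over $G_{wu}$, together with the compatibility of $I$-orientations from Lemma~\ref{lem:orientation-consistency} and of the orientations of the metric families from Lemma~\ref{lem:G-orient}; (b) the discrepancy between our chosen orientation \eqref{eq:orient-Mu} of $\Mu_{vu}$ (putting the translation $\R$ first and using it to orient $\Mu_{vu}$ as the quotient of $G_{vu}$) versus the parametrized-moduli-space orientation over $\bG_{vu}$, which by \eqref{eq:Mu-sign-fudge} costs $(-1)^{|v-u|_{1}-1}$ on each factor; and (c) the comparison of the product orientation on $\bG_{wv}\times\bG_{vu}$ with the boundary orientation of this face inside $\partial\bG_{wu}^{+}$, which by Lemma~\ref{lem:bG-sign} introduces $(-1)^{\dim\bG_{wv}}=(-1)^{|w-v|_{1}-1}$. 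The plan is to track these through the identification $G_{wu}=G_{wv}\times G_{vu}$ of \eqref{eq:simple-product}, exactly as in the derivation of \eqref{eq:boundary-M} and \eqref{eq:chain-homotopy-faces} in section~\ref{subsec:families}, keeping in mind that $\dim\bG_{vu}=|v-u|_{1}-1$ and that the two subfamilies have complementary coordinate supports (so no extra crossing signs beyond the fiber-first reordering).

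Concretely, I would start from the case of a smooth compact family of metrics on a single cobordism, where the endpoint formula reads $\partial\circ\brm = \brm_{\partial}$ with the fiber-first signs of \eqref{eq:boundary-M}; here, however, both $w=u$ is excluded from translation-invariance and the family $\bG_{wu}^{+}$ has its boundary faces indexed by the intermediate links $K_{v}$, so the analogue of $\brm_{\partial G}$ is the sum over $v$ of the face contributions. Counting the endpoints of $\Mubk_{wu}(\beta,\alpha)_{1}$ and matching each face $\bG_{wv}\times\bG_{vu}$ against the composite $\brm_{vu}\circ\brm_{wv}$, I collect: a factor $(-1)^{|w-v|_{1}-1}$ from Lemma~\ref{lem:bG-sign}, the product of the two $\Mu$-orientation fudge factors $(-1)^{(|w-v|_{1}-1)+(|v-u|_{1}-1)}$ from \eqref{eq:Mu-sign-fudge} against the single fudge $(-1)^{|w-u|_{1}-1}$ for $\Mu_{wu}$ on the left-hand side, and the fiber-first reordering sign that, as in \eqref{eq:chain-homotopy-faces}, contributes $(-1)^{\dim\bG_{vu}\cdot\dim\bG_{wv}}=(-1)^{(|v-u|_{1}-1)(|w-v|_{1}-1)}$ when one swaps the two metric-family factors past each other. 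Using $|w-u|_{1}=|w-v|_{1}+|v-u|_{1}$ and reducing modulo $2$, these collapse to the single exponent $|v-u|_{1}(|w-v|_{1}-1)+1$, which is exactly the claimed sign; an overall $(-1)$ arises because the two ends of a $1$-manifold carry opposite boundary orientations (equivalently, the ``$\partial\circ\brm$ versus $\brm\circ\partial$'' cancellation is absorbed into the single summed identity here, since the outer links $K_{w},K_{u}$ are fixed).

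The main obstacle I expect is precisely the last reduction: verifying that the accumulated signs from Lemma~\ref{lem:bG-sign}, the fudge \eqref{eq:Mu-sign-fudge} applied on both factors and on $\Mu_{wu}$, the fiber-first crossing sign, and the outward-normal boundary convention collapse to $(-1)^{|v-u|_{1}(|w-v|_{1}-1)+1}$ and not to some off-by-one variant. This is the same kind of delicate sign computation that produced the acknowledged sign error in \cite{KM-book} at the analogous point, so I would carry it out carefully on a small model (say $N=2$, $w-u$ supported on both coordinates, so that $\bG^{+}_{wu}$ is an interval) to pin down all conventions before asserting the general formula. The rest — compactness of $\Mubk_{wu}(\beta,\alpha)_{1}$, identification of its boundary, and the reduction to matrix coefficients of $\brm_{vu}\circ\brm_{wv}$ — is exactly Proposition~\ref{prop:1-manifold-boundary} together with the standard gluing and counting arguments from section~\ref{subsec:families}.
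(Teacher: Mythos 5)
Your proposal is correct and follows essentially the same route as the paper: the paper also proves this by counting ends of the one-dimensional moduli spaces over $\bG^{+}_{wu}$, packaging the count as an instance of the general chain-homotopy formula \eqref{eq:chain-homotopy-faces} (after extracting the $v=w$ and $v=u$ terms as $\brm_{wu}\circ d_{w}$ and $d_{u}\circ\brm_{wu}$) and then reconciling the orientation conventions exactly via Lemma~\ref{lem:bG-sign} and the fudge factor \eqref{eq:Mu-sign-fudge}. Your sign assembly does collapse to the exponent $|v-u|_{1}(|w-v|_{1}-1)+1$ as claimed (the residual $-1$ comes from the mismatch $(|w-v|_{1}-1)+(|v-u|_{1}-1)-(|w-u|_{1}-1)=-1$ of the fudge factors rather than from boundary orientations of the $1$-manifold, but the total is right).
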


\begin{proof}
    There is a degenerate case of this lemma, when $w=u$. In this
    case, $\brm_{ww}$ is the Floer differential $d_{w}$ on $C_{w}$, and
    the lemma states that $-d_{w}^{2}=0$. For the non-degenerate case,
    where $w>u$, the sum involves two special terms, namely the terms
    where $v=w$ and $v=u$. Extracting these terms separately, we can
    recast the formula as
\begin{multline*}
                 \Bigl(  \sum_{v\ne w,u} (-1)^{|v-u|_{1} (|w-v|_{1}-1) + 1} \brm_{vu}\circ
         \brm_{wv}\Bigr)
             \\        +
           (-1)^{|w-u|_{1} + 1}    \brm_{wu} \circ d_{w}  - d_{u}\circ \brm_{wu}
                    =0,
\end{multline*}
or equivalently
\begin{multline}
    \Bigl( \sum_{v\ne w,u} (-1)^{(\dim \bG_{vu}+1) \dim\bG_{wv} + 1}
    \brm_{vu}\circ \brm_{wv}\Bigr) \\ + (-1)^{\dim\bG_{wu}} \brm_{wu}
    \circ d_{w} - d_{u}\circ \brm_{wu} =0.
\end{multline}
This formula is simply a special case of the general chain-homotopy
formula \eqref{eq:chain-homotopy-faces}, but to verify this we need to
compare the signs here to those in \eqref{eq:chain-homotopy-faces}. To
make this comparison, let us first rewrite the last formula in terms
of the homomorphism $\bar{m}_{vu}$, defined in the same way as
$\brm_{vu}$ but using the orientation convention of
section~\ref{subsec:families}, so that 
\[
        \brm_{vu} =  (-1)^{\dim \bG_{vu}} \bar{m}_{vu}
\]
as in \eqref{eq:Mu-sign-fudge}. After multiplying throughout by
$(-1)^{\dim\bG_{wu}}$, the formula becomes
\begin{multline*}
    \Bigl( \sum_{v\ne w,u} (-1)^{(\dim \bG_{vu} +1)\dim\bG_{wv}}
    \bar{m}_{vu}\circ \bar{m}_{wv}\Bigr) \\
     +  (-1)^{\dim\bG_{wu}}  \bar{m}_{wu}
    \circ d_{w}  -d_{u}\circ \bar{m}_{wu} =0.
\end{multline*}
In this form, the formula resembles the formula
\eqref{eq:chain-homotopy-faces}, with the only difference being the
extra $+1$ in the first factor of the first exponent. This extra term
is accounted for by Lemma~\ref{lem:bG-sign} and is present because the
product orientation on  $\bG_{wv}\times \bG_{vu}$ is not equal
to its boundary orientation as a face of $\bG_{wu}$.
\end{proof}

In order to define away some of the signs, we observe that the formula
in the lemma above can be written as
\[
 (-1)^{\sum w_{i} + \msign(w,u)} \sum_{v} (-1)^{\msign(v,u) +
   \msign(w,v)}\brm_{vu}\circ
         \brm_{wv} = 0,
\]
where $\msign$ is given by the formula
\begin{equation}\label{eq:m-formula}
         \msign(v,u) =  \frac{1}{2}|v-u|(|v-u|-1) + \sum v_{i}.
\end{equation}
With our choices of homology orientations $\mu_{vu}$ etc.~still
understood, we make the following definition:

\begin{definition}\label{def:f}
    In the above setting, we define homomorphisms
\[
         f_{vu}: C_{v}\to C_{u}
\]
by the formula
\[
              f_{vu} = (-1)^{\msign(v,u)} \brm_{vu}
\]
for all singular $1$-simplices $(v,u)$. \CloseDef
\end{definition}

Note that in the case $v=u$ we have
\[
   f_{vv} = (-1)^{\sum_{i}v_{i}} d_{v}.
\]
With these built-in sign adjustments, the previous lemma takes the
following form.

\begin{proposition}
    For any singular $1$-simplex $(w,u)$, we have
\[
            \sum_{v} f_{vu}f_{wv}=0,
\]
where the sum is over all $v$ with $w\ge v\ge u$. \qed
\end{proposition}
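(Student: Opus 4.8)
The statement to prove is that $\sum_{v} f_{vu}f_{wv}=0$ for any singular $1$-simplex $(w,u)$, where the sum is over $v$ with $w \ge v \ge u$.

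The plan is to derive this directly from Lemma~\ref{lem:M-boundary-sign} by bookkeeping with the sign function $\msign$. First I would recall that Lemma~\ref{lem:M-boundary-sign} gives
\[
\sum_{v} (-1)^{|v-u|_{1}(|w-v|_{1}-1)+1}\,\brm_{vu}\circ\brm_{wv}=0,
\]
and that by definition $f_{vu}=(-1)^{\msign(v,u)}\brm_{vu}$ with $\msign(v,u)=\tfrac12|v-u|_{1}(|v-u|_{1}-1)+\sum_i v_i$. Substituting, $f_{vu}f_{wv}=(-1)^{\msign(v,u)+\msign(w,v)}\brm_{vu}\brm_{wv}$, so it suffices to verify the congruence
\[
\msign(v,u)+\msign(w,v)\equiv |v-u|_{1}(|w-v|_{1}-1)+1+\bigl(\textstyle\sum_i w_i+\msign(w,u)\bigr)\pmod 2
\]
uniformly in $v$; i.e.\ that the quantity on the left differs from $|v-u|_{1}(|w-v|_{1}-1)+1$ by a $v$-independent constant. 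Indeed, the preamble to Definition~\ref{def:f} already records the identity
\[
(-1)^{|v-u|_{1}(|w-v|_{1}-1)+1}=(-1)^{\sum_i w_i+\msign(w,u)}\,(-1)^{\msign(v,u)+\msign(w,v)},
\]
so once that elementary identity of exponents is checked, multiplying the displayed equation of Lemma~\ref{lem:M-boundary-sign} through by the global sign $(-1)^{\sum_i w_i+\msign(w,u)}$ yields exactly $\sum_v f_{vu}f_{wv}=0$.

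The one genuine computation is the exponent identity. Write $a=|v-u|_1$, $b=|w-v|_1$; since $w\ge v\ge u$ coordinatewise, $|w-u|_1=a+b$. Then $\msign(v,u)+\msign(w,v)=\tfrac12 a(a-1)+\tfrac12 b(b-1)+\sum_i v_i+\sum_i w_i$, while $\msign(w,u)+\sum_i w_i=\tfrac12(a+b)(a+b-1)+\sum_i u_i+\sum_i w_i$. Subtracting, and using $\sum_i v_i-\sum_i u_i=a$, the difference is $\tfrac12 a(a-1)+\tfrac12 b(b-1)-\tfrac12(a+b)(a+b-1)+a=-ab+a=-a(b-1)$, which is $\equiv a(b-1)=|v-u|_1(|w-v|_1-1)\pmod 2$. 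Hence $\msign(v,u)+\msign(w,v)\equiv |v-u|_1(|w-v|_1-1)+\msign(w,u)+\sum_i w_i\pmod 2$, which is the claimed identity (the extra constant $+1$ in the statement of the lemma is absorbed into the global sign, since it is $v$-independent).

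I expect no serious obstacle here: the whole content of the proposition has already been done in Lemma~\ref{lem:M-boundary-sign}, which in turn rests on the chain-homotopy formula \eqref{eq:chain-homotopy-faces} together with the orientation comparison of Lemma~\ref{lem:bG-sign}. The present statement is purely a repackaging to make the signs cosmetically vanish, and the only thing to be careful about is the modular arithmetic with the half-integer expressions $\tfrac12 a(a-1)$ — one should note these are genuine integers (binomial coefficients) so the parity manipulations are legitimate. Accordingly the proof can simply read: this is a restatement of Lemma~\ref{lem:M-boundary-sign}, using Definition~\ref{def:f} and the identity of signs displayed before it, whose verification is the short computation above. $\qed$
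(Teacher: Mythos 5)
Your proof is correct and is essentially the paper's own argument: the proposition is stated without a separate proof precisely because it is Lemma~\ref{lem:M-boundary-sign} rewritten via the sign observation displayed just before Definition~\ref{def:f}, and your exponent computation is the verification of that observation (correctly reduced to showing the two exponents differ by a $v$-independent constant, since a global sign cannot affect the vanishing of the sum). One small slip: in expanding $\msign(w,u)$ you wrote $\sum_i u_i$ where formula \eqref{eq:m-formula} gives $\sum_i w_i$; but the discrepancy $\sum_i w_i - \sum_i u_i = |w-u|_{1}$ is itself independent of $v$, so it only shifts the irrelevant global constant and your conclusion stands.
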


For each singular $1$-simplex $(v,u)$, we now introduce
\[
     \bC[vu] =  \bigoplus_{  v\ge v' \ge u} C_{v'}.
\]
(Here $v'$ runs over the unit cube with extreme vertices $v$ and $u$.)
We similarly define
\[
       \bF[vu] = \bigoplus_{v\ge v' \ge u' \ge u} f_{v'u'},
\]
so we have
\[
            \bF[vu]:\bC[vu] \to \bC[vu].
\]
The previous proposition then becomes the statement
\[
             \bF[vu] ^{2} = 0,
\]
so that $(\bC[vu], \bF[vu] )$ is a complex. The basic case here is to
take $v=(1,\dots,1)$ and $u=(0,\dots,0)$, in which case $\bC[vu]$
becomes
\begin{equation}\label{eq:standard-C}
           \bC := \bigoplus_{v'\in \{0,1\}^{N}} C_{v'},
\end{equation}
with one summand for each vertex of the unit $N$-cube. The
corresponding differential $\bF$ has a summand $f_{v'v'} : C_{v'}\to
C_{v'}$ for each vertex, together with summands $f_{v'u'}$ for each
$v' > u'$. The general $\bC[vu]$ is also a sum of terms indexed by
$v'$ running over the vertices of a cube of side-length $1$, though
the dimension of the cube is $|v-u|_{1}$ in general, which may be less
than $N$.

The remainder of this subsection and the following one are devoted to
proving the following theorem, which states that the homology of the
cube $\bC[vu]$ coincides with the homology of a single $C_{w}$ for
appropriate $w$:

\begin{theorem}\label{thm:big-cube}
    Let $(v,u)$ be any $1$-simplex, and let $w = 2v - u$. Then there
    is a chain map
\[
          (C_{w}, f_{ww}) \to (\bC[vu], \bF[vu])
\]
   inducing an isomorphism in homology. Thus in the case that
   $v=(1,\dots,1)$ and $u=(0,\dots,0)$, the homology of $(\bC, \bF)$,
   where $\bC$ is as in \eqref{eq:standard-C}, is isomorphic to the
   homology of $(C_{w}, f_{w})$, where $w=(2,\dots,2)$.
\end{theorem}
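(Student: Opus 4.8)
\textbf{Proof plan for Theorem~\ref{thm:big-cube}.}
The plan is to reduce the general statement to the one-dimensional case $N=1$ and then prove that case directly from the skein exact triangle for $\Inat$. First I would set up an induction on $|v-u|_{1}$. When $|v-u|_{1}=0$ there is nothing to prove, since $w=v=u$ and both sides are $(C_{w},f_{ww})$. For the inductive step, pick a coordinate $j$ with $v_{j}>u_{j}$, write $\Z^{N}=\Z^{N-1}\times\Z$ in that coordinate, and observe that the cube $\bC[vu]$ of dimension $|v-u|_{1}$ splits as the mapping cone of a chain map between two sub-cubes of dimension $|v-u|_{1}-1$: these are the face where the $j$-th coordinate equals $v_{j}$ and the face where it equals $u_{j}$, and the connecting map is built out of the $f_{v'u'}$ that change only the $j$-th coordinate. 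By the inductive hypothesis each of these two faces is quasi-isomorphic to a single $C_{*}$ with appropriate label (namely $C_{w'}$ and $C_{w''}$ where $w',w''$ are the points $2v-u$ with the $j$-th coordinate set to $v_{j}$ and to $u_{j}$ respectively, i.e.\ differing by $v_{j}-u_{j}$ in the $j$-th slot; since $|v_{j}-u_{j}|\le 1$ these labels are exactly the two relevant ones). Thus the whole cube becomes quasi-isomorphic to the mapping cone of a single chain map $C_{w''}\to C_{w'}$, which is (up to the sign conventions of Definition~\ref{def:f}) one of the skein cobordism maps.

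The heart of the matter is then the case where the cube is one-dimensional, i.e.\ the statement that for links $K_{2},K_{1},K_{0}$ related by the unoriented skein moves of Figure~\ref{fig:Tetrahedra-skein}, the mapping cone of the cobordism map $f_{1,0}\colon C_{1}\to C_{0}$ is quasi-isomorphic to $C_{2}$ (with label $w=2$). This I would prove by the standard surgery-triangle method: establish an exact triangle
\[
  \cdots \to I^{\omega}(Y,K_{2}) \to I^{\omega}(Y,K_{1}) \to I^{\omega}(Y,K_{0}) \to \cdots
\]
at the level of homology, together with the refinement that it comes from an actual short exact sequence (or rather a distinguished triangle) of chain complexes in which one of the maps is the cobordism map $f_{1,0}$. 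The input is that the three cobordisms $\Tigma_{2,1}$, $\Tigma_{1,0}$, $\Tigma_{0,-1}$ (the last being $\Tigma_{0,2}$ after the period-$3$ shift) can be composed in pairs, and each such composite factors through a cobordism that, after neck-stretching along the family of metrics $G_{vu}$, produces a nullhomotopy. Concretely, I would count the ends of the one-dimensional parametrized moduli spaces over the polytopes $\bG^{+}$ of broken metrics for a composite of two adjacent skein cobordisms; Proposition~\ref{prop:1-manifold-boundary} and the chain-homotopy formula \eqref{eq:chain-homotopy-faces} give that each such composite is chain-homotopic to a diagonal ``identity-like'' map plus correction terms, which is exactly what is needed to run the algebraic argument (as in Floer's surgery exact triangle, or the cube arguments of Ozsv\'ath--Szab\'o) identifying $C_{2}$ with the cone.

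For the inductive step to go through cleanly, one needs the chain maps produced at each stage of the induction to themselves be (chain-homotopic to) the canonical cobordism maps $f_{vu}$, so that the mapping cone one forms at the next level is again a sub-cube of the $\DDelta^{N}$ picture with its built-in sign conventions; this is where Lemma~\ref{lem:orientation-consistency}, Lemma~\ref{lem:G-orient}, Lemma~\ref{lem:bG-sign} and the sign function $\msign$ of \eqref{eq:m-formula} do their work, and keeping all signs consistent through the induction is the main bookkeeping obstacle. The genuinely hard analytic point, however, is the one-dimensional base case: proving that the composite of two adjacent skein cobordisms, with its family of metrics, induces a map chain-homotopic to an isomorphism onto the relevant summand. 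This requires a gluing/neck-stretching analysis for singular instantons on the relevant elementary cobordisms (the $1/4$-twisted rectangle $T$ of Figure~\ref{fig:Tetrahedron-2} and its doubles), identifying the small-scale contributions; I expect this to be the step occupying the bulk of the proof, and it is presumably carried out in Sections~\ref{sec:proof-prop-blocks} and~\ref{sec:unlinks-e-2}.
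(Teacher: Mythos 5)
Your top-level strategy matches the paper's: peel off one coordinate at a time (the paper's Proposition~\ref{prop:blocks} is exactly the statement that the $(N-1)$-cube $\bC_{2}$ maps quasi-isomorphically to the $N$-cube $\bC$, and Theorem~\ref{thm:big-cube} follows by $N$ applications), and base everything on an exact-triangle/mapping-cone argument driven by families of metrics. But your description of the base case contains a genuine error that would derail the argument. You assert that the composite of two adjacent skein cobordisms, after stretching over the family of metrics, is ``chain-homotopic to a diagonal identity-like map plus correction terms,'' and later that the hard analytic point is showing this composite is ``chain-homotopic to an isomorphism onto the relevant summand.'' It is not: the double composite $f_{10}\circ f_{21}$ is chain-homotopic to \emph{zero}. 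The reason is topological: by Lemma~\ref{lem:composite-RP2} the composite cobordism $\Tigma_{20}$ splits off a connected summand $(S^{4},\RP^{2})$ with self-intersection $+2$, and since all solutions on that summand are irreducible (Proposition~\ref{prop:RP2-example-prop}) the usual connected-sum dimension count kills the induced map; the one-parameter stretching family supplies the null-homotopy $\bJ_{20}$.

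What you are missing is the \emph{second} hypothesis of the algebraic lemma (Ozsv\'ath--Szab\'o's Lemma~4.2, quoted in Section~\ref{sec:proof-prop-blocks}) that actually identifies $C_{2}$ with the cone of $f_{10}$: one must also show that the chain map $j_{i-1}f_{i}+f_{i-2}j_{i}$, built from the null-homotopies and the \emph{triple} composite, is a quasi-isomorphism. This is where the ``identity-like'' map genuinely appears, and it is a separate construction requiring its own two-parameter family of metrics --- the pentagon $\bG_{30}^{+}$ with faces indexed by the five separating $3$-manifolds $Y_{1},Y_{2},\SS_{30},\SS_{31},\SS_{20}$ --- together with the topological input of Lemma~\ref{lem:two-disks-product}: the union $M_{30}$ of the two plumbed M\"obius bands sits in a ball $B_{30}$, and replacing $(B_{30},M_{30})$ by two standard disks yields the trivial cobordism. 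The resulting homotopy $\bK_{30}$ produces a term $\mathbf{N}$ which one then shows equals (up to sign and further homotopy) the identity, via a degree-one argument for the map to the interval $\Crit(\SS_{30},\partial\Delta)=[0,\pi]$ of flat connections on the two-component unlink. Without this second layer your exact triangle is only a ``composite is null-homotopic'' statement, which does not by itself yield exactness or the cone identification. Separately, note that the paper does not first prove $N=1$ and then induct: it constructs $\bJ$ and $\bK$ as sums over the whole $(N-1)$-cube at once (with metric families $\R^{m-1}\times G_{20}$, etc.), which is precisely what resolves the coherence-of-homotopies issue you flag but do not settle in your inductive step.
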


To amplify the statement of the theorem a little, we can point out
first that in the case $v=u$, the result is a tautology, for both of
the chain complexes then reduce to $(C_{v}, f_{vv})$. Next, we can
look at the case $|v-u|_{1}=1$. In this case, $K_{v}$ and $K_{u}$
differ only inside one of the $N$ balls, and we may as well take
$N=1$. In the notation of Figure~\ref{fig:Tetrahedra-skein}, we can
identify $K_{v}$ and $K_{u}$ with $K_{1}$ and $K_{0}$, in which case
$K_{w}$ is $K_{2}$. The complex $\bC[vu]$ is the sum of the chain
complexes for the two links $K_{1}$ and $K_{0}$,
\[
          C_{1} \oplus C_{0}
\]
and
\[
         \bF[vu]= \begin{pmatrix}  -d_{1} & 0 \\
                                     f_{10}& d_{0} \end{pmatrix}
\]
where $f_{10}$ is minus the chain map induced by $\Tigma_{10}$. Thus
 $(\bC[vu], \bF[vu] )$ is, up to sign, the mapping cone of the chain
map induced by the cobordism. The theorem then expresses the fact that
the homologies of $C_{2}$, $C_{1}$ and $C_{0}$ are related by a long
exact sequence, in which one of the maps arises from the
cobordism $\Tigma_{10}$. The proof of the theorem will also show that
the remaining maps in the long exact sequence can be taken to be the
ones arising from $\Tigma_{21}$ and $\Tigma_{02}$.
This is the long exact sequence of the unoriented skein
relations, mentioned in the introduction. For
larger values of $|v-u|_{1}$, the differential $\bF[vu]$ still has a
lower triangular form, reflecting the fact that there is a filtration
of the cube (by the sum of the coordinates) that is preserved by the
differential.

\begin{corollary}\label{cor:spectral-sequence}
In the situation of the theorem above, 
there is a spectral sequence whose $E_{1}$ term is
\[
          \bigoplus_{v' \in \{0,1\}^{N}} 
                I^{\omega}_{*}(Y, K_{v'})
\]
and which abuts to the instanton Floer homology $I^{\omega}_{*}(Y,
K_{w})$, for $w=(2,\dots,2)$.  \qed
\end{corollary}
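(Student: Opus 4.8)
The statement is a formal consequence of Theorem~\ref{thm:big-cube} together with the observation that $(\bC,\bF)$ is a filtered complex. The plan is first to equip $\bC = \bigoplus_{v'\in\{0,1\}^N} C_{v'}$ with the filtration indexed by the total coordinate sum $\sum_i v'_i$. The key point, already noted in the amplification of Theorem~\ref{thm:big-cube}, is that the differential $\bF = \bigoplus f_{v'u'}$ respects this filtration: each summand $f_{v'u'}\colon C_{v'}\to C_{u'}$ has $v'\ge u'$, hence $\sum_i u'_i \le \sum_i v'_i$, so the diagonal part $\bigoplus_{v'} f_{v'v'} = \bigoplus_{v'}(-1)^{\sum v'_i} d_{v'}$ preserves the filtration degree while the off-diagonal terms with $v'>u'$ strictly lower it.

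Next I would run the spectral sequence of this filtered complex. Because the filtration is finite (only $N+1$ nontrivial graded pieces, indexed by $\sum v'_i \in \{0,1,\dots,N\}$), the spectral sequence converges after finitely many pages to $H_*(\bC,\bF)$. Its $E_0$ page is $\bigoplus_{v'} C_{v'}$ with differential the diagonal part $\bigoplus_{v'} \pm d_{v'}$, so passing to homology gives
\[
  E_1 \;=\; \bigoplus_{v'\in\{0,1\}^N} H_*(C_{v'},d_{v'}) \;=\; \bigoplus_{v'\in\{0,1\}^N} I^{\omega}_*(Y,K_{v'}),
\]
using that $I^{\omega}_*(Y,K_{v'})$ is by definition the homology of $(C_{v'},d_{v'})$.

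Finally, I would apply Theorem~\ref{thm:big-cube} with $v=(1,\dots,1)$ and $u=(0,\dots,0)$, so that $w = 2v-u = (2,\dots,2)$: this identifies $H_*(\bC,\bF) = H_*(\bC[vu],\bF[vu])$ with the homology of $(C_w,f_{ww})$, which is $I^{\omega}_*(Y,K_w)$. Hence the spectral sequence above abuts to $I^{\omega}_*(Y,K_w)$, as claimed. There is no genuine obstacle here; the only point requiring care is bookkeeping of the grading conventions — whether to use the increasing or decreasing filtration, and the overall sign — so that the differentials on successive pages point in the standard direction, but this is routine once the filtered structure of $(\bC,\bF)$ has been made explicit.
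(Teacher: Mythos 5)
Your proposal is correct and is essentially the paper's own argument: the paper notes just before the corollary that $\bF$ is lower triangular with respect to the filtration of the cube by $\sum_i v'_i$, so the corollary follows from the spectral sequence of this finite filtered complex, whose $E_0$ differential is the diagonal part $\bigoplus_{v'}\pm d_{v'}$ and whose total homology is identified with $I^{\omega}_*(Y,K_w)$ by Theorem~\ref{thm:big-cube}. Nothing further is needed.
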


The signs of the maps in this spectral sequence are determined by
choices of $I$-orientations for the cobordisms $\Tigma_{vu}$ and
orientations of the families of metrics $G_{vu}$, subject to the
compatibility conditions imposed by
Lemmas~\ref{lem:orientation-consistency} and~\ref{lem:G-orient}. These
compatibility conditions still leave some freedom. For both lemmas,
the compatibility conditions mean that
the orientations for all $1$-simplices $(v,u)$ are determined by the
orientations of $\Tigma_{vu}$ and $G_{vu}$
for the \emph{edges}, i.e. the $1$-simplices $(v,u)$ for which
$|v-u|_{1}=1$. These are the simplices that will contribute to the
differential $d_{1}$ in the spectral sequence above. For these, the
space $\breve{G}_{vu}$ is a point, so an orientation is determined by
a sign $(-1)^{\delta(v,u)}$ for each $v,u$. 
The condition of Lemma~\ref{lem:G-orient}
is equivalent to requiring the following: for every $2$-dimensional
face of the cube, with diagonally opposite vertices $w> u$ and
intermediate vertices $v$ and $v'$, we need
\begin{equation}
    \label{eq:square-anti-commutes}
     \delta(w,v) \delta(v,u) = 1+\delta(w,v')\delta(v',u) \pmod{2}.
\end{equation}
We can achieve this condition by an explicit choice of sign, such as
\begin{equation}
    \label{eq:delta-sign-that-works}
      \delta(v,u) = \sum_{i=0}^{i_{0}-1} v_{i},
\end{equation}
where $i_{0}$ is the unique index at which $v_{i}$ and $u_{i}$
differ. 

\begin{corollary}\label{cor:spectral-sequence-signs}
 Let $I$-orientations be chosen for the cobordisms $\Tigma_{v'u'}$ so
 that the conditions of Lemma~\ref{lem:orientation-consistency}
 hold. For each edge $(v', u')$ of the cube, let
 $I^{\omega}(S_{v'u'})$ 
be the map
 $I^{\omega}(Y,K_{v'}) \to I^{\omega}(Y, K_{u'})$ induced by
 $\Tigma_{v'u'}$ with this $I$-orientation. Then
 the spectral sequence in the previous corollary can be set up so that 
  the differential $d_{1}$ is the sum of the
maps
\[
      (-1)^{\tilde \delta(v',u')} I^{\omega}(S_{v'u'})
\]
  over all edges of the cube, where
\[
      \tilde\delta(v', u') = \sum_{i=i_{0}}^{N} v'_{i}
\]
and $i_{0}$ is the index at which $v'$ and $u'$ differ.
\end{corollary}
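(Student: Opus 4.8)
The plan is to extract the sign $\tilde\delta$ from the interplay between the two earlier sign functions appearing in Definition~\ref{def:f} and in the compatibility choice \eqref{eq:delta-sign-that-works}, since the $d_1$ differential in the spectral sequence of Corollary~\ref{cor:spectral-sequence} is, by construction, the sum over edges $(v',u')$ of the maps $f_{v'u'}$ acting between the homologies $I^\omega(Y,K_{v'})$ and $I^\omega(Y,K_{u'})$.

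First I would recall that for an edge $(v',u')$ with $|v'-u'|_1=1$, the space $\breve G_{v'u'}$ is a single point, so $\breve m_{v'u'}$ counts points in a zero-dimensional moduli space for a single metric and is precisely the chain-level map inducing $I^\omega(S_{v'u'})$ on homology, once the $I$-orientation $\mu_{v'u'}$ is fixed in accordance with Lemma~\ref{lem:orientation-consistency}. By Definition~\ref{def:f}, $f_{v'u'} = (-1)^{\msign(v',u')}\breve m_{v'u'}$, where by \eqref{eq:m-formula}, $\msign(v',u') = \tfrac12|v'-u'|_1(|v'-u'|_1-1) + \sum_i v'_i = \sum_i v'_i$ since $|v'-u'|_1=1$. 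So at this stage $d_1$ on the edge $(v',u')$ is $(-1)^{\sum_i v'_i} I^\omega(S_{v'u'})$. The remaining task is to see that, after the change of basis that identifies the $E_1$ page with $\bigoplus_{v'} I^\omega(Y,K_{v'})$ and re-normalizes so that the claimed formula holds, the sign $(-1)^{\sum_i v'_i}$ can be replaced by $(-1)^{\tilde\delta(v',u')}$ with $\tilde\delta(v',u')=\sum_{i\ge i_0} v'_i$, where $i_0$ is the coordinate at which $v'$ and $u'$ differ.

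The key step is the observation that the overall sign on an edge map is only constrained \emph{up to a vertex-by-vertex sign ambiguity}: replacing the chosen element of $\Lambda(\beta)$ at each vertex $v'$ by $(-1)^{c(v')}$ times it (equivalently, rescaling each summand $C_{v'}$ of $\bC$) changes $f_{v'u'}$ by $(-1)^{c(v')+c(u')}$ while leaving $(\bC,\bF)$ and its homology canonically unchanged. Thus one may replace $(-1)^{\msign(v',u')}$ by $(-1)^{\msign(v',u') + c(v') + c(u')}$ for any function $c$. Taking $c(v') = \sum_{i<i_0}v'_i$ is not well-defined ($i_0$ depends on the edge), so instead I would take $c$ to be a global function of the vertex, e.g. a suitable combination of coordinates, and check that $\msign(v',u') + c(v') + c(u') \equiv \tilde\delta(v',u') \pmod 2$ for every edge. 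Concretely, for an edge differing at coordinate $i_0$ with $v'_{i_0} = u'_{i_0}+1$, one has $\sum_i v'_i - \sum_{i\ge i_0}v'_i = \sum_{i<i_0} v'_i = \sum_{i<i_0} u'_i$; so the desired change is accomplished by setting $c(v') = \sum_{i<i_0} v'_i$ read as $\sum_{j} (\text{number of }i<j\text{ with }v'_i\text{ contributing})$ — more cleanly, verifying directly that the coboundary of $c(v') := \sum_{j}\binom{\text{position-weighted sum}}{}$ equals $\msign - \tilde\delta$ on edges. The cleanest route is: both $\msign(v',u')$ and $\tilde\delta(v',u')$ restricted to edges are $1$-cochains on the cube whose coboundaries (checked on $2$-faces) agree — indeed $\delta(v',u') = \sum_{i<i_0}v'_i$ of \eqref{eq:delta-sign-that-works} satisfies \eqref{eq:square-anti-commutes}, and $\msign$ on edges together with $\delta$ and $\tilde\delta$ all differ by coboundaries — hence they differ by a coboundary, which is exactly a vertex sign change.

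I expect the main obstacle to be the bookkeeping of signs on $2$-faces: one must verify that $\tilde\delta$ as defined satisfies the square-(anti)commutativity relation \eqref{eq:square-anti-commutes} (equivalently that $\delta\tilde\delta$ on $2$-faces agrees with $\delta\msign$, i.e.\ both make each square anticommute), so that the required vertex function $c$ with $\delta c = \msign - \tilde\delta$ exists on the cube. This is a finite, elementary computation: for a $2$-face with vertices $u' < v', v'' < w'$ differing in coordinates $i_0 < i_1$, one computes $\tilde\delta(w',v') + \tilde\delta(v',u') - \tilde\delta(w',v'') - \tilde\delta(v'',u') \pmod 2$ and checks it equals $1$, using that $v'$ and $v''$ have the same coordinates as $w'$ except one of $i_0, i_1$ is decremented. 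Granting that, the spectral sequence of Corollary~\ref{cor:spectral-sequence} is re-set-up with the rescaled identifications of the $C_{v'}$, its $E_1$ page and the abutment are unchanged, and $d_1$ on each edge becomes $(-1)^{\tilde\delta(v',u')} I^\omega(S_{v'u'})$ as claimed. \qed
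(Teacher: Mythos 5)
There is a genuine gap, and it occurs at the very first step. For an edge $(v',u')$ you identify $\brm_{v'u'}$ with the chain map inducing $I^{\omega}(S_{v'u'})$ "once the $I$-orientation is fixed", but $\brm_{v'u'}$ also depends on the chosen orientation of the one-dimensional family $G_{v'u'}\cong\R$ (equivalently, of the point $\breve{G}_{v'u'}$), and that choice is exactly the sign $(-1)^{\delta(v',u')}$ of \eqref{eq:delta-sign-that-works}, constrained by Lemma~\ref{lem:G-orient} to satisfy \eqref{eq:square-anti-commutes}. So the correct edge map is $f_{v'u'}=(-1)^{\msign(v',u')+\delta(v',u')}I^{\omega}(S_{v'u'})$, and since $\msign(v',u')=\sum_i v'_i$ and $\delta(v',u')=\sum_{i<i_0}v'_i$ on an edge, the total exponent is $\sum_{i\ge i_0}v'_i=\tilde\delta(v',u')$ directly --- this one-line cancellation is the paper's entire proof. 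Your version, $f_{v'u'}=(-1)^{\msign(v',u')}I^{\omega}(S_{v'u'})$, is already wrong as a formula for the differential: on each $2$-face the two composites $I^{\omega}(S_{v'u'})\circ I^{\omega}(S_{w'v'})$ agree (the composite $I$-orientations coincide by Lemma~\ref{lem:orientation-consistency}), and the coboundary of $\msign$ on a $2$-face is $\sum_i v_i+\sum_i v''_i\equiv 0\pmod 2$, so with your signs the square would \emph{commute} and $d_1^2\ne 0$.

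The attempted repair is also not available. A vertex-by-vertex rescaling can only change the edge signs by a coboundary $\delta c$, so you would need $\msign-\tilde\delta$ (restricted to edges) to be a coboundary. But $\msign-\tilde\delta=\delta$ mod $2$, and $\delta$ has coboundary $1$ on every $2$-face by \eqref{eq:square-anti-commutes}; a coboundary has coboundary $0$. Your assertion that "$\msign$ on edges together with $\delta$ and $\tilde\delta$ all differ by coboundaries" is internally inconsistent: $\delta$ and $\tilde\delta$ both have coboundary $1$ (so they do differ by a coboundary from each other), whereas $\msign=\delta+\tilde\delta$ has coboundary $0$ and therefore cannot differ from either by a coboundary. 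The fix is not a change of basis at the vertices but the missing factor $(-1)^{\delta(v',u')}$ from the orientation of $G_{v'u'}$.
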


\begin{proof}
    The difference between $\delta$ and the $\tilde\delta$ that
    appears here is $\sum_{i}{v'_{i}}$ mod 2, 
   which is $\msign(v',u')$ mod 2
    in the case of an edge $(v',u')$.
\end{proof}

Theorem~\ref{thm:big-cube} will be proved by an inductive argument. 
The special case described at the end of the theorem is equivalent to
the general case, so we may as well take
\[
\begin{aligned}
    w &= (2,\dots, 2) \\
    v &= (1,\dots, 1) \\
    u &= (0,\dots,0) .
\end{aligned}
\]
Thus $v$ and $u$ span an $N$-cube. We again write $(\bC, \bF)$ for
$(\bC[uv], \bF[uv])$ in this context.
For each $i\in\Z$, let us set
\[
        \mathbf{C}_{i} =  \bigoplus_{v'\in \{1,0\}^{N-1}} C_{v',i}.
\]
Each $\bC_{i}$ can be described as $\bC[v'u']$ for some $v', u'$ with
$|v' - u'|_{1}=N-1$.
We have
\[
        \bC = \bC_{1} \oplus \bC_{0},
\]         
generalizing the case $N=1$ considered above, and we can similarly
decompose $\bF$ in block form as
\[
       \bF =
       \begin{pmatrix}
           \bF_{11} & 0 \\
           \bF_{10} & \bF_{00}
       \end{pmatrix},
\]
where
\[
       \bF_{ij} = \bigoplus_{v',u'\in \{1,0\}^{N-1}} f_{(v'i)(u'j)}.
\]
To prove the theorem, we will establish:

\begin{proposition}\label{prop:blocks}
    There is a chain map
\[
         (\bC_{2}, \bF_{22}) \to (\bC, \bF)
\]
inducing isomorphisms in homology.
\end{proposition}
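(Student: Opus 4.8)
The plan is to prove Proposition~\ref{prop:blocks} in two stages: reduce it, by a filtration argument, to the case $N=1$; and then prove the case $N=1$, which is the long exact sequence of the unoriented skein triple $(K_2,K_1,K_0)$. (Together with Theorem~\ref{thm:big-cube} in dimension $N-1$ applied to the $(N-1)$-cube $\bC_2$, this completes the induction for Theorem~\ref{thm:big-cube}.)

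First I would construct a candidate chain map $\Psi=(\psi_1,\psi_0)\colon(\bC_2,\bF_{22})\to(\bC,\bF)$, using that $(\bC,\bF)$ is, up to signs, the mapping cone of the chain map $\bF_{10}\colon\bC_1\to\bC_0$. The component $\psi_1\colon\bC_2\to\bC_1$ is assembled, over the vertices $v'\in\{0,1\}^{N-1}$ and over the families of metrics $G_{(v'2)(u'1)}$, from the maps induced by the skein cobordisms $\Tigma_{(v'2)(u'1)}$; that $\psi_1$ is a chain map up to sign follows from the same chain-homotopy identities used in Lemma~\ref{lem:M-boundary-sign}. The component $\psi_0\colon\bC_2\to\bC_0$ is assembled from the parametrized moduli spaces over the one-parameter families of broken metrics $\bG^{+}_{(v'2)(u'0)}$ on the composite cobordisms $\Tigma_{(v'1)(u'0)}\comp\Tigma_{(v'2)(v'1)}$; counting the ends of the $1$-dimensional such moduli spaces then forces $\psi_0$ to be a null-homotopy of $\bF_{10}\psi_1$, so that $\Psi$ is a chain map, \emph{provided} the contribution of the non-broken end of each family $\bG^{+}_{(v'2)(u'0)}$ vanishes. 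Establishing this vanishing is a geometric input: one isotopes the non-necked composite cobordism to a standard one and shows that the associated moduli spaces are empty in the relevant range of dimensions, using the unlink computations (Proposition~\ref{prop:unlink-canonical}) and the model moduli-space analysis of Section~\ref{subsec:RP2-examples}; this is the business of Section~\ref{sec:proof-prop-blocks}.

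Granting that $\Psi$ is a chain map, I would filter both complexes by $p(v'i)=\sum_{j<N}v'_j$. The map $\Psi$ is filtered, and on associated graded one has $\mathrm{gr}_{v'}(\bC_2)=(C_{(v'2)},\pm d)$, $\mathrm{gr}_{v'}(\bC)=\mathrm{Cone}(\pm f_{(v'1)(v'0)})$, and $\mathrm{gr}_{v'}(\Psi)$ is exactly the map produced by the case $N=1$ for the single ball $B_N$ with the remaining resolutions frozen at $v'$. By the comparison theorem for the spectral sequences of filtered complexes, it therefore suffices to prove the case $N=1$, namely that $C_2$ is quasi-isomorphic to $\mathrm{Cone}(f_{10}\colon C_1\to C_0)$. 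For this I would follow Floer's proof of the surgery exact triangle as presented in \cite{Braam-Donaldson} and adapted in \cite{OS-double-covers}: one has the three cyclically symmetric cobordism maps $a=\pm f_{21}$, $b=\pm f_{10}$, $c=\pm f_{02}$; first-order chain homotopies $H_{ba}$, $H_{cb}$, $H_{ac}$ with $ba\simeq0$, $cb\simeq0$, $ac\simeq0$, coming from the composite cobordisms over their one-parameter metric families $\bG^{+}$ (again via the vanishing of the non-broken end); and a second-order relation coming from the two-parameter family $\bG^{+}$ on the triple composite cobordism $cba\colon C_2\to C_2$, whose non-broken end is isotopic to a product cobordism with a standard summand and hence, by the unlink computations, induces a chain homotopy equivalence that is homotopic to $\pm\mathrm{id}$. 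A purely homological lemma then shows that $(a,H_{ba})\colon C_2\to\mathrm{Cone}(b)$ is a chain map and, building a homotopy inverse from $c$ and the remaining homotopies, that it is a quasi-isomorphism.

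I expect the main obstacle to be the case $N=1$, and within it the geometric part of Section~\ref{sec:proof-prop-blocks}: the isotopies that identify the non-necked composite and triple-composite skein cobordisms, together with the moduli-space analysis over the families of metrics $\bG$ showing that the relevant contributions vanish for the double composites and compute a unit for the triple composite. The filtration/spectral-sequence reduction and the algebra of promoting a homotopy-exact triangle to a mapping-cone quasi-isomorphism are routine once these inputs are in hand.
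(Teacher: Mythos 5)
Your proposal is correct in substance and rests on the same geometric inputs as the paper's proof, but it organizes the algebra along a genuinely different route. The paper applies the Ozsv\'ath--Szab\'o triangle-detection lemma directly to the sequence of $(N-1)$-cube complexes $(\bC_{i},\bF_{ii})$: it constructs the full first-order homotopy $\bJ_{20}=\sum j_{(v'2)(u'0)}$ over the extended metric families $G_{vu}=G'_{vu}\cup G''_{vu}$, and then a full second-order homotopy $\bK_{30}=\sum k_{(v'3)(u'0)}$ over the product families $G_{w'u'}\times G_{30}$ (the pentagon), proving that $\bF_{10}\bJ_{31}+\bJ_{20}\bF_{32}$ is chain-homotopic to $\pm 1$ via Lemma~\ref{lem:two-disks-product} and the degree-one property of the restriction map to $\Crit(\SS_{30},\partial\Delta)\cong[0,\pi]$. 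You instead build the chain map into the mapping cone --- which still forces you to construct the \emph{full} $\bJ_{20}$, including the off-diagonal components $j_{(v'2)(u'0)}$ for $v'>u'$, since the identity \eqref{eq:first-chain-homotopy-pieces} mixes diagonal and off-diagonal terms --- and then reduce the quasi-isomorphism claim to $N=1$ by filtering by $\sum_{j<N}v'_{j}$ and comparing associated graded pieces. That reduction is sound (the filtration is finite, and the associated graded of your $\Psi$ is exactly the direct sum of the $N=1$ maps), and it buys you a real economy: you never need the higher-dimensional families $G_{w'u'}\times G_{30}$ or the off-diagonal $k$-maps, only the single pentagon. What it cannot buy you is the hard geometry, which you correctly locate in section~\ref{sec:proof-prop-blocks}. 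Two small corrections there: the vanishing of the $S^{3}$-cut face contribution comes from the connected-sum dimension count on $(S^{4},\RP^{2})$ --- all of its solutions are irreducible while the unique critical point on $(S^{3},S^{1})$ is reducible --- not from the unlink computations of Proposition~\ref{prop:unlink-canonical}; and the families $\bG^{+}_{(v'2)(u'0)}$ are one-parameter only when $v'=u'$ (for $v'>u'$ they have dimension $|v'-u'|_{1}+1$), so the end-counting that verifies \eqref{eq:first-chain-homotopy-pieces} must be run over the full higher-dimensional families exactly as in the paper.
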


This proposition expresses an isomorphism between the homologies of
two cubes, one of dimension $N-1$, the other of dimension
$N$. Theorem~\ref{thm:big-cube} for the given $w$, $v$ and $u$ is an
immediate consequence of $N$ applications of this proposition. Just as
in the case $N=1$, the proposition will be proved while establishing
that there is a long exact sequence in homology arising from the anti-chain
maps, 
\[
            \cdots \longrightarrow ( \bC_{3} ,\bF_{33}) \stackrel{\bF_{32}} \longrightarrow       
           ( \mathbf{C}_{2}, \bF_{22}) 
        \stackrel{\bF_{21}} \longrightarrow 
               ( \mathbf{C}_{1}, \bF_{11})
 \stackrel{\bF_{10}} \longrightarrow 
               ( \mathbf{C}_{0}, \bF_{00}) 
   \longrightarrow \cdots
\]
(In the above sequence, the chain groups and anti-chain maps are
periodic mod 3, up to sign.)

\section{Proof of Proposition~\ref{prop:blocks}}
\label{sec:proof-prop-blocks}

\subsection{The algebraic setup}

The proof is based on an algebraic lemma which appears (in a mod $2$
version) as Lemma~4.2 in \cite{OS-double-covers}. We omit the proof of
the lemma:

\begin{lemma}[\protect{\cite[Lemma~4.2]{OS-double-covers}}]
    Suppose that for each $i\in \Z$ we have a complex $(C_{i}, d_{i})$
    and anti-chain maps
     \[
                 f_{i} : C_{i} \to C_{i-1}.
      \]
     Suppose that the composite chain map $f_{i-1} \circ
     f_{i}$ is chain-homotopic to $0$ via a chain-homotopy
     $j_{i}$, in that
     \[
                  d_{i-1} j_{i} + j_{i} d_{i} + f_{i-1} 
                   f_{i} =0
      \]
       for all $i$. Suppose furthermore that for all $i$, the map
       \[
                       j_{i-1}f_{i} + f_{i-2} j_{i} :
                       C_{i} \to C_{i-3}
      \]
       (which is a chain map under the hypotheses so far) induces
       an isomorphism in homology.  Then the induced maps in homology,
       \[
              (f_{i})_{*} : H_{*}(C_{i},d_{i}) \to H_{*}(C_{i-1},d_{i-1})
        \]
        form an exact sequence; and for each $i$ the anti-chain map
        \[
        \begin{aligned}
            \Phi: s &\mapsto (f_{i} s, j_{i} s) \\
            \Phi: C_{i} &\to \mathrm{Cone}(f_{i})
        \end{aligned}
          \]
        induces isomorphisms in homology. Here
        $\mathrm{Cone}(f_{i})$ denotes $C_{i}\oplus C_{i-1}$
        equipped with the differential
\[
\begin{pmatrix}
    d_{i} & 0 \\
    f_{i} & d_{i-1}
\end{pmatrix}.
\]
\qed
\end{lemma}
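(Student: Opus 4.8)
The plan is to prove this by the standard \emph{iterated mapping cone} argument in homological algebra; the only point that is genuinely more than formal here (as opposed to the $\Z/2$-version of \cite{OS-double-covers}) is the bookkeeping of signs forced by the anti-chain-map convention. Throughout one reads the indices in the statement with the evident shift: the relevant chain map is $\Phi_i\colon C_{i+1}\to\mathrm{Cone}(f_i)$, $s\mapsto(f_{i+1}s,\,j_{i+1}s)$, and the homotopies satisfy $d\,j_i+j_i\,d+f_{i-1}f_i=0$ with $j_i\colon C_i\to C_{i-2}$.

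First I would record the formal consequences of the hypotheses. Because each $f_i$ anticommutes with the differentials, the graded group $\mathrm{Cone}(f_i)=C_i\oplus C_{i-1}$ with the stated differential $\bigl(\begin{smallmatrix}d_i&0\\ f_i&d_{i-1}\end{smallmatrix}\bigr)$ is a genuine complex, and $0\to C_{i-1}\xrightarrow{\iota}\mathrm{Cone}(f_i)\xrightarrow{p}C_i\to0$ is a short exact sequence of complexes whose connecting homomorphism on homology is precisely $(f_i)_*$. Consequently, the asserted exactness of the sequence $\cdots\to H_*(C_{i+1})\xrightarrow{(f_{i+1})_*}H_*(C_i)\xrightarrow{(f_i)_*}H_*(C_{i-1})\to\cdots$ and the quasi-isomorphism statement for $\Phi$ are \emph{equivalent}: if $\Phi_i$ is a quasi-isomorphism, then substituting $H_*(\mathrm{Cone}(f_i))\cong H_*(C_{i+1})$ into the long exact sequence of the cone and using $p\circ\Phi_i=f_{i+1}$ turns it into the claimed exact sequence. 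So the entire content is to show each $\Phi_i$ is a quasi-isomorphism, equivalently that $\mathrm{Cone}(\Phi_i)$ is acyclic.

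The heart of the argument is the identification of $\mathrm{Cone}(\Phi_i)$. One checks directly that $\mathrm{Cone}(\Phi_i)=C_{i+1}\oplus C_i\oplus C_{i-1}$ with differential $\bigl(\begin{smallmatrix}d_{i+1}&0&0\\ f_{i+1}&d_i&0\\ j_{i+1}&f_i&d_{i-1}\end{smallmatrix}\bigr)$; this three-term complex is symmetric in its description, and in particular it is \emph{also} the mapping cone of the chain map $\psi_i\colon\mathrm{Cone}(f_{i+1})\to C_{i-1}$, $(a,b)\mapsto j_{i+1}a+f_i b$ — the fact that $\psi_i$ is a chain map being exactly the homotopy identity for $j$. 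The key computation is then that the composite $\psi_{i-1}\circ\Phi_i\colon C_{i+1}\to C_{i-2}$ equals $j_i f_{i+1}+f_{i-1}j_{i+1}$, which is the hypothesis map (reindexed) and is therefore a quasi-isomorphism; hence $\mathrm{Cone}(\psi_{i-1}\circ\Phi_i)$ is acyclic. Feeding the factorization $\psi_{i-1}\circ\Phi_i$ into the octahedral (iterated-cone) exact triangle $\mathrm{Cone}(\Phi_i)\to\mathrm{Cone}(\psi_{i-1}\circ\Phi_i)\to\mathrm{Cone}(\psi_{i-1})\to\mathrm{Cone}(\Phi_i)[1]$, together with $\mathrm{Cone}(\psi_{i-1})=\mathrm{Cone}(\Phi_{i-1})$, gives a shifted quasi-isomorphism $\mathrm{Cone}(\Phi_i)\simeq\mathrm{Cone}(\Phi_{i-1})[\pm1]$. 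One then finishes either by the grading/finiteness available in the Floer setting (homology cannot persist under arbitrarily large shifts), or — as in \cite{OS-double-covers} — by arguing directly on the three-term complex, peeling off the filtration $C_{i-1}\subset\mathrm{Cone}(f_i)\subset\mathrm{Cone}(\Phi_i)$ and using the $g$-isomorphism hypothesis to kill the associated-graded homology.

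The step I expect to be the main obstacle is not any of these conceptual moves but the sign bookkeeping: in the $\Z/2$ setting of \cite{OS-double-covers} everything is transparent, whereas over $\Z$ one must fix signs in the cone differentials, in the homotopy relation $d\,j+j\,d+f f=0$, and in the definition of $\Phi$ so that every cone that appears is genuinely a complex, so that $\psi_i$ and $\Phi_i$ are honest (anti-)chain maps, and so that the octahedral identifications are sign-correct. This is routine but must be carried out consistently; once it is, the homological-algebra skeleton above applies verbatim, which is why it suffices to refer to \cite[Lemma~4.2]{OS-double-covers} for the argument itself.
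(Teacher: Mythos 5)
The paper gives no proof of this lemma at all (it says ``We omit the proof'' and cites \cite{OS-double-covers}), so the comparison point is the Ozsv\'ath--Szab\'o argument. Your skeleton matches it up to the decisive step: reducing both conclusions to the claim that $\Phi_i$ is a quasi-isomorphism, identifying the three-term complex $D_i=C_{i+1}\oplus C_i\oplus C_{i-1}$ simultaneously as $\mathrm{Cone}(\Phi_i)$ and as $\mathrm{Cone}(\psi_i)$, and computing $\psi_{i-1}\circ\Phi_i=G_{i+1}:=j_if_{i+1}+f_{i-1}j_{i+1}$, the hypothesis quasi-isomorphism. All of that is correct. The gap is in how you close the loop. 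Knowing only that $\psi\circ\Phi$ is a quasi-isomorphism for every $i$ gives injectivity of $\Phi_*$ and surjectivity of $\psi_*$ on homology, and the octahedral triangle gives $H(\mathrm{Cone}(\Phi_i))\cong H(\mathrm{Cone}(\Phi_{i-1}))[\pm 1]$ --- i.e.\ all these cones have isomorphic homology up to shift, which does not force that homology to vanish. Neither of your proposed finishes repairs this: the Floer complexes carry only a relative $\Z/4$ grading and the $C_i$ are $3$-periodic in $i$, so a shift by $12\equiv 0$ returns you to where you started and no contradiction arises (and the lemma is stated as pure algebra, with no finiteness to appeal to); while the three-step filtration $C_{i-1}\subset\mathrm{Cone}(f_i)\subset D_i$ has associated graded homology $H(C_{i+1})\oplus H(C_i)\oplus H(C_{i-1})$ with $d_1=f_*$, so killing the $E_\infty$ page presupposes exactness of the $f_*$ sequence --- which is part of what is being proved.

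The missing ingredient is the complementary identity for the other composite: $\Phi_{i-3}\circ\psi_{i-1}\colon\mathrm{Cone}(f_i)\to\mathrm{Cone}(f_{i-3})$ is chain-homotopic, via the homotopy $(a,b)\mapsto(\pm j_{i-1}b,\,0)$ and the relations $ff=\pm(dj+jd)$ and $fj=\pm G\mp jf$, to the lower-triangular map
\[
\begin{pmatrix} G_i & 0 \\ \pm j_{i-2}j_i & G_{i-1}\end{pmatrix},
\]
which is a quasi-isomorphism because its diagonal entries are (apply the five lemma to the map it induces between the two short exact sequences of the cones). Once \emph{both} composites $\psi_{i-1}\circ\Phi_i$ and $\Phi_{i-3}\circ\psi_{i-1}$ are quasi-isomorphisms, $(\psi_{i-1})_*$ is both surjective and injective, hence bijective, and then $(\Phi_i)_*=((\psi_{i-1})_*)^{-1}\circ(G_{i+1})_*$ is bijective; this two-out-of-six step finishes the proof. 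That homotopy computation is the one genuinely non-formal point of the lemma, and it is the one your outline omits.
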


Given the lemma, our first task is to construct (in the case $i=2$,
for example) a map
\[
           \bJ_{20} : \bC_{2} \to \bC_{0}
\]
satisfying
\begin{equation}\label{eq:first-homotopy}
          \bF_{00} \bJ_{20}  +  \bJ_{20}\bF_{22}  + \bF_{10}\bF_{21} = 0.
\end{equation}
After constructing these maps, we will then need to show that the
maps such as
\[
         \bF_{10}\bJ_{31} + \bJ_{20}\bF_{32} : \bC_{3} \to \bC_{0}
\]
(which is a chain map) induce isomorphisms in homology. This second
step will be achieved by constructing a chain-homotopy
\[
         \bK_{30} :  \bC_{3} \to \bC_{0}
\]
with
\begin{equation}\label{eq:second-homotopy}
              \bF_{00}\bK_{30} + \bK_{30}\bF_{33} + 
                   \bF_{10}\bJ_{31} + \bJ_{20}\bF_{32} + \tilde{\mathbf{Id}} = 0,
\end{equation}
where $\tilde{\mathbf{Id}}$ is a chain-map that is chain-homotopic to
$\pm 1$. The construction of $\bJ$ and $\bK$ and the
verification of the chain-homotopy formulae \eqref{eq:first-homotopy}
and \eqref{eq:second-homotopy} occupy the remaining two subsections of
this section of the paper.

\subsection{\texorpdfstring{Construction of $\bJ$}{Construction of J}}

We will construct $\bJ_{i,i-2}$ for all $i$ so that (in the case
$i=2$, for example) the relation \eqref{eq:first-homotopy} holds.
Recalling the definition of $\bC_{i}$, we see that we should write
\begin{equation}\label{eq:bJ-pieces}
       \bJ_{20} = \sum j_{(v'2)(u'0)} 
\end{equation}
where the sum is over all $v' \ge u'$ in $\{0,1\}^{N-1}$. The desired
relation then expands as the condition that, for all $w'$, $u'$ in $\{0,1\}^{N-1}$,
\begin{equation}\label{eq:first-chain-homotopy-pieces}
      \sum_{v'}\Bigl(  f_{(v'0)(u'0)} j_{(w'2)(v'0)} + j_{(v'2)(u'0)}
              f_{(w'2)(v'2)} +  f_{(v'1)(u'0)} f_{(w'2)(v'1)} \Bigr) = 0. 
\end{equation}
Our task now is to define $j_{vu}$ for $v=v'2$ and $u=u'0$ in
$\Z^{N}$, where
$(v',u')$ some singular $1$-simplex in the
triangulation of $\DDelta^{N-1}$ of $\R^{N-1}$. 

We have previously considered $I$-orientations for $\Tigma_{vu}$ and
orientations of families of metrics $G_{vu}$ in the case that $(v,u)$
is a singular $1$-simplex. We now extend our constructions to the case
of an arbitrary pair $(v,u)$ with $v\ge u$. (So we now allow
$|v-u|_{\infty}$ to be larger than $1$. At present we are most
interested in the case $|v-u|_{\infty}=2$; and in the next section,
$3$ will be relevant.) We still have natural
cobordisms $\Tigma_{vu}$ when $|v-u|_{1}>1$, obtained by concatenating
the cobordisms we used previously. So for example, when $N=1$, the
cobordism $\Tigma_{20}$ is the composite of the cobordisms
$\Tigma_{21}$ and $\Tigma_{10}$.

 For the $I$-orientations, we
can begin by choosing $I$-orientations as before for
$1$-simplices $(v,u)$, so that the conditions of
Lemma~\ref{lem:orientation-consistency} hold. Then we simply extend to all
pairs $v\ge u$ so that the consistency condition
\[
 \mu_{wu}= \mu_{vu} \comp \mu_{wv}
\]
holds for all $w\ge v \ge u$.  

For arbitrary $v\ge u$, the cobordism
$(\R\times Y, \Tigma_{vu})$ 
also carries a family of metrics $G'_{vu}$ of dimension
$n = |v-u|_{1}$. We define $G'_{vu}$ first in the case $N=1$. In this
case, we can regard $\Tigma_{vu}$ as the composite of $n$ cobordisms,
where $n=|v-u|_{1}$ and each cobordism is a surface on which the $t$
coordinate as a single critical point. Unlike the previous setup,
these critical points cannot be re-ordered, as they all lie in the same
copy of $\R\times B_{1}$ (where $B_{1}$ is a $3$-ball), rather than in
distinct copies $\R\times B_{i}$. As an appropriate parameter space in
this case, we define $G'_{vu}$
to be
\begin{equation}\label{eq:G-primed-def}
  G'_{vu} =  \{\, (\tau_{1},\dots \tau_{n}) \in \R^{n} \mid \tau_{m+1} \ge 1+
   \tau_{m},\; \forall m<n \,\},
\end{equation}
and construct the metrics so that $\tau_{m}$ is the $t$ coordinate of
the critical point in the $m$'th cobordism (much as we did in the
construction of $G_{vu}$ earlier, on page~\pageref{page:Gvu-setup}). For
larger $N$, we construct $G'_{vu}$ as a product of $G'_{v_{i}u_{i}}$
over all $i=1,\dots ,N$. In the case that $|v-u|_{\infty}=1$, the
space $G'_{vu}$ coincides with $G_{vu}= \R^{n}$ as defined before;
while if $|v_{N}-u_{N}|=2$ and
$|v_{j}-u_{j}|\le 1$ for $j\le N-1$, then $G'_{vu}$ is a half-space. 
Whenever $w \ge v
\ge u$, we have self-evident maps
\[
        G'_{wu} \to G'_{wv} \times G'_{vu}.
\]
These maps are either surjective, or have image equal to the
intersection of the codomain with a product of half-spaces. Following
Lemma~\ref{lem:G-orient}, we can choose orientations for all the
$G'_{vu}$ so these maps are always orientation-preserving.

Next we examine the topology of the cobordism $\Tigma_{vu}$ in the
case that $N=1$ and $|v-u|_{\infty} = 2$. For the following
description, we revert to considering $S_{vu}$ as a compact surface in
a product $I\times Y$, rather than a surface with cylindrical ends in
$\R\times Y$.

\begin{lemma}\label{lem:composite-RP2}
    In the situation depicted in Figures~\ref{fig:Tetrahedra-skein}
    and~\ref{fig:Tetrahedron-2}, the composite cobordism
    $\Tigma_{2,0} = \Tigma_{1,0}\circ\Tigma_{2,1}$ from $K_{2}$  to
    $K_{0}$ in $I\times Y$ has the form
\[
            ( I \times Y , V_{2,0} ) \# (S^{4}, \RP^{2})
\]
    where the $\RP^{2}$ is standardly embedded in $S^{4}$ with
    self-intersection $+2$, as described in
    section~\ref{subsec:RP2-examples}.
\end{lemma}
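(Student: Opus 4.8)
The plan is to reduce everything to a purely local computation inside $[0,1]\times B^{3}$ and then to recognise the resulting embedded surface directly. By construction $\Tigma_{2,1}$ and $\Tigma_{1,0}$ are product cobordisms outside $[0,1]\times B^{3}$, so the composite $\Tigma_{2,0}=\Tigma_{1,0}\circ\Tigma_{2,1}$ is a product there as well, and all of its content lies in the piece $S_{2,0}=\Tigma_{2,0}\cap\bigl([0,1]\times B^{3}\bigr)$, the union of the two twisted rectangles. The connected sum with $(S^{4},\RP^{2})$ in the statement may be formed in a small ball in the interior of $[0,1]\times B^{3}$, disjoint from the product part and from the boundary, so it is enough to produce an isotopy rel boundary between the pair $\bigl([0,1]\times B^{3},S_{2,0}\bigr)$ and a standard model $\bigl([0,1]\times B^{3},W_{2,0}\bigr)\#(S^{4},\RP^{2})$, where $W_{2,0}$ is the trace of a single (``untwisted'') band move carrying the two arcs $K_{2}\cap B^{3}$ to the two arcs $K_{0}\cap B^{3}$ --- so $W_{2,0}$ is an embedded disk --- and the $\RP^{2}$ summand is the standard one in $S^{4}$ of self-intersection $+2$ coming from $\bar\CP^{2}/\text{conjugation}$ as in section~\ref{subsec:RP2-examples}. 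Here $V_{2,0}$ is the cobordism in $I\times Y$ that is a product outside $[0,1]\times B^{3}$ and equals $W_{2,0}$ inside.

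Next I would describe $S_{2,0}$ explicitly. Using the Morse picture, $S_{2,0}$ is the union of the graph of a Morse function on the $1/4$-twisted rectangle $T$ of Figure~\ref{fig:Tetrahedron-2} over $[0,1/2]$ and the graph of a Morse function on the rotated rectangle $T'$ over $[1/2,1]$, the two being glued along the two interior arcs of $K_{1}$ in $\{1/2\}\times B^{3}$; in particular its image in $B^{3}$ is $T\cup T'$. A first step is to pin down the intrinsic topology of $S_{2,0}$: each of $S_{2,1}\cong T$ and $S_{1,0}\cong T'$ is a disk, glued along two arcs, so $S_{2,0}$ is connected with Euler characteristic $0$; and it is non-orientable --- hence a M\"obius band --- because the quarter-twists carried by $T$ and by $T'$ have the same handedness. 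Indeed $T'$ is obtained from $T$ by the right-handed one-third turn that defines the cyclic order of $K_{2},K_{1},K_{0}$ (noted after Figure~\ref{fig:Tetrahedra-skein}), which is orientation-preserving on $B^{3}$, so stacking $T$ below $T'$ accumulates a full half-twist rather than cancelling. This is already consistent with $W_{2,0}$ being a disk, since a M\"obius band is the surface-connected-sum of a disk with an $\RP^{2}$ ($\chi=1+1-2=0$). I would then upgrade this to an embedded statement: isotope $S_{2,0}$ rel boundary so that outside a standard $B^{4}$ it coincides with $W_{2,0}$, while inside that $B^{4}$ the pair is the standard $(B^{4},\text{half-twisted band})$ whose double is $(S^{4},\RP^{2})$. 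This exhibits the asserted connected-sum decomposition.

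It remains to check that the $\RP^{2}$ summand is the one of self-intersection $+2$ rather than $-2$ --- the distinction between the $\RP^{2}\subset S^{4}$ obtained from $\bar\CP^{2}$ and the $\RP^{2}_{-}$ obtained from $\CP^{2}$ in section~\ref{subsec:RP2-examples}. The normal Euler number of the summand is governed by the handedness of the accumulated half-twist, which is fixed by the same right-handed convention on the skein moves; one pins the sign by an explicit local model, identifying the stacked pair of twisted rectangles with the branch-locus picture of the conjugation quotient $\bar\CP^{2}\to S^{4}$. I expect this sign bookkeeping, together with making the embedded isotopy of the previous paragraph fully rigorous, to be the main obstacle: the statement up to orientation is routine, but tracking the orientation conventions carefully enough to land precisely on $+2$ (so that the local moduli space is the single regular point of Proposition~\ref{prop:RP2-example-prop}) is where the real care lies.
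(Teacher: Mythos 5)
Your overall route is the paper's: localize to $[0,1]\times B^{3}$, recognize the local piece of $\Tigma_{2,0}$ as a M\"obius band, split off a standard half-twisted band as a connect summand $(S^{4},\RP^{2})$, and let the handedness of the half-twist decide between self-intersection $+2$ and $-2$. But the write-up stops exactly where the lemma has content. You assert (a) that $S_{2,0}$ can be isotoped rel boundary so that inside some standard $B^{4}$ it is the standard unknotted half-twisted band and outside it is a disk, and (b) that the accumulated twist is a half-twist of the correct handedness because ``the right-handed one-third turn relating $T$ and $T'$ is orientation-preserving, so the quarter-twists have the same handedness and accumulate.'' Neither is established. The paper proves (a) by exhibiting the $4$-ball explicitly as a regular neighborhood of $[0,2]\times\delta$, where $\delta=T_{2,1}\cap T_{1,0}$ is the arc joining two points of $K_{1}$: the surface meets this ball in a neighborhood of the circle $\gamma$ lying over $\delta$, and the M\"obius band is displayed as an unknotted band in $\partial B^{4}\cong S^{3}$ assembled from three explicit pieces. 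Without identifying this ball (or a substitute), the claim that the summand is the \emph{standard} $\RP^{2}\subset S^{4}$, rather than some knotted M\"obius band capped off differently, is unproved.

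The handedness heuristic in (b) is also not a valid argument as it stands. In the paper's computation the two relevant quarter-turns --- of neighborhoods of $\delta$ in $T_{2,1}$ and in $T_{1,0}$ --- have \emph{opposite} handedness for the standard orientation of $B^{3}$ (right-handed and left-handed respectively, see Figure~\ref{fig:Tetrahedron-3}); they both become left-handed in $\partial B^{4}$ only because the two boundary $3$-balls of $N([0,2]\times\delta)$ inherit opposite orientations. So ``orientation-preserving rotation $\Rightarrow$ same handedness $\Rightarrow$ twists add'' mislocates the source of the sign, and an argument at that level of precision could just as well output $-2$. Note also that non-orientability of the local piece needs no twist analysis at all: it is connected with $\chi=0$ and a single boundary circle, hence a M\"obius band automatically; the delicate point is solely \emph{which} M\"obius band, i.e.\ the normal Euler number. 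To complete the proof you must carry out the local computation you deferred: exhibit the ball, exhibit the unknotted band in its boundary sphere, and track the two quarter-turns against the boundary orientations.
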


\begin{remark}
    The cobordism $V_{2,0}$ that appears in the above lemma is
    diffeomorphic to  $\Tigma_{3,2}$,
    viewed as a cobordism from $K_{2}$ to $K_{3}$ by reversing the
    orientation of $I\times Y$.
\end{remark}

\begin{proof}[Proof of the Lemma]
      Arrange the composite cobordism $\Tigma_{2,0}$ so that the $t$
      coordinate runs from $0$ to $1$ across $\Tigma_{2,1}$ and from
      $1$ to $2$ across $\Tigma_{1,0}$. The projection of
      $\Tigma_{2,1}$ to $Y$ meets the ball $B^{3}$ in the twisted
      rectangle $T=T_{2,1}$ depicted in
      Figure~\ref{fig:Tetrahedron-2}, while the projection of
      $\Tigma_{1,0}$ similarly meets $B^{3}$ in a twisted rectangle
      $T_{1,0}$. The intersection $T_{2,1}\cap T_{1,0}$ in $B^{3}$ is
      a closed arc $\delta$, joining two points of $K_{1}$. The
      preimages of $\delta$ in $\Tigma_{2,1}$ and $\Tigma_{1,0}$ are two
      arcs in $\Tigma_{2,0}$ whose union is a simple closed curve
\[
               \gamma\subset \Tigma_{2,0}.
\] 
On $\gamma$, the $t$ coordinate takes values in $[1/2,3/2]$.
 A regular
      neighborhood of $[0,2]\times \delta$ in $[0,2]\times Y$ is a
      $4$-ball meeting $\Tigma_{2,0}$ in a M\"obius band: the band
      is the neighborhood of $\gamma$ in $\Tigma_{2,0}$. 

      This M\"obius band in the $4$-ball can be seen as arising from
      pushing into the ball
      an unknotted M\"obius band in the $3$-sphere.  The M\"obius band
      $M$ in the $3$-sphere is the union of three pieces:
      \begin{enumerate}
      \item a neighborhood of $\{0\} \times \delta$ in $\{0\}\times
          T_{2,1}$;
      \item a neighborhood of $\{2\} \times \delta$ in $\{2\}\times
          T_{1,0}$;
      \item the pair of rectangles $[0,2]\times \epsilon$, where
          $\epsilon$ is a pair of arcs, one in each component of
          $K_{1}\cap B^{3}$.    
\end{enumerate}
       This M\"obius band $M$
      possesses a left-handed
      half-twist. The half-twist is the result of two quarter-turns,
      one in each of the first two pieces of $M$ in the list above.
      The signs of the quarter-turns can be seen in 
     Figure~\ref{fig:Tetrahedron-3}:
\begin{figure}
    \begin{center}
        \includegraphics[scale=0.5]{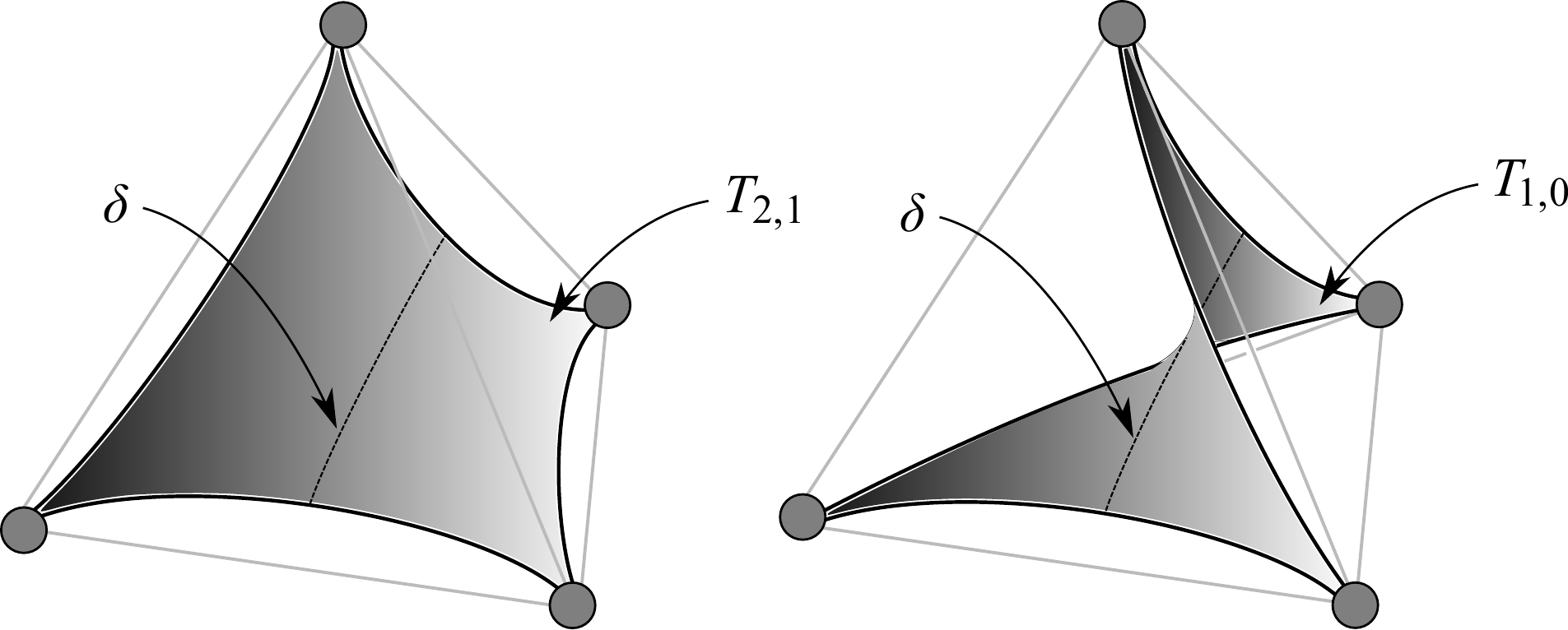}
    \end{center}
    \caption{\label{fig:Tetrahedron-3}
   The arc $\delta$ as the  intersection of $T_{2,1}$ and $T_{1,0}$.}
\end{figure}
        a neighborhood of $\delta$ in $T_{2,1}$ has a \emph{right}-hand
       quarter turn for the standard orientation of $B^{3}$, but this
       $3$-ball occurs in the boundary of the $4$-ball with its opposite
       orientation; and  a neighborhood of $\delta$ in $T_{1,0}$ has a \emph{left}-hand
       quarter turn for the standard orientation of $B^{3}$, and this
       $3$-ball occurs  with its positive orientation in the boundary
       of the $4$-ball. Thus the M\"obius band in the boundary of the
       $4$-ball has a left-hand half-twist resulting from two
       left-handed quarter-turns.
 
      The $\RP^{2}$ obtained from M\"obius band
      with a left-handed half-twist is the standard $\RP^{2}$ with
      self-intersection $+2$.
\end{proof}

The fact that the composite cobordism $\Tigma_{20}$ from $(Y,K_{2})$ to $(Y,K_{0})$
splits off a summand $(S^{4}, \RP^{2})$ (as stated in the lemma above)
implies, by standard stretching arguments, that this composite
cobordism induces the zero map in homology:
\[
  (f_{10}\comp f_{21})_{*} = 0 : I^{\omega}(Y, K_{2}) \to I^{\omega}(Y, K_{0}).
\]
This is essentially the same point as the vanishing theorem for the
Donaldson invariants of connected sums. It is important here that
the summand $(S^{4}, \RP^{2})$ carries no reducible solutions, which
might live in moduli spaces for which the index of $\calD$ is
negative: see the examples of moduli spaces in
Proposition~\ref{prop:RP2-example-prop}.
Although it is zero at the level of homology, at the \emph{chain
  level}, 
the map induced by the composite may be non-zero. The
one-parameter family of metrics involved in the stretching provides a
chain homotopy, showing that the map is chain-homotopic to zero. This
is what will be used to construct the chain-homotopy $\bJ_{20}$
in Definition~\ref{def:jvu-def} below.
But
first, we must make the family of metrics explicit and extend our
notation to the case of more than one ball in $Y$.

Staying for a moment with the case of one ball, we have already set up a
family of metrics $G'_{20}$, which in this case is a $2$-dimensional
half-space:
\[
  G'_{20} = \{\, (\tau_{1},\tau_{2}) \mid \tau_{2} \ge 1 + \tau_{1} \,\}.
\]
We extend this family of metrics to a family
\begin{equation}\label{eq:G-extended}
         G_{20} = G'_{20} \cup G''_{20}
\end{equation}
as follows. The boundary of $G'_{20}$ consists of the family of
metrics with $\tau_{2}=\tau_{1}+1$, all of which are isometric to each
other, by translation of the coordinates. Fixing any one of these, say
at $\tau_{1}=0$, we construct a family of metrics parametrized by the
negative half-line $\R^{-}$, by stretching along the sphere
$S^{3}$  which splits off the summand $(S^{4},\RP^{2})$ in
Lemma~\ref{lem:composite-RP2}. (This family of metrics can be completed
to a family parametrized by  $\R^{-}\cup \{-\infty\}$, where the added
point is a broken metric, cut along this $S^{3}$.)
Putting back the translation parameter,
we obtain our family of metrics $G''_{20}$ parametrized by
$\R^{-}\times \R$. The space $G_{20}$ is the union of these two
half-spaces, along their common boundary.

Suppose now that $N$ is arbitrary, and that $v_{N} - u_{N} =2$ and
$v_{j}- u_{j} = 0$ or $1$ for $j<N$. The space of metrics $G'_{vu}$ is
a half-space: it is a product
\[
             G'_{vu} =  \R^{m-1} \times G'_{20}
\]
where $m$ is the number of coordinates in which $v$ and $u$ differ,
and $G'_{20}$ is a 2-dimensional half-space as above. The coordinates
on $\R^{m-1}$ are the locations $\tau$ of the critical points in the
balls $B_{i}$ corresponding to coordinates $i<N$ where $v$ and $u$
differ. 
We extend this
family of metrics to a family
\begin{equation*}
    \begin{aligned}
        G_{vu} &= \R^{m-1} \times (G'_{20} \cup G''_{20}) \\
        &= G'_{vu} \cup G''_{vu} 
    \end{aligned}
\end{equation*}
where $G''_{20}$ is as before. We again use the notation $\bG_{vu}$
for the quotient by translations:
\[
      \bG_{vu}= G_{vu} /\R.
\] 

Let us consider the natural compactification $\bG^{+}_{wu}$ of
$\bG_{wu}$, where $w=w'2$, $u=u'0$ and $|w'-u'| \le 1$. This is a
family of broken Riemannian metrics whose codimension-1 faces are as follows.
\begin{enumerate}
\item \label{item:faces-a}
   First, there are the families of broken metrics which are cut
    along $(Y, K_{v})$ where $w>v>u$. This face is parametrized by
   $\bG_{wv}\times \bG_{vu}$. These faces we can classify 
    further into the cases
    \begin{enumerate}
    \item
      \label{item:face-a1}
     the case $v_{N}=0$, in which case the first factor
        $\bG_{wv}$ has the form $\bG'_{wv}\cup \bG''_{wv}$, where
        $\bG''_{wv}$ involves stretching across the $S^{3}$;
    \item the similar case $v_{N}=2$, where the second factor has the
        form $\bG'_{vu}\cup \bG''_{vu}$;
    \item 
      \label{item:face-a3}
    the case $v_{N}=1$, in which case $\bG_{wv}$ and $\bG_{vu}$
        are  both the simpler families described in the previous
        subsection leading to the construction of the maps $f_{vu}$ etc.
    \end{enumerate}
    \item  \label{item:face-b}
         Second, there is the family of broken metrics which are cut
        along the $S^{3}$.
\end{enumerate}

Now let $\beta \in \Crit_{v}$ and
$\alpha\in \Crit_{u}$ be critical points, corresponding to generators
of the complexes $C_{v}$ and $C_{u}$ respectively. The family of
metrics $G_{vu}$ gives rise to a parametrized moduli space
\[
       M_{vu}(\beta,\alpha) \to G_{vu}.
\]
Dividing out by the translations, we also obtain
\[
    \Mu_{vu}(\beta,\alpha) \to \breve{G}_{vu}.
\]
We have already oriented the subset $G'_{vu} \subset G_{vu}$, so we
have chosen orientation for $G_{vu}$. As before, we orient
$M_{vu}(\beta,\alpha)$ using our chosen $I$-orientations and a
fiber-first convention, and we orient $\Mu_{vu}(\beta,\alpha)$ as the
quotient of $M_{vu}(\beta,\alpha)$ with the $\R$ factor first. The
zero-dimensional part
\[
\Mu_{vu}(\beta,\alpha)_{0} \subset \Mu_{vu}(\beta,\alpha) 
\]
(if any) is a finite set of oriented points as usual, and we define
$j_{vu}$ by counting these points, with an overall correction factor
for the sign:

\begin{definition}\label{def:jvu-def}
    Given $v \ge u$ in $\Z^{N}$ with $v_{N}- u_{N}=2$ and $v_{j}-u_{j}
    \le 1$ for $j<N$, we define
\[
         j_{vu} : C_{v} \to C_{u}
\]
     by declaring the matrix entry from $\beta$ to $\alpha$ to be the
     signed count of the points in the zero-dimensional 
    moduli space $\Mu_{vu}(\beta,\alpha)_{0}$ (if any), adjusted by
    the overall sign $(-1)^{\msign(v,u)}$, where $\msign(v,u)$ is again defined
    by the formula \eqref{eq:m-formula}.
\CloseDef
\end{definition}

Having defined $j_{vu}$ in this way, we can now construct $  \bJ_{20}
: \bC_{2} \to \bC_{0}$ in terms of $j_{vu}$ by the formula
\eqref{eq:bJ-pieces}. We must now prove the chain-homotopy formula
\eqref{eq:first-homotopy}, or equivalently the formula
\eqref{eq:first-chain-homotopy-pieces}, which we can equivalently
write as
\begin{equation}\label{eq:long-hand-j-htpy}
      \sum_{\{ v \mid v_{N}=0\}} f_{vu} j_{wv} + \sum_{\{ v \mid
        v_{N}=2\}}  j_{vu}
              f_{wv} +   \sum_{\{ v \mid
        v_{N}=1\}}  f_{vu} f_{wv}  = 0. 
\end{equation}

As usual, the proof that this expression is zero is to interpret the
matrix entry of this map, from $\gamma$ to $\alpha$, as the number of
boundary points of an oriented $1$-manifold, in this case the manifold
$\Mu_{wu}(\gamma,\alpha)$. This is in essence an example of the
chain-homotopy formula \eqref{eq:chain-homotopy-faces}, resulting from
counting ends of one-dimensional moduli-spaces
$\Mubk_{wu}(\gamma,\alpha)_{1}$ over $\bG^{+}_{wv}$. The three types
of terms in the above formulae capture the three types of boundary
faces \ref{item:faces-a} above, together with the terms of the form
``$\partial\circ m_{G} \pm m_{G} \circ \partial$'' in
\eqref{eq:chain-homotopy-faces}. This is just the same set-up as the
proof that $\bF_{10}\circ\bF_{10}=0$ in the previous section, and our
signs are once again arranged so that all terms contribute with
positive sign.

The only remaining issue for the proof of \eqref{eq:long-hand-j-htpy}
is the question of why there is no additional term in this formula to
account for a contribution from the face
\ref{item:face-b} of $\bG^{+}_{wu}$. This face does not fall into the
general analysis, because the cut $(S^{3}, S^{1})$ does not satisfy
the non-integral condition. (We have $w=0$ on this cut.)
 Analyzing the
contribution from this type of boundary component follows the standard
approach to a connected sum -- in this case, a connected sum with the
pair $(S^{4},\RP^{2})$ along a standard $(S^{3}, S^{1})$. There is no
contribution from this type of boundary component, however, by the
usual dimension-counting argument for connected sums, because all
solutions on $(S^{4},\RP^{2})$ are irreducible and the unique critical
point for $(S^{3}, S^{1})$ is reducible.

\subsection{\texorpdfstring{Construction of $\bK$}{Construction of K}}

We turn to the construction of $\bK_{30}$ and the proof the formula
\eqref{eq:second-homotopy}. We start with a look at the  topology of
the composite cobordism \[ \Tigma_{30} = \Tigma_{10} \comp \Tigma_{21}
\comp \Tigma_{32}\] from $K_{3}$ to $K_{0}$ in the case $N=1$. 
Our discussion is very closely modeled on the exposition of
\cite[section~5.2]{KMOS}.

Arrange the $t$ coordinate on the composite cobordism $\Tigma_{30}$ so
that $t$ runs from $3-i$ to $3-j$ across $\Tigma_{ij}$. In the
previous subsection we exhibited a M\"obius band (called $M$ there)
inside $\Tigma_{20}$. Let us now call this M\"obius band $M_{20}$. It
is the intersection of $\Tigma_{20}$ with a 4-ball arising as the
regular neighborhood of $[1,3]\times \delta$. Just as we renamed $M$,
let us now write $\delta_{20}$ for the arc $\delta$.  There is a
similar M\"obius band $M_{31}$ in $\Tigma_{31}$, arising as the
intersection of $\Tigma_{31}$ with the regular neighborhood of
$[0,2]\times\delta_{31}$. 

The arcs $\delta_{20}$ and $\delta_{31}$ in $B^{3}$ both lie on the
surface $T_{21}$, where they meet at a single point at the center of
the tetrahedron. The two M\"obius bands $M_{31}$ and $M_{20}$ meet
$T_{21}$ in regular neighborhoods of these arcs; so the intersection
$M_{31}\cap M_{20}$ is a neighborhood in $T_{21}$ of this point. (See
Figure~\ref{fig:two-bands}.) The union
\[
           M_{30} := M_{31} \cup M_{20}
\]
\begin{figure}
    \begin{center}
        \includegraphics[height=1.7in]{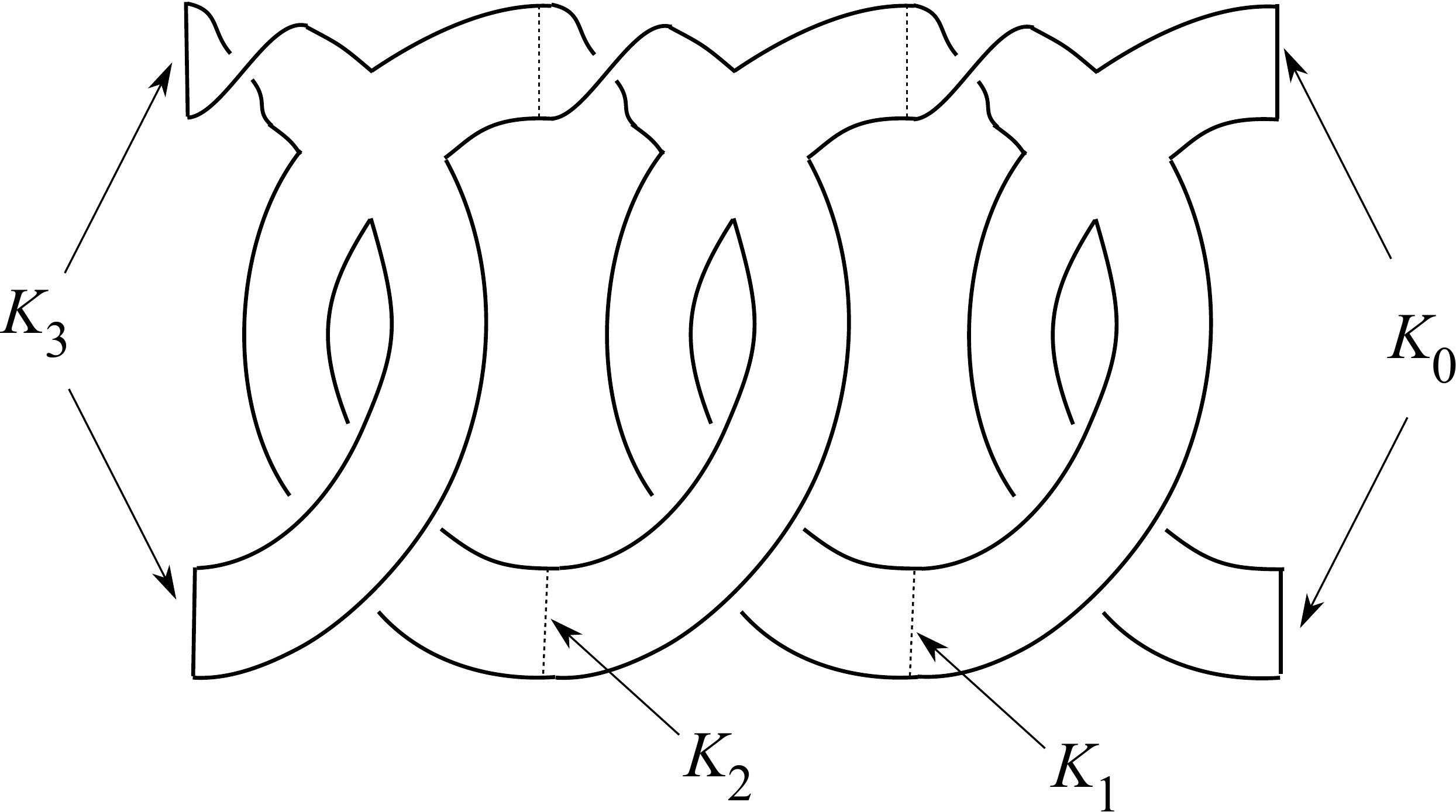}
    \end{center}
    \caption{\label{fig:two-bands}
    Two intersecting M\"obius bands, $M_{31}$ and $M_{20}$ inside $\Tigma_{30}$.}
\end{figure}
has the topology of a twice-punctured $\RP^{2}$ and it sits in a 4-ball
$B_{30}$ obtained as regular neighborhood of the union of the previous
two balls, $B_{31}$ and $B_{20}$. The $3$-sphere $\SS_{30}$ which forms
the boundary of $B_{30}$ meets $M_{30}$ in two unknotted, unlinked
circles. The following lemma helps to clarify the topology of the
cobordism $\Tigma_{30}$.

\begin{lemma} \label{lem:two-disks-product} 
If we remove $(B_{30}, M_{30})$ from the pair $([0,3]\times Y,
\Tigma_{30})$ and replace it with $(B_{30}, \Delta)$, where $\Delta$
is a union of two standard disks in the 4-ball, then the resulting
cobordism $\bar\Tigma$ from $K_{3}$ to $K_{0} = K_{3}$ is the trivial
cylindrical cobordism in $[0,3]\times Y$.
\end{lemma}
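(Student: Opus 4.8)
The statement is local: outside the region $[0,3]\times B^{3}$ the cobordism $\Tigma_{30}$ already coincides with the product $[0,3]\times K_{3}$, and the ball $B_{30}$ lies inside $[0,3]\times B^{3}$; so it suffices to work there. The plan is to use the explicit model of $\Tigma_{30}\cap([0,3]\times B^{3})$ as the concatenation of the three twisted rectangles $T_{32}$, $T_{21}$, $T_{10}$ sitting over the successive $t$-intervals $[0,1]$, $[1,2]$, $[2,3]$, each carrying exactly one index-$1$ critical point of $t$ at its centre, and to keep track of the two M\"obius bands $M_{31}$, $M_{20}$ and the surgery ball $B_{30}$ in this picture, following Figures~\ref{fig:Tetrahedron-2}, \ref{fig:Tetrahedron-3} and~\ref{fig:two-bands}.

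The first key point is that $B_{30}$ captures \emph{all three} critical points of $t$ on $\Tigma_{30}$. Indeed $B_{30}$ is a regular neighbourhood of $B_{31}\cup B_{20}$, and $B_{31}$, $B_{20}$ are the regular neighbourhoods of $[0,2]\times\delta_{31}$ and $[1,3]\times\delta_{20}$, where the arcs $\delta_{31}=T_{32}\cap T_{21}$ and $\delta_{20}=T_{21}\cap T_{10}$ both pass through the centre of $T_{21}$ (which in the symmetric tetrahedral picture is also the centre of $T_{32}$ and of $T_{10}$, and is where $\delta_{31}$ and $\delta_{20}$ cross). The three critical points sit over this centre at $t=1/2,3/2,5/2$, hence inside $B_{31}\cup B_{20}\subset B_{30}$. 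Consequently $\Tigma_{30}\setminus M_{30}=\Tigma_{30}\setminus\mathrm{int}(B_{30})$ has no critical points of $t$: it is a disjoint union of ``vertical bands'', products of subarcs of $K_{3}\cap B^{3}$ with $t$-intervals, together with two extra boundary circles on $\SS_{30}=\partial B_{30}$, namely $\partial M_{31}$ and $\partial M_{20}$.

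To finish, one glues back the standard disk pair $(B_{30},\Delta)$. Since $\Delta$ is an unknotted, unlinked, boundary-parallel pair of disks in the $4$-ball $B_{30}$ with $\partial\Delta=\partial M_{30}\subset\SS_{30}$, the isotopy class rel ends of $\bar\Tigma$ is determined by the critical-point-free product band structure of $\Tigma_{30}\setminus M_{30}$ outside $B_{30}$ together with the standardness of $\Delta$. Reading off from the figures which vertical band ends are rejoined by the two disks of $\Delta$, one sees that $\bar\Tigma\cap([0,3]\times B^{3})$ reassembles as $[0,3]\times(K_{3}\cap B^{3})$, and hence, combined with the product structure outside $[0,3]\times B^{3}$, that $\bar\Tigma$ is the trivial cylinder $[0,3]\times K_{3}$. (An alternative route is to peel off the two summands $(S^{4},\RP^{2})$ one at a time using Lemma~\ref{lem:composite-RP2} cyclically: capping $M_{31}$ converts $\Tigma_{31}$ into $\overline{\Tigma_{10}}$, so the capped composite becomes $\Tigma_{10}\comp\overline{\Tigma_{10}}$, a product with a trivial tube, and the residual $M_{20}$ is that tube, whose capping returns the product; this requires checking that the first surgery ball can be taken disjoint from $M_{20}$ away from the single crossing locus.)

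The routine parts are the localisation and the Euler-characteristic bookkeeping; the one genuinely delicate step is verifying that the vertical bands of $\Tigma_{30}\setminus M_{30}$ are reconnected through $\Delta$ in the ``product'' pattern rather than some other one. This is exactly where the left-handed half-twists of the M\"obius bands $M_{31}$ and $M_{20}$ (noted in the proof of Lemma~\ref{lem:composite-RP2}) must be tracked carefully, and it is most transparently confirmed by following the tetrahedral pictures as in \cite[\S5.2]{KMOS}.
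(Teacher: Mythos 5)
Your argument is correct, and it supplies substantially more justification than the paper does: the paper's entire proof of this lemma is ``This is clear.'' Your key observation --- that $B_{30}$, being a neighbourhood of $([0,2]\times\delta_{31})\cup([1,3]\times\delta_{20})$ with both arcs passing through the centre of the ball, contains all three critical points of $t$ on $\Tigma_{30}$ (at $t=1/2,3/2,5/2$) --- is accurate and does most of the work, since it shows that $\bar\Tigma$ and the product cylinder agree up to vertical isotopy outside $B_{30}$, while inside $B_{30}$ the surface $\bar\Tigma$ consists of the two standard disks by construction. A small bookkeeping check that supports your ``reconnection'' step: $\Tigma_{30}\cap([0,3]\times B^{3})$ is a connected twice-punctured $\RP^{2}$ ($\chi=-1$, two outer boundary circles each made of one arc of $K_{3}$, one arc of $K_{0}$ and two vertical arcs), so its complement $\Tigma_{30}\setminus\mathrm{int}(M_{30})$ has $\chi=0$ with four boundary circles and must be two annuli, each joining one outer circle to one component of $\partial M_{30}$; capping with $\Delta$ therefore yields two disks with product boundary, and the only residual issue is that these disks are standard (boundary-parallel) in $[0,3]\times B^{3}\cong B^{4}$. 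That last point, as you say, is where the content lies, and your deferral to the explicit tetrahedral pictures is consistent with --- indeed more careful than --- the paper's own treatment. Your parenthetical alternative (peeling off the two $(S^{4},\RP^{2})$ summands one at a time via Lemma~\ref{lem:composite-RP2} and its following remark) is also viable and arguably closer in spirit to how the surrounding text uses the decomposition.
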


\begin{proof}
    This is clear.
\end{proof}

Altogether, we can identify five separating $3$-manifolds in $[0,3]
\times Y$, namely the three $3$-spheres $\SS_{30}$, $\SS_{31}$ and
$\SS_{20}$ obtained as the boundaries of the three balls, and the two
copies of $Y$,
\[
\begin{aligned}
    Y_{2} &= \{1\} \times Y \\
    Y_{1} &= \{2\} \times Y
\end{aligned}
\]
which contain the links $K_{2}$ and $K_{1}$. Just as in
\cite[section~5.2]{KMOS}, each of these five $3$-manifolds intersects
two of the others transversely, in an arrangement indicated schematically in
Figure~\ref{fig:5-surfaces}, and each non-empty intersection is a
2-sphere. 
\begin{figure}
    \begin{center}
        \includegraphics[height=1.9in]{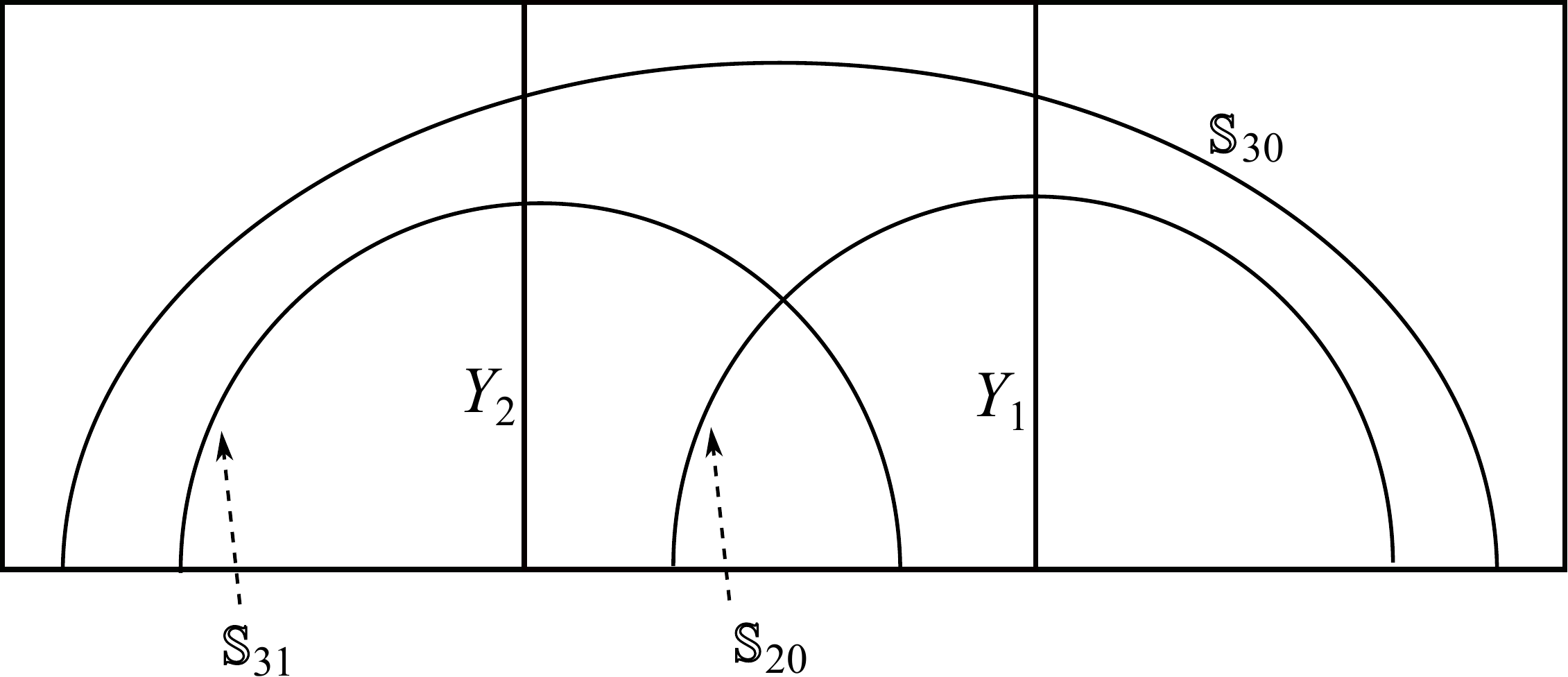}
    \end{center}
    \caption{\label{fig:5-surfaces}
    The five $3$-manifolds, $Y_{2}$, $Y_{1}$, $\SS_{30}$, $\SS_{31}$
    and $\SS_{20}$ in the composite cobordism $([0,3]\times Y,
   \Tigma_{30})$.}
\end{figure}

We can form a family of Riemannian metrics $\bG_{30}$ on this
cobordism whose compactification $\bG^{+}_{30}$ is a 2-dimensional
manifold with corners -- in fact, a pentagon -- parametrizing a
family of broken Riemannian metrics. The five edges of this pentagon
correspond to broken metrics for which the cut is a single one of the
five separating $3$-manifolds,
\[
             \SS \in \{ \SS_{30}, \SS_{31}, \SS_{20} , Y_{2}, Y_{1}\}.
\]
We denote the corresponding face by
\[
        Q(\SS) \subset \bG^{+}_{30}.
\]
The five corners of the pentagon correspond to broken metrics where
the cut has two connected components,  $\SS \cup \SS'$, where
\[
   \{ \SS, \SS' \} \subset \{ \SS_{30}, \SS_{31}, \SS_{20} , Y_{2}, Y_{1}\},
\]
is a pair of $3$-manifolds that do \emph{not} intersect. (There are
exactly five such pairs.) In the neighborhood of each edge and each
corner, the family of metrics has the model form described in
section~\ref{subsec:families}.  As special cases, we have
\[
\begin{aligned}
    Q(Y_{1}) &= \breve{G}_{31} \times \breve{G}_{10} \\
    Q(Y_{2}) &= \breve{G}_{32} \times \breve{G}_{20} .
\end{aligned}
\]

We can, if we wish, regard $\breve{G}_{30}$ as the quotient by
translations of a larger family $G_{30}$ of dimension $3$. We can
regard the previously-defined family $G'_{30} \cong
\R^{+}\times\R^{+}$ as a subset of $G_{30}$ in such a way that its
image in $\bG_{30}$ is the indicated quadrilateral in
Figure~\ref{fig:pentagon}.
\begin{figure}
    \begin{center}
        \includegraphics[height=2in]{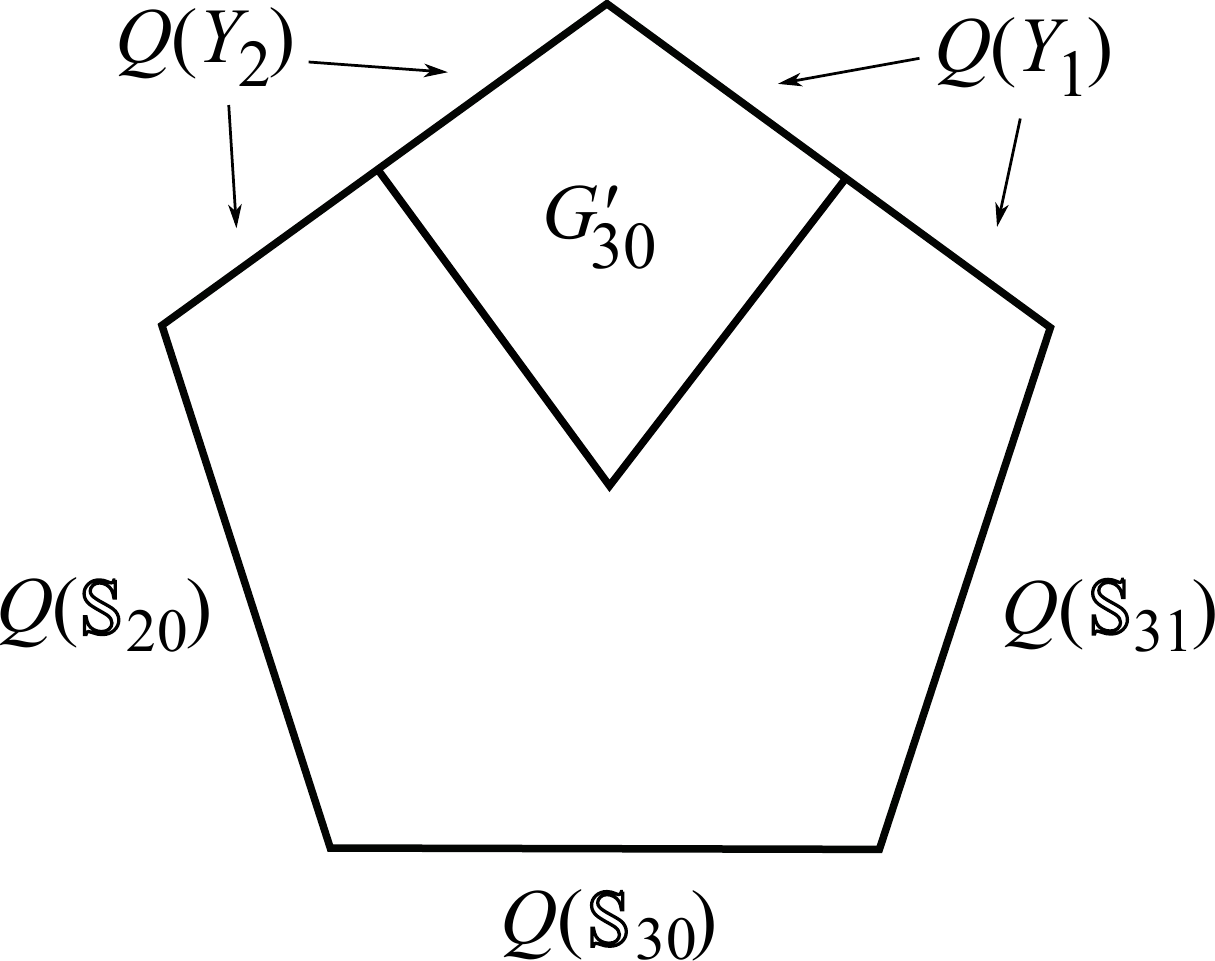}
    \end{center}
    \caption{\label{fig:pentagon}
    The family of metrics $\bG_{30}$ containing the image of family $G'_{30}$.}
\end{figure}

Turning now to the case of arbitrary $N$, we proceed as we did in the
previous subsection. That is, we suppose that we have $w$ and $u$ in
$\Z^{N}$, with $w_{N}-u_{N}=3$ and $w_{j}-u_{j}=0$ or $1$ for
$j<N$. The space of metrics $G'_{wu}$ is the product of $\R^{m-1}$
with a $2$-dimensional 
quadrant, where $m$ is again the number of coordinates in which
$w$ and $u$ differ:
\[
      G'_{wu}  = \R^{m-1} \times G'_{30}.
\]
If we write $w= w'3$ and $u = u'0$ with $w', u' \in
\Z^{N-1}$, then we can identify the $\R^{m-1}$ here with $G_{w'u'}$.
We extend $G'_{wu}$ to a larger family
\[
\begin{aligned}
    G_{wu} &= \R^{m-1} \times G_{30} \\
           &= G_{w'u'} \times G_{30}
\end{aligned}
\]
where $G_{30}\supset G'_{30}$ is the interior of the 
pentagon just described, and we set
$\breve{G}_{wu} = G_{wu}/\R$. By suitably normalizing the coordinates,
we can choose to identify
\[
   \breve{G}_{wu} = G_{w'u'} \times \breve G_{30}.
\]
We can complete $\breve{G}_{wu}$ to a family of broken Riemannian
metrics $\bG^{+}_{wu}$ whose codimension-1 faces are as follows.
\begin{enumerate}
\item First, the faces of the form $\breve{G}_{wv} \times \breve{G}_{vu}$ with
    $w > v > u$, parametrizing metrics broken at $(Y, K_{v})$. These
    we further subdivide as:
\begin{enumerate}
\item \label{item:vN=3} the cases with $v_{N}=w_{N}$ ;
\item\label{item:vN=0} the cases with $v_{N}=u_{N}$ ;
\item \label{item:vN=2} the cases with $v_{N}=w_{N}-1$ (these correspond to the edge
    $Q(Y_{2})$ of the pentagon, in the case $N=1$);
\item \label{item:vN=1} the cases with $v_{N}=w_{N}-2$ (these correspond to the edge
    $Q(Y_{1})$ of the pentagon, in the case $N=1$).
\end{enumerate}
\item \label{item:S-other} Second, the faces of the form $G_{w'u'} \times Q(\SS)$ for $\SS=\SS_{31},
    \SS_{20}$ or $\SS_{30}$.
\end{enumerate}

Our chosen orientation of $\breve G'_{wu}$  determines an
orientation for the larger space $\breve{G}_{wu}$. Over the
compactification $\bG^{+}_{wu}$ we have moduli spaces
$\Mubk_{wu}(\alpha,\beta)$ as usual. Mimicking
Definition~\ref{def:jvu-def},
we define the components of $\bK$ as follows:

\begin{definition}\label{def:kvu-def}
    Given $v \ge u$ in $\Z^{N}$ with $v_{N}- u_{N}=3$ and $v_{j}-u_{j}
    \le 1$ for $j<N$, we define
\[
         k_{vu} : C_{v} \to C_{u}
\]
     by declaring the matrix entry from $\beta$ to $\alpha$ to be the
     signed count of the points in the zero-dimensional 
    moduli space $\Mu_{vu}(\beta,\alpha)_{0}$ (if any), adjusted by
    the overall sign $(-1)^{\msign(v,u)}$, where $\msign(v,u)$ is again defined
    by the formula \eqref{eq:m-formula}.
\CloseDef
\end{definition}

The map $ \bK_{30} :  \bC_{3} \to \bC_{0}$ is defined in terms of
these $k_{vu}$ by
     \begin{equation}\label{eq:bK-pieces}
       \bK_{30} = \sum k_{(v'3)(u'0)} .
\end{equation}
The last stage of the argument is now to prove the formula \eqref{eq:second-homotopy}:

\begin{proposition}
    The anti-chain-map \[ \bF_{00}\bK_{30} + \bK_{30}\bF_{33} + 
                   \bF_{10}\bJ_{31} + \bJ_{20}\bF_{32}\] from
                   $\mathbf{C}_{3}$ to $\mathbf{C}_{0}$ is
                   chain-homotopic to $\pm 1$.
\end{proposition}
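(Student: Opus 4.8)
The plan is to read off \eqref{eq:second-homotopy} by counting endpoints of the one-dimensional moduli spaces $\Mubk_{wu}(\gamma,\alpha)_{1}$ over the pentagonal family of broken metrics $\bG^{+}_{wu}$ (with $w=w'3$, $u=u'0$, $|w'-u'|_{\infty}\le 1$), in exactly the style of the chain-homotopy formula \eqref{eq:chain-homotopy-faces} and of the proof of \eqref{eq:long-hand-j-htpy}. First I would enumerate the codimension-one strata of $\bG^{+}_{wu}$ as in the list following Definition~\ref{def:kvu-def}: the two ``end'' strata coming from a trajectory sliding off the $K_{w}$- or $K_{u}$-end, and the five edges of the pentagon. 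The sign conventions built into $\msign$ in the definitions of $f_{vu}$, $j_{vu}$, $k_{vu}$, together with the boundary-orientation comparison of Lemma~\ref{lem:bG-sign}, are arranged so that the two end strata contribute the terms $\bK_{30}\bF_{33}$ and $\bF_{00}\bK_{30}$, while the edges $Q(Y_{2})=\bG_{32}\times\bG_{20}$ and $Q(Y_{1})=\bG_{31}\times\bG_{10}$ contribute $\bJ_{20}\bF_{32}$ and $\bF_{10}\bJ_{31}$ respectively. This bookkeeping is identical to that in the proof of \eqref{eq:long-hand-j-htpy}, only carried out over a two-dimensional family rather than a one-dimensional one.

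Next I would dispose of the edges $Q(\SS_{31})$ and $Q(\SS_{20})$. Along each of these the metric is broken along a copy of $(S^{3},S^{1})$ bounding the $4$-ball pair $(B^{4},M_{31})$ or $(B^{4},M_{20})$, and capping off the $S^{1}$ by a standard disk turns this into the pair $(S^{4},\RP^{2})$ of section~\ref{subsec:RP2-examples}. This is precisely the connected-sum situation treated at the end of the preceding subsection: the unique critical point of $(S^{3},S^{1})$ is reducible while $(S^{4},\RP^{2})$ carries only irreducible solutions (Proposition~\ref{prop:RP2-example-prop}), so the usual dimension count shows that neither face contributes to the zero- or one-dimensional moduli spaces over $\bG^{+}_{wu}$. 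The endpoint count then reduces to
\[
\bF_{00}\bK_{30}+\bK_{30}\bF_{33}+\bF_{10}\bJ_{31}+\bJ_{20}\bF_{32}+\tilde{\mathbf{Id}}=0,
\]
where $\tilde{\mathbf{Id}}$ is the chain map obtained by counting zero-dimensional moduli spaces over the remaining edge $Q(\SS_{30})$ with its boundary orientation; it remains to prove that $\tilde{\mathbf{Id}}$ is chain-homotopic to $\pm 1$.

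To identify $\tilde{\mathbf{Id}}$, recall that along $Q(\SS_{30})$ the metric is broken along the $3$-sphere $\SS_{30}=\partial B_{30}$, which meets $\Tigma_{30}$ in the two-component unlink bounding the twice-punctured $\RP^{2}$, namely $M_{30}\subset B_{30}$. By Lemma~\ref{lem:two-disks-product} the complementary pair $([0,3]\times Y,\Tigma_{30})\sminus(B_{30},M_{30})$ is the trivial cylinder over $(Y,K_{3})$ with a standard disk-pair $(B^{4},\Delta)$ deleted. A fibre-product description of the moduli spaces over $Q(\SS_{30})$ then expresses $\tilde{\mathbf{Id}}$, up to sign, as the composite of the map induced by this punctured trivial cylinder with the relative invariant of $(B^{4},M_{30})$ in the relevant configuration space of the cut. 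Capping $(B^{4},M_{30})$ with $(B^{4},\Delta)$ produces $(S^{4},\RP^{2})$ with self-intersection $+2$, whose moduli space at $\kappa=1/8$ is a single regular irreducible point by Proposition~\ref{prop:RP2-example-prop}; since the $(B^{4},\Delta)$ factor supplies exactly the single reducible, the relative count attached to $(B^{4},M_{30})$ over the appropriate component of the representation variety of the cut must be $\pm 1$. Finally, gluing that reducible back into the punctured trivial cylinder recovers the honest trivial cylinder, whose induced map is the identity up to chain homotopy, so $\tilde{\mathbf{Id}}\simeq\pm 1$, as required.

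I expect the neck-stretching analysis along $\SS_{30}$ to be the main obstacle: one must make the fibre-product description over $Q(\SS_{30})$ precise while accounting for the fact that the cut $(S^{3},\partial M_{30})$ does not satisfy the non-integral condition (its unperturbed configuration space contains reducibles), and must check that in the relevant dimension only the irreducible gluing parameter arising from the $\kappa=1/8$ solution on $(S^{4},\RP^{2})$ contributes. This is carried out in the monopole setting in \cite[section~5.2]{KMOS}, and the argument transcribes to the present situation with only notational changes, together with the sign bookkeeping already invoked above.
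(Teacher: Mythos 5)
Your proposal tracks the paper's own argument closely through most of its length: the enumeration of the codimension-one faces of $\bG^{+}_{wu}$, the matching of the faces with $v_{N}\in\{0,1,2,3\}$ to the four summations in \eqref{eq:second-homotopy}, and the vanishing of the contributions from $Q(\SS_{31})$ and $Q(\SS_{20})$ by the $(S^{4},\RP^{2})$ connected-sum argument are all as in the paper. The gap is in the final step, where you must show that the contribution $\tilde{\mathbf{Id}}$ of the face $Q(\SS_{30})$ is chain-homotopic to $\pm 1$. That map counts points of the fiber product of the restriction map $r: M(W',\Tigma';\gamma,\alpha)_{0}\to\Crit(\SS_{30},\partial\Delta)=[0,\pi]$ with the map $s: M_{Q(\SS_{30})}(B_{30},M_{30})_{1}\to[0,\pi]$ defined on the one-dimensional moduli space over the one-parameter family of metrics on the punctured-$\RP^{2}$ piece. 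To replace the $(B_{30},M_{30})$ factor by $(B_{30},\Delta)$ — which is what Lemma~\ref{lem:two-disks-product} converts into the identity cobordism — one needs to know that $s$ is a \emph{proper map of degree $\pm1$ onto the interior of $[0,\pi]$}. Your substitute for this, namely that capping $(B_{30},M_{30})$ with $(B_{30},\Delta)$ yields $(S^{4},\RP^{2})$ with a one-point moduli space and that ``the $(B_{30},\Delta)$ factor supplies exactly the single reducible,'' does not work: the flat connections on $(B_{30},\Delta)$ are in bijection with the entire interval $[0,\pi]$ of critical points on the cut (this is precisely what allows each interior critical point to be capped off uniquely), not a single reducible, and a count on the closed pair for one metric cannot by itself establish either the properness of $s$ or its degree.

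The missing geometric input is the analysis of the two ends of the family $Q(\SS_{30})$. Each end corresponds to a connected-sum decomposition $(B_{30},M_{30})=(B_{30},A_{\pm})\#(S^{4},\RP^{2})$ with $A_{+}$ and $A_{-}$ two \emph{different} annuli; the gluing description shows that each end of $M_{Q(\SS_{30})}(B_{30},M_{30})_{1}$ is obtained by gluing the unique irreducible solution on $(S^{4},\RP^{2})$ to the unique flat reducible on $(B_{30},A_{\pm})$, and — crucially — the two annuli induce identifications of the two components of $\partial\Delta$ that differ in orientation, so the limiting values of $s$ at the two ends are the two \emph{distinct} endpoints $0$ and $\pi$. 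This is what forces $s$ to be proper of degree $\pm1$ onto the interior; without it the fiber-product count could just as well be zero (if both ends of the moduli space limited to the same endpoint of the interval). Deferring the neck-stretching analysis to \cite[section~5.2]{KMOS} is reasonable, but this degree computation is the specific point your argument needs and does not supply.
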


\begin{remark}
    Our definitions mean that $\mathbf{C}_{0}$ and $\mathbf{C}_{3}$
    are the same group, but the differential $\bF_{33}$ is
    $-\bF_{00}$, because of the sign $(-1)^{\msign(v,u)}$ in \eqref{eq:m-formula}.
\end{remark}

\begin{proof}
    Let $w \ge u$ be given, with $w=(w',3)$ and $u = (u',0)$, with
    $w', u' \in \{0,1\}^{N-1}$.
    We must prove a formula of the shape:
\begin{equation}\label{eq:k-homotopy-pieces}
    \begin{aligned}
        \sum_{\{ v \mid v_{N}=0\}} f_{vu} k_{wv}  + \sum_{\{ v \mid
          v_{N}=3\}} k_{vu} f_{wv} &+ \sum_{\{ v \mid v_{N}=1\}} f_{vu}
        j_{wv} \\ & + \sum_{\{ v \mid v_{N}=2\}} j_{vu} f_{wv} + \pm n_{wu}
        = 0
    \end{aligned}
\end{equation}
where $n_{wu}$ are the components of a map $\mathbf{N}$
chain-homotopic to $\pm 1$ from $\mathbf{C}_{3}$ to
$\mathbf{C}_{0}$. As usual, the proof goes by equating the
matrix-entry of the left-hand side, from $\gamma$ to $\alpha$, with
the number of ends of an oriented $1$-manifold, in this case the
$1$-manifold
\[
    \Mubk_{wu}(\gamma,\alpha)_{1}.
\]
As such, the above formula has again the same architecture as the
general chain-homotopy formula  \eqref{eq:chain-homotopy-faces}. In
the latter formula, the terms $\partial \circ m_{G}$ and
$m_{G}\circ\partial$ correspond to 
special cases of the first two terms of
\eqref{eq:k-homotopy-pieces}, 
of the special form
\[
    f_{uu} k_{wu} \qquad\text{or} \qquad k_{wu} f_{ww}.             
\]
With the exception of these terms and the term $\pm n_{wu}$, the terms
in \eqref{eq:k-homotopy-pieces} in the four summations are the
contributions from the first four types of faces of $\bG^{+}_{wu}$. Specifically,
the case \ref{item:vN=3} gives
rise to the terms $k_{vu}f_{wv}$ with $w>v>u$ in
\ref{eq:k-homotopy-pieces}; the case \ref{item:vN=0} gives rise
similarly to the terms $f_{vu}k_{wv}$; the cases \ref{item:vN=2} and
\ref{item:vN=1} provide the terms $j_{vu}f_{wv}$ and
$f_{vu}j_{wv}$. 

The terms from faces \ref{item:S-other} of type
$G_{w'u'}\times Q(\SS_{31})$ and $G_{w'u'}\times Q(\SS_{20})$ are all
zero, for the same reason as in the previous subsection: for these
families of broken metrics we have pulled off a connect-summand
$(S^{4}, \RP^{2})$.

What remains is the contribution corresponding to the face of the form
$G_{w'u'}\times Q(\SS_{30})$. We will complete the proof of the lemma by
showing that these contributions are
the matrix entries of a map $n_{wu}$ which is chain-homotopic to $\pm
1$. That is, we define $n_{wu}(\gamma,\alpha)$ by counting with sign the
ends of $M_{wu}(\gamma,\alpha)_{1}$ which lie over this face; we
define $n_{wu}$ to be the map with matrix entries
$n_{wu}(\gamma,\alpha)$, and we define $\mathbf{N}$ to be the map
$\mathbf{C}_{3}$ to $\mathbf{C}_{0}$ whose components are the
$n_{wu}$. With this understood, we then have
\[
   \bF_{00}\bK_{30} + \bK_{30}\bF_{33} + 
                   \bF_{10}\bJ_{31} + \bJ_{20}\bF_{32}  \pm \mathbf{N}
                   = 0.
\]
From this it follows formally that $\mathbf{N}$ is an anti-chain map; and
to complete the proof of the proposition, we must show:
\begin{equation}
    \label{eq:to-prove-N}
     \text{\itshape{the map $\mathbf{N}$ is chain-homotopic to the identity.}}
\end{equation}

The face $G_{w'u'}\times Q(\SS_{30})$ parametrizes metrics on a broken
Riemannian manifold with two components, obtained by cutting along
$\SS_{30}$. Recall that $\SS_{30}$ is a $3$-sphere meeting the embedded
surface $\Tigma_{wv}$ in a $2$-component unlink. One component of the
broken manifold is the pair
$(B_{30}, M_{30})$, equipped with a cylindrical end, were $B_{30}$ is
the standard $4$-ball described above: it contains the embedded surface $M_{30}$
obtained by plumbing two M\"obius bands. The second component has
three cylindrical ends: we denote it by $(W', \Tigma')$, and it is 
obtained by
removing $B_{30}$ from $(\R\times Y, \Tigma_{wu})$ 
and attaching a cylindrical end. The manifold-pair
$(B_{30},M_{30})$ carries the $1$-parameter family of metrics
$Q(\SS_{30})$, obtained by stretching along $\SS_{20}$ or $\SS_{31}$ as
$T\to -\infty$ or $+\infty$ respectively, while the cobordism $W'$
with cylindrical ends carries a family of metrics $G_{w'u'}$. The
dimension of $G_{w'u'}$ is equal to $|w'-u'|_{1}$. 

As in Lemma~\ref{lem:two-disks-product}, we consider now the cobordism 
$(\bar W, \bar\Tigma)$ obtained from $(W' ,\Tigma')$ by
attaching to the $\SS_{30}$ end a pair $(B_{30}, \Delta)$, where
$\Delta$ is a pair of standard disks in the 4-ball $B_{30}$ having
boundary the unlink.  The manifold $\bar W$ is topologically a
cylinder on $Y$, and in the case $N=1$ the embedded surface
$\bar\Tigma$ is also a trivial, cylindrical cobordism from
$K_{3}$ ($=K_{0}$) to $K_{0}$, as Lemma~\ref{lem:two-disks-product}
states. 
For larger $N$, we can identify
$\bar\Tigma$ with the cobordism standard $\Tigma_{\bar{w}u}$ from
$K_{\bar{w}}$ to $K_{u}$, where $\bar{w} = w'0$ and $u=u'0$. (So
$K_{\bar w}$ is the same link as $K_{w}=K_{w'3}$.) 

\begin{lemma}
   The cobordism $(I\times Y, \Tigma_{\bar{w}u})$ from $K_{\bar{w}}$
   to $K_{u}$, equipped with the family of metrics $G_{\bar{w}u}$,
   gives rise to the identity map from $C_{\bar{w}}$ to $C_{u}$ in the
   case $\bar{w}=u$ and the zero map otherwise. 
\end{lemma}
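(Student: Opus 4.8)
The plan is to reduce the statement to two cases and dispatch each by a short, standard argument. By Lemma~\ref{lem:two-disks-product} and the discussion preceding the statement, $(\bar W,\bar\Tigma)$ is the standard cobordism $\Tigma_{\bar w u}$ with $\bar w=w'0$ and $u=u'0$, where $w',u'\in\{0,1\}^{N-1}$ and $w'\ge u'$; thus $(\bar w,u)$ is a singular $1$-simplex, and $G_{\bar w u}$ is the family of metrics attached to such a simplex, namely a copy of $\R^{d}$ with $d=|\bar w-u|_{1}=|w'-u'|_{1}$, on which $\R$ acts by simultaneously translating the coordinates $\tau_{i}$.

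First suppose $w'=u'$, so $\bar w=u$ and $\Tigma_{\bar w u}=\Tigma_{uu}$ is the product cobordism $[0,1]\times K_{u}$ in $[0,1]\times Y$, with $G_{\bar w u}$ a single product metric. With the fixed auxiliary data $\aux_{u}$ on both ends, the product metric, and vanishing secondary perturbation, the only index-zero solution on $\R\times Y$ asymptotic to $\beta$ and $\alpha$ is the constant one, which occurs exactly when $\beta=\alpha$; every nonconstant solution lies in a free $\R$-orbit and hence has index at least $1$. So counting the points of $M(\bar W,\bar\Tigma;\beta,\alpha)_{0}$ yields the identity chain map $C_{\bar w}\to C_{u}$, and with our fixed $I$-orientations $\mu_{vu}$ the sign comes out $+$, exactly as in the standard product-cobordism computation of \cite[Section~25]{KM-book} and \cite{KM-knot-singular}.

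Now suppose $w'\neq u'$, so $d\ge 1$. Then $\R$ acts freely on $G_{\bar w u}=\R^{d}$, and this action lifts compatibly to the parametrized moduli spaces $M_{\bar w u}(\beta,\alpha)\to G_{\bar w u}$. A point of the zero-dimensional part $M_{\bar w u}(\beta,\alpha)_{0}$ would then have a free, hence one-dimensional, $\R$-orbit sitting inside a finite set, which is impossible; equivalently, passing to the quotient by translations gives $M_{\bar w u}(\beta,\alpha)=\R\times\Mu_{\bar w u}(\beta,\alpha)$ over $G_{\bar w u}=\R\times\bG_{\bar w u}$, so a zero-dimensional component of the total space would force $\Mu_{\bar w u}(\beta,\alpha)$ to have dimension $-1$. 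Hence $M_{\bar w u}(\beta,\alpha)_{0}=\emptyset$ for all $\beta,\alpha$, and the induced map is zero.

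The lemma itself is therefore routine; the step that takes real care — but which belongs to the surrounding argument, not to the lemma — is the neck-stretching and gluing identification that lets one read off the face contributions $n_{wu}$ (counted over $G_{w'u'}\times Q(\SS_{30})$) from the cobordism $(\bar W,\bar\Tigma,G_{\bar w u})$. As with the $(S^{4},\RP^{2})$ summands treated above, this uses that the cap $(B_{30},\Delta)$ is a pair of standard disks in a $4$-ball whose relevant flat limit matches the $\SS_{30}$ end of $(W',\Tigma')$, so that attaching it alters neither the parametrized moduli spaces over $G_{w'u'}$ nor their orientations. Granting that standard analysis, the present lemma is exactly what is needed to evaluate $\mathbf{N}$: its off-diagonal components vanish and its diagonal components reduce to the contribution of a trivial cylinder, whence $\mathbf{N}$ is chain-homotopic to the identity, completing the proof of Proposition~\ref{prop:blocks}.
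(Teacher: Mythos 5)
Your proof is correct and takes essentially the same route as the paper's own (two-sentence) argument: the redundant overall-translation $\R$-factor acting freely on $G_{\bar{w}u}$ and on the parametrized moduli spaces forces any zero-dimensional component to consist of translation-invariant solutions, which exist only when $\Tigma_{\bar{w}u}$ is the product cylinder and there yield exactly the identity. Your case split and the dimension count via $M_{\bar w u}=\R\times\Mu_{\bar w u}$ just make explicit what the paper compresses into ``counts only translation-invariant instantons.''
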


\begin{proof}
    The family of metrics $G_{\bar{w}u}$ includes the redundant $\R$
    factor, so the induced map counts only translation-invariant
    instantons. These exist only when $\bar{w}=u$, in which case they
    provide the identity map.
\end{proof}

In light of the lemma, we can prove the
assertion~\eqref{eq:to-prove-N}, if we can show that
$\mathbf{N}$ is chain-homotopic (up to an overall sign) to the map
obtained from the cobordism $\bar\Tigma$ with the family of metrics
$G_{\bar{w}u}$. We will do this by introducing a third map,
$\mathbf{N}'$, whose components $n_{wu}'$ count solutions on the
pair $(W' ,\Tigma')$ with its three cylindrical ends. 

To define $\mathbf{N}'$ in more detail, recall again that the third end of
this pair is a cylinder on the pair $(\SS_{30}, \partial \Delta)$, which
is a 2-component unlink.  For the pair $(\SS_{30},\partial \Delta)$, 
the critical points comprise a closed interval 
  \[ \Crit(\SS_{30}, \partial \Delta) = [0,\pi]. \] 
To
see this, note that the fundamental group of the link complement is free on
two generators and we are looking at homomorphisms from this free
group to $\SU(2)$ which send each generator to a point in the
conjugacy class of our preferred element \eqref{eq:bi}. This
conjugacy class is a $2$-sphere, and (up to conjugacy) the
homomorphism is determined by the great-circle distance between the
images of the two generators. In this closed interval, the interior
points represent irreducible representations, while the two endpoints
are reducible. To be more precise, in 
order to identify the critical points with $[0,\pi]$
in this way, we need to choose a relative orientation of the two
components of the unlink $\partial \Delta$, because without any
orientations the two generators of the free group are well-defined
only up to sign. Changing our choice of orientation will change our
identification by flipping the interval $[0,\pi]$.

For a generic perturbation of the equations, any solution
on $(W' ,\Tigma')$ lying in a zero-dimensional moduli space is
asymptotic to a critical point in the interior of the interval on this
end \cite[Lemma 3.2]{K-obstruction}. We define $n'_{wu}(\gamma,\alpha)$
by counting these solutions over the family of metrics $G_{w'u'}$, 
and we define $\mathbf{N}'$ as usual in
terms of its components $n'_{wu}(\gamma,\alpha)$.

Each critical point on $(\SS_{30},\partial\Delta)$ extends uniquely to a
flat connection on the pair $(B_{30},\Delta)$. So we can regard
$\mathbf{N}'$ also as obtained by counting solutions on the broken
manifold with two pieces: $(W',\Tigma')$ and $(B_{30},\Delta)$, with
their cylindrical ends. Since this broken manifold is obtained in turn
from $(I\times Y, \Tigma_{\bar{w}u})$ by stretching across $\SS_{30}$,
we see by an argument similar to the previous ones that $\mathbf{N}'$ is
chain-homotopic to the map arising from the cobordism $(I\times Y,
\Tigma_{\bar{w}u})$ with its family of metrics $G_{\bar{w}u}$: i.e.~to
the identity map, by the lemma. (See also
\cite[section~3.3]{K-obstruction}.) All that remains now is to prove:
\begin{equation}
     \text{$\mathbf{N} = \mathbf{N}'$, \itshape{up to an overall sign}.}
\end{equation}

The components $n'_{wu}(\gamma,\alpha)$ of $\mathbf{N}'$ count the
points of the moduli spaces $M(W',\Tigma' ; \gamma, \alpha)_{0}$ on the
three-ended manifold $W'$, and as stated above, this moduli space
comes with a map to the space of critical points on the $\SS_{30}$ end:
\[
\begin{aligned}
    r: M(W',\Tigma' ; \gamma, \alpha)_{0} &\to
    \Crit(\SS_{30}, \partial \Delta)  \\
    &= [0,\pi].
\end{aligned}
\]
The components $n_{wu}(\gamma,\alpha)$ of $\mathbf{N}$ on the other
hand count the points of a fiber product of the map $r$ with a map
\[
        s: M_{Q(\SS_{30})}(B_{30},M_{30})_{1} \to  \Crit(\SS_{30}, \partial \Delta)
\]
where the left-hand side is the $1$-dimensional part of the moduli
space on the pair $(B_{30}, M_{30})$ equipped with a cylindrical end
and carrying the $1$-parameter family of metrics $Q(\SS_{30})$. To show
that $\mathbf{N} = \mathbf{N}'$ up to sign, it suffices to show that the map
$s$ is a proper map of degree $\pm 1$ onto the interior of the
interval $[0,\pi]$. (The actual sign here depends on a choice of
orientation for the moduli space  $M_{Q(\SS_{30})}(B_{30},M_{30})$.) 

The two ends of the family of metrics $Q(\SS_{30})$ on $(B_{30},
M_{30})$ correspond to two different connected-sum decompositions of
$(B_{30}, M_{30})$, both of which have the form
\[
          (B_{30}, M_{30}) = (B_{30}, A) \# (S^{4}, \RP^{2})
\]
where $A$ is a standard annulus in the 4-ball, with $\partial
A=\partial \Delta$, and $\RP^{2}$ is (as before) a standard $\RP^{2}$
with positive self-intersection. Since these are two different
decompositions, we really have two different annuli $A$ involved here;
so we should write the first summand as $(B_{30}, A_{+})$ or $(B_{30},
A_{-})$ to distinguish the two cases.  The two annuli can be
distinguished as follows: either annulus determines a preferred
isotopy-class of diffeomorphisms between its two boundary components
(the two components of the unlink $\partial  \Delta$); but the annuli
$A_{+}$ and $A_{-}$ determine isotopy classes of diffeomorphisms with
the opposite orientation.

Considering the gluing problem for
this connected sum, we see that the parametrized moduli
space has two ends, one for each end of the parameter space
$Q(\SS_{30})$, and that each end is obtained by gluing the standard
irreducible solution on $(S^{4}, \RP^{2})$ to a flat, reducible
connection on $(B_{30},A_{\pm})$. The limiting value of the map $s$ on
the two ends is equal to critical point in
$\Crit(\SS_{30},\partial\Delta)$ arising as the restriction to
$(\SS_{30},\partial\Delta)$ of the unique flat solution on
$(B_{30},A_{\pm})$. In each case, this value is one of the two ends of
the interval $[0,\pi]$; and if $A_{+}$ gives rise to the endpoint $0
\in [0,\pi]$, then $A_{-}$ will give rise to the endpoint $\pi$,
because the two different annuli provide identifications of the two
boundary components that differ in orientation, as explained above.
\end{proof}

\subsection{\texorpdfstring{The absolute $\Z/4$ grading}{The absolute
    Z/4 grading}}

We return briefly to Theorem~\ref{thm:big-cube}, which expresses the
existence of a quasi-isomorphism between two complexes. The
complex $(C_{w}, f_{ww})$ is just the complex that computes
$I^{\omega}(K_{w})$, to within an immaterial change of sign in
$f_{ww}$; so this complex carries a relative $\Z/4$ grading. In the
spirit of Proposition~\ref{prop:canonical-Z/4}, we can fix absolute
$\Z/4$ gradings on all the complexes $C_{v}$, for $v\in \Z^{N}$, in
such a way that the maps $f_{vv'}$ when $|v-v'|_{1}=1$ have degree
\[
        -\chi(\Tigma_{vv'}) - b_{0}(K_{v}) + b_{0}(K_{v'})
         =  1 - b_{0}(K_{v}) + b_{0}(K_{v'}).
\]
For general $v \ge u$, let us also write
\[
\begin{aligned}
    \iota(v,u) &:= -\chi(\Tigma_{vu}) - b_{0}(K_{v}) +
    b_{0}(K_{u}) \\
    &= |v-u|_{1} - b_{0}(K_{v}) + b_{0}(K_{u})
\end{aligned}
\]
Let us denote by $\shift{n}$ a shift of
grading by $n$ mod $4$, so that if $A$ has a generator in degree $i$ then
$A\shift{n}$ has a generator in degree $i-n$. Then the cobordism
$\Tigma_{vu}$ equipped with just a fixed metric (not a family)
induces a chain map of degree $0$,
\[
            C_{v} \to C_{u}\shift{\iota(v,u)}.
\]

Having fixed an absolute $\Z/4$ grading for $(C_{w}, f_{ww})$ in this
way, we can ask how we may grade the other complex $(\bC[vu],
\bF[vu])$ in Theorem~\ref{thm:big-cube} so that the quasi-isomorphism
respects the $\Z/4$ grading. 
Let us then refine the
definition of $\bC[vu]$ by specifying a grading mod $4$:
\[
 \bC[vu] =  \bigoplus_{  v\ge v' \ge u} C_{v'}\shift{j(v')},
\]
where
\[
\begin{aligned}
    j(v') &= -\iota(v',w) - |v'-u|_{1} \\
    &= -\iota(v',u) - |v'-u|_{1} - n + b_{0}(K_{u}) - b_{0}(K_{w})
\end{aligned}
\]
where $n=|v-u|_{1}$ (the dimension of the cube).
With this definition, it is easily verified that the differential
$\bF[vu]$ has degree $-1$. So $(\bC[vu], \bF[vu])$ is another
$\Z/4$-graded complex. We then have the following refinement of the
theorem:

\begin{proposition}\label{prop:Z4-grading-cube}
    If $C_{w}$ and $\bC[vu]$ are given absolute $\Z/4$ gradings as
    above, then the quasi-isomorphism of Theorem~\ref{thm:big-cube}
    becomes a quasi-isomorphism
    \[
                  C_{w}   \to  \bC[vu] 
    \]
    of $\Z/4$ graded complexes.
\end{proposition}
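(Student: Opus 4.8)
The plan is to check that the quasi-isomorphism constructed in the proof of Theorem~\ref{thm:big-cube} is automatically a map of $\Z/4$-graded complexes, once the gradings are normalized as stated. Since Theorem~\ref{thm:big-cube} is proved by iterating Proposition~\ref{prop:blocks}, it suffices to track the $\Z/4$ grading through one application of that proposition, i.e. through the chain map $\Phi: C_w \to \mathrm{Cone}(f)$ provided by the algebraic lemma of \cite{OS-double-covers}, whose components are $f_{vu}$ and $j_{vu}$. So the first step is to record the mod-$4$ degree of each of the elementary maps $f_{vu}$, $j_{vu}$, $k_{vu}$ that enter the construction: a fixed-metric cobordism $\Tigma_{vu}$ induces a chain map of degree $-\iota(v,u)$ after the shift $\shift{\iota(v,u)}$ (this is the content of Proposition~\ref{prop:canonical-Z/4} applied to the cobordisms $\Tigma_{vu}$, whose $\iota$-invariant reduces to $-\chi(\Tigma_{vu}) - b_0(K_v) + b_0(K_u) = |v-u|_1 - b_0(K_v)+b_0(K_u)$ because the $4$-manifold is a cylinder); and a map defined by counting over a $p$-dimensional family of metrics $\bG_{vu}$ drops the degree by a further $p$. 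Thus $j_{vu}$ (counting over $\bG_{vu}$, which has dimension $|v-u|_1 - 1$) and $k_{vu}$ (dimension $|v-u|_1 - 1$ again, since $\bG_{30}$ is $2$-dimensional while $G'_{30}$ is, and the relevant $\bG_{wu}$ has dimension $|w-u|_1-1$) both lower the $\Z/4$-grading by exactly $\iota(v,u) + (|v-u|_1-1)$ relative to unshifted complexes.

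Given these degree counts, the second step is purely bookkeeping: verify that with the shifts $j(v') = -\iota(v',w) - |v'-u|_1$ built into $\bC[vu]$, every matrix entry $f_{v'u'}$ (with $v' \ge u'$) of $\bF[vu]$ has degree exactly $-1$. Concretely, $f_{v'u'}$ carries $C_{v'}\shift{j(v')}$ to $C_{u'}\shift{j(u')}$; using that $f_{v'u'}$ counts over a family of dimension $|v'-u'|_1-1$ (or, when $v'=u'$, is $\pm d_{v'}$, of degree $-1$) and that $\iota$ is additive along composable cobordisms so that $\iota(v',w) = \iota(v',u') + \iota(u',w)$ whenever $w \ge v' \ge u'$... but here we need $v', u'$ between $v$ and $u$ and $w=2v-u$, so $\iota(v',w)$ is defined via the cobordism from $K_{v'}$ to $K_w$; additivity of $\chi$, $b_0$ and $|\cdot|_1$ under concatenation of the standard cobordisms gives the required identity $j(v') - j(u') = -\iota(v',u') - (|v'-u'|_1)$, and combined with the intrinsic degree $-\iota(v',u') - |v'-u'|_1$ wait, that is $\deg f_{v'u'}$ as a map of unshifted complexes equals $-(\iota(v',u')+|v'-u'|_1-1)$ for $v'>u'$; adding the shift correction $j(v')-j(u')$ one obtains total degree $-1$. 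This is exactly the computation indicated in the excerpt as ``easily verified,'' and I would present it as a short lemma recording $\deg f_{v'u'}$, $\deg j$, $\deg k$ and then a one-line check.

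The third step is to confirm that the quasi-isomorphism $C_w \to \bC[vu]$ itself has degree $0$ with these conventions. The top vertex of the cube $\bC[vu]$ is $v$, and the map $C_w \to \bC[vu]$ built from Proposition~\ref{prop:blocks} lands, in its leading term, in the summand $C_v$; but more relevantly the iterated construction from Theorem~\ref{thm:big-cube} sends $C_w$ through the maps $\Phi$ whose components are the $f$'s and $j$'s already accounted for. Since $\bC[vu]$ was graded precisely by declaring $j(v')$ so that the reference vertex $v$ (where $\iota(v,w) = \iota(v, 2v-u)$ and $|v-u|_1 = n$) receives the shift making it agree with $C_w\shift{\iota(v,w)}$, one checks $j(v) = -\iota(v,w) - n$, and the leading component $C_w \to C_v\shift{j(v)}$, being induced by $\Tigma_{wv}$ over a family of dimension $n$... so it has degree $-\iota(w,v) - n$ on unshifted complexes wait --- the cobordism here goes from $K_w$ to $K_v$; by the remark after Lemma~\ref{lem:composite-RP2} and the general additivity, $\iota(w,v)$ cancels appropriately against $j(v)$, giving degree $0$. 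I would phrase this last point by simply noting that the quasi-isomorphism is a sum of composites of the elementary maps each of whose degrees has been pinned down, and degree-$0$ follows by the same additivity.

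The main obstacle, such as it is, is purely notational: keeping straight the three distinct contributions to the mod-$4$ degree of each elementary map --- the Euler-characteristic term $-\chi(\Tigma)$, the $b_0$ correction, and the dimension of the family of metrics --- and checking that the additivity of all three under concatenation of the standard cobordisms $\Tigma_{vu}$ (which is immediate from $\Tigma_{wu} = \Tigma_{vu}\circ\Tigma_{wv}$ and the corresponding additivity of $\chi$, $b_0$, $|\cdot|_1$) makes the shifts $j(v')$ telescope correctly. There is no new geometry; Proposition~\ref{prop:canonical-Z/4} already supplies the only nontrivial input, namely the absolute $\Z/4$ degree of a single cobordism map. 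So the proof is a short verification, and I would write it as: (i) state the degree of $f_{v'u'}$, $j_{vu}$, $k_{vu}$ with brief justification citing Proposition~\ref{prop:canonical-Z/4} and the fiber-first orientation convention for families of metrics; (ii) substitute into the definitions of $\bF[vu]$ and $\bC[vu]$ to see that $\bF[vu]$ has degree $-1$; (iii) observe that the quasi-isomorphism of Theorem~\ref{thm:big-cube}, being assembled from these maps, therefore has degree $0$.
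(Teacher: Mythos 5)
Your proposal follows essentially the same route as the paper: the only substantive input is the degree formula of Proposition~\ref{prop:canonical-Z/4} applied to the elementary maps, and the proof reduces to checking, via additivity of $\chi$, $b_{0}$ and $|\cdot|_{1}$ under composition of the standard cobordisms, that the leading component $C_{w}\to C_{v}\shift{j(v)}$ induced by $\Tigma_{wv}$ (Euler number $-n$) has degree $0$, all other components being forced once each of the $n$ maps in the composite is known to have a well-defined $\Z/4$ degree. The paper's proof is just a tighter version of your steps (i)--(iii) — it takes the degree-$(-1)$ property of $\bF[vu]$ as already verified before the proposition and only computes the one component — so the two arguments coincide in substance.
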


\begin{proof}
    The quasi-isomorphism is exhibited in the proof of
    Theorem~\ref{thm:big-cube} as the composite of $n$ maps,
    each of which (as is easy to check) has a well-defined $\Z/4$
    degree. The composite map has a component
    \[
           C_{w} \to C_{v}\shift{j(v)} \subset \bC[vu] 
     \]
    which is the map induced by the cobordism $\Tigma_{wv}$. The Euler
    number of this surface is $-n$, so the map
    $C_{w}\to C_{v}$ induced by $\Tigma_{wv}$ has degree
    \[
          \iota(w,v)=    n - b_{0}(K_{w}) + b_{0}(K_{v})
     \]
    with respect to the original $\Z/4$ gradings. This last quantity
    coincides with $j(v)$; so the map has degree $0$ as a map
     \[
             C_{w} \to C_{v}\shift{j(v)} \subset \bC[vu].
     \]
\end{proof}

\section{\texorpdfstring{Unlinks and the $E_{2}$ term}{Unlinks and the
  E-2 term}}
\label{sec:unlinks-e-2}

\subsection{Statement of the result}

We now turn to classical knots and  links $K$, and invariants
$\Inat(K)$ and $\Isharp(K)$ introduced in
section~\ref{subsec:classical} 
above.  
We will focus on the \emph{unreduced} version, $\Isharp(K)$, and
return to the reduced version later. Recall that we have defined
\[
         \Isharp (K)  = I^{\omega}(S^{3}, K \amalg \Hopf)
\]
where $K$ is regarded as a link in $\R^{3}$ and $\Hopf$ is a standard Hopf
link near infinity, with $\omega$ an arc joining the components of $\Hopf$.
From this definition, it is
apparent that the results of section~\ref{sec:cubes} apply equally
well to the invariant  $\Isharp(K)$ as they do to $I^{\omega}(Y,K)$ in
general. Thus for example, if $K_{2}$, $K_{1}$ and $K_{0}$ are links
in $S^{3}$ which differ only inside a single ball, as in 
Figure~\ref{fig:Tetrahedra-skein}, then there is a skein exact sequence
\[
          \cdots \to \Isharp(K_{2}) \to \Isharp(K_{1}) \to
          \Isharp(K_{0}) \to \cdots
\]
in which the maps are induced by the elementary cobordisms
$\Tigma_{21}$ etcetera. More generally, we can consider again a
collection of links $K_{v}$ indexed by $v\in\{0,1,2\}^{N}$ which
differ by the same unoriented skein relations in a collection of $N$
disjoint balls in $\R^{3}$. From Corollary~\ref{cor:spectral-sequence}
we obtain:

\begin{corollary}\label{cor:spectral-sequence-hat}
For links $K_{v}$ as above, there is a spectral sequence whose $E_{1}$ term is
\[
          \bigoplus_{v \in \{0,1\}^{N}} 
                \Isharp(K_{v})
\]
and which abuts to the instanton Floer homology $\Isharp(K_{v})$,
for $v=(2,\dots,2)$.  The differential $d_{1}$ is the sum of the maps
induced by the cobordisms $\Tigma_{vu}$ with $v>u$ and $|v-u|=1$,
equipped with $I$-orientations satisfying the conditions of
Lemma~\ref{lem:orientation-consistency} and corrected by the signs
$(-1)^{\tilde{\delta}(v,u)}$ as given in
Corollary~\ref{cor:spectral-sequence-signs}.  \qed
\end{corollary}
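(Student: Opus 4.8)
The plan is to observe that $\Isharp$ is literally an instance of the invariant $I^{\omega}(Y,K)$ to which all of Section~\ref{sec:cubes} applies, and then to quote Corollaries~\ref{cor:spectral-sequence} and~\ref{cor:spectral-sequence-signs} verbatim. First I would set $Y=S^{3}$, fix a standard Hopf link $\Hopf$ and the connecting arc $\omega$ "near infinity", and take the $N$ balls $B_{1},\dots,B_{N}$ in which the skein moves take place to be disjoint from $\Hopf$ and from $\omega$. With this choice the family of links $K_{v}\amalg \Hopf$, for $v\in\{0,1,2\}^{N}$, is exactly a family of the type considered in Section~\ref{sec:cubes}, and the fixed $\omega$ serves simultaneously for all $v$ (it is disjoint from every ball, as required on page~\pageref{page:Gvu-setup} and in the hypothesis preceding Lemma~\ref{lem:orientation-consistency}).

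The one point that needs a remark is the standing non-integral hypothesis imposed in Section~\ref{sec:cubes}: we must check that $(S^{3},K_{v}\amalg\Hopf,\omega)$ satisfies the non-integral condition of Definition~\ref{def:non-int} for \emph{every} $v$. This is immediate, because $\omega$ has exactly one endpoint on each component of $\Hopf$, so one component of $K_{v}\amalg\Hopf$ (a component of $\Hopf$) contains an odd number of endpoints of $\omega$; equivalently, a small torus $T$ linking that component of $\Hopf$ is disjoint from the link and has $w_{2}(P)$ non-zero, so $T$ is a non-integral surface. Since $\Hopf$ and $\omega$ are unchanged as $v$ varies, the same surface works for all $v$, and Hypothesis~\ref{hyp:morse-smale} can be arranged uniformly.

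With this in place, Corollary~\ref{cor:spectral-sequence}, applied to the family $\{K_{v}\amalg\Hopf\}$, produces a spectral sequence with
\[
   E_{1}=\bigoplus_{v\in\{0,1\}^{N}} I^{\omega}_{*}(S^{3},K_{v}\amalg\Hopf)
        =\bigoplus_{v\in\{0,1\}^{N}}\Isharp(K_{v}),
\]
abutting to $I^{\omega}_{*}(S^{3},K_{w}\amalg\Hopf)=\Isharp(K_{w})$ with $w=(2,\dots,2)$, as claimed. For the identification of $d_{1}$ I would invoke Corollary~\ref{cor:spectral-sequence-signs}: the cube cobordisms $\Tigma_{vu}$ on $([0,1]\times S^{3},\,K_{\bullet}\amalg\Hopf)$ are products on the $\Hopf$ factor, so they agree with the elementary skein cobordisms $\Tigma_{vu}$ between the $K_{v}$, and the statement about the signs $(-1)^{\tilde\delta(v,u)}$ and the compatibility conditions of Lemma~\ref{lem:orientation-consistency} carries over word for word.

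\textbf{Main obstacle.} There is essentially no new difficulty: the only substantive thing to verify is the uniform non-integral condition for the whole family $K_{v}\amalg\Hopf$, and that is a one-line check. The remainder is bookkeeping to see that the "Hopf link at infinity" is compatible with the cube-of-resolutions apparatus of Section~\ref{sec:cubes}.
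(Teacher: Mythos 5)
Your proposal is correct and follows exactly the route the paper takes (the paper leaves the argument implicit with a \qed, observing only that "the results of section 5 apply equally well to $\Isharp$"): one applies Corollary~\ref{cor:spectral-sequence} and Corollary~\ref{cor:spectral-sequence-signs} to the family $K_{v}\amalg\Hopf$ with $\omega$ disjoint from the $N$ balls. Your explicit verification of the non-integral condition via the endpoints of $\omega$ on the components of $\Hopf$ is precisely the check the paper relies on from section~\ref{subsec:classical}.
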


Let $K$ be a link in $\R^{3}\subset S^{3}$ with a planar projection
giving a diagram $D$ in $\R^{2}$. Let $N$ be the number of crossings
in the diagram. As in \cite{Khovanov}, we can consider the $2^{N}$
possible smoothings of $D$, indexed by the points $v$ of the cube
$\{0,1\}^{N}$,  with the conventions of
\cite{Khovanov,Rasmussen-slice}, for example. This labeling of the
smoothings is consistent with the convention illustrated in
Figure~\ref{fig:Tetrahedra-skein}. This gives  $2^{N}$ different
unlinks $K_{v}$. For each $v\ge u$ in $\{0,1\}^{N}$,
we have our standard cobordism $\Tigma_{vu}$ from $K_{v}$ to $K_{u}$.

We can consistently orient all the links $K_{v}$, for
$v\in\{0,1\}^{N}$, and all the cobordisms $\Tigma_{vu}$, so that
$\partial \Tigma_{vu} = K_{u} - K_{v}$.  To do this, start with a
checkerboard coloring of the regions of the diagram $D$, and simply
orient each $K_{v}$ so that, away from the crossings and their
smoothings, the orientation of $K_{v}$ agrees with the boundary
orientation of the black regions of the checkerboard
coloring. We
can then give each $\Tigma_{vu}$ the $I$-orientation is obtains as an
oriented surface. The resulting
$I$-orientations respect composition: they satisfy the conditions of
Lemma~\ref{lem:orientation-consistency}.

We therefore apply Corollary~\ref{cor:spectral-sequence-hat} to this
situation. We learn that there is a spectral sequence abutting to
$\Isharp(K)$ whose $E_{1}$ term is
\[
   E_{1}= \bigoplus_{v\in \{0,1\}^{N}} \Isharp(K_{v}).
\]
In this sum, each $K_{v}$ is an unlink. The differential $d_{1}$ is
\begin{equation}
\label{eq:d1-sum}
     d_{1} = \sum_{v\ge u} (-1)^{\tilde{\delta}(v,u)} \Isharp(\Tigma_{vu}),
\end{equation}
where each cobordism $\Tigma_{vu}$ is obtained from a ``pair of
pants'' that either joins two components into one, or splits one
component into two.
We can consider the spectral
sequence of Corollary~\ref{cor:spectral-sequence-hat} in this
setting, about which we have the following result.

\begin{theorem}\label{thm:E2-is-Kh}
    In the above situation, the page $(E_{1},d_{1})$ of the spectral sequence
    furnished by Corollary~\ref{cor:spectral-sequence-hat}     
    is isomorphic (as an abelian group with differential) to the
    complex that computes the Khovanov cohomology of $\bar{K}$ (the
    mirror image of $K$) from the given diagram.
    Therefore, the $E_{2}$ term of the spectral sequence
    is
    isomorphic to the Khovanov cohomology of $\bar{K}$.  
    The spectral sequence abuts to the
    instanton homology $\Isharp(K)$.
\end{theorem}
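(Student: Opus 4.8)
The plan is to prove Theorem~\ref{thm:E2-is-Kh} by exhibiting, crossing by crossing, an explicit isomorphism between the instanton cube of Corollary~\ref{cor:spectral-sequence-hat} and Khovanov's cube of smoothings, arranged so that the two differentials agree on the nose. Three things must be pinned down: (i) the groups $\Isharp(K_v)$ for the unlinks $K_v$ indexed by $v\in\{0,1\}^N$ must be identified with Khovanov's chain groups $A^{\otimes k(v)}$, where $k(v)$ is the number of circles in the smoothing and $A=\mathbb Z[X]/(X^2)$ is the rank-two Frobenius algebra; (ii) each elementary cobordism map $\Isharp(\Tigma_{vu})$ occurring in $d_1$ — which is geometrically a ``pair of pants'' (a merge or a split) crossed with a product cobordism on the remaining components — must be identified with $m\otimes\mathrm{id}$ or $\Delta\otimes\mathrm{id}$; and (iii) the edge signs $(-1)^{\tilde\delta(v,u)}$ of Corollary~\ref{cor:spectral-sequence-signs} must be reconciled with Khovanov's sign assignment. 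Granting (i)--(iii), the page $(E_1,d_1)$ is literally the Khovanov complex of the diagram (with one convention subtlety discussed below), and the assertions about $E_2$ and the abutment are then immediate, the latter being exactly Corollary~\ref{cor:spectral-sequence-hat} applied with $v=(2,\dots,2)$, for which $K_{(2,\dots,2)}$ is the original link $K$.

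The heart of the matter is a computation of $\Isharp$ on unlinks together with the elementary cobordism maps, packaged as the statement that the restriction of $\Isharp$ to unlinks and their genus-zero cobordisms realizes the $(1{+}1)$-dimensional TQFT attached to $A$. First I would compute $\Isharp$ of the $n$-component unlink $U_n$: starting from Proposition~\ref{prop:unknot-calc} (so $\Isharp(\emptyset)=\mathbb Z$) and the definition $\Isharp(K)=I^{\omega}(S^3,K\amalg\Hopf)$, one obtains, via the split-link formula Corollary~\ref{cor:split-link} applied inductively or a direct moduli computation, that $\Isharp(U_n)\cong V^{\otimes n}$ with $V=\Isharp(U_1)$ free of rank two. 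The disk cobordisms give a unit $\eta\colon\mathbb Z\to V$ and counit $\epsilon\colon V\to\mathbb Z$; the two pairs of pants give a commutative associative product $m\colon V\otimes V\to V$ and coproduct $\Delta\colon V\to V\otimes V$; and naturality of the split-link isomorphism under split cobordisms shows these assemble into a commutative Frobenius algebra on $V$ compatible with all $\Isharp(\Tigma_{vu})$. It then remains to identify this Frobenius algebra with $A$ over $\mathbb Z$: normalize so $\eta(1)$ is the unit, pick the complementary generator $X$ with $\epsilon(1)=0$ and $\epsilon(X)=1$, check $m(1\otimes1)=1$ and $X^2=0$ (the vanishing being forced by the $\mathbb Z/4$-grading of Proposition~\ref{prop:canonical-Z/4}, or by the same $(S^4,\RP^2)$-type mechanism already used in section~\ref{sec:proof-prop-blocks}), and finally compute $\Delta(1)=1\otimes X+X\otimes1$ and $\Delta(X)=X\otimes X$ using the Frobenius relation $\Delta\circ m=(m\otimes\mathrm{id})\circ(\mathrm{id}\otimes\Delta)$ together with a single genuinely geometric input: the value of one comultiplication coefficient, read off from a Hopf-link (or theta-graph) moduli computation or from the one-crossing skein exact sequence of Corollary~\ref{cor:spectral-sequence-hat}.

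With the TQFT in hand, assembling the cube is formal: $\bigoplus_{v\in\{0,1\}^N}\Isharp(K_v)$ becomes $\bigoplus_v A^{\otimes k(v)}$, and $d_1=\sum_{v>u,\,|v-u|=1}(-1)^{\tilde\delta(v,u)}\Isharp(\Tigma_{vu})$ becomes, up to the edge signs, Khovanov's differential. For the signs, by Lemma~\ref{lem:orientation-consistency} (through \eqref{eq:square-anti-commutes}) the $\tilde\delta$ make every $2$-face of the cube anticommute, exactly as Khovanov's do; any two such sign assignments on the Boolean lattice $\{0,1\}^N$ differ by the coboundary of a $0$-cochain $\sigma\colon\{0,1\}^N\to\{\pm1\}$, and rescaling the summand at $v$ by $\sigma(v)$ is an isomorphism of complexes carrying one differential to the other. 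The appearance of the \emph{mirror} $\bar K$ rather than $K$ is then a bookkeeping point: the convention of Figure~\ref{fig:Tetrahedra-skein} labelling the two smoothings of a crossing as the ``$0$'' and ``$1$'' resolutions is, relative to Khovanov's over/under convention, reversed at every crossing (and the cube coordinate $\sum v_i$ runs the opposite way to Khovanov's homological grading), so the instanton cube of the diagram $D$ is Khovanov's cube of the reflected diagram, i.e.\ the complex computing $\kh(\bar K)$ — consistent with Proposition~\ref{prop:spectral} and with the fact that the spectral sequence nonetheless abuts to $\Isharp(K)$.

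I expect the main obstacle to be (i) and (ii) in their sharp form: showing not merely that $\Isharp(U_n)$ is abstractly $V^{\otimes n}$, but that there is a \emph{canonical} such identification, functorial for all the elementary cobordisms simultaneously, so that $\Isharp(\Tigma_{vu})$ genuinely is the local operator $m\otimes\mathrm{id}$ or $\Delta\otimes\mathrm{id}$ with no stray global factors. This is precisely where, as the acknowledgments note, an earlier form of the unlink statement was not stated sharply enough; resolving it requires using the naturality of the split-link excision isomorphism (Corollary~\ref{cor:split-link}) for split cobordisms to transport a fixed basis of $V=\Isharp(U_1)$ coherently across the whole cube, and then checking the merge and split coefficients against the Frobenius relations. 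Pinning down the single comultiplication coefficient — that $\Delta(1)$ contains \emph{both} $1\otimes X$ and $X\otimes1$, each with unit coefficient — is the one place where a direct moduli-space or neck-stretching computation seems unavoidable.
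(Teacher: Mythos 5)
Your overall architecture is the paper's: compute $\Isharp$ on unlinks via the split-link excision isomorphism, fix canonical generators $\vp,\vm$ by capping with disks, show the pair-of-pants maps give the Frobenius algebra $\Z[X]/(X^2)$, and reconcile the edge signs by rescaling with a $0$-cochain. The sign discussion, the mirror bookkeeping, and your identification of the functoriality of the unlink isomorphism as the delicate point are all correct and match the paper.

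There is, however, a genuine gap at the one structure constant that everything else hinges on: the vanishing $X^2=0$ (equivalently, the absence of a $\vp\otimes\vp$ term in $\mcoprod(\vm)$, equivalently the absence of a $1\otimes 1$ term in $\Delta(1)$). Your primary justification — that the vanishing is ``forced by the $\Z/4$-grading'' — is false: $\vm$ sits in degree $2 \pmod 4$, the merge map has degree $0$, so $\mprod(\vm\otimes\vm)$ lives in degree $0\pmod 4$, which is exactly the degree of $\vp$. The grading therefore permits $X^2=c\cdot 1$ for any $c$ (the Lee deformation $X^2=1$ satisfies every constraint you have imposed, including nondegeneracy of the Frobenius pairing, so the Frobenius relation cannot rule it out either). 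Your fallback, the ``$(S^4,\RP^2)$-type mechanism,'' does not apply here: the cobordism that splits off an $\RP^2$ summand is the \emph{nonorientable} composite $\Tigma_{10}\circ\Tigma_{21}$ of two consecutive skein cobordisms at the same crossing, not the orientable genus-one cobordism realizing $\mprod\circ\mcoprod$ — indeed the latter is nonzero on homology, since $\mprod(\mcoprod(\vp))=2\vm$. The paper closes exactly this gap with a separate idea you do not have: the degree-$2$ operator $\sigma$ given by connect-summing the identity cobordism with a torus is \emph{nilpotent} (Lemma~\ref{lem:sigma-nilpotent}, proved by skein induction reducing to the Hopf link, where the grading genuinely does force $\sigma=0$); since $\sigma=\mprod\circ\mcoprod$ and $\sigma(\vp)=2\vm$, nilpotency forces $\sigma(\vm)=0$, and naturality of $\sigma$ under the split cobordism then yields $\mcoprod(\vm)=\vm\otimes\vm$ and $\mprod(\vm\otimes\vm)=0$. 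Some such additional geometric input is unavoidable, and you would need to supply it — either by this nilpotency argument or by an honest moduli-space computation of the coefficient you defer.
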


\begin{remarks}
    The relation expressed by this theorem, between $\Isharp(K)$ and
    $\kh(\bar{K})$, pays no attention to the bigrading that is carried
    by $\kh(\bar{K})$. It is natural to ask, for example, whether at
    least the filtration of $\Isharp(K)$ that arises from the
    spectral sequence is a topological invariant of $K$. More
    generally, one can ask whether the intermediate pages of the
    spectral sequence, as filtered groups, are invariants of $K$ (see
    \cite{Baldwin} for the similar question concerning the spectral
    sequence of Ozsv\'ath and Szab\'o). A related question is whether
    the intermediate pages are functorial for knot
    cobordisms. Although the bigrading is absent, there is at least a
    $\Z/4$ grading throughout: carrying
    Proposition~\ref{prop:Z4-grading-cube} over to the present
    situation, we see that there is a spectral sequence of $\Z/4$
    graded groups abutting to $\Isharp(K)$ whose $E_{1}$ term is
    \begin{equation}
        \label{eq:E1-shift}
   E_{1}= \bigoplus_{v\in \{0,1\}^{N}} \Isharp(K_{v})\shift{k_{v}},
    \end{equation}
    with
    \begin{equation}
        \label{eq:k-shift}
        k_{v} = - b_{0}(K_{v}) + 2 b_{0}(K_{0}) - b_{0}(K) - N.
    \end{equation}
    In deriving this formula from the formula for $j(v')$, we have
    used the fact that the cobordisms between the different smoothings
    are all orientable, which implies that $|v-v'|_{1} = b_{0}(v) -
    b_{0}(v')$ mod $2$. The formula for $k_{v}$ can also be written
    (mod $4$) as
    \[
         k_{v} = -b_{0}(K_{v}) + b_{0}(K) - N_{-} + N_{+},
    \]
    where $N_{-}$ and $N_{+}$ are the number of positive and negative
    crossings in the diagram. From this version of the formula, it is
    straightforward to compare our $\Z/4$ grading to the bigradings in
    \cite{Khovanov}. The result is that the $\Z/4$-graded $E_{2}$ page
    of our spectral sequence is isomorphic to $\kh(\bar{K})$ with the
    $\Z/4$-grading defined by
    \[
                     q-h - b_{0}(K)
     \]
    where $q$ and $h$ are the $q$-grading and homological grading
    respectively. That is, the part of the $E_{2}$ term in
    $\Z/4$-grading $\alpha$ is
    \[
                \bigoplus_{j-i-b_{0}(K) = \alpha} \kh^{i,j}(\bar{K}).
    \]
\end{remarks}

Understanding the $E_{1}$ page and the differential $d_{1}$  means
computing $\Isharp(K_{v})$ for an unlink $K_{v}$ and computing the
maps given by pairs of pants. We take up these calculations in
the remaining parts of this section.

\subsection{Unlinks}

We write $U_{n}$ for the unlink in $\R^{3}$ with $n$ components, so
that $U_{0}$ is the empty link and $U_{1}$ is the unknot. We take
specific models for these. For example, we may take $U_{n}$ to be the
union of standard circles in the $(x,y)$ plane, each of diameter
$1/2$, and centered on the first $n$ integer lattice points along the
$x$ axis; and we can then orient the components of $U_{n}$ as the
boundaries of the standard disks that they bound.

Given any subset $\bi=\{i_{1},\dots,i_{m}\}\subset \{1,\dots,n\}$,
there is a corresponding $m$-component sublink $U_{\bi}$ of
$U_{n}$. We will identify $U_{\bi}$ with $U_{m}$ in a standard way,
via a self-evident isotopy (preserving the ordering of the components).

We have
already seen that $\Isharp(U_{0})$ is $\Z$
(Proposition~\ref{prop:isharpemptyset}). 
There are two possible identifications of
$\Isharp(U_{0})$ with $\Z$, differing in sign. We fix one of them,
once and for all by specifying a generator
\[
       \bu_{0} \in \Isharp(U_{0}),
\]
 so that
\[
              \Isharp(U_{0})=\Z.
\]
This $\Z$ occurs in grading $0$
mod $4$, by convention. 

\begin{lemma}\label{lem:unknot-1}
    For the unknot $U_{1}$, the instanton homology $\Isharp(U_{1})$ is free of rank
    $2$,
    \[
             \Isharp(U_{1}) \cong \Z \oplus \Z,
     \]
 with generators in degrees $0$ and $-2$ mod $4$.
 \end{lemma}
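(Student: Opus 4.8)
The plan is to compute $\Isharp(U_{1}) = I^{\omega}(S^{3}, U_{1}\amalg \Hopf)$ directly from the representation variety, exactly as in the proof of Proposition~\ref{prop:unknot-calc}, but now with an extra unknotted, unlinked component. Equivalently, by definition of $\Isharp$ we may view the link as $U_{1}$ together with a Hopf link $\Hopf = L_{1}\cup L_{2}$ sitting in a ball near infinity, with $\omega$ an arc joining the two components of $\Hopf$; the relevant $\SU(2)$ gauge theory is governed by the fundamental group of the complement of $(U_{1}\amalg\Hopf\cup\omega)$ in $S^{3}$. First I would identify the unperturbed critical set: these are conjugacy classes of homomorphisms $\pi_{1}(S^{3}\setminus(U_{1}\amalg\Hopf\cup\omega)) \to \SU(2)$ sending meridians of $U_{1}$, $L_{1}$, $L_{2}$ to the conjugacy class of $\mathbf{i}$ and the meridian of $\omega$ to $-1$. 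Since $U_{1}$ is split from the Hopf link, this fundamental group is a free product, and the condition on the Hopf-link factor pins that factor down to a single point (as in Proposition~\ref{prop:unknot-calc}), while the $U_{1}$ factor contributes a free $\Z$ on its meridian $\mu$, subject only to $\mathrm{hol}(\mu)\sim \mathbf{i}$. So the unperturbed critical set is the $2$-sphere of such holonomies, i.e. a Morse--Bott circle's worth — actually an $S^{2}$ — of flat connections, which is not non-degenerate.

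The key step is then the standard holonomy-perturbation argument: one perturbs the Chern--Simons functional by a small cylinder function built from the holonomy around $\mu$ (a function of the form $h\circ\mathrm{Hol}$ with $h$ a suitable class function on $\SU(2)$), chosen so that the perturbed critical set becomes two non-degenerate points corresponding to the two critical points of the induced Morse function on the $2$-sphere. This is precisely the situation already analyzed in \cite{KM-knot-singular} and in Floer's original treatment of the unknot; the upshot is that $\Isharp(U_{1})$ has a chain complex with exactly two generators. One then checks there can be no differential: the two generators sit in relative grading equal to the Morse index difference of the two critical points on $S^{2}$, which is $2$, and since the differential shifts the $\Z/4$ grading by $1$, it vanishes. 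Hence $\Isharp(U_{1})\cong\Z\oplus\Z$, free of rank $2$.

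For the grading statement I would invoke the absolute $\Z/4$ grading of Proposition~\ref{prop:canonical-Z/4} together with the normalization fixed by $\bu_{0}$: one generator is determined by the grading of $\Isharp(U_{0})$ shifted through the cobordism (a pair of pants, or rather the elementary cobordism from $U_{0}$ to $U_{1}$) using the formula for $\iota$, and the index computation on $S^{2}$ shows the other generator lies two units below it. Concretely, the two critical points on $S^{2}$ differ in $\bar\gr$ by $2$, and comparison with the empty-link generator places them in degrees $0$ and $-2$ mod $4$. The index bookkeeping here is routine given Lemma~\ref{lem:index} and Proposition~\ref{prop:canonical-Z/4}.

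\textbf{Main obstacle.} The genuinely delicate point is the perturbation analysis: one must verify that a small holonomy perturbation supported near $U_{1}$ genuinely resolves the degenerate $2$-sphere of flat connections into exactly two non-degenerate critical points with the expected index difference, and that no new critical points or trajectories are introduced that could cancel generators or create a differential. This is exactly the content of the unknot computation underlying \cite{Floer-Durham} and \cite{KM-knot-singular}, so I would cite it rather than reprove it; the rank-$2$ answer and the grading are then forced. Everything else — the free-product decomposition of the fundamental group, the rigidity of the Hopf-link factor, the vanishing of the differential by the $\Z/4$-grading parity — is formal.
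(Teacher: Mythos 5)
Your route is genuinely different from the paper's. The paper does not touch the representation variety of $U_{1}\amalg \Hopf$ at all: it introduces a diagram of the Hopf link with one extra crossing, whose two smoothings are $\Hopf$ and $\Hopf\amalg U_{1}$, and runs the unoriented skein exact triangle. Everything then follows formally from $\Isharp(U_{0})=I^{\omega}(S^{3},\Hopf)=\Z$ (Proposition~\ref{prop:unknot-calc}) together with the degree formula for the skein cobordisms: the connecting map $c$ has odd degree between two copies of $\Z$ sitting in a single even degree, hence vanishes, forcing rank $2$ with generators in degrees $0$ and $-2$. This costs nothing beyond machinery already set up. Your direct computation is more geometric and would, if completed, also identify the generators concretely, but it requires two pieces of technology the paper deliberately avoids.

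First, a concrete step that fails as written: you propose to resolve the critical $2$-sphere by a cylinder function $h\circ\mathrm{Hol}$ with $h$ a class function and the holonomy taken around $\mu$ alone. On the critical set, $\mathrm{Hol}_{x}(\tilde B)$ around any loop parallel to $\mu$ lies in the fixed conjugacy class of $\bi$, so a class function of this single holonomy is \emph{constant} on the entire $2$-sphere and induces no Morse function on it; the perturbed critical set is then governed by higher-order terms, not by the two critical points of a height function. To break the $S^{2}$ you need the multi-loop cylinder functions of section~\ref{subsec:perturbations} (a conjugation-invariant function of the holonomies around $\mu$ \emph{and} a loop linking the Hopf factor, e.g.\ a trace of a product), whose restriction to the critical sphere is a genuine height function. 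Second, even with the right perturbation, the assertion that a small perturbation of a Morse--Bott critical manifold yields exactly the critical points of the induced function, with relative Floer grading equal to the difference of Morse indices, is standard in instanton Floer theory but is nowhere established in this paper; Proposition~\ref{prop:residual-crit} only gives existence of nondegenerate perturbations, not a count. You would have to import that analysis from elsewhere, whereas the paper's argument needs only the single nondegenerate, rigid representation for the Hopf link. Your grading bookkeeping via Proposition~\ref{prop:canonical-Z/4} and the disk cobordism (for which $\iota=0$) is fine once one generator is pinned to degree $0$, but that again requires either the index computation you defer or the exact-triangle argument the paper actually uses.
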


\begin{proof}
    Draw a diagram of the Hopf link $\Hopf$ with an extra crossing, so that
    by smoothing that crossing in two different ways one obtains the
    links $\Hopf$ (again) and $\Hopf \amalg U_{1}$ (see
    Figure~\ref{fig:U1-skein}).  
\begin{figure}
    \begin{center}
        \includegraphics[height=1.4in]{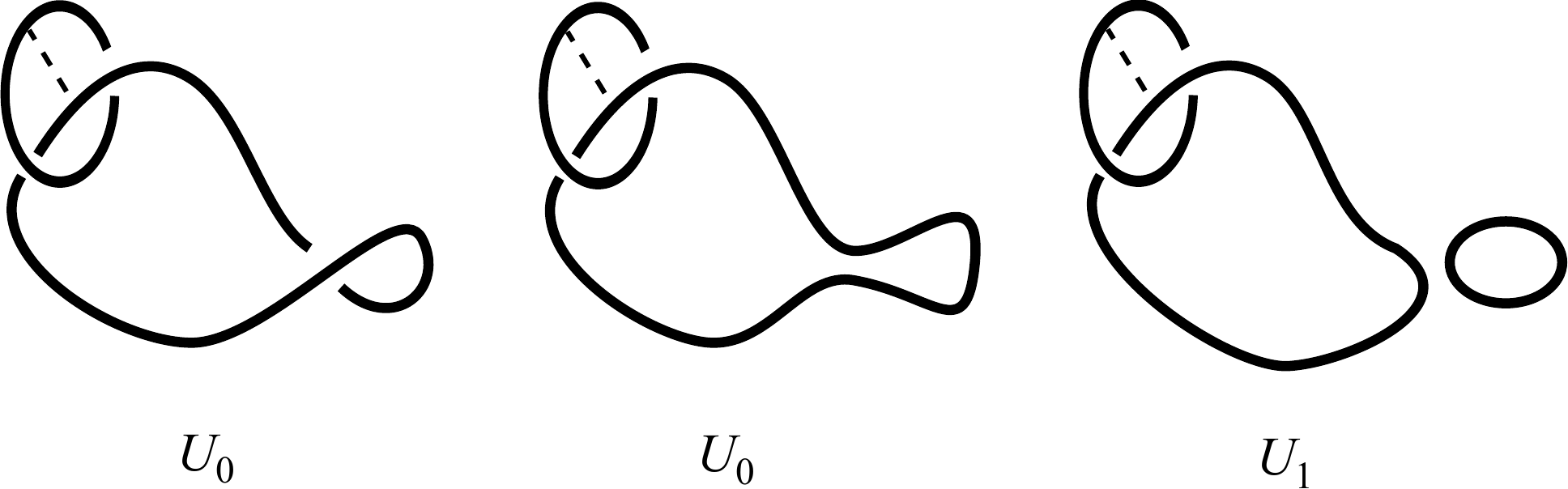}
    \end{center}
    \caption{\label{fig:U1-skein}
    A skein sequence relating $U_{0}$ (twice) and $U_{1}$.}
\end{figure}
  The skein sequence for this situation
    gives a long exact sequence
    \[
          \cdots \to \Isharp(U_{0}) \stackrel{a}{\to} \Isharp(U_{1})
          \stackrel{b}{\to} 
          \Isharp(U_{0}) \stackrel{c}{\to}
          \Isharp(U_{0}) \to \cdots,
    \]
   in which the maps $a$ and $b$ have degree $-2$ and $0$
   respectively, 
   while $c$ 
   has degree $1$. From our calculation of
   $\Isharp(U_{0})$ it follows that $c=0$ and that $\Isharp(U_{1})$ is
   free of rank $2$ with generators in degrees $0$ and $-2$.  The
   generator of degree $-2$ is the image of $a$, while the generator of
   degree $0$ is mapped by $b$ to a generator of $\Isharp(U_{0}) =
   \I^{\omega}(S^{3}, \Hopf)$.
\end{proof}

   We wish to have explicit generators of the rank-2 group
   $\Isharp(U_{1})$ defined without reference to the auxiliary Hopf
   link $H$. To this end, let $D$ be the standard disk that $U_{1}$
   bounds in the $(x,y)$ plane. Let $D^{+}$ be the oriented cobordism
   from the empty link $U_{0}$ to $U_{1}$ obtained by pushing the disk
   $D$ a little into the 4-dimensional cylinder $[-0,1]\times
   \R^{3}$. Similarly, let $D^{-}$ be the cobordism from $U_{1}$ to
   $U_{0}$ obtained from $D$ with its opposite orientation. These
   oriented cobordisms give preferred maps
   \[
   \begin{aligned}
       \Isharp(D^{+}) : \Isharp(U_{0}) &\to \Isharp(U_{1}) \\
       \Isharp(D^{-}) : \Isharp(U_{1}) &\to \Isharp(U_{0}) 
   \end{aligned}
    \]
     of degrees $0$ and $2$ respectively.

    \begin{lemma}
        There are preferred generators $\vp$ and $\vm$ for the
        rank-2 group $\Isharp(U_{1})$, in degrees $0$ and $2$ mod $4$
        respectively, characterized by the conditions
        \[
                   \Isharp(D^{+})(\bu_{0}) = \vp
        \]
         and
         \[
                  \Isharp(D^{-})(\vm) =  \bu_{0}
          \]
          respectively, where $\bu_{0}$ is the chosen generator for
          $\Isharp(U_{0})=\Z$.
    \end{lemma}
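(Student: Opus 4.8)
The plan is to reduce the whole statement to two composition identities at the level of $I^{\omega}$, using the skein cobordisms that already occur in the proof of Lemma~\ref{lem:unknot-1}. Recall from that proof that $\Isharp(U_{1}) \cong \Z\oplus\Z$ with the two summands supported in degrees $0$ and $2$ mod $4$, and that the associated skein exact sequence is built from two elementary cobordisms: one, call it $a\colon \Isharp(U_{0})\to\Isharp(U_{1})$, which (in the $\Isharp$ picture, carrying the auxiliary Hopf link along as a product) is the saddle that \emph{splits off} an unknotted, unlinked circle; and one, $b\colon \Isharp(U_{1})\to\Isharp(U_{0})$, which is the saddle that \emph{absorbs} such a circle back into the diagram. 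A bookkeeping check with the index formula of Proposition~\ref{prop:canonical-Z/4} on $W=[0,1]\times S^{3}$ (where $\chi(W)$, $\sigma(W)$ and the $b^{1}(Y_{i})$ all vanish, so $\iota$ reduces to $-\chi(\Tigma)+b_{0}(\partial^{+}\Tigma)-b_{0}(\partial^{-}\Tigma)$) shows that $\Isharp(D^{+})$ has degree $0$ and $\Isharp(D^{-})$ has degree $2$ mod $4$; hence $\Isharp(D^{+})$ lands in the degree-$0$ summand of $\Isharp(U_{1})$, while $\Isharp(D^{-})$ annihilates that summand and maps the degree-$2$ summand to $\Isharp(U_{0})$.

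First I would establish the identity $b\comp\Isharp(D^{+}) = \pm\,\mathrm{id}$ on $\Isharp(U_{0})$. The composite cobordism $D^{+}$ followed by the saddle of $b$ is obtained by birthing a small trivial circle next to a strand and then immediately banding it into that strand; this composite is isotopic rel boundary to the product cobordism, because an index-$0$ critical point (the birth) and an adjacent index-$1$ critical point (the saddle) cancel. By the functoriality of $I^{\omega}$ under isotopic cobordisms and under composition, $b\comp\Isharp(D^{+})$ is therefore $\pm\,\mathrm{id}$. Symmetrically, the saddle of $a$ followed by $D^{-}$ splits off a trivial circle and then caps it, again a cancelling pair of critical points, so $\Isharp(D^{-})\comp a = \pm\,\mathrm{id}$ on $\Isharp(U_{0})$; the overall signs play no role. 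From these identities the lemma follows formally: since $b\comp\Isharp(D^{+})=\pm\,\mathrm{id}$, the degree-$0$ map $\Isharp(D^{+})\colon \Z = \Isharp(U_{0}) \to \Isharp(U_{1})$ is split injective, so its image is a rank-one direct summand of the degree-$0$ part $\Isharp(U_{1})_{0}\cong\Z$, hence is all of it, and $\vp := \Isharp(D^{+})(\bu_{0})$ is a generator of $\Isharp(U_{1})_{0}$; since $\Isharp(D^{-})\comp a = \pm\,\mathrm{id}$, the restriction of $\Isharp(D^{-})$ to $\Isharp(U_{1})_{2}\cong\Z$ is a split surjection onto $\Isharp(U_{0})\cong\Z$, hence an isomorphism, so there is a unique $\vm\in\Isharp(U_{1})_{2}$ with $\Isharp(D^{-})(\vm)=\bu_{0}$, and it generates $\Isharp(U_{1})_{2}$. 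As $\Isharp(U_{1}) = \Isharp(U_{1})_{0}\oplus\Isharp(U_{1})_{2}$ by Lemma~\ref{lem:unknot-1}, $\{\vp,\vm\}$ is then a basis with the stated degrees.

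The algebra in the second half is routine; the only real point requiring care is the geometric input, namely that the skein cobordisms $a$ and $b$ of Lemma~\ref{lem:unknot-1} are genuinely of ``split-off'' and ``absorb a trivial unknotted circle'' type, and that composing them with $D^{\mp}$ (with a suitably positioned model for the standard disk $D$) yields the cancelling pair of critical points, all while dragging the auxiliary Hopf link through every cobordism as a product. That verification is where the bulk of the (short) argument lies.
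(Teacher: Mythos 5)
Your argument is correct and is essentially the paper's own proof: both reduce the lemma to the identities $b\comp\Isharp(D^{+})=\pm\,\mathrm{id}$ and $\Isharp(D^{-})\comp a=\pm\,\mathrm{id}$, justified by the observation that the composite cobordisms (from the Hopf link to itself, with the auxiliary Hopf link carried along as a product) are products, and then conclude by the degree decomposition from Lemma~\ref{lem:unknot-1}. Your extra detail on the birth--saddle cancellation and the $\Z/4$ degree bookkeeping only makes explicit what the paper leaves implicit.
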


    \begin{proof}
        The proof of the previous lemma shows that a generator of the
        degree-0 part of $\Isharp(U_{1})$ is the image of the map
        $b$. So to show that $\vp$ as defined in the present lemma is
        a generator it suffices to show that the composite map
        $b\circ \Isharp(D^{+})$ is the identity map on the rank-1 group
        $\Isharp(U_{0}) = I^{\omega}(S^{3}, H)$. This in turn follows
        from the fact that the composite cobordism from the Hopf link
        $H$ to itself is a product.
        
        Similarly, to show that there is a generator $\vm$ of the
        degree-2 part with the property described, it suffices to show
        that the composite map $\Isharp(D^{-})\circ a$ is the identity
        map $\Isharp(U_{0})$. The composite cobordism is again a
        product, so the result follows.
    \end{proof}

\begin{corollary}\label{cor:unlink-tensor-n}
    Write  $V= \langle \vp, \vm \rangle \cong \Z^{2}$ 
    for the group $\Isharp(U_{1})$.
    We then have  isomorphisms of
    $\Z/4$-graded abelian groups,
    \[
        \Phi_{n}: V^{\otimes n} \to \Isharp(U_{n})       ,
     \]
      for all $n$,
     with the following  properties. First, 
     if $D^{+}_{n}$ denotes the cobordism from $U_{0}$ to $U_{n}$
     obtained from standard disks as in the previous lemma, then
     \[
                  \Isharp(D^{+}_{n})(\bu_{0}) = 
                           \Phi_{n}( \vp\otimes \dots \otimes \vp).
    \]
     Second, the isomorphism is natural for split cobordisms from
     $U_{n}$ to itself. Here, a ``split'' cobordism means a cobordism
     from $U_{n}$ to $U_{n}$ in $[0,1]\times \R^{3}$ which is the
     disjoint union of $n$ cobordisms from $U_{1}$ to $U_{1}$, each
     contained in a standard ball $[0,1]\times B^{3}$. 
\end{corollary}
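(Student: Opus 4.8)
The plan is to build the isomorphisms $\Phi_n$ by combining the product formula for split links with the rank-two calculation for the unknot. Recall from Corollary~\ref{cor:split-link} that if at least one of the factors is torsion-free then $\Isharp$ of a disjoint union is the tensor product of the $\Isharp$'s of the pieces, and this isomorphism is natural for split cobordisms. Since $\Isharp(U_1) = V$ is free of rank two by Lemma~\ref{lem:unknot-1}, an induction on $n$ using Corollary~\ref{cor:split-link} immediately produces isomorphisms
\[
\Phi_n : V^{\otimes n} \xrightarrow{\ \cong\ } \Isharp(U_n)
\]
for all $n$, and the naturality for split cobordisms is inherited from the naturality statement in that corollary: a split cobordism from $U_n$ to $U_n$ decomposes as a disjoint union of $n$ cobordisms $U_1 \to U_1$, each in its own ball, and the excision isomorphism intertwines the induced map with the tensor product of the $n$ individual maps. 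The compatibility with the $\Z/4$ grading follows because the excision isomorphism of Theorem~\ref{thm:excision} respects the relative $\Z/4$ grading, and one fixes the absolute grading by normalizing on $\bu_0$ as we have done.

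The one point requiring a small argument is the first asserted property, namely that $\Isharp(D^+_n)(\bu_0) = \Phi_n(\vp \otimes \cdots \otimes \vp)$. Here I would proceed by induction on $n$, using that $D^+_n$ is the disjoint union (in $[0,1]\times\R^3$) of $n$ copies of the elementary disk cobordism $D^+$ from $U_0$ to $U_1$, each in its own ball. More precisely, $D^+_n$ is the composite of the split cobordism that introduces one new circle with the cobordism $D^+_{n-1}$ acting on the remaining components; under the excision isomorphism $\Phi_n = \Phi_{n-1}\otimes\Phi_1$ (suitably interpreted), the map induced by this split cobordism is the tensor product $\Isharp(D^+_{n-1}) \otimes \Isharp(D^+)$, up to the sign built into the excision identification. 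Applying this to $\bu_0 = \bu_0 \otimes \bu_0$ and using $\Isharp(D^+)(\bu_0) = \vp$ together with the inductive hypothesis $\Isharp(D^+_{n-1})(\bu_0) = \Phi_{n-1}(\vp\otimes\cdots\otimes\vp)$ gives the claim. One must check the sign is trivial; this comes down to the fact that the $I^\sharp$-orientation of a disjoint union of oriented disk cobordisms is the product orientation and that all the classes $\vp$ sit in even degree, so the sign operator $\epsilon$ appearing in the K\"unneth differential \eqref{eq:Kunneth} acts trivially on the relevant generators.

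The main obstacle, such as it is, lies in making the bookkeeping of the excision isomorphisms coherent under the induction --- that is, verifying that the iterated application of Corollary~\ref{cor:split-link} produces a \emph{well-defined} isomorphism $\Phi_n$ (independent of how one associates the iterated tensor product and of the order in which circles are peeled off), and that the naturality for split cobordisms holds on the nose rather than merely up to sign. For the associativity and order-independence, one appeals to the naturality of the excision cobordisms under the evident diffeomorphisms permuting and regrouping the balls; for the sign issue, one uses that all the generators $\vp, \vm$ lie in a single $\Z/4$-degree parity pattern and tracks the $\epsilon$-operators through the chain-level K\"unneth isomorphism as discussed after \eqref{eq:Kunneth}. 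None of this is deep, but it is where the care is needed.
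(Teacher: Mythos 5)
Your proposal is correct and follows essentially the same route as the paper, whose entire proof is a one-line appeal to the product formula for split links (Corollary~\ref{cor:split-link}); you have simply spelled out the induction on $n$, the treatment of $D^{+}_{n}$ as a split cobordism, and the sign/grading bookkeeping that the paper leaves implicit.
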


\begin{proof}
    This follows from the general product formula for split links, 
    Corollary~\ref{cor:split-link}.
\end{proof}

\begin{remark}
    We will see later, in Proposition~\ref{prop:unlink-canonical},
    the extent to which this isomorphism is canonical.
\end{remark}

\subsection{An operator of degree 2}

Let $K$ be a link, let $p$ be a marked point on $K$, and let an
orientation be chosen for $K$ at $p$. We can then form a cobordism
$\Tigma$ from $K$ to $K$ by taking the cylinder $[-1,1] \times K$ and
forming a connect sum with a standard torus at the point $(0,p)$.  
This cobordism then
determines a map
\begin{equation}\label{eq:sigma-map}
           \sigma : \Isharp(K) \to \Isharp(K)
\end{equation}
of degree $2$ mod $4$.  

\begin{lemma}\label{lem:sigma-nilpotent}
    For any link $K$, and any base-point $p\in K$,  the map $\sigma$ is nilpotent.
\end{lemma}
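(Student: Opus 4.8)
The plan is to show that $\sigma$ acts as the "connect-sum with a torus" operator, and that such an operator is nilpotent because the torus summand, after sufficiently many iterations, forces a connected-sum decomposition that kills the invariant. More precisely, the $n$-fold composite $\sigma^{n}$ is the map induced by the cobordism obtained from $[-1,1]\times K$ by forming connect sums with $n$ disjoint copies of the standard torus $T^{2}$, all located near the basepoint $p$. Equivalently, by isotoping all these torus summands out of the cylinder, $\sigma^{n}$ is the composite of the map induced by the cobordism $[-1,1]\times K$ (i.e. the identity) with the map induced by a ball-supported cobordism $(B^{4}, \Sigma_{n})$ from the unknot to itself, where $\Sigma_{n}$ is an annulus connect-summed with $n$ copies of $T^{2}$ — that is, $\sigma^{n}$ factors through the relative invariant of $(B^{4}, A) \#^{n} (S^{4}, T^{2})$.

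The key observation is that for large $n$, the connected-sum decomposition of the cobordism forces a dimension/index obstruction of exactly the type already exploited in the proof of Proposition~\ref{prop:blocks} (the vanishing for $(S^{4},\RP^{2})$ summands). Here, however, the relevant closed pair is $(S^{4}, T^{2})$ with $T^{2}$ a standardly embedded torus of self-intersection zero. One computes the action $\kappa$ and the index of $\mathcal{D}$ for singular bundle data on $(S^{4}, T^{2})$ using Lemma~\ref{lem:index}: since $\chi(T^{2})=0$, $T^{2}\cdot T^{2}=0$, $\chi(S^{4})=2$, $\sigma(S^{4})=0$, the index is $8\kappa - 3$. The monotonicity/compactness constraints mean that along a sequence of glued-up solutions contributing to $\sigma^{n}$ in a fixed relative grading, the total instanton-and-monopole charge distributed over the $n$ torus summands is bounded; but each torus summand carrying a non-flat, irreducible solution consumes a definite positive amount of $\kappa$ (or contributes a non-negative-dimensional factor only at the cost of shifting the grading), while each summand carrying only the trivial flat connection contributes negatively to the index (index $-3$ for $\kappa = 0$ with appropriate $\Delta$, as in the $(S^{4}, \RP^{2})$ analysis). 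Hence for $n$ larger than some bound depending only on the relative grading, the zero-dimensional moduli space on the composite cobordism is empty, so $\sigma^{n} = 0$ on each $\Z/4$-graded piece, and therefore on all of $\Isharp(K)$ since that group is finitely generated.

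Concretely, the steps I would carry out are: (1) identify $\sigma^{n}$ with the map induced by the cobordism $\Sigma^{(n)} = ([-1,1]\times K)\#^{n}(S^{4}, T^{2})$, using the composition law for cobordism maps from section~\ref{subsec:maps-from-cob}; (2) use a neck-stretching argument, as in section~\ref{subsec:families} and as used in the proof of Lemma~\ref{lem:composite-RP2} and Proposition~\ref{prop:blocks}, to describe contributions to the zero-dimensional moduli space on $\Sigma^{(n)}$ as glued solutions, one factor on $[-1,1]\times K$ minus $n$ balls and one factor on each $(S^{4}, T^{2})$ with a cylindrical end; (3) invoke the index formula (Lemma~\ref{lem:index}) and the monotonicity of the dimension formula in $\kappa$ to bound the number of torus summands that can carry a positive-dimensional family of solutions, and observe that a flat connection on $(S^{4}, T^{2})$ contributes negatively to the expected dimension; (4) conclude that for $n$ sufficiently large (depending on the single relative $\Z/4$ grading and on $\Isharp(K)$'s finite rank), the relevant moduli space is empty, hence $\sigma^{n}=0$.

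The main obstacle, I expect, is step (3): one must be careful about which singular bundle data arise on the torus summands and verify that there is no family of ASD (or perturbed-ASD) solutions on $(S^{4}, T^{2})$ of non-negative index in grading $0$ that could be glued repeatedly without cost — in other words, one needs a genuine lower bound on the ``energy'' $\kappa$ consumed by any nontrivial contribution, paralleling the role that irreducibility of all solutions on $(S^{4},\RP^{2})$ played earlier. If $(S^{4}, T^{2})$ happens to carry reducible solutions (e.g. a flat connection, since $\pi_{1}(S^{4}\setminus T^{2})$ can be abelian), then one must check that these sit in moduli spaces of the expected negative dimension $-3$ and so cannot contribute to a zero-dimensional glued moduli space — exactly the dimension-counting argument for connected sums. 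Handling this bookkeeping uniformly in $n$ is the crux; once it is in place, nilpotency follows immediately.
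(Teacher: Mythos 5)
Your identification of $\sigma^{n}$ with the map induced by $([-1,1]\times K)$ summed with $n$ copies of $(S^{4},T^{2})$ is fine, but the vanishing claim in your step (3) is a genuine gap, and it sits exactly where you suspected. First, there is no index obstruction that accumulates with $n$: by Lemma~\ref{lem:index} the expected dimension on the $n$-fold composite is $8\kappa+\chi(\Sigma^{(n)})+\mathrm{const}=8\kappa-2n+\mathrm{const}$, so the zero-dimensional moduli spaces computing $\sigma^{n}$ simply occur at energy $\kappa\approx n/4$; the budget grows linearly with $n$ rather than running out, and ``each nontrivial summand consumes a definite amount of $\kappa$'' produces no contradiction. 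Second, the analogy with the $(S^{4},\RP^{2})$ vanishing fails at the decisive point. That argument worked because every solution on the $(S^{4},\RP^{2})$ summand is irreducible while the unique critical point on the neck $(S^{3},S^{1})$ is reducible, so nothing can be glued. For the standardly embedded torus, $\pi_{1}(S^{4}\sminus T^{2})$ is abelian, the flat connection on the summand is reducible and restricts to the neck as exactly that reducible critical point, and gluing it in is precisely what makes $\sigma$ nonzero (recall $\sigma(\vp)=2\vm$). So the glued moduli spaces are not empty, and a count involving $n$ reducibles with their obstruction bundles is not something you can dismiss by dimension counting. Note also that the paper deduces $\sigma(\vm)=0$ on the unknot \emph{from} nilpotency together with the pair-of-pants computations, not the other way around, so a direct moduli-theoretic proof would have to do genuinely new work even for $U_{1}$.

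The paper's actual proof is entirely soft and avoids all of this: $\sigma$ is natural for cobordisms of based links, hence commutes with the maps in the unoriented skein exact triangle; if $\sigma$ is nilpotent on two of the three terms of such a triangle it is nilpotent on the third; repeated skein moves reduce the statement to the unknot $U_{1}$; and the exact sequence from the proof of Lemma~\ref{lem:unknot-1} reduces further to $\Isharp(U_{0})=I^{\omega}(S^{3},\Hopf)=\Z$, which is supported in a single degree mod $4$, where the degree-$2$ map $\sigma$ must vanish. If you want to salvage a geometric argument, the honest route is to compute the contribution of the reducible flat connection on $(S^{4},T^{2})$ with its obstruction theory, which is considerably harder than the formal argument above.
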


\begin{proof}
    The map $\sigma$ behaves naturally with respect to cobordisms of
    links with base-points. So if $K_{2}$, $K_{1}$ and $K_{0}$ are
    three links related by the unoriented skein relation, then
    $\sigma$ commutes with the maps in the long exact sequence
    relating the groups $\Isharp(K_{i})$. From this it follows that if
    $\sigma$ is nilpotent on $\Isharp(K_{i})$ for two of the three links
    $K_{i}$ then it is nilpotent also on the third. By repeated use of
    the skein relation, we see that it is enough to check the case
    that $K=U_{1}$.  Finally, for $U_{1}$, we can use the exact
    sequence from the proof of Lemma~\ref{lem:unknot-1} to reduce to
    the case of $\Isharp(U_{0})$, or more precisely to the case of
    $I^{\omega}(S^{3}, \Hopf)$, with a marked point $p$ on one of the two
    components of the Hopf link $\Hopf$. This last case is trivial,
    however, because the group has a generator only in one of the
    degrees mod $4$, which forces $\sigma$ to be zero on $\Isharp(U_{0})$.
\end{proof}

\subsection{Pairs of pants}

Let $\pants$ be a pair-of-pants cobordism from $U_{1}$ to $U_{2}$. We
wish to calculate the corresponding map on instanton homology. 
Via the isomorphisms of Corollary~\ref{cor:unlink-tensor-n}, this map
becomes a map
\[
     \mcoprod:  V \to V\otimes V.
\]
The degree of this map is $-2$. 

    There is also a pair-of-pants cobordism $\copants$, from $U_{2}$
    to $U_{1}$, which induces a map
     \[
           \mprod : V\otimes V \to V
      \]
      of degree $0$.

\begin{lemma}
   In
    terms of the  generators $\vp$ and $\vm$ for $V$, 
  the map $\mcoprod$ is given by
  \[
  \begin{aligned}
      \vm &\mapsto  \vm\otimes \vm \\
       \vp & \mapsto  ( \vm \otimes \vp + \vp \otimes \vm )
  \end{aligned}
   \]
   and $\mprod$ is given by
   \[
   \begin{aligned}
       \vp \otimes \vp &\mapsto \vp \\
       \vp \otimes \vm &\mapsto \vm \\
       \vm \otimes \vp &\mapsto \vm \\
       \vm \otimes \vm &\mapsto  0 .    
   \end{aligned}
   \]
\end{lemma}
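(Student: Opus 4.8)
The plan is to determine all eight matrix entries by combining the $\Z/4$-grading constraints with topological identifications of composite cobordisms and the defining properties of $\vp$ and $\vm$, using the degree-$2$ operator $\sigma$ to handle the one entry that has no ``geometric'' description.

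First I would record what the grading forces. Since $\deg\vp=0$ and $\deg\vm=2$ mod $4$, the degree-$0$ part of $V\otimes V$ is spanned by $\vp\otimes\vp$ and $\vm\otimes\vm$, and its degree-$2$ part by $\vp\otimes\vm$ and $\vm\otimes\vp$. As $\mprod$ has degree $0$ and $\mcoprod$ has degree $-2$, we must have
\begin{gather*}
\mprod(\vp\otimes\vp)=a\,\vp,\quad \mprod(\vm\otimes\vm)=b\,\vp,\quad \mprod(\vp\otimes\vm)=c\,\vm,\quad \mprod(\vm\otimes\vp)=d\,\vm,\\
\mcoprod(\vp)=\alpha\,\vp\otimes\vm+\beta\,\vm\otimes\vp,\qquad \mcoprod(\vm)=\gamma\,\vp\otimes\vp+\delta\,\vm\otimes\vm,
\end{gather*}
for integers $a,b,c,d,\alpha,\beta,\gamma,\delta$ (the claim being that all equal $1$ except $b=\gamma=0$). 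Throughout, $I$-orientations are fixed so that the composition law holds, and the signs below are bookkeeping of the kind already used in Section~\ref{sec:cubes}; I suppress them in this sketch.

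Next I would pin down the ``geometric'' coefficients. Capping one input leg of $\copants$ with the disk $D^{+}\colon U_{0}\to U_{1}$ yields a cobordism isotopic to the product cylinder, so $\mprod\circ(\iota\otimes\mathrm{id})=\mathrm{id}=\mprod\circ(\mathrm{id}\otimes\iota)$, where $\iota\colon\Z\to V$ is the unit with $\iota(\bu_{0})=\vp$; this uses $\Isharp(D^{+})(\bu_{0})=\Phi_{1}(\vp)$ and the split-cobordism naturality of Corollary~\ref{cor:unlink-tensor-n}. These identities give $a=c=d=1$. Dually, capping one output of $\pants$ with $D^{-}$ gives the cylinder, so $(\mathrm{id}\otimes\epsilon)\circ\mcoprod=\mathrm{id}=(\epsilon\otimes\mathrm{id})\circ\mcoprod$, where $\epsilon\colon V\to\Z$ is the counit with $\epsilon(\vp)=0$, $\epsilon(\vm)=1$ (from the degree count for $\Isharp(D^{-})$ and the definition of $\vm$). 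Evaluating on $\vp$ and on $\vm$ forces $\alpha=\beta=\delta=1$. At this stage only $b$ and $\gamma$ remain.

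The remaining step --- the only one not a routine cobordism identification --- is $b=\gamma=0$. Since $\vm$ is not in the image of the unit, it cannot be produced by capping with a disk, so I would use the operator $\sigma$ of \eqref{eq:sigma-map}. Gluing $\pants$ to $\copants$ along $U_{2}$ produces a genus-one cobordism from $U_{1}$ to itself, so $\sigma=\mprod\circ\mcoprod$ on $V$; hence $\sigma(\vp)=\mprod(\vp\otimes\vm+\vm\otimes\vp)=2\vm$ and $\sigma(\vm)=\mprod(\gamma\,\vp\otimes\vp+\vm\otimes\vm)=(\gamma+b)\vp$. By the skein exact sequence of Lemma~\ref{lem:unknot-1}, with the marked point placed on the auxiliary Hopf link as in the proof of Lemma~\ref{lem:sigma-nilpotent}, $\sigma$ commutes with the maps $a$ and $b$ of that sequence; since $\Isharp(U_{0})=I^{\omega}(S^{3},\Hopf)$ is concentrated in one degree mod $4$, $\sigma$ vanishes on it and therefore kills the image of $a$, which is exactly the degree-$2$ line $\langle\vm\rangle$. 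Thus $\sigma(\vm)=0$, i.e. $\gamma+b=0$. Finally, sliding the genus-one handle of $\sigma$ along $\copants$ gives $\sigma\circ\mprod=\mprod\circ(\sigma\otimes\mathrm{id})$, so $\sigma(\mprod(\vm\otimes\vm))=\mprod(\sigma(\vm)\otimes\vm)=0$; but $\mprod(\vm\otimes\vm)=b\,\vp$ and $\sigma(b\,\vp)=2b\,\vm$, and $V$ is torsion-free, so $b=0$ and hence $\gamma=0$. (Alternatively the Frobenius relation $\mcoprod\circ\mprod=(\mprod\otimes\mathrm{id})\circ(\mathrm{id}\otimes\mcoprod)$, valid because both sides come from the same genus-zero cobordism, gives $b=\gamma$, which together with $\gamma+b=0$ finishes it.) Assembling the values yields the two formulas of the lemma. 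The main obstacle is precisely this $\vm\otimes\vm$ entry; my expectation is that in a full write-up the bulk of the remaining care goes into the $I$-orientation signs rather than into any new geometric input.
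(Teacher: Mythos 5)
Your proposal is correct and follows essentially the same route as the paper's proof: grading constraints plus disk-capping (unit/counit) identities determine all coefficients except those of $\vp\otimes\vp$ and $\vm\otimes\vm$, and the operator $\sigma=\mprod\circ\mcoprod$ together with its vanishing on $\vm$ (which you re-derive from the skein sequence, where the paper simply invokes Lemma~\ref{lem:sigma-nilpotent}) handles the rest. The only cosmetic difference is that you use $\sigma\circ\mprod=\mprod\circ(\sigma\otimes\mathrm{id})$ where the paper uses the mirror relation $\mcoprod\circ\sigma=(1\otimes\sigma)\circ\mcoprod$ and ``a dual argument''; both are sound.
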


\begin{proof}
    We begin with  $\mcoprod(\vp)$. Because of the $\Z/4$ grading, we
    know that
   \[
     \mcoprod(\vp) = \lambda_{1}( \vm \otimes \vp) + \lambda_{2} (\vp
     \otimes \vm),
\]
  for some integers $\lambda_{1}$ and $\lambda_{2}$. Consider the
  composite cobordism $\pants_{0}=D^{+} \cup \pants$ from $U_{0}$ to $U_{2}$,
  formed from the pair of pants $\pants$ by attaching a disk to the
  incoming boundary component. The composite cobordism determines
  a map 
   \[
            \mcoprod_{0} : \Z \to V^{\otimes 2}
    \]
    with
       \[
     \mcoprod_{0}(\bu_{0}) = \lambda_{1}( \vm \otimes \vp) + \lambda_{2} (\vp
     \otimes \vm).
\]
     Next, form a cobordism $\pants_{1}=\pants_{0} \cup D^{-}$ from
     $U_{0}$ to $U_{1}$ by attaching a disk to the first of the two outgoing boundary
     components of $\pants_{0}$. As a cobordism from $U_{1}$ to
     $U_{0}$, the disk maps $\vm$ to $1$; so by the naturality
     expressed in Corollary~\ref{cor:split-link}, it maps
     $\vm\otimes \vp$ to $\vp$ when viewed as a cobordism from $U_{2}$
     to $U_{1}$. The composite cobordism $\pants_{1}$ therefore
     defines a map
     \[
           \mcoprod_{1}: \Z \to V
     \]
    with
       \[
     \mcoprod_{1}(\bu_{0}) = \lambda_{1} \vp.
\]
     But $\pants_{1}$ is simply a disk, with the same orientation as
     the original $\pants$; so $\mcoprod_{1}(\bu_{0}) = \vp$ by definition of $\vp$,
     and we conclude that $\lambda_{1}=1$. We also have
     $\lambda_{2}=1$ by the same argument, so we have computed $\mcoprod(\vp)$.

     As a preliminary step towards computing $\mcoprod(\vm)$, we
     compute the effect of the degree-2 operator $\sigma$ on
     $\Isharp(U_{1})$. The cobordism which defines $\sigma$ on $U_{1}$
     can be seen as a composite cobordism $U_{1} \to U_{1}$ obtained
     from two pairs of pants: first $\pants$ from $U_{1}$ to $U_{2}$,
     then $\copants$ from $U_{2}$ to $U_{1}$. These are oriented
     surfaces, so the cobordisms are canonically $I$-oriented, in a
     manner that is compatible with composition. By an argument dual
     to the one in the previous paragraph, we see that
     \[
               \mprod(\vp\otimes \vm) = \mprod(\vm\otimes \vp) = \vm.
     \]
     Looking at the composite $\mprod\comp \mcoprod$, we see that
     \[
               \sigma(\vp) =  2 \vm.
      \]
     We also know that $\sigma$ is nilpotent
     (Lemma~\ref{lem:sigma-nilpotent}), so we must have
     \[
          \sigma(\vm) = 0.
     \]
       
      We now appeal to the naturality of $\sigma$ with respect to
      cobordism of links with base-points. This tells us that
       \[
                 \mcoprod \comp \sigma = (1\otimes \sigma) \comp \mcoprod.
        \]
       In particular, 
       \[
                \mcoprod( \sigma(\vp)) = (1\otimes\sigma)(\mcoprod(\vp)).
       \]
        We have already calculated $\mcoprod(\vp)$ and $\sigma(\vp)$, so we have,
        \[
        \begin{aligned}
            \mcoprod(2 \vm) &= (1\otimes \sigma) (\vm \otimes \vp
            + \vp \otimes \vm) \\
            &=2( \vm\otimes \vm).
        \end{aligned}
        \]
        It follows that $\mcoprod(\vm) = \vm \otimes \vm$ as claimed. A
        dual argument determines the remaining terms of $\mprod$ in a
        similar manner.
\end{proof}

\subsection{Isotopies of the unlink}

\begin{lemma}
    Let $S \subset [0,1] \times \R^{3}$ be a closed, oriented surface,
    regarded as a cobordism from the empty link $U_{0}$ to
    itself. Then the induced map $\Isharp(S) :\Isharp(U_{0}) \to
    \Isharp(U_{0})$ 
     is
    multiplication by $2^{k}$ if $S$ consists of $k$ tori; and
    $\Isharp(S)$ is zero otherwise.
\end{lemma}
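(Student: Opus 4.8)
The plan is to reduce the statement to two ingredients: the known calculation $\Isharp(U_{0}) = \Z$ (Proposition~\ref{prop:isharpemptyset}), and the description of the pair-of-pants maps $\mcoprod$ and $\mprod$ on $V = \Isharp(U_{1})$ from the previous lemma. Given a closed, oriented surface $S$ in $[0,1]\times\R^{3}$, I would first isotope $S$ into a standard position with respect to the $[0,1]$-coordinate: arrange the height function to be Morse, with all index-$0$ critical points (births) occurring first, all index-$1$ critical points (saddles) in the middle, and all index-$2$ critical points (deaths) last. Between these ranges the cobordism is a split cobordism on some unlink $U_{n}$, so by the product formula for split links (Corollary~\ref{cor:split-link}) and Corollary~\ref{cor:unlink-tensor-n}, the induced map $\Isharp(S)$ factors as a composite of the basic pieces: the disk-creation map $D^{+}_{n}$ sending $\bu_{0}$ to $\vp^{\otimes n}$, a tensor product of copies of $\mcoprod$ and $\mprod$ (and identity maps) realizing the saddles, and finally the disk-capping map $D^{-}_{n}$. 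This reduces the computation of $\Isharp(S)$ to a purely algebraic computation in $V^{\otimes\bullet}$ using the Frobenius-algebra structure $(\mprod, \mcoprod)$ and the unit $\vp$ and counit $\epsilon$ (where $\epsilon(\vm)=1$, $\epsilon(\vp)=0$, coming from $D^{-}$).

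The key observation is that $V = \langle \vp, \vm\rangle$ with the maps computed in the previous lemma is exactly the Frobenius algebra $\Z[x]/(x^{2})$ underlying Khovanov's TQFT (with $\vp = 1$, $\vm = x$), and for this Frobenius algebra the invariant of a closed connected genus-$g$ surface is well known: a genus-$g$ surface is obtained by capping off $g$ ``handle'' operators $x\mapsto \sigma(x) = \mprod\circ\mcoprod$ applied to the sphere invariant. Since $\sigma(\vp)=2\vm$ and $\sigma(\vm)=0$, the composite $\epsilon\circ\sigma^{g}\circ(\text{unit})$ evaluates to $\epsilon(\sigma^{g}(\vp))$, which is $\epsilon(2\vp) = 0$ if $g=0$ (the sphere contributes $0$), equals $\epsilon(2\vm) = 2$ if $g=1$ (the torus), and is $\epsilon(\sigma^{g-1}(2\vm)) = 0$ for $g\ge 2$ since $\sigma(\vm)=0$. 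Thus a single sphere component contributes a factor $0$, a single torus contributes $2$, and a higher-genus component contributes $0$; by multiplicativity over disjoint components (again Corollary~\ref{cor:split-link}), a disjoint union of $k$ tori and any other components gives $2^{k}$ if there are no sphere or higher-genus components and $0$ otherwise. I would need to also handle nonconnected surfaces where components are nested or linked in the $3$-ball direction, but since we are free to isotope $S$ in $[0,1]\times\R^{3}$, each closed component can be pushed into its own standard ball, making the cobordism genuinely split.

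The main obstacle I anticipate is the bookkeeping needed to justify that the Morse-theoretic decomposition of $S$ actually computes $\Isharp(S)$ as the asserted algebraic composite --- that is, verifying that the maps induced by elementary cobordisms compose correctly with the right signs and orientations, and that the isomorphisms $\Phi_{n}\colon V^{\otimes n}\to\Isharp(U_{n})$ of Corollary~\ref{cor:unlink-tensor-n} intertwine the geometric saddle cobordisms with the algebraic maps $\mprod$, $\mcoprod$ (including on the appropriate tensor factors). This is the content one would want to cite from the composition law for cobordism maps in section~\ref{subsec:maps-from-cob} together with the naturality statements already established, but spelling out the placement of the handle relative to the numbered components requires care. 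Once that framework is in place, the evaluation is the short computation with $\sigma$ given above, using precisely $\sigma(\vp)=2\vm$, $\sigma(\vm)=0$, and the nilpotence of $\sigma$ (Lemma~\ref{lem:sigma-nilpotent}) which we have already used to derive those formulas.
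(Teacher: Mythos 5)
Your final evaluation (cap with $D^{+}$, apply the handle operator $\sigma=\mprod\circ\mcoprod$ some number of times per component, cap with $D^{-}$, and use $\sigma(\vp)=2\vm$, $\sigma(\vm)=0$, $\Isharp(D^{-})(\vm)=\bu_{0}$) is exactly the computation the paper performs for a \emph{standardly embedded} surface. The gap is in the reduction to that case. You assert that $S$ can be isotoped so that its Morse decomposition consists of standard births, standard split saddles realizing $1\otimes\cdots\otimes\mprod$ or $1\otimes\cdots\otimes\mcoprod$, and standard deaths, and that "each closed component can be pushed into its own standard ball." This is false: closed surfaces in $[0,1]\times\R^{3}$ can be knotted, and surface components can be nontrivially linked, so they are not in general isotopic to standardly embedded ones. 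Even in a banded-unlink normal form (which does always exist), the intermediate level sets are unlinks but the bands may be knotted or may wind around other components, so the induced cobordism maps are not \emph{a priori} the standard algebraic $\mprod$ and $\mcoprod$ on the identified tensor factors -- identifying them as such is precisely where the knotting of $S$ has to be discharged, and isotopy alone cannot do it.

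The paper closes this gap with a genuinely stronger input: the results of the reference \cite{K-obstruction}, which show that the closed-manifold invariants defined by singular instantons depend on the embedded surface only through its \emph{homotopy} class. Concretely, since $\Isharp(U_{0})=\Z$, the map $\Isharp(S)$ is determined by its trace, and (twice) that trace is the invariant of the closed pair obtained by gluing the two ends, namely $S^{1}\times S^{3}$ containing $S^{1}\times H$ together with $S$; homotoping $S$ inside a ball disjoint from $S^{1}\times H$ then reduces everything to a standard connected surface of each genus, after which your $\sigma$-computation (and multiplicativity over split components) applies verbatim. So your proposal needs either this homotopy-invariance theorem or some substitute argument showing that the induced map depends only on the abstract topology of $S$; without it, the Morse-theoretic decomposition does not yield the claimed algebraic composite.
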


\begin{proof}
    The first point is that the map $\Isharp(S)$ in this situation 
    depends only on $S$ as an abstract surface, not on its embedding
    in $(0,1)\times \R^{3}$. This can be deduced from results
    of \cite{K-obstruction}, which show that the invariants of a closed
    pair $(X,\Sigma)$ defined using singular instantons depend on
    $\Sigma$ only through its homotopy class. To apply the results of
    \cite{K-obstruction} to the present situation, we proceed as
    follows. Since $\Isharp(U_{0})$ is $\Z$, the map $\Isharp(S)$ is
    determined by its trace; and twice the trace can be interpreted as
    the invariant of a closed pair $(X,\Sigma)$ obtained by gluing the
    incoming to the outgoing ends of the cobordism. Thus 
     $X$ is $S^{1}\times S^{3}$ and $\Sigma$ is the union of
     $S^{1}\times H$ and the surface $S$. The results of
     \cite{K-obstruction} can be applied directly to any homotopy of
     $S$ that remains in a ball disjoint from $S^{1}\times H$, and
     this is all that we need.

     Because of this observation, it now suffices to verify the
     statement in the case that $S$ is a standard connected, oriented
     surface of arbitrary genus.If we decompose a genus-1 surface $S$
     as an incoming disk $D^{+}$, a genus-1 cobordism from $U_{1}$ to itself,
     and an outgoing disk $D^{-}$, we find that $\Isharp(S)$ in this
     case is given by
     \[
                 \Isharp(S)(\bu_{0}) = \Isharp(D^{-})\comp \sigma
                 \comp \Isharp(D^{+})(\bu_{0}),
      \]
       which is $2\bu_{0}$ by our previous results. For the case of
       genus $g$, we look at
       \[
                \Isharp(S)(\bu_{0}) = \Isharp(D^{-})\comp \sigma^{g}
                 \comp \Isharp(D^{+})(\bu_{0}),
         \]
        which is zero for all $g$ other than $g=1$.
\end{proof}

\begin{lemma}
    Let $S$ be an oriented concordance from the standard unlink $U_{n}$ to
    itself, consisting of $n$ oriented annuli in $[0,1]\times
    \R^{3}$. 
    Let $\tau$ be the permutation of $\{1,\dots,n\}$
    corresponding to the permutation of the components of $U_{n}$
    arising from $S$. Then the standard isomorphism $\Phi_{n}$ of
    Corollary~\ref{cor:unlink-tensor-n} intertwines the map
     \[
              \Isharp(S) : \Isharp(U_{n}) \to \Isharp(U_{n})
     \]
     with the permutation map
     \[
                 \tau_{*} : V\otimes \dots\otimes V \to  V\otimes
                 \dots \otimes V.
      \]
      In particular, if the permutation $\tau$ is the identity, then
      $\Isharp(S)$ is the identity.
\end{lemma}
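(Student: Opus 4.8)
The plan is to reduce the statement to the two building blocks already established: the product formula for split links (Corollary~\ref{cor:unlink-tensor-n}, itself resting on Corollary~\ref{cor:split-link}) and the functoriality of $\Isharp$ for cobordisms. Decompose the concordance $S$ into an ``atomic'' product, writing $S$ as a composite $S = S_{m}\circ \dots \circ S_{1}$, where each $S_{j}$ is a concordance from $U_{n}$ to $U_{n}$ which is split in the sense of Corollary~\ref{cor:unlink-tensor-n} except in one pair of adjacent strands, where it performs a single elementary transposition (a ``half-twist'' swapping two neighbouring circles, together with product annuli on the remaining $n-2$ components). Any oriented concordance of annuli realising a permutation $\tau$ can be isotoped to such a composite, with the composite of the underlying permutations equal to $\tau$; by functoriality $\Isharp(S) = \Isharp(S_{m})\circ\dots\circ\Isharp(S_{1})$, so it suffices to treat a single elementary transposition.

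For a single elementary transposition $S_{j}$, Corollary~\ref{cor:unlink-tensor-n} (naturality of $\Phi_{n}$ for split cobordisms) identifies $\Isharp(S_{j})$ under $\Phi_{n}$ with $1^{\otimes(a)}\otimes \theta \otimes 1^{\otimes(b)}$, where $\theta : V\otimes V \to V\otimes V$ is the map induced by the basic concordance that swaps the two components of $U_{2}$. So the whole problem is reduced to computing $\theta$ on the $2$-component unlink and showing it is the flip $\vx\otimes\vy \mapsto \vy\otimes\vx$. For this, I would use the characterisation of the generators $\vp,\vm$ via the capping cobordisms $D^{\pm}$. Precompose the swap concordance of $U_{2}$ with $D^{+}_{2}$ (capping both incoming circles with disks): the composite $S^{1}$ of an oriented cobordism from $U_{0}$ to $U_{2}$ with the swap is isotopic rel ends to $D^{+}_{2}$ itself, so it sends $\bu_{0}$ to $\vp\otimes\vp = \theta(\vp\otimes\vp)$, giving $\theta(\vp\otimes\vp)=\vp\otimes\vp$. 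Next cap one outgoing circle with $D^{-}$; since $D^{-}$ sends $\vm\mapsto \bu_{0}$ and (by the split naturality) $\vp\otimes\vm\mapsto\vp$ and $\vm\otimes\vp\mapsto\vp$, comparing the two ways of reading the resulting cobordism from $U_{2}$ to $U_{1}$ forces $\theta(\vp\otimes\vm)=\vm\otimes\vp$ and $\theta(\vm\otimes\vp)=\vp\otimes\vm$ up to the ambiguity that $\theta$ could a priori also add a multiple of $\vp\otimes\vp$ in the wrong degree — but the $\Z/4$ grading (these live in different degrees mod $4$) rules that out. Finally the top degree class $\vm\otimes\vm$ is the unique generator in its $\Z/4$-degree, and $\theta$ is a degree-$0$ isomorphism, so $\theta(\vm\otimes\vm) = \pm \vm\otimes\vm$; the sign is $+1$ because $\theta^{2}=\mathrm{Id}$ (the swap concordance squared is isotopic to the identity concordance, which induces the identity by functoriality) and, combined with the values already computed, $\theta$ is exactly the flip.

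The main obstacle I anticipate is not any single computation but the bookkeeping of \emph{signs and $I$-orientations}: the maps $\Isharp(S)$ are only defined up to sign unless one tracks the $I$-orientations carefully, and the oriented annuli of a concordance do carry canonical $I$-orientations (as in the remarks following Definition~\ref{def:I-orientation} and the treatment of oriented cobordisms in section~\ref{subsec:i-orient}), compatible with composition; I would invoke this to pin down that $\Isharp(S)$ is genuinely, not just projectively, the flip, and in particular that the induced map is the \emph{identity} (with the correct sign) when $\tau$ is trivial. The other point requiring a little care is the claim that a concordance realising $\tau$ factors through elementary transpositions split off as above — this is a standard fact about isotopies of unlinks in $[0,1]\times\R^{3}$ (one isotopes the annuli to be in ``general position'' with respect to the $\R^{3}$-coordinate so that the swaps happen one at a time in disjoint balls), but it is the step where one actually uses that $S$ consists of annuli and nothing more.
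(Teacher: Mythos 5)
Your computation of the swap map on $\Isharp(U_{2})$ is essentially sound (modulo two slips noted below), but the reduction that precedes it has a genuine gap, and it is exactly the step you flagged as ``requiring a little care.'' The lemma is about an arbitrary \emph{concordance} $S$ by annuli, not about the trace of an isotopy. A concordance from $U_{n}$ to itself need not be isotopic rel boundary to a composite of split elementary transpositions: for example, one can tube a knotted $2$-sphere in $S^{4}$ into a product annulus $[0,1]\times U_{1}$ and obtain a concordance from the unknot to itself that is not isotopic to the product. Putting the annuli in general position with respect to the $t$-coordinate produces Morse critical points (births, deaths, saddles) on each annulus, not a level-preserving product structure, so the ``standard fact about isotopies of unlinks'' you invoke does not apply. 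Your argument therefore only proves the lemma for concordances arising from ambient isotopies (which is the content of Proposition~\ref{prop:unlink-canonical}, a \emph{consequence} of this lemma in the paper), not for the general case.

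The paper's proof is structured precisely to avoid this. It never normalizes $S$. Instead it (i) observes that $\Isharp(S)$ intertwines the degree-$2$ operators $\sigma_{i}$ with $\sigma_{\tau(i)}$ (the torus connect-sum can be slid along each annulus), which by $\sigma(\vp)=2\vm$ reduces the whole statement to showing $\Isharp(S)(\vp^{\otimes n})=\vp^{\otimes n}$; and (ii) extracts each coefficient $\lambda_{m}$ of $\vp^{\otimes m}\otimes\vm^{\otimes(n-m)}$ by pre- and post-composing with $D^{+}_{n}$, $D^{-}_{n}$ and $m$ of the $\sigma_{i}$, producing a \emph{closed} surface consisting abstractly of $m$ tori and $n-m$ spheres. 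The preceding lemma — resting on the homotopy-invariance results of \cite{K-obstruction} for closed pairs — evaluates this closed-surface invariant purely from the abstract topology, regardless of how the annuli are embedded. That is the mechanism that handles knotted concordances, and it is absent from your proposal. Two smaller points: your claim that $\vm\otimes\vm$ is the unique generator in its $\Z/4$-degree is false ($\vp\otimes\vp$ lies in the same degree mod $4$), though the coefficient of $\vp\otimes\vp$ in $\theta(\vm\otimes\vm)$ can be killed by commuting with $\sigma$; and in the capping step the identity should read $(1\otimes\Isharp(D^{-}))\circ\theta=\Isharp(D^{-})\otimes 1$, with $\Isharp(D^{-})(\vp)=0$ for degree reasons, rather than sending both $\vp\otimes\vm$ and $\vm\otimes\vp$ to $\vp$.
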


\begin{proof}
    We start with the case that the permutation $\tau$ is the
    identity. Let $\sigma_{i} : \Isharp(U_{n}) \to \Isharp(U_{n})$ be
    the map $\sigma$ applied with a chosen basepoint on the $i$'th
    component of the link. The isomorphism $\Phi_{n}$ intertwines
    $\sigma_{i}$ with
    \[
           1\otimes \dots\otimes 1\otimes \sigma \otimes
           1\otimes\dots\otimes 1
     \] 
     with $\sigma$ in the $i$'th spot, by
     Corollary~\ref{cor:unlink-tensor-n}. Furthermore, $\Isharp(S)$
     commutes with $\sigma_{i}$, because the corresponding
     cobordisms commute up to diffeomorphism relative to the
     boundary. Since $\sigma(\vp)=2\vm$, we therefore see that, to
     show $\Isharp(S)$ is the identity, we need only show that
     \[
                 \Isharp(S)(\vp \otimes \cdots \otimes \vp) 
                            =     \vp \otimes \cdots \otimes \vp.
     \]
     Let $\lambda_{m}$ be the coefficient of $\vp^{\otimes m} \otimes
     \vm^{\otimes(n-m)}$ in $\Isharp(S)(\vp^{\otimes n})$. (There is
     no loss of generality in putting the $\vp$ factors first here: it
     is only a notational convenience.) We must show that
     $\lambda_{m}=0$ for $m<n$ and $\lambda_{n}=1$. From our
     calculation of $\sigma$ etc., we see that we can get hold of
     $\lambda_{m}$ by the formula
      \[
                     \Isharp(D_{n}^{-}) \circ \sigma_{1}\circ\cdots\circ
                     \sigma_{m} \Isharp(D_{n}^{+})(\bu_{0}) = 2^{m} \lambda_{m} \bu_{0}.
      \]
       On the other hand, the composite map on the left is equal to
       $\Isharp(S)$, where $S$ is a closed surface consisting of $m$
       tori and $n-m$ spheres, viewed as a cobordism from $U_{0}$ to
       $U_{0}$. From the results of the previous lemma, we see that
       the left-hand side is $0$ if $m\ne n$ and is $2^{n}$ if
       $m=n$. This completes the proof in the case that the
       permutation $\tau$ is $1$.

       For the case that $S$ provides a non-trivial permutation $\tau$
       of the components, the map $\Isharp(S)$ intertwines
       $\sigma_{i}$ with $\sigma_{\tau(i)}$. It is again sufficient to
       show that $\Isharp(S)$ sends $\vp^{\otimes n}$ to itself, and
       essentially the same argument applies.
\end{proof}

As a special case of a cobordism from $U_{n}$ to itself, we can
consider the trace of an isotopy $f_{t} : U_{n} \to \R^{3}$
($t\in[0,1]$) which begins and ends with the standard inclusion. As an
application of the lemma, we therefore have:

\begin{proposition}\label{prop:unlink-canonical}
    Let $\cU_{n}$ be any oriented link in link-type of $U_{n}$, and
    let its components be enumerated. Then there is canonical
    isomorphism
    \[
             \Psi_{n} :  V\otimes \dots \otimes V \to \Isharp(\cU_{n})
    \]
    which can be described as $ \Isharp(S) \circ \Phi_{n}$, where
    $\Phi_{n}$ is the standard isomorphism of
    Corollary~\ref{cor:unlink-tensor-n} and $S$ is any cobordism from
    $U_{n}$ to $\cU_{n}$ arising from an isotopy from $U_{n}$ to
    $\cU_{n}$, respecting the orientations and the enumeration of
    the components. 
    
    If the enumeration of the components of $\cU_{n}$ is changed by a
    permutation $\tau$, then the isomorphism $\Psi_{n}$ is changed
    simply by composition with the corresponding permutation of the
    factors in the tensor product. \qed
\end{proposition}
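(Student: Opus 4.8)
The plan is to show that the homomorphism $\Isharp(S)\circ\Phi_{n}$ is independent of the choice of $S$ and is an isomorphism; the statement about re-enumeration will then follow from the preceding lemma on oriented concordances of $U_{n}$.

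First I would observe that any $S$ of the kind allowed in the statement --- the trace of an isotopy from $U_{n}$ to $\cU_{n}$ respecting orientations and the enumeration of components --- is an oriented concordance in $[0,1]\times\R^{3}$ consisting of $n$ annuli, the $i$-th running from the $i$-th component of $U_{n}$ to the $i$-th component of $\cU_{n}$. As an oriented cobordism in a product, $S$ carries a canonical $\Isharp$-orientation (section~\ref{subsec:i-orient}), so $\Isharp(S)\colon\Isharp(U_{n})\to\Isharp(\cU_{n})$ is an honest homomorphism, and these canonical orientations are compatible with composition. Let $\bar S$ be the reverse concordance, from $\cU_{n}$ to $U_{n}$, with its own canonical $\Isharp$-orientation. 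Because the concatenation of an isotopy with its reverse is homotopic rel endpoints to the constant isotopy, both $S\circ\bar S$ and $\bar S\circ S$ are isotopic rel boundary to the corresponding product cobordisms, and hence (by the composition law for canonical $\Isharp$-orientations) $\Isharp(S)\circ\Isharp(\bar S)=\mathrm{id}$ and $\Isharp(\bar S)\circ\Isharp(S)=\mathrm{id}$. Thus $\Isharp(\bar S)=\Isharp(S)^{-1}$; in particular $\Isharp(S)$ is an isomorphism, and since $\Phi_{n}$ is an isomorphism by Corollary~\ref{cor:unlink-tensor-n}, so is $\Psi_{n}:=\Isharp(S)\circ\Phi_{n}$.

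Next I would prove independence of $S$. If $S'$ is a second admissible cobordism from $U_{n}$ to $\cU_{n}$, then $\bar S\circ S'$ is an oriented concordance from $U_{n}$ to itself: it again consists of $n$ annuli (glue the annuli of $S'$ to those of $\bar S$ componentwise), and the permutation of the components of $U_{n}$ that it induces is the identity, since both $S$ and $S'$ respect the given enumeration. Applying the previous lemma to $\bar S\circ S'$ gives $\Isharp(\bar S\circ S')=\mathrm{id}$, that is $\Isharp(\bar S)\circ\Isharp(S')=\mathrm{id}$. Combining this with $\Isharp(\bar S)=\Isharp(S)^{-1}$ yields $\Isharp(S')=\Isharp(S)$, so $\Psi_{n}$ does not depend on the choice of $S$. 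For the dependence on the enumeration, I would note that changing the enumeration of $\cU_{n}$ by a permutation $\tau$ has the effect of replacing $S$ by $S\circ P_{\tau}$, where $P_{\tau}$ is the trace of an isotopy of $U_{n}$ that rearranges its components according to $\tau$ (such an isotopy exists because the components of $U_{n}$ are standard, unknotted and unlinked). By the previous lemma, $\Phi_{n}$ intertwines $\Isharp(P_{\tau})$ with the corresponding permutation $\tau_{*}$ of the tensor factors of $V^{\otimes n}$, so the isomorphism associated to the re-enumerated link is $\Isharp(S)\circ\Isharp(P_{\tau})\circ\Phi_{n}=\Isharp(S)\circ\Phi_{n}\circ\tau_{*}=\Psi_{n}\circ\tau_{*}$, which is the asserted change.

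The step I expect to be the main obstacle is not conceptual but a matter of keeping the orientations straight: one must check that the various composite concordances ($\bar S\circ S$, $\bar S\circ S'$, $S\circ P_{\tau}$, and the products) all carry canonical $\Isharp$-orientations that are compatible under composition, so that every induced map above is defined honestly and the previous lemma can be invoked with the correct sign. Given the framework of section~\ref{subsec:i-orient} --- where oriented cobordisms in products have canonical $\Isharp$-orientations preserved under composition --- together with the elementary fact that componentwise gluing of $n$-annulus concordances produces another $n$-annulus concordance inducing the composite permutation, this bookkeeping goes through without difficulty.
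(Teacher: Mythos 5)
Your proposal is correct and follows exactly the route the paper intends: the paper presents this proposition as an immediate application of the preceding lemma on $n$-annulus concordances of $U_{n}$ (which is why it carries only a \qed), and your write-up supplies precisely the expected details — well-definedness via the lemma applied to $\bar S\circ S'$, invertibility via the reversed concordance, and the permutation statement via $P_{\tau}$. The orientation bookkeeping you flag is handled, as you say, by the canonical $I^{\natural}$-orientations of oriented cobordisms in a product and their compatibility with composition.
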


The following proposition encapsulates the calculations of this
section so far.  

\begin{proposition}\label{prop:unlink-summary}
    Let $V$ be a $\Z/4$-graded free abelian group with generators
    $v_{-}$ and $v_{+}$ in degrees $-2$ and $0$. Then for each
    oriented $n$-component unlink
    $\cU_{n}$ with enumerated components $K_{1}, \dots, K_{n}$,
    there is a canonical isomorphism
    \[
                \Phi_{n}(\cU_{n}) : V\otimes \dots\otimes V \to \Isharp(\cU_{n})
    \]
    with the following properties.
    \begin{enumerate}
    \item Given an orientation-preserving 
         isotopy from $\cU_{n}$ to $\cU'_{n}$, respecting the
         enumeration of the components, the map $\Isharp(S)$ arising
         from the corresponding cobordism $S$ intertwines
         $\Phi_{n}(\cU_{n})$ with $\Phi_{n}(\cU'_{n})$.
    \item If the components of $\cU_{n}$ are enumerated differently,
        then $\Phi_{n}(\cU_{n})$ changes by composition with the
        permutation of the factors in the tensor product.
      \item If $\pants_{n}$ is the oriented cobordism from $\cU_{n}$ to
          some $\cU_{n+1}$  which attaches a pair of pants $\Pi$ to the last
          component, inside a ball disjoint from the other components, 
          then the isomorphisms $\Phi_{n}(\cU_{n})$ and
          $\Phi_{n}(\cU_{n+1})$ intertwine the corresponding map
          $\Isharp(\pants_{n}):\Isharp(\cU_{n}) \to \Isharp(\cU_{n+1})$ with the map
         \[
            1\otimes \dots \otimes 1 \otimes \mcoprod .
          \]
     \item Similarly, for an oriented cobordism $\cU_{n+1} \to \cU_{n}$ obtained
         using a pair of pants $\copants$ on the last two
         components, in a ball disjoint from the other components, we obtain the map
       \[
               1\otimes \dots \otimes 1 \otimes \mprod .
      \]
    \end{enumerate}
\qed
\end{proposition}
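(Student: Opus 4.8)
The plan is to assemble Proposition~\ref{prop:unlink-summary} as a formal consequence of the results already established in this section, by taking $\Phi_{n}(\cU_{n})$ to be the canonical isomorphism $\Psi_{n}$ of Proposition~\ref{prop:unlink-canonical}. Concretely, for each oriented $n$-component unlink $\cU_{n}$ with enumerated components we fix an isotopy $S$ from the standard model $U_{n}$ (with its standard enumeration) to $\cU_{n}$ respecting orientations and the enumeration, and set $\Phi_{n}(\cU_{n}) = \Isharp(S)\circ\Phi_{n}$, where $\Phi_{n}$ is the standard isomorphism of Corollary~\ref{cor:unlink-tensor-n}. Proposition~\ref{prop:unlink-canonical} already tells us this is well-defined (independent of the choice of $S$) and records property~(2), the behaviour under reenumeration of the components. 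So the content that remains is to verify properties~(1), (3) and~(4).

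Property~(1) is almost immediate: if $S'$ is a cobordism from $\cU_{n}$ to $\cU'_{n}$ arising from an orientation- and enumeration-preserving isotopy, then for any isotopy cobordism $S$ from $U_{n}$ to $\cU_{n}$ the composite $S'\circ S$ is an isotopy cobordism from $U_{n}$ to $\cU'_{n}$ of the same type; functoriality of $\Isharp$ (Section~\ref{subsec:maps-from-cob}) gives $\Isharp(S')\circ\Isharp(S) = \Isharp(S'\circ S)$, and by Proposition~\ref{prop:unlink-canonical} the latter equals $\Phi_{n}(\cU'_{n})\circ\Phi_{n}^{-1}$ composed appropriately, which rearranges to $\Isharp(S')\circ\Phi_{n}(\cU_{n}) = \Phi_{n}(\cU'_{n})$. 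The only subtlety is the overall sign: the canonical $\Inat$-orientation of an oriented cobordism in a product $[0,1]\times Y$, together with the composition law for $\Inat$-orientations discussed in Section~\ref{subsec:i-orient}, ensures the signs are consistent under composition, so no ambiguity intrudes. For properties~(3) and~(4) I would first reduce to the standard model: using property~(1), it suffices to check the identities when $\cU_{n}$ and $\cU_{n+1}$ are standard unlinks and the pair-of-pants cobordism $\pants_{n}$ (resp.\ $\copants$) is the standard one acting on the last component inside a ball disjoint from the others. In that situation the cobordism is ``split'' in the sense of Corollary~\ref{cor:split-link}: it is the disjoint union of $n-1$ product annuli on the first components and a single pair of pants on the last. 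Applying the product formula for split links (Corollary~\ref{cor:split-link}) and the naturality of the isomorphism there, the induced map is identified with $1\otimes\cdots\otimes 1\otimes(\text{map of }\pants\text{ on }U_{1}\to U_{2})$; and the map of a single pair of pants on the last factor was computed explicitly in the ``Pairs of pants'' lemma to be $\mcoprod$ (resp.\ $\mprod$). This gives properties~(3) and~(4).

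The main obstacle is bookkeeping with signs and with the $I$-orientations rather than any conceptual difficulty: one must check that the canonical $\Inat$-orientations of the oriented cobordisms involved (isotopy traces, pairs of pants, product annuli) are the ones implicitly used when we invoke Corollaries~\ref{cor:unlink-tensor-n} and~\ref{cor:split-link} and the pair-of-pants computation, so that the equalities hold on the nose and not merely up to sign. This is handled by the remarks in Section~\ref{subsec:i-orient}: an oriented cobordism $S\subset[0,1]\times\R^{3}$ between oriented links has a canonical $\Inat$-orientation, these are preserved under composition, and they are exactly the orientations implicit in all the cited statements. Granting this, every step above is an application of functoriality, the excision-based product formula, and the already-completed local pair-of-pants calculation, so the proof of Proposition~\ref{prop:unlink-summary} reduces to stringing these together. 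I would phrase the final write-up as: "Take $\Phi_{n}(\cU_{n}) = \Psi_{n}$ from Proposition~\ref{prop:unlink-canonical}. Property~(2) is part of that proposition; property~(1) follows from functoriality and the composition law for isotopy cobordisms; and properties~(3),(4) follow from Corollary~\ref{cor:split-link} together with the pair-of-pants computation above, after using~(1) to reduce to the standard model." That, together with a sentence confirming the sign conventions align via the canonical $\Inat$-orientations, completes the argument.
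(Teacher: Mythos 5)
Your proposal is correct and is essentially the paper's intended argument: the paper gives no separate proof, stating explicitly that the proposition ``encapsulates the calculations of this section so far,'' i.e.\ it follows by taking $\Phi_{n}(\cU_{n})=\Psi_{n}$ from Proposition~\ref{prop:unlink-canonical} and combining the isotopy lemma, the split-cobordism naturality of Corollaries~\ref{cor:unlink-tensor-n} and~\ref{cor:split-link}, and the pair-of-pants computation, exactly as you do. Your explicit checks of well-definedness, the composition argument for property~(1), and the reduction of~(3),(4) to the standard split model fill in precisely the steps the paper leaves implicit.
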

 
\subsection{Khovanov cohomology}

We now have all that we need to conclude the proof of
Theorem~\ref{thm:E2-is-Kh}. 
If we write $n(v)$ for the
number of components of $K_{v}$ and enumerate those components, then
we have a canonical identification
\[
    E_{1} = \bigoplus_{v\in \{0,1\}^{N}} V^{\otimes n(v)}.
\]
The differential $d_{1}$ is given by the sum \eqref{eq:d1-sum} 
The cobordism $\Tigma_{vu}$ is
a the union of some product cylinders and a single pair of pants,
either $\pants$ or $\copants$. Proposition~\ref{prop:unlink-summary}
therefore tells us that, after pre- and post-composing by
permutations of the components, the map $f_{vu}$ is given either by
\[
      (-1)^{\tilde{\delta}(v,u)} (1\otimes \dots\otimes 1\otimes \mcoprod) 
\]
or
\[
      (-1)^{\tilde{\delta}(v,u)} (1\otimes \dots\otimes 1\otimes \mprod).
\]
The complex $(E_{1}, d_{1})$ that one arrives at in this way is
exactly the complex that computes the Khovanov cohomology of $\bar{K}$. The
fact that the mirror $\bar{K}$ of the link $K$ appears in this statement is
accounted for  by the fact that, in Khovanov's definition, the
differential is the sum of contributions from the edges oriented so
that $|v|$ \emph{increases} along the edges, whereas in our setup the
differential $d_{1}$ \emph{decreases} $|v|$. It follows that the
$E_{2}$ page of the spectral sequence is isomorphic (as an abelian
group) to the Khovanov cohomology $\kh(\bar{K})$.

\subsection{The reduced homology theories}

Recall that for a link $K$ with a
marked point $x$ and normal vector $v$ at $x$,
we have defined
\[
      \Inat(K) = I^{\omega}(S^{3}, K \cup L  ),
\]
where $L$ is a meridional circle centered at $x$ and $\omega$ is an
arc in the direction of $v$. There is a skein exact sequence
(illustrated for the case of the unknot in Figure~\ref{fig:U1-skein}),
\[
       \cdots \to \Inat(K) \to \Isharp(K) \to \Inat(K) \to \Inat(K)
       \to \cdots. 
\]
Corollary~\ref{cor:spectral-sequence-hat} has a straightforward
adaptation to this reduced theory, which can again be deduced from the
more general result, Corollary~\ref{cor:spectral-sequence}.

The maps in the long exact sequence above have already been described
for the unknot $U_{1}$. Thus, the map
\[
         \Isharp(U_{1}) \to \Inat(U_{1})
\]
is the same as the map $\Isharp(U_{1}) \to \Isharp(U_{0}) = \Z$ given as
the quotient map
\[
             V\mapsto V/\langle v_{-} \rangle \cong \Z.
\]
For the unlink $U_{n}$ we similarly have
\[
         \Inat(U_{n}) = V\otimes \dots\otimes V \otimes V/\langle v_{-} \rangle,
\]
as a quotient of $V^{\otimes n}$. (The marked point is on the last
component here.) The maps $\mprod$ and $\mcoprod$ give rise to maps
\[
      \mprod_{r} :     V \otimes V/\langle v_{-} \rangle \to  V/\langle v_{-} \rangle
\]
and
\[
         \mcoprod_{r}: V/\langle v_{-} \rangle \to V \otimes V/\langle v_{-} \rangle ;
\]
and these are precisely the maps induced by the pair-of-pants
cobordisms $\copants$ and $\pants$.  

In the spectral sequence abutting to $\Inat(K)$, we can therefore
identify $E_{1}$ and $d_{1}$. The $E_{1}$ term is obtained from the
unreduced version by replacing (at each vertex of the cube) the factor $V$
corresponding to the marked component by a factor $V/\langle v_{-}
\rangle$. And the differential $d_{1}$ is obtained from the unreduced
case by replacing $\mprod$ or $\mcoprod$ by $\mprod_{r}$ or
$\mcoprod_{r}$ whenever the marked component is involved. The
resulting complex is precisely the complex that computes the reduced
Khovanov cohomology of the mirror of $K$. We therefore have:

\begin{theorem}
   For a knot or link $K$ in $S^{3}$, there is a spectral sequence of
    abelian groups whose $E_{2}$ term is the reduced Khovanov cohomology of
    $\bar{K}$ and which abuts to $\Inat(K)$. \qed
\end{theorem}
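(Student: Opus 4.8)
The plan is to repeat, for the reduced invariant $\Inat$, the cube-of-resolutions argument that proved Theorem~\ref{thm:E2-is-Kh} for $\Isharp$, and then to identify the resulting $(E_{1},d_{1})$ page with the reduced Khovanov complex of $\bar K$. This is essentially the content of the preceding subsection, and the point is simply to assemble those ingredients.

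First I would fix a planar diagram $D$ of $K$ with $N$ crossings, placing the marked point $x$ and its normal vector on an arc of $D$ that avoids all the crossing regions. By definition $\Inat(K)=I^{\omega}(S^{3},K\cup L)$, where $L$ is a small meridional circle at $x$ and $\omega$ is a radial arc from $x$ to $L$, and $L\cup\omega$ lies in a ball disjoint from the $N$ crossing-balls $B_{1},\dots,B_{N}$. Consequently the whole apparatus of Section~\ref{sec:cubes} applies to the family of links $K_{v}\cup L$ for $v\in\{0,1,2\}^{N}$: the skein cobordisms $\Tigma_{vu}$ (extended as products over $L$ and $\omega$), the orbifold metrics and families $G_{vu}$, and the coherent $I$-orientations of Lemma~\ref{lem:orientation-consistency}. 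The non-integral condition holds for every $(S^{3},K_{v}\cup L,\omega)$ because the component $L$ contains exactly one endpoint of $\omega$. Applying Corollary~\ref{cor:spectral-sequence} (in the classical-link form of Corollary~\ref{cor:spectral-sequence-hat}) to this family yields a spectral sequence abutting to $\Inat(K)$ whose first page is
\[
 E_{1}=\bigoplus_{v\in\{0,1\}^{N}}\Inat(K_{v}),
\]
with differential $d_{1}=\sum_{v>u,\ |v-u|=1}(-1)^{\tilde{\delta}(v,u)}\,\Inat(\Tigma_{vu})$.

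Next I would compute this page. Each $K_{v}$ is an unlink; write $n(v)$ for its number of components and distinguish the \emph{marked} component, the one through $x$. Exactly as in Proposition~\ref{prop:unlink-summary}, but for the reduced theory, there is a canonical isomorphism
\[
 \Inat(K_{v})\;\cong\;V^{\otimes(n(v)-1)}\otimes\bigl(V/\langle\vm\rangle\bigr),
\]
the quotient factor occupying the marked component; it is natural for orientation-preserving isotopies respecting the marking, and the pair-of-pants cobordisms act on the unmarked components by $\mprod$ and $\mcoprod$ and on the marked component by the induced maps $\mprod_{r}\colon V\otimes(V/\langle\vm\rangle)\to V/\langle\vm\rangle$ and $\mcoprod_{r}\colon V/\langle\vm\rangle\to V\otimes(V/\langle\vm\rangle)$. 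This reduced statement follows directly from the unreduced one, since the reduced groups and reduced pair-of-pants maps are obtained from their unreduced counterparts (Proposition~\ref{prop:unlink-summary} and Corollary~\ref{cor:split-link}) by the single quotient $V\to V/\langle\vm\rangle$ on the marked factor, and that quotient is precisely the natural map $\Isharp(U_{1})\to\Inat(U_{1})$ appearing in the skein sequence $\cdots\to\Inat\to\Isharp\to\Inat\to\cdots$. Feeding these identifications into $d_{1}$, with the signs already fixed by Corollary~\ref{cor:spectral-sequence-signs}, one sees that $(E_{1},d_{1})$ is literally the complex that Khovanov's construction assigns to $D$ with the Frobenius algebra replaced by its pointed (reduced) version; the mirror $\bar K$ appears for the same reason as in Theorem~\ref{thm:E2-is-Kh}, namely that $d_{1}$ decreases $|v|$ while Khovanov's differential increases it. Hence $E_{2}\cong\khr(\bar K)$ and the spectral sequence abuts to $\Inat(K)$, as claimed.

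The only genuinely new ingredient relative to the unreduced case is the reduced version of Proposition~\ref{prop:unlink-summary}, and I expect that this is the one point to nail down carefully: one must check that the quotient $V\to V/\langle\vm\rangle$ on the marked component is compatible with the split-link product isomorphisms and with the pair-of-pants cobordisms. Since that quotient is itself induced by a cobordism (filling in the meridian $L$), this compatibility is immediate from functoriality, so the obstacle is bookkeeping rather than a new geometric input.
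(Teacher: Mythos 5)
Your proposal is correct and follows essentially the same route as the paper: apply the cube-of-resolutions machinery (Corollary~\ref{cor:spectral-sequence}) to the links $K_{v}\cup L$ with the arc $\omega$, and identify $(E_{1},d_{1})$ with the reduced Khovanov complex of $\bar K$ via the quotient $V\to V/\langle v_{-}\rangle$ on the marked component, which is the map $\Isharp(U_{1})\to\Inat(U_{1})$ in the skein sequence. The paper's own treatment is if anything terser than yours, and your observation that the compatibility of the quotient with the split-link and pair-of-pants maps follows from functoriality of the cobordism filling in $L$ is exactly the point being used implicitly there.
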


\bibliographystyle{abbrv}
\bibliography{khovanov-unknot}

\end{document}